\title{Generic differentiability and $P$-minimal groups}
\author{Will Johnson}
\DeclareMathOperator*{\ind}{\raise0.2ex\hbox{\ooalign{\hidewidth$\vert$\hidewidth\cr\raise-0.9ex\hbox{$\smile$}}}}
\newcommand{\ter}{\operatorname{int}}
\newcommand{\ACVF}{\mathrm{ACVF}}
\newcommand{\ba}{\bar{a}}
\newcommand{\bb}{\bar{b}}
\newcommand{\bc}{\bar{c}}
\newcommand{\bx}{\bar{x}}
\newcommand{\by}{\bar{y}}
\newcommand{\bz}{\bar{z}}
\newcommand{\Th}{\operatorname{Th}}
\newcommand{\rad}{\operatorname{rad}}
\newcommand{\acl}{\operatorname{acl}}
\newcommand{\dcl}{\operatorname{dcl}}
\newcommand{\tp}{\operatorname{tp}}
\newcommand{\st}{\operatorname{st}}
\newcommand{\dom}{\operatorname{dom}}
\newcommand{\img}{\operatorname{im}}
\newcommand{\dpr}{\operatorname{dp-rk}}
\newtheorem{theorem}{Theorem}[section] % numbered like the section
\newtheorem{lemma}[theorem]{Lemma}
\newtheorem{corollary}[theorem]{Corollary}
\newtheorem{fact}[theorem]{Fact}
\newtheorem{conjecture}[theorem]{Conjecture}
\newtheorem{proposition}[theorem]{Proposition}
\newtheorem{proposition-eh}[theorem]{Proposition(?)}
\newtheorem*{theorem-star}{Theorem}
\newtheorem*{conjecture-star}{Conjecture}
\newtheorem*{lemma-star}{Lemma}
\newtheorem{claim}[theorem]{Claim}
\theoremstyle{definition}
\newtheorem{definition}[theorem]{Definition}
\newtheorem{example}[theorem]{Example}
\newtheorem{remark}[theorem]{Remark}
\newtheorem{observation}[theorem]{Observation}
\theoremstyle{remark}
\newtheorem*{acknowledgment}{Acknowledgments}
\newcommand{\Qq}{\mathbb{Q}}
\newcommand{\Rr}{\mathbb{R}}
\newcommand{\Zz}{\mathbb{Z}}
\newcommand{\Nn}{\mathbb{N}}
\newcommand{\Mm}{\mathbb{M}}
\newcommand{\Ff}{\mathbb{F}}
\newcommand{\Pp}{\mathbb{P}}
\newcommand{\Ll}{\mathcal{L}}
\newcommand{\Oo}{\mathcal{O}}
\newcommand{\mm}{\mathfrak{m}}
\newenvironment{claimproof}[1][\proofname]
               {
                 \proof[#1]
                 
               }
               {
                 \endproof
               }
\begin{document}

\maketitle\unmarkedfntext{
  \emph{2020 Mathematical Subject Classification}: 03C45, 03C60, 12L12

  \emph{Key words and phrases}: P-minimality, compact domination, generic differentiability
  }

\begin{abstract}
  We prove generic differentiability in $P$-minimal theories,
  strengthening an earlier result of Kuijpers and Leenknegt.  Using
  this, we prove Onshuus and Pillay's $P$-minimal analogue of Pillay's
  conjectures on o-minimal groups.  Specifically, let $G$ be an
  $n$-dimensional definable group in a highly saturated model $M$ of a
  $P$-minimal theory.  Then there is an open definable subgroup $H
  \subseteq G$ such that $H$ is compactly dominated by $H/H^{00}$, and
  $H/H^{00}$ is a $p$-adic Lie group of the expected dimension.
  Additionally, the generic differentiability theorem immediately
  implies a classification of interpretable fields in $P$-minimal
  theories, by work of Halevi, Hasson, and Peterzil.
\end{abstract}

\section{Introduction}
In the past few decades, o-minimality has been a major area of
research in model theory, with applications to machine learning
\cite{macson}, diophantine geometry \cite{andre-oort-1}, and Hodge
theory \cite{hodge-B}.  Much work has been done on definable groups in
o-minimal structures.  Pillay's \emph{o-minimal group conjectures}
\cite{conjecture} indicated a link between o-minimal definable groups
and real Lie groups.  Specifically, if $G$ is a \emph{definably
compact} definable group in an o-minimal structure, then $G$ should
have a smallest type-definable subgroup of small index $G^{00}$, and
the quotient group $G/G^{00}$ (which is naturally a compact Hausdorff
group) should be a real Lie group with the same dimension as $G$.
This conjecture was proven by Hrushovski, Peterzil, and Pillay
\cite{HPP}, building off a number of earlier papers.

O-minimality is closely tied to the theory RCF (real closed fields),
which is the complete theory of $(\Rr,+,\cdot)$.  Specifically, if $T$
is an expansion of RCF, then $T$ is \emph{o-minimal} if and only if
for every model $M \models T$, every 1-variable definable set $D
\subseteq M^1$ is already definable in the reduct $M \restriction
\mathcal{L}_{Rings} = (M,+,\cdot)$.  (Here and in what follows, ``definable'' always means ``definable with parameters.'')  In practice, most interesting
o-minimal theories are expansions of RCF.

Drawing inspiration from this point of view, Haskell and Macpherson
\cite{p-min} defined a ``version of o-minimality for the $p$-adics''
called \emph{$P$-minimality}.  Let $K$ be a $p$-adic field, i.e., a
finite extension of $\Qq_p$.  An expansion $T$ of $\Th(K)$ is
\emph{$P$-minimal} if for every model $M \models T$, every 1-variable
definable set $D \subseteq M^1$ is already definable in the reduct $M
\restriction \mathcal{L}_{Rings}$.  There are
non-trivial examples of $P$-minimal theories arising from analytic
functions on $\Qq_p$~\cite{analytic}.  Haskell, Macpherson, and later authors \cite{p-minimal-cells} show that
$P$-minimality has many properties similar to o-minimality, such as a
good dimension theory and topological cell decomposition theorem.\footnote{In addition to the ``topological cell decomposition'' of \cite{p-minimal-cells}, stronger cell decompositions are known under the assumption of definable Skolem functions \cite{mourgues,denef-cell,cluckers-cell,halupczok,cubides-delon}.  Thanks to the referee for providing this list of references.}

Onshuus and Pillay \cite{O-P} subsequently formulated a $P$-minimal
analogue of Pillay's o-minimal group conjectures.  Among other things,
their conjecture says that if $G$ is a $d$-dimensional definable group
in a $P$-minimal structure, then some $d$-dimensional definable
subgroup $H$ has the property that $H/H^{00}$ is a $d$-dimensional
$p$-adic Lie group (as opposed to a real Lie group).

In this paper, we prove Onshuus and Pillay's conjecture on definable
groups in $P$-minimal structures.  Moreover, we show that a certain key
property of o-minimal structures---the \emph{generic
differentiability} of definable functions---also holds in $P$-minimal
structures.

\subsection{Statement of results}

The first main result of this paper is the following generic
differentiability theorem:
\begin{theorem} \label{gd-thm}
  Let $M$ be a model of a $P$-minimal theory $T$.  Let $U \subseteq
  M^n$ be a non-empty definable open set.  Let $f : U \to M^m$ be a
  definable function.  Then there is a definable open set $U_0
  \subseteq U$ such that $\dim(U \setminus U_0) < n$ and $f$ is
  differentiable on $U_0$.
\end{theorem}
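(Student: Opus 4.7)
The plan is to reduce to the one-variable case of Kuijpers--Leenknegt by a combination of Fubini for definable dimension and $P$-minimal cell decomposition, and then upgrade the generic existence of partial derivatives to full differentiability by an ultrametric analysis argument. As a preliminary reduction, I would pass to $m = 1$, since $f = (f_1, \ldots, f_m)$ is differentiable at a point iff each $f_j$ is, and a finite intersection of definable open subsets of $U$ whose complements have dimension $< n$ has the same property.

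Now assume $m = 1$. For each coordinate $i$, I would show that $\partial_i f$ exists on a definable open subset $V_i \subseteq U$ with $\dim(U \setminus V_i) < n$. For each $\ba \in M^{n-1}$, the one-variable slice $t \mapsto f(\ldots, a_{i-1}, t, a_{i+1}, \ldots)$ is definable on an open subset of $M$, and by the Kuijpers--Leenknegt theorem it is differentiable off a finite set. By Fubini for definable dimension in $P$-minimal theories, the total exceptional set $E_i \subseteq U$ where $\partial_i f$ fails to exist has dimension $< n$, so $V_i := U \setminus \overline{E_i}$ is the required definable open subset, using that closure does not raise dimension in $P$-minimal structures. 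Set $V = \bigcap_i V_i$; then all partials exist on $V$, and $\dim(U \setminus V) < n$. Since each $\partial_i f : V \to M$ is definable, the $P$-minimal cell decomposition theorem yields a definable open $U_0 \subseteq V$ with $\dim(V \setminus U_0) < n$ on which every $\partial_i f$ is continuous.

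It remains to show that $f$ is differentiable on $U_0$. Fix $a \in U_0$ and write $f(a + h) - f(a)$ as a telescoping sum of one-variable increments along the coordinate axes, each evaluated by the partial $\partial_i f$ at a base point near $a$. The argument reduces to an ultrametric substitute for the mean value theorem: if $g : B \to M$ is a definable differentiable function on a ball $B$ and $|g'(x) - g'(c)| < \epsilon$ throughout $B$, then $|g(x) - g(c) - g'(c)(x - c)| \leq \epsilon |x - c|$ for all $x \in B$. This is the step I expect to be the main obstacle, since $P$-minimal analysis has no true mean value theorem; the surrogate must be proved from scratch in the definable setting (or extracted from Kuijpers--Leenknegt), and then combined with continuity of $\partial_i f$ on $U_0$ to produce the desired $o(|h|)$ remainder. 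A secondary technical point is verifying uniform definability of the Kuijpers--Leenknegt exceptional sets as $\ba$ varies, which is what makes the Fubini step go through cleanly.
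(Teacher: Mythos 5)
Your proposal has a fundamental gap at the very first step: you invoke the Kuijpers--Leenknegt theorem for the one-variable slices, but that theorem is only proved under the hypothesis of \emph{strict} $P$-minimality (i.e., $T = \Th(K')$ for some $P$-minimal expansion $K'$ of the standard $p$-adic field $K$). In a general $P$-minimal theory, a one-variable definable function $g : M \to M$ has a graph living in two variables, so its graph need not be ring-definable and Kuijpers--Leenknegt does not apply to it. Removing exactly this restriction is the central new content of the theorem you are asked to prove. The paper therefore has to rebuild the one-variable case from scratch: it establishes definable compactness of closed bounded sets (Section 2), then analyzes the asymptotic behavior $f(x) \propto Cx^q$ of one-variable functions near a point via Presburger cell decomposition in $\Gamma$ and a cluster-point argument (Section 4), then shows $q = 1$ by a telescoping argument in the group of $m$th powers and matches the directional derivatives across the finitely many cosets of $P_m$ (Lemmas 5.8--5.11), culminating in Proposition 5.12. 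Your Fubini step is reasonable modulo this, but the input to it is not available.

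There is a second gap, which you yourself flag: passing from generic existence and continuity of the partials to multivariable differentiability via an ultrametric mean-value surrogate of the form $|g'(x) - g'(c)| < \epsilon$ on $B$ $\Rightarrow$ $|g(x) - g(c) - g'(c)(x-c)| \le \epsilon |x-c|$. This statement is not a consequence of pointwise differentiability with continuous derivative in the ultrametric world (the classical Dieudonn\'e-type pathologies block any soft proof), and proving it for definable functions would amount to proving strict differentiability by other means. The paper avoids this entirely by working with \emph{strict} differentiability from the start: it defines a matrix to be the strict derivative when the double limits $\lim_{(\bx, y) \to (\ba, 0)} \frac{f(\bx + y \bar e_i) - f(\bx)}{y}$ all exist (Definition 5.1, justified by Fact 5.2), and then upgrades the one-variable pointwise partial derivative to the corresponding double limit on a smaller open set via the limit-exchange Lemma 3.4 (which itself rests on definable compactness, the small-boundary property, and a $\Gamma$-family Baire-category argument, Lemma 3.1). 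That sequence of lemmas is precisely the rigorous substitute for the mean-value argument you leave as an open obstacle.

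In short: your reduction to $m = 1$ and your Fubini framework match the paper, but both the base case and the glue you propose for the multivariable step must be replaced. The base case needs the paper's Sections 2--4 and Proposition 5.12 rather than Kuijpers--Leenknegt, and the multivariable step should target strict differentiability via Lemma 3.4 rather than a mean-value inequality.
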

In fact, we even get generic \emph{strict} differentiability.  See
Definition~\ref{difdef} and Theorem~\ref{generic-diff}.  We also prove
an inverse function theorem for strictly differentiable functions (Theorem~\ref{ift}).

The generic differentiability theorem generalizes work of Kuijpers and
Leenknegt \cite[Theorem~1.8]{leenknegt}, who proved generic
differentiability in the special case where $T$ is ``strictly
$P$-minimal'', meaning that $T$ is the theory of some $P$-minimal
expansion of the $p$-adic field $K$.  Most $P$-minimal theories
arising in practice are strictly $P$-minimal.  However, there are many
simple examples of $P$-minimal theories which fail to be strictly
$P$-minimal, so it is conceptually better to have generic differentiability without
any additional assumptions.

Theorem~\ref{gd-thm} is analogous to the generic differentiability theorems in other settings, such as o-minimal expansions of RCF \cite[Chapter~7]{lou-o-minimality}, the \emph{Hensel minimal fields} of Cluckers, Halupczok, and Rideau-Kikuchi \cite[Lemma~5.3.5, Corollary~3.2.7]{hens-min1}, and definably complete \emph{C-minimal} fields of characteristic 0 \cite[Theorem~9.5]{own-C-min} (including the \emph{V-minimal} fields of Hrushovski and Kazhdan \cite[Corollary~5.17]{HK}).  The Hensel minimal case overlaps significantly with the $P$-minimal case, since $p$-adically closed fields and their expansions by analytic functions are both Hensel minimal and $P$-minimal.  On the other hand, there should be examples of $P$-minimal theories which are not Hensel minimal, using the argument of \cite[Example~1.1]{own-C-min}.

Halevi, Hasson, and Peterzil \cite{hhp} classified interpretable fields in
$P$-minimal structures with generic differentiability.  By
Theorem~\ref{gd-thm}, we can drop the assumption of generic
differentiability:
\begin{corollary}[{\cite[Theorem~1(3)]{hhp}}]
  If $M$ is a model of a $P$-minimal theory, then any infinite interpretable field in $M$
  is definably isomorphic to a finite extension of $M$.
\end{corollary}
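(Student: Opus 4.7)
The plan is essentially to cite and invoke the Halevi--Hasson--Peterzil classification and observe that its one hypothesis is now discharged. In \cite{hhp}, Halevi, Hasson, and Peterzil work with a $P$-minimal structure $M$ in which they \emph{assume} that every definable function $f : U \to M^m$ from an open set $U \subseteq M^n$ is differentiable outside a lower-dimensional subset. Under this hypothesis they show that any infinite interpretable field $F$ is definably isomorphic to a finite extension of $M$. Theorem~\ref{gd-thm} provides precisely this generic differentiability hypothesis, and does so for all $P$-minimal theories with no additional assumptions. Hence the corollary follows by direct application of \cite[Theorem~1(3)]{hhp}.

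The only thing I would write out in the proof is a brief pointer to how the two pieces fit together: let $F$ be the interpretable field, viewed as a definable set in $M^{\eq}$; by the Halevi--Hasson--Peterzil argument, combined with the cell decomposition and dimension theory for $P$-minimal structures, one reduces to differentiability of a coordinate representation of the field operations, which is where generic differentiability enters. Theorem~\ref{gd-thm} supplies this step in full generality.

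There is no real obstacle remaining: all the structural work -- relating the interpretable field to a definable one, showing it is a $p$-adic manifold, and identifying it with a finite extension -- is done inside \cite{hhp}, and the single analytic input they required is exactly what Theorem~\ref{gd-thm} now guarantees. The proof therefore reduces to one sentence stating that the hypothesis of \cite[Theorem~1(3)]{hhp} holds by Theorem~\ref{gd-thm}.
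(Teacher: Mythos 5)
Your proposal matches the paper's own treatment exactly: the corollary is stated immediately after noting that Halevi--Hasson--Peterzil proved the classification under a generic differentiability hypothesis, and the paper simply observes that Theorem~\ref{gd-thm} discharges that hypothesis, so \cite[Theorem~1(3)]{hhp} applies directly. Nothing more is written out in the paper; your one-sentence reduction is the intended argument.
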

As mentioned above, we will apply generic differentiability to prove the
Onshuus-Pillay conjecture on definable groups in $P$-minimal
structures.  Before stating the conjecture, we review some
machinery.  Recall that a theory $T$ is \emph{NIP} if no formula has the independence property (IP) \cite[Chapter~2]{NIPguide}.  Informally, $T$ is NIP if there are no big collections of independent definable sets, where a family of sets is ``independent'' if every boolean combination of sets in the family is non-empty.  O-minimal and $P$-minimal theories are NIP.  If $G$ is a definable group in a monster model $\Mm$,
recall that $G^{00}$ is the smallest type-definable subgroup of $G$
with small index, if it exists.  The group $G^{00}$ exists in NIP
theories
\cite[Theorem~8.4]{NIPguide}.  When $G^{00}$ exists, the quotient
group $G/G^{00}$ is naturally a compact Hausdorff group via the
\emph{logic topology}---the topology in which a set is closed iff it
pulls back to a type-definable subset of $G$.  As a compact Hausdorff group,
$G/G^{00}$ carries a normalized Haar measure.  A definable group $G$
in a monster model $\Mm$ is \emph{compactly dominated}
\cite[Definition~9.1]{HPP} if $G^{00}$ exists and the quotient map $f
: G \to G/G^{00}$ has the property that for any definable set $D
\subseteq G$, we can partition $G/G^{00}$ into three sets $D_0, D_1,
D_{\mathrm{err}}$ such that
\begin{gather*}
  f(x) \in D_0 \implies x \notin D \\
  f(x) \in D_1 \implies x \in D \\
  D_{\mathrm{err}} \text{ has Haar measure } 0.
\end{gather*}
Onshuus and Pillay conjectured the following in \cite{O-P}, by analogy
with Pillay's conjectures on o-minimal groups \cite{conjecture} (plus
the Compact Domination Conjecture of \cite{HPP}):
\begin{conjecture}[Onshuus-Pillay] \label{op-conj}
  Let $G$ be an $n$-dimensional definable group in a highly saturated
  $P$-minimal structure $M$.  Then there is an $n$-dimensional
  definable open subgroup $H \subseteq G$ with the following
  properties:
  \begin{enumerate}
  \item $H/H^{00}$ is isomorphic to an $n$-dimensional Lie group over
    $K$.
  \item $H$ is compactly dominated by $H/H^{00}$.
  \end{enumerate}
\end{conjecture}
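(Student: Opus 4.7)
The plan is to use generic differentiability together with the inverse function theorem to equip a definable open subgroup of $G$ with the structure of a $p$-adic analytic manifold, and then exploit this analytic structure to analyze the quotient by the type-definable connected component.

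First I would apply the strict version of Theorem~\ref{gd-thm} to the group operations: viewing multiplication $\mu : G \times G \to G$ and inversion $\iota : G \to G$ as definable functions, I obtain definable open sets of full dimension on which both maps are strictly differentiable. Using left translation by arbitrary elements of $G$ (which are definable bijections), I would transport strict differentiability around $G$, a standard homogeneity argument: if $\mu$ is strictly differentiable at $(a, b)$ then it is strictly differentiable at $(ga, hb)$ after composing with the translations $L_g$ and $L_h$, provided we can show these translations are themselves strictly differentiable near a generic point. Iterating and using saturation, I would produce a definable open subset $V \ni e$ on which $\mu$ and $\iota$ are strictly differentiable.

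Next, I would take $H$ to be the subgroup of $G$ generated by a small symmetric definable open neighborhood $V_0 \subseteq V$ of the identity; by standard arguments $H$ is a definable open subgroup of the full dimension $n$. Applying the inverse function theorem (Theorem~\ref{ift}) to the translation maps shows that, after shrinking $V_0$, each left translation $L_g$ for $g$ near $e$ is a local diffeomorphism. This endows $H$ with the structure of a $p$-adic Lie group of dimension $n$ over $K$ in a manner definable in $M$. In particular $H$ contains a compact open subgroup $H_1$ isomorphic to a polydisc in $K^n$ via a definable analytic chart, and the Haar measure on $H_1$ is internal to the model theory.

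The final and hardest step is to verify the two conclusions of Conjecture~\ref{op-conj}. For the Lie group quotient, the key observation is that $H^{00}$ exists for NIP groups, and in the compact case, the quotient $H_1 / (H_1 \cap H^{00})$ is a compact Hausdorff topological group. Using the $p$-adic analytic chart on $H_1$ together with saturation, one shows $H_1 \cap H^{00}$ is an intersection of open (in the Lie-group sense) subgroups, forcing the quotient to be a $p$-adic Lie group of the expected dimension $n$. Compact domination then reduces to a Fubini-style argument: for any definable $D \subseteq H$, one uses the analytic structure on $H_1$ and the generic differentiability theorem applied to the fibre functions of $D$ to show that the boundary of $D$ in the logic topology on $H/H^{00}$ has Haar measure zero. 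The principal obstacle will be controlling the measure of this boundary, i.e.\ showing that the frontier of a definable set projects to a null set in $H/H^{00}$; this is where one must most carefully exploit the precise shape of cell decomposition in $P$-minimal structures and the fact that lower-dimensional definable sets are null with respect to the $p$-adic Haar measure transferred from $H_1$.
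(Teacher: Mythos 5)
Your proposal diverges from the paper's strategy and, as written, runs into two serious obstacles.

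First, you take $H$ to be the subgroup generated by a small definable open symmetric neighborhood $V_0 \ni e$ and assert that $H$ is definable "by standard arguments." This does not hold in general: the subgroup generated by a definable set is $\bigcup_{k} V_0^{\star k}$, an ind-definable union, and there is no general reason for it to stabilize at a finite stage in a definable group over a non-archimedean field. The paper sidesteps this entirely by choosing $H$ to be a \emph{single} ball $\ba + \epsilon\Oo^n$ around the identity, where $\epsilon$ is infinitesimal over a small model $M_0$ defining $G$; Lemma~\ref{to-b-2} shows directly, using strict differentiability at $(e,e)$ and the fact that the derivative of $\bx \star \by$ at $(0,0)$ is the identity in each variable, that $\epsilon\Oo^n$ is already closed under $\star$ and inversion. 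No generation step is needed.

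Second, and more fundamentally: you propose to endow $H$ with a $p$-adic analytic manifold structure via the inverse function theorem, and then analyze $H/H^{00}$ as a quotient of a Lie group. But in a general $P$-minimal theory, generic \emph{strict differentiability} (Theorem~\ref{generic-diff}) is all you have; there is no generic \emph{analyticity}, and strict $C^1$-structure is nowhere near enough to conclude that $H$ (or $H/H^{00}$) is a Lie group over $K$. The paper explicitly isolates this issue: the analytic route only works for pure $\Th(K)$, where it relies on van den Dries--Scowcroft piecewise analyticity and is carried out separately as Theorem~\ref{last-main} via the machinery of ``splendid'' functions. For arbitrary $P$-minimal theories, the paper instead exploits infinitesimality: once $H = \epsilon\Oo^n$ with $\epsilon$ infinitesimal, strict differentiability alone yields condition $\mathfrak{B}_\omega$, namely $\bx \star \by \equiv \bx + \by \pmod{p^k\Oo^n}$ for all $k$; this forces $H/H^{00}$ to be the inverse limit $\varprojlim (\Oo^n, +)/p^k\Oo^n = (\Oo_K^n, +)$, which is trivially a Lie group. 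No manifold structure on $H$ is needed. Likewise, compact domination is not proved by your Fubini-style null-boundary argument (which would be delicate to make rigorous); instead Proposition~\ref{a-b} transports the known fsg/compact-domination properties of $(\Oo^n,+)$ to $(\Oo^n,\star)$ by showing the smooth translation-invariant Keisler measure on $(\Oo^n,+)$ is also $\star$-left-invariant. You should abandon the Lie-manifold chart and instead check the algebraic congruence conditions $\mathfrak{A}_\infty$, $\mathfrak{B}_\omega$, $\mathfrak{D}$, $\mathfrak{E}$ on an infinitesimal ball.
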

Here $K$ is the finite extension of $\Qq_p$ whose theory $T$ expands.
We prove Conjecture~\ref{op-conj}, and in fact prove something a
little stronger:
\begin{theorem}[{Theorem~\ref{second-main}}] \label{intro-1}
  Let $G$ be an $n$-dimensional definable group in a highly saturated
  $P$-minimal structure $M$.  Then there is an $n$-dimensional
  definable open subgroup $H \subseteq G$ with the following
  properties:
  \begin{enumerate}
  \item $H$ is compactly dominated by $H/H^{00}$.
  \item $H/H^{00}$ is isomorphic to $(\Oo_K,+)^n$, where $\Oo_K$ is
    the ring of integers in $K$.  In particular, $H/H^{00}$ is
    isomorphic to an $n$-dimensional Lie group over $K$.
  \end{enumerate}
\end{theorem}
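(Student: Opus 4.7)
The plan is to use generic differentiability to equip a neighborhood of the identity in $G$ with a strictly differentiable structure, turning an open subgroup of $G$ into what is essentially a $p$-adic Lie group, and then to invoke the structure of compact $p$-adic Lie groups in saturated models to extract $H$ with the required properties.

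To begin, I would use $P$-minimal dimension theory and cell decomposition to produce a definable bijection between an open neighborhood of the identity $e \in G$ and an open subset of $M^n$, sending $e$ to $0$. In this chart, multiplication $m$ and inversion $\iota$ are definable, and by Theorem~\ref{gd-thm} they are strictly differentiable on dense open sets. Since left and right translations are definable homeomorphisms, strict differentiability can be transported to neighborhoods of $(e,e)$ and $e$ respectively. The group axioms force the linear parts of $m$ and $\iota$ at the origin to be $(X,Y) \mapsto X + Y$ and $X \mapsto -X$; after possibly shrinking the chart and applying Theorem~\ref{ift}, I may assume these linear parts hold throughout the chart domain and that the chart is a $C^1$-diffeomorphism.

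Next, a quantitative version of the inverse function theorem shows that, in the chart, a sufficiently small ball is closed under multiplication and inversion, producing a definable compact open subgroup $H \subseteq G$ of dimension $n$. Inside $H$, I would construct a descending sequence of definable open subgroups $H = H^{(0)} \supset H^{(1)} \supset \cdots$ forming a neighborhood basis of $e$, with each successive quotient finite. By standard model-theoretic generalities in the NIP setting, $H^{00}$ can then be identified with $\bigcap_k H^{(k)}$, and $H/H^{00}$ with the corresponding inverse limit, a compact topological group of dimension $n$. Choosing $H$ and the coordinates carefully, and using that the strictly differentiable group law in these coordinates has additive linear part, the inverse limit is identified with $(\Oo_K,+)^n$, yielding (2). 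For compact domination (1), I would use cell decomposition to partition any definable $D \subseteq H$ into $n$-dimensional cells (whose images in $\Oo_K^n$ under the standard part map are open of positive Haar measure) and lower-dimensional cells (whose images are Haar-null); the trichotomy $D_0$, $D_1$, $D_{\mathrm{err}}$ then arises from the interior of the image, the interior of its complement, and the topological boundary.

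The main obstacle I anticipate is the identification of $H/H^{00}$ specifically with the abelian group $(\Oo_K,+)^n$, rather than just some $n$-dimensional compact $p$-adic Lie group. Generic differentiability supplies a $C^1$-structure but not an analytic one, so classical tools such as the exponential map and the Baker-Campbell-Hausdorff formula are unavailable in pristine form. Care must be taken in choosing $H$ (possibly shrinking to a uniform pro-$p$-like subgroup) and in showing, using only the strictly differentiable group law in a saturated model, that the quotient by $H^{00}$ trivializes the non-abelian higher-order terms of the group law and yields the clean additive identification.
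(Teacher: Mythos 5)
Your plan starts as the paper does---embed $G$ into $\Mm^n$, use generic differentiability and the inverse function theorem to get a $C^1$ group chart around the identity---but it has a genuine gap at exactly the place you flag, and your proposed fix (``a sufficiently small ball'') does not close it. A standard-sized ball $H$ would only make the group law agree with addition modulo $p^k$ for bounded $k$, so $\varprojlim H/p^k\Oo^n$ would carry a genuine, possibly non-abelian, Lie structure rather than $(\Oo_K^n,+)$. The paper's key device is to take $H = \epsilon\Oo^n$ with $\epsilon$ a nonzero \emph{$M$-infinitesimal} over a small model $M$ defining $G$, i.e.\ $v(\epsilon) > \Gamma_M$. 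Strict differentiability at the identity then bounds all the higher-order error matrices by $M$-infinitesimals, which are smaller than $p^k$ for \emph{every} $k$; this is condition $\mathfrak{B}_\omega$ of Definition~\ref{ABC}, so $\bx \star \by \equiv \bx + \by \pmod{p^k\Oo^n}$ for all $k$ simultaneously, and $H/H^{00} \cong (\Oo_K^n,+)$ falls out of Proposition~\ref{b-good}. Without the infinitesimal-ball trick of Lemma~\ref{to-b-2}, you are stuck precisely where you say you are: a ``uniform pro-$p$-like'' standard subgroup is not in general close to additive in all residues.

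The compact-domination step also needs more than a cell decomposition. Splitting $D$ into $n$-dimensional and lower-dimensional cells does not directly give the trichotomy: a full-dimensional cell is not a union of cosets of $H^{00}$, and passing to the standard-part image and controlling the Haar measure of its topological boundary is exactly the content that needs to be proved, not assumed. The paper instead works through fsg: Proposition~\ref{a-b} shows $(\Oo^n,+)$ is fsg (first in pure $p$-adically closed fields, then transferred to the $P$-minimal expansion using that the definable $1$-sets and $1$-types are the same in both languages), and uses condition $\mathfrak{A}_\omega$ to show the unique smooth left-invariant measure on $(\Oo^n,+)$ is also left-invariant for $\star$; compact domination then comes from the fsg/compact-domination equivalence for groups in distal theories (Remark~\ref{fsg}), and the same machinery identifies $H^{00} = \bigcap_k p^k\Oo^n$. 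So the two missing ideas are (i) the $M$-infinitesimal neighborhood and (ii) the fsg/measure argument in place of a direct cell-decomposition count.
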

The proof strategy is a bit idiotic: one simply takes $H$ to be an
infinitesimally small neighborhood of 1 and checks that everything
works.  Generic differentiability helps control the structure of $H$,
showing that $(H,+)$ looks like a deformation of
$(\Oo^n,+)$.\footnote{Hrushovski, Peterzil, and Pillay use a very similar strategy in the proof of
\cite[Proposition~7.8]{HPP}, which gives a version of the o-minimal
group conjectures for ind-definable groups.  Like here, their strategy is to take an infinitesimal neighborhood of the identity.}

\subsection{Variants of Theorem~\ref{intro-1}}
While Theorem~\ref{intro-1} \emph{technically} resolves the
Onshuus-Pillay conjecture, it certainly goes against the spirit of the
conjecture---the intention was that $H$ would be close to $G$, and the
$p$-adic Lie group $H/H^{00}$ should have interesting structure
related to the structure of $G$ and $H$.  But the $H$ in
Theorem~\ref{intro-1} is much smaller than $G$, and the group
$H/H^{00}$ has a boring structure solely determined by $\dim(G)$.\footnote{Hrushovski, Peterzil, and Pillay \cite{HPP} make similar complaints against their Proposition~7.8, writing
\begin{quote}
  ``\ldots note that the locally compact quotient we obtained is abelian; it is indeed a locally compact manifestation of the Lie algebra of $G$.  We feel that the canonical compact quotient of a definably compact group $K$ reflects better the structure of $K$; for instance $K/K^{00}$ is non-abelian if $K$ is non-abelian.  In the general case too, there should also be a locally compact quotient whose structure is close to that of $G$.  We do not at the moment have a precise statement of this, either in the compact or in the locally compact cases'' \cite[\S 7]{HPP}.
\end{quote}}

In fact, we are essentially exploiting a loophole in Onshuus and
Pillay's specific formulation of their conjecture.  To be more
precise, I do \emph{not} know how to prove the following variant
conjecture:
\begin{conjecture}[Modified Onshuus-Pillay conjecture] \label{mod-op}
  Let $G$ be a definable group in a highly saturated
  $P$-minimal structure $M$ and $n = \dim(G)$.  Let $M_0 \prec M$ be a small model
  defining $G$.  Then there is an $n$-dimensional $M_0$-definable open
  subgroup $H \subseteq G$ with the following properties:
  \begin{enumerate}
  \item $H/H^{00}$ is isomorphic to an $n$-dimensional Lie group over
    $K$.
  \item $H$ is compactly dominated by $H/H^{00}$.
  \end{enumerate}
\end{conjecture}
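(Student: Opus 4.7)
The plan is to deduce the modified conjecture from a Pillay-style analysis of $G$ as a definable $p$-adic $C^1$-manifold, obtained by feeding Theorem~\ref{gd-thm} and Theorem~\ref{ift} into the standard argument. In outline, I would first endow $G$ with the structure of an $M_0$-definable $C^1$-manifold over $K$, then locate an $M_0$-definable compact open subgroup $H \subseteq G$ using a local inverse function theorem argument at the identity, and finally identify $H/H^{00}$ with a standard $p$-adic Lie group and verify compact domination.

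For the first step, take an $M_0$-definable cell decomposition of $G$ and apply generic differentiability to the chart maps of top-dimensional cells; the exceptional set of codimension $< n$ can be absorbed by translating good charts around $G$ using the group operation. The generic strict differentiability of multiplication and inversion then lets one check that overlapping charts are $C^1$-compatible, so $G$ becomes an $M_0$-definable $C^1$-manifold of dimension $n$ over $K$. For the second step, choose a chart centered at $e$ identifying a neighborhood of $e$ with an open subset of $K^n$; in these coordinates the group operation is strictly differentiable at $0$, and an inverse function theorem argument lets one shrink to a small enough closed ball $H$ that is closed under multiplication and inversion. This $H$ is $M_0$-definable, compact, open, of dimension $n$, and comes with an analytic-looking coordinatization as a ball in $K^n$.

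The third step is where the plan stops being routine and begins to resemble the Pillay conjectures themselves. Let $H^\mathrm{inf}$ be the type-definable subgroup of elements whose chart coordinates are infinitesimal; it has bounded index, and the quotient $H/H^\mathrm{inf}$ is naturally a standard $p$-adic Lie group (roughly $\Oo_K^n$ with its analytic group law transported from the chart). One then hopes to show $H^{00} = H^\mathrm{inf}$, using that $H^\mathrm{inf}$ is already type-definable and that any smaller type-definable subgroup of bounded index would have to split off a nonstandard direction in the coordinates.

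The hard part will be compact domination. In Theorem~\ref{intro-1} the subgroup $H$ was itself infinitesimal, so $H/H^{00}$ was immediate and compact domination reduced to a residue-scale computation on $\Oo_K^n$. Here $H$ lives at the macroscopic scale, so for a definable $D \subseteq H$ one must show that the set of cosets $\overline{x} \in H/H^{00}$ whose fiber meets both $D$ and $H \setminus D$ has Haar measure zero—the genuine $p$-adic analogue of the Hrushovski–Peterzil–Pillay compact domination theorem. I do not see a way around developing an analogue of their measure-theoretic machinery for $P$-minimal structures, and this is the step I expect to be the real obstacle; without it the approach only yields the Lie group quotient, not the domination statement.
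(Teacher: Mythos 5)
The statement you are trying to prove is Conjecture~\ref{mod-op}, which the paper explicitly leaves \emph{open}: the author writes ``I do \emph{not} know how to prove the following variant conjecture,'' and the paper only establishes two weakenings, Theorem~\ref{third-main} (an $M_0$-definable $H$ with $H/H^{00}$ an $ne$-dimensional $\Qq_p$-Lie group, $e=[K:\Qq_p]$) and Theorem~\ref{last-main} (the full statement, but only for the pure theory $\Th(K)$). So there is no ``paper's own proof'' to compare against; the question is only whether your sketch closes the gap the author could not close. It does not.

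Your identification of the bottleneck is also misplaced. You think compact domination is the hard part, but in the paper's machinery compact domination falls out cheaply once $H$ has the right infinitesimal structure: Proposition~\ref{a-b} derives fsg and compact domination from condition $\mathfrak{A}_\omega$, essentially by comparing the group to $(\Oo^n,+)$ and invoking Remark~\ref{fsg}; this works just as well for a macroscopic $M_0$-definable $H$, and Theorem~\ref{third-main} already produces such an $H$ that is compactly dominated. The genuine obstruction, which the paper names explicitly, is the claim you slide past in your third step: that ``the quotient $H/H^{\mathrm{inf}}$ is naturally a standard $p$-adic Lie group (roughly $\Oo_K^n$ with its analytic group law transported from the chart).'' Theorem~\ref{generic-diff} only gives a strict $C^1$ chart, not an analytic one, and a $C^1$ group law on a macroscopic ball does \emph{not} descend to an analytic group law on the standard part $\Oo_K^n$. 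Condition $\mathfrak{B}_\omega$, which controls the standard-part group law completely when $H$ is an infinitesimal ball (as in Theorem~\ref{second-main}), visibly fails once $H$ has an $M_0$-definable radius: the $O(\epsilon)$ error terms of strict differentiability are no longer negligible modulo every $p^k$. The paper gets around this either by invoking Lazard's intrinsic characterization of pro-$p$ groups that are $\Qq_p$-Lie groups (Fact~\ref{reference-hunt}, hence only a $\Qq_p$-Lie group of dimension $ne$), or, in the special case $T=\Th(K)$, by replacing ``$C^1$'' with ``locally splendid'' (locally analytic with $\Oo$-integral coefficients) via van den Dries--Scowcroft piecewise analyticity, which does give $\mathfrak{C}_\omega$ and hence a $K$-Lie group. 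Neither route is available from generic differentiability alone in an arbitrary $P$-minimal theory, and the paper identifies the absence of a Lazard-style intrinsic criterion for ``Lie group over $K$'' (as opposed to over $\Qq_p$) as exactly the missing ingredient.

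So: your steps one and two are essentially Lemma~\ref{local-C1} together with a fixed-radius version of Lemma~\ref{to-b-2}, and they are fine; but the passage from a $C^1$ macroscopic chart to an analytic (or even $K$-Lie) group law on $H/H^{00}$ is an unproved assertion, not a routine step, and it is precisely the open problem.
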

(This is the same as Conjecture~\ref{op-conj}, except we are now
requiring $H$ to be defined over the same parameters that define $G$.)
The best I could prove was the following two theorems, each of which
has a drawback:
\begin{theorem}[{Theorem~\ref{third-main}}]\label{intro-2}
  In the setting of Conjecture~\ref{mod-op}, there is an
  $M_0$-definable open subgroup $H \subseteq G$ such that $H$ is
  compactly dominated by $H/H^{00}$, and $H/H^{00}$ is isomorphic to
  an $(n \cdot [K : \Qq_p])$-dimensional Lie group over $\Qq_p$.
\end{theorem}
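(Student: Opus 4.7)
The plan is to adapt the infinitesimal-neighborhood strategy from the proof of Theorem~\ref{intro-1}, but constrained to take $H$ of \emph{bounded} rather than infinitesimal size so that it is $M_0$-definable. First, I apply generic differentiability (Theorem~\ref{gd-thm}) to the group law $m:G\times G\to G$ to find an $M_0$-definable open subset on which $m$ is strictly differentiable. By elementarity this set meets $G(M_0)\times G(M_0)$; since left- and right-translations by $M_0$-rational points are $M_0$-definable diffeomorphisms of $G$, we may in fact assume $m$ is strictly differentiable on an $M_0$-definable open neighborhood of $(e,e)$, and similarly for $\iota:G\to G$. The inverse function theorem (Theorem~\ref{ift}) then produces an $M_0$-definable $C^1$-chart $\phi:U\to V\subseteq M^n$ with $\phi(e)=0$. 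The transported group law $m_V(x,y):=\phi(\phi^{-1}(x)\cdot\phi^{-1}(y))$ satisfies $m_V(x,y)=x+y+o(\max(|x|,|y|))$, so the ball $V_0:=\pi^k\Oo^n$ is preserved by $m_V$ and by inversion for $k\in\Zz$ large enough, and $H:=\phi^{-1}(V_0)$ is an $M_0$-definable open subgroup of $G$.

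Let $\mu\subseteq M$ be the type-definable additive group of infinitesimals over $M_0$ (the intersection of all $M_0$-definable open neighborhoods of $0$), and set $\mu_H:=\phi^{-1}(\mu^n)$. Since $m_V(x,y)\equiv x+y\pmod{\mu^n}$ on $V_0\times V_0$, $\mu_H$ is a subgroup of $H$, and it is type-definable of bounded index because $V_0/\mu^n$ is profinite. The central claim is that $H^{00}=\mu_H$. Granting this, the standard-part map identifies $V_0/\mu^n$ with a compact open subgroup of $\Oo_K^n$, and since the induced group operation on the quotient is just pointwise addition, $H/H^{00}$ carries the structure of a $\Qq_p$-Lie group of dimension $n\cdot[K:\Qq_p]$.

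For compact domination, let $D\subseteq H$ be any definable set; by $P$-minimal cell decomposition combined with Theorem~\ref{gd-thm}, the topological boundary $\partial D$ has $P$-minimal dimension $<n$. Its image in $H/H^{00}\cong\Oo_K^n$ therefore lies in a finite union of $C^1$-submanifolds of $\Qq_p$-dimension strictly less than $n\cdot[K:\Qq_p]$ and so has Haar measure zero; fibers over the complement lie entirely inside $D$ or entirely outside it. The main obstacle is the claim $H^{00}=\mu_H$: in Theorem~\ref{intro-1}, $H$ was itself infinitesimally small and $H^{00}$ was visible from the construction, whereas here $H$ is macroscopic and one must show separately that every type-definable subgroup $N\leq H$ of bounded index absorbs all of $\mu_H$. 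I expect this to require NIP-based tools combined with the $C^1$-structure from the chart in order to propagate the infinitesimals across cosets of $N$, possibly by reducing to an application of Theorem~\ref{intro-1} on the infinitesimal ``interior'' of $H$.
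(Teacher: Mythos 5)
Your construction of an $M_0$-definable open subgroup $H = \phi^{-1}(\pi^k\Oo^n)$ is sound, and your use of generic differentiability, translation by $M_0$-points, and the inverse function theorem to get an $M_0$-definable $C^1$-chart near $e$ is essentially what the paper does too (cf.\ Lemmas~\ref{local-C1} and \ref{to-b-2}). But the argument collapses at the claim ``$m_V(x,y)\equiv x+y\pmod{\mu^n}$ on $V_0\times V_0$.'' Strict differentiability at $(0,0)$ controls the error $m_V(x,y)-x-y$ only \emph{relative} to $v(x,y)$: for any $\gamma$ there is a neighborhood $N_\gamma$ of the origin on which $v(m_V(x,y)-x-y)>\gamma+v(x,y)$, but $N_\gamma$ shrinks as $\gamma$ grows. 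On the fixed $M_0$-definable ball $V_0$, we get such an estimate only for a single bounded $\gamma_0$, so for macroscopic $\bx,\by\in V_0$ the error $m_V(x,y)-x-y$ has valuation $\approx\gamma_0+kv(\pi)$, which is not above $\Gamma_{M_0}$ and hence is not in $\mu^n$. The congruence $m_V \equiv +\pmod{\mu^n}$ holds only when $H$ itself is taken of $M_0$-infinitesimal radius, which is exactly why the paper's Theorem~\ref{second-main} does not give an $M_0$-definable $H$. Notice also that if your claim were true, you would immediately conclude $H/H^{00}\cong(\Oo_K^n,+)$, resolving Conjecture~\ref{mod-op} in full, whereas the paper explicitly states it could not prove that variant.

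You have also flagged the claim $H^{00}=\mu_H$ as your ``main obstacle,'' but even granting it, the argument fails for the reason above: the quotient group law on $V_0/\mu^n$ is \emph{not} pointwise addition, and you cannot read off the Lie-group structure from the chart.

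The paper's route is genuinely different. It builds an \emph{infinitesimal} subgroup satisfying the conditions $\mathfrak{A}_\infty$, $\mathfrak{B}_\omega$, $\mathfrak{D}$, $\mathfrak{E}$ (Lemma~\ref{to-b-2}), and then observes that the weaker combination $\mathfrak{A}_\infty\wedge\mathfrak{B}_1\wedge\mathfrak{B}_2\wedge\mathfrak{D}\wedge\mathfrak{E}$ is \emph{first-order definable} (Remark~\ref{def-el}), unlike $\mathfrak{B}_\omega$ which is only type-definable. By Tarski--Vaught, an $M_0$-definable $H$ satisfying this definable combination exists. Having sacrificed $\mathfrak{B}_\omega$, the quotient $H/H^{00}$ is no longer given by addition modulo all $p$-power congruences; instead Proposition~\ref{ed-good} invokes Lemma~\ref{recognizer2}, whose input is exactly $\mathfrak{A}_\infty\wedge\mathfrak{B}_1\wedge\mathfrak{B}_2\wedge\mathfrak{D}\wedge\mathfrak{E}$ and whose engine is Lazard's characterization of uniformly powerful pro-$p$ groups (Fact~\ref{reference-hunt}). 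That characterization only produces a $\Qq_p$-Lie structure, which is precisely why Theorem~\ref{third-main} achieves dimension $n\cdot[K:\Qq_p]$ over $\Qq_p$ rather than dimension $n$ over $K$. This trade-off --- losing $\mathfrak{B}_\omega$ to gain definability, and compensating with Lazard theory --- is the essential idea you are missing.
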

So the case where $K = \Qq_p$ is completely handled, but when $[K :
  \Qq_p] > 1$ we get a slightly defective result, only getting a Lie
group over $\Qq_p$ rather than a Lie group over $K$.  The proof of
Theorem~\ref{intro-2} depends on the following deep results of
Lazard~\cite{lazard}:
\begin{enumerate}
\item[(i)] If $G$ is a Lie group over $\Qq_p$, then the topological group
  structure on $G$ determines the Lie group structure.
\item[(ii)] If $G$ is an abstract topological group, then $G$ is a Lie
  group over $\Qq_p$ if and only if $G$ satisfies a certain abstract
  group-theoretic condition (see \cite[Theorem~8.1]{app} for details).
\end{enumerate}
If we replace $\Qq_p$ with a finite extension $K$, then (i) certainly
fails.  This makes (ii) seem more unlikely, at least to a non-expert
like me.  Part (ii) is the specific thing we would need to generalize
Theorem~\ref{intro-2} to resolve Conjecture~\ref{mod-op}.

Using very different techniques, we also resolve
Conjecture~\ref{mod-op} in the special case where $T$ is the pure
theory of $p$-adically closed fields $\Th(K)$:
\begin{theorem}[{Theorem~\ref{last-main}}]\label{intro-3}
  Conjecture~\ref{mod-op} holds if $T$ is $\Th(K)$, the theory of pure
  $p$-adically closed fields.
\end{theorem}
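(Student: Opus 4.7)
My plan is to exploit the additional rigidity available in pure $\Th(K)$—in particular, cell decomposition and the fact that definable functions are piecewise $K$-rational (hence piecewise $K$-analytic). This rigidity should allow one to construct the required $H$ canonically from the structure of $G$, rather than having to descend to an unspecified infinitesimal neighborhood as in Theorem~\ref{intro-1}.

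The first step is to equip $G$ with a $K$-analytic Lie group structure defined over $M_0$. Cell decomposition in pure $\Th(K)$ produces an $M_0$-definable cell around the identity of $G$ on which a suitable chart $\phi : U \to V \subseteq M^n$ is piecewise $K$-rational; by shrinking, the chart can be taken to be a single rational function, and generic differentiability (Theorem~\ref{gd-thm}) together with group homogeneity ensures it is a true analytic chart. The pulled-back group law $m(x,y) = \phi(\phi^{-1}(x) \cdot \phi^{-1}(y))$ is then a convergent power series near the origin whose coefficients lie in $K$, rather than merely in $M_0$, thanks to the piecewise $K$-rational nature of definable functions in pure $\Th(K)$.

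Second, using the $p$-adic Campbell--Baker--Hausdorff machinery (Lazard), I would shrink the chart to obtain an $M_0$-definable compact open subgroup $H \subseteq G$ such that $\phi(H)$ is a standard ball of the form $B(0,p^N) \subseteq M^n$ with $N$ a standard integer, and such that the group law on $\phi(H)$ is of standard BCH form over $K$. The subgroup $H^{00}$ then corresponds under $\phi$ to the set of infinitesimals $\{x : \val(x) > \Gamma_{M_0}\}$; closure under the BCH-type product is immediate from direct valuation estimates on the power series, bounded index follows because the cosets are in bijection with a standard part, and minimality is a routine $\bigwedge$-definability argument. The quotient $H/H^{00}$ inherits its group law through standard parts, and because the defining power series has coefficients in $K$, the resulting quotient descends to a compact $K$-analytic Lie group of dimension $n$ rather than to a Lie group over the larger field $\widehat{M_0}$.

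Finally, compact domination of $H$ by $H/H^{00}$ follows by essentially the same infinitesimal-neighborhood argument as in Theorem~\ref{intro-1}, now applied inside the chart where $H$ is literally a ball and $H^{00}$ its infinitesimals. The main obstacle I foresee is the first step: rigorously showing that the group-law power series near the identity of $G$ has coefficients in $K$ (and not just in $M_0$). This is the precise rigidity that is unavailable in general $P$-minimal theories, and explains why this strategy resolves Conjecture~\ref{mod-op} for pure $\Th(K)$ but not in full generality.
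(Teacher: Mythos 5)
Your overall strategy---exploit the piecewise analyticity of definable functions in pure $\Th(K)$ to get an analytic group law near the identity, then pass to a suitable ball and take a quotient---is indeed the spirit of the paper's proof. But two concrete claims in your sketch are wrong, and one of them is exactly the ``main obstacle'' you flagged at the end, so let me be explicit about how the paper dodges it.

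First, the claim that the power-series coefficients of the group law $m(x,y)$ lie in $K$ rather than $M_0$ is false. Consider $\bx \star \by = \bx + \by + c\,\bx\by$ on $\Oo$ with $c \in \mm_{M_0}\setminus K$: this is a perfectly good $M_0$-definable analytic group law whose quadratic coefficient is in $M_0$ but not in $K$. Piecewise $K$-analyticity of \emph{formulas} does not constrain the \emph{values} of the parameters. (Also, the correct statement is piecewise \emph{analytic}, not piecewise \emph{rational}: definable functions in $\Th(K)$ involve Hensel roots, which are analytic but not rational.) What is actually true, and what the paper uses, is that the coefficients lie in $\Oo$ and satisfy \emph{valuation estimates} transferred from $K$---this is the ``splendid''/``$\mathfrak{C}_\omega$'' machinery. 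The paper then produces the $K$-Lie group structure on $H/H^{00}$ by taking \emph{standard parts} $\st(\bc_{I,J}) \in \Oo_K$ of the coefficients (Proposition~\ref{c-good}), not by asserting $\bc_{I,J} \in K$. There is also a nontrivial rescaling step (Lemma~\ref{mess}, replacing $\epsilon$ by $\rho = p\epsilon^2/\delta$) needed to force the estimate $\bc_{I,J} \in p^{\lvert I\rvert + \lvert J\rvert - 1}\Oo^n$, which your sketch omits.

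Second, $H^{00}$ is not $\{x : v(x) > \Gamma_{M_0}\}$.  If $M_0 \succneq K$, the $M_0$-infinitesimals are a \emph{proper} subgroup of $\bigcap_{n<\omega} p^n\Oo^n$, which is the actual $H^{00}$ (Proposition~\ref{a-b}); so your quotient would come out too large.  The group $H^{00}$ is canonical and independent of the choice of small model; it is the ``$K$-infinitesimals,'' and the quotient is $\Oo_K^n$ via the standard part map, not $\Oo_{\widehat{M_0}}^n$.  Finally, the appeal to Lazard/BCH is not needed for the $\Th(K)$ case: once one has a power-series group law with coefficients in $\Oo$ satisfying the $\mathfrak{C}_\omega$ estimates, the quotient $H/H^{00}$ is by definition an analytic group over $K$, and compact domination follows from Proposition~\ref{a-b}.  (Lazard is used in the paper only for the weaker $\Qq_p$-Lie-group version, Theorem~\ref{third-main}.)  The paper also does not attempt to build a global Lie group structure on $G$; it only needs local splendidness of $\star_a$ at a single well-chosen generic point $a \in U(M_0)$, which is what the generic local-splendidness machinery (Proposition~\ref{gen-lsa}, Lemma~\ref{la-2}) provides.
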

The proof depends heavily on the fact that definable functions on $K$
are piecewise analytic \cite[Theorem~1.1]{vdDS}, something which has
no analogue in a general $P$-minimal theory.  Consequently, I do not
expect the proof of Theorem~\ref{intro-3} to usefully generalize.

\subsection{The proof of generic differentiability}
In proving generic differentiability (Theorem~\ref{gd-thm}), the key
technical tool is the ``definable compactness'' of closed bounded sets
(Proposition~\ref{dc-thm}).  Definable compactness was essentially proved
by Cubides-Kovacsics and Delon \cite{cubides-delon}, but we give a
slightly improved proof in Section~\ref{dc-sec}.  The improved version
isn't strictly necessary (Remark~\ref{gam-fam}), but makes the
subsequent proofs more streamlined.

To a first approximation, definable compactness ensures that the limit
\begin{equation*}
  \lim_{x \to a} \frac{f(x) - f(a)}{x - a}
\end{equation*}
exists, though it might take the value $\infty$ and might only be a
``one-sided limit''---the limit might depend on which ``direction''
$x$ approaches $a$ from.  To proceed further, we analyze the
asymptotic behavior of definable functions $f(x)$ as $x$ approaches 0.
In Section~\ref{sec-a}, we show that any definable function behaves
asymptotically like
\begin{gather*}
  f(x) \propto C x^q \text{ as } x \to 0 \tag{$\ast$} \\
   \text{that is, } \lim_{x \to 0} \frac{f(x)}{Cx^q} = 1
\end{gather*}
for some constant $C$ and rational number $q$, though again ($\ast$)
may only hold ``on one side'' of $0$.  To show this, we first analyze
the valuation $v(f(x))$, relying on Cluckers' theorem that the value
group is a pure model of Presburger arithmetic
\cite[Theorem~6]{cluckers}.  After controlling the valuation, we use
definable compactness to get the more precise statement ($\ast$).

To prove generic differentiability, one reduces to the case where
\begin{equation*}
  f(a + x) - f(a) \propto C(a) x^q \text{ as } x \to 0
\end{equation*}
for any $a$ in the domain of $f$, for some fixed $q$ independent of
$a$.  By carefully analyzing the behavior of $f$, one can show that
$q$ must be 1,\footnote{For example, you cannot have a definable
function $f$ such that $f(a + x) - f(a) \propto x^2$ at every point
$a$.  Contrast this with $\ACVF_{2,2}$, where the function $f(x) =
x^2$ \emph{does} have the asymptotic expansion $f(a + x) = f(a) +
x^2$.} and the asymptotic behavior of $f(a + x) - f(a)$ is
independent of which direction $x$ approaches $0$ from.  This gives
differentiability.

%% By extending these arguments, one can show that if $a$ is sufficiently
%% generic, then $f(x)$ has the expected asymptotic expansion around $a$:
%% \begin{equation*}
%%   f(a + x) \propto f(a) + f'(a)x + \frac{f'(a)}{2}x^2 + \frac{f''(a)}{6}x^3 + \cdots
%% \end{equation*}
%% See Appendix~\ref{taylor} for details.

\subsection{Conventions}
``Definable'' always means ``definable with parameters''.  The set of natural numbers $\Nn$ contains $0$.

We write group operations as $x \star y$, since we will frequently
need to distinguish from both addition $x+y$ and multiplication $x
\cdot y$.  We write inverses as $x^{-1}$ however.  (Usually it is
clear from context whether we mean the group inverse or the
multiplicative inverse.)

We generally work in the following setting: $K$ is $\Qq_p$ or a finite
extension, $\Th(K)$ is the theory of $K$ in the language of rings
$\mathcal{L}_{Rings}$, $T$ is a $P$-minimal expansion of $\Th(K)$, and
$\Mm$ is a highly saturated monster model of $T$.  If $M$ is an
elementary substructure of $\Mm$, or more generally a subfield of $\Mm$, then $\Oo_M$, $\mm_M$, $k_M$, and
$\Gamma_M$ denote the valuation ring, maximal ideal, residue field,
and value group, respectively.  When no subscript is given, assume $M
= \Mm$.  The valuation is written as $v(x)$ with the additive
conventions, so that
\begin{gather*}
  v(xy) = v(x) + v(y) \\
  v(x+y) \ge \min(v(x),v(y)).
\end{gather*}
The ball of radius $\gamma$ around $a$, written $B_\gamma(a)$ is the
\emph{closed} ball $\{x \in \Mm : v(x - a) \ge \gamma\}$.  The \emph{radius} of $B_\gamma(a)$ is $\gamma$.  We write the radius
of a ball $B$ as $\rad(B)$.  The \emph{parent} of $B_\gamma(a)$ is
$B_{\gamma-1}(a)$.  We say that $B$ is a \emph{child} of $B'$ if $B'$
is the parent of $B$.  Two balls are \emph{siblings} if they have the
same parent.  We say that $B$ is an \emph{ancestor} of $B'$ and $B'$
is a \emph{descendant} of $B$ if $B \supseteq B'$.

If $X$ is a set in a topological space, then $\overline{X}$ denotes the closure, $\ter(X)$ denotes the interior, and $\partial X$ denotes the frontier $\overline{X} \setminus X$.  If $\bx = (x_1,\ldots,x_n)$ is a vector, then $v(\bx)$ denotes
$\min(v(x_1),v(x_2),\ldots,v(x_n))$.

For $n$ a positive integer, we let $P_n$ denote the set of non-zero $n$th powers, and
$\overline{P_n}$ denote its closure, which is $P_n \cup \{0\}$.  A
famous theorem of Macintyre~\cite{macintyre} shows that $p$-adically
closed fields have quantifier elimination in the language of valued
fields expanded by the $P_n$ as unary predicates.\footnote{More precisely, Macintyre deals with the case of $\Th(\Qq_p)$.  For $\Th(K)$ when $K$ is a finite extension, the result is due to Prestel and Roquette \cite{Prestel-roquette}, and one must add new constant symbols to the language, naming certain parameters.  Thanks to the referee for clarifying this point.}

The definitions of differentiability and strict
differentiability will be reviewed in Section~\ref{review}.

\section{Definable compactness} \label{dc-sec}
If $M$ is any structure and $X$ is a definable topological space in
$M$, say that $X$ is \emph{definably compact} if for any downward-directed definable family $\mathcal{F}$ of closed non-empty subsets of
$X$, the intersection $\bigcap \mathcal{F}$ is non-empty.  More
generally, a definable subset $D \subseteq X$ is definably compact if
$D$ is definably compact with respect to the induced subspace
topology.  This notion was investigated independently by
Fornasiero~\cite{fornasiero} and the author
\cite[\S3.1]{wj-o-minimal}.

Definable compactness has the following good properties, proven in
\cite[\S3.1]{wj-o-minimal}; only (\ref{non-triv}) is
non-trivial.
\begin{fact}[{\cite[\S3.1]{wj-o-minimal}}] \label{dc-facts}
  ~
  \begin{enumerate}
\item If $X$ is Hausdorff and $D \subseteq X$ is definably compact,
  then $D$ is closed.
\item If $X$ is definably compact, any closed subset $X \subseteq D$
  is definably compact.
\item If $D$ is compact, then $D$ is definably compact.
\item \label{non-triv} A direct product of two definably compact sets
  is definably compact, with respect to the product topology.
\item A finite set is definably compact.
\item A finite union of definably compact sets is definably compact.
\item The image of a definably compact set under a continuous function
  is definably compact.
\item Definable compactness is preserved in elementary extensions: if
  $N \succeq M$, then $D(M)$ is definably compact iff $D(N)$ is
  definably compact.
\end{enumerate}
\end{fact}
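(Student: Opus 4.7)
The items other than (4) are routine, so I focus on the substantive assertion that a product of two definably compact sets is definably compact. For orientation: (1) follows by considering, for a limit point $x$ of $D$ not in $D$, the definable family $\{\overline{U} \cap D : U \ni x\}$ of closed sets with empty intersection; (2), (3), (5), (6) are elementary set-theoretic manipulations; (7) pulls back a directed family of closed subsets of the image along the continuous map; and (8) is saturation-based transfer of the first-order content of definable compactness.

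For (4), the plan is a definable adaptation of the classical two-factor Tychonoff argument: first locate a good $X$-coordinate, then a good $Y$-coordinate compatible with it. Let $D \subseteq X$ and $E \subseteq Y$ be definably compact and let $\mathcal{F} = \{F_t : t \in T\}$ be a definable downward-directed family of non-empty closed subsets of $D \times E$. Set $A_t = \overline{\pi_X(F_t)} \cap D$, where $\pi_X$ is projection onto the first factor. The family $\{A_t\}_{t \in T}$ is definable, downward-directed, and consists of non-empty closed subsets of $D$, so definable compactness of $D$ yields some $a \in \bigcap_t A_t$. Now for each pair $(t, U)$ with $U$ a definable open neighborhood of $a$, the set $F_t \cap (U \times E)$ is non-empty because $a \in \overline{\pi_X(F_t)}$ forces $U$ to meet $\pi_X(F_t)$; set $B_{t,U} = \overline{\pi_Y(F_t \cap (U \times E))} \cap E$. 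The doubly-indexed family $\{B_{t,U}\}$ is downward-directed in the combined index, so definable compactness of $E$ yields some $b$ lying in every $B_{t,U}$.

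It remains to check that $(a,b) \in F_t$ for every $t$. If not, then since $F_t$ is closed, a basic open neighborhood $U \times W$ of $(a,b)$ can be chosen disjoint from $F_t$; but $b \in B_{t,U} = \overline{\pi_Y(F_t \cap (U \times E))}$ forces $W$ to meet $\pi_Y(F_t \cap (U \times E))$, producing a point of $F_t$ sitting inside $U \times W$, a contradiction.

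The main obstacle is ensuring that $\{B_{t,U}\}$ is a genuinely \emph{definable} family, which requires a definable basis of open neighborhoods at the chosen point $a$. In the $P$-minimal ambient setting this is automatic, since neighborhoods of $a \in M^n$ may be drawn from the definable family of balls $B_\gamma(a)$ parameterized by $\gamma \in \Gamma$, so the full index reduces to the definable set $T \times \Gamma$. For general abstract definable topological spaces one must argue more carefully, which is what the references \cite{fornasiero} and \cite{wj-o-minimal} supply.
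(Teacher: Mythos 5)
The paper does not prove this Fact itself: it cites \cite[\S3.1]{wj-o-minimal} and explicitly remarks that only item (\ref{non-triv}) is non-trivial, so there is no in-paper argument to compare against. Your proof of (4) is a correct definable adaptation of the classical two-factor Tychonoff scheme: project to the first factor to find $a$, then run a second compactness argument over the joint index $(t,U)$ to find $b$, and verify $(a,b)\in F_t$ by the same basic-open rectangle contradiction used classically. The directedness of $\{A_t\}$ and of $\{B_{t,U}\}$ both check out, as does non-emptiness of $B_{t,U}$ from $a\in\overline{\pi_X(F_t)}$. The caveat you flag at the end is the only real point of care, but it is easier than you suggest: a definable topological space comes equipped with a definable basis $\{B_s\}_{s\in S}$, so you may index $U$ over $\{s\in S : a\in B_s\}$, which is definable, and directedness of the combined family $\{B_{t,s}\}$ then follows because any two basis elements containing $a$ contain a third basis element containing $a$. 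This works in the general abstract setting, not just in the $P$-minimal case via balls. Your brief dispatch of items (1), (2), (3), (5), (6), (7), (8) is also fine.
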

Return to the setting of a $P$-minimal monster model $\Mm$.
\begin{lemma} \label{dc}
  The valuation ring $\Oo$ is definably compact.  In other words, any
  downward-directed definable family $\mathcal{F}$ of closed non-empty
  subsets of $\Oo$ has $\bigcap \mathcal{F} \ne \varnothing$.
\end{lemma}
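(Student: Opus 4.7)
My plan would be to build a point of $\bigcap \mathcal F$ via a ``tree of balls'' argument that exploits the finiteness of the residue field $k$, followed by a saturation step. The key local claim is: if a ball $B \subseteq \Oo$ meets every $F \in \mathcal F$, then at least one of the $q := |k|$ children of $B$ also meets every $F \in \mathcal F$. For each $F$, the set $I_F := \{i : C_i \cap F \neq \varnothing\}$ of indices of children $C_i$ meeting $F$ is a non-empty subset of $\{1, \dots, q\}$; the family $\{I_F\}_F$ is downward-directed by the hypothesis on $\mathcal F$, and a downward-directed family of non-empty subsets of a finite set has non-empty intersection, so any common index furnishes a child meeting every $F$.

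Iterating the local step, I would consider
\[
S_\gamma := \{c \in \Oo : B_\gamma(c) \cap F \neq \varnothing \text{ for all } F \in \mathcal F\},
\]
a definable subset of $\Oo$. The local step gives $S_\gamma \neq \varnothing \Rightarrow S_{\gamma+1} \neq \varnothing$, and $S_0 = \Oo$. To propagate $S_\gamma \neq \varnothing$ to all $\gamma \in \Gamma_{\ge 0}$---including non-standard values---I would invoke that $\Gamma$ is a pure $\Zz$-group in $P$-minimal theories and that definable subsets of $\Gamma_{\ge 0}$ obey the least-element principle: the definable set $\{\gamma \in \Gamma_{\ge 0} : S_\gamma \neq \varnothing\}$ contains $0$ and is closed under $+1$, so it equals all of $\Gamma_{\ge 0}$.

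With $\{S_\gamma\}_{\gamma \in \Gamma_{\ge 0}}$ now a nested definable chain of non-empty subsets of $\Oo$, high saturation of $\Mm$ realizes the partial type $\{x \in S_\gamma : \gamma \in \Gamma_{\ge 0}\}$ by some $a \in \Oo$. For each $F \in \mathcal F$ and each $\gamma$ the ball $B_\gamma(a)$ meets $F$, so $a$ lies in the $\Mm$-closure of $F$; since $F$ is closed, $a \in F$, and hence $a \in \bigcap \mathcal F$.

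The hard part will be propagating $S_\gamma \neq \varnothing$ through non-standard $\gamma$: the naive ``start at $0$ and iterate the local step'' only visits $\Nn$, so one must upgrade this to a first-order induction on a subset of $\Gamma$ definable in $\Mm$, which rests on purity/stable embeddedness of the value group. Closedness of each $F$ enters only at the very end, upgrading ``$a$ lies in the closure of $F$'' to ``$a \in F$''; the non-closed family $\{B_\gamma(0) \setminus \{0\}\}_{\gamma \in \Gamma_{\ge 0}}$ shows that this hypothesis cannot be dropped.
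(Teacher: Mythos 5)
Your reduction to the value group proceeds along essentially the same lines as the paper: the local step (a good ball has a good child, by directedness of $\mathcal F$ plus the finiteness of $k$) is the paper's property (3), and the induction establishing $S_\gamma \neq \varnothing$ for all $\gamma \in \Gamma_{\ge 0}$ via definable completeness of $\Gamma$ is the paper's argument that $\Theta = \Gamma_{\ge 0}$. The problem is the final step.

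You cannot realize the partial type $\{x \in S_\gamma : \gamma \in \Gamma_{\ge 0}\}$ by appealing to high saturation of $\Mm$, because the index set $\Gamma_{\ge 0}$ lives inside $\Mm^{\eq}$ and generally has cardinality $|\Mm|$, whereas $\Mm$ is only $\kappa$-saturated for some $\kappa < |\Mm|$ (and even a saturated model is $\kappa$-saturated only for $\kappa = |\Mm|$, not $|\Mm|^+$). Saturation realizes types over \emph{small} parameter sets; $\Gamma_{\ge 0}$ is not small. And you cannot simply restrict to $\Gamma_{M_0, \ge 0}$ for a small $M_0 \preceq \Mm$, because $\Gamma_{M_0}$ is not cofinal in $\Gamma_\Mm$, so realizing the truncated type does not give a point in every $S_\gamma$. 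In fact the assertion ``every nested definable $\Gamma$-family of non-empty clopen subsets of $\Oo$ has non-empty intersection'' is precisely the content of Cubides-Kovacsics and Delon's Theorem (A); it is not a soft consequence of saturation, and some genuine input is required to close the argument. The paper supplies this input as Fact~\ref{ckd-B}: the locally constant definable function $f(a) = \max\{\gamma : B_\gamma(a) \text{ good}\}$ (well-defined once one assumes $\bigcap \mathcal F = \varnothing$ for contradiction) must attain a maximum, which contradicts $\Theta$ being cofinal. That fact is proved separately using a uniform-modulus argument in Presburger arithmetic together with uniform continuity of definable maps to finite sets, and it is exactly the missing piece your saturation step was trying to sidestep.

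Your closing remarks are sound: closedness of the members of $\mathcal F$ is indeed used only to upgrade ``$a$ lies in the closure of each $F$'' to ``$a \in F$,'' and the family $\{B_\gamma(0) \setminus \{0\}\}_{\gamma \in \Gamma_{\ge 0}}$ is a correct counterexample showing this hypothesis cannot be dropped.
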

This is \emph{almost} the same thing as Theorem (A) of Cubides-Kovacsics and Delon \cite{cubides-delon}, but slightly more general as we do
not require $\mathcal{F}$ to be nested (or indexed by $\Gamma$).
Luckily, it is not hard to deduce Lemma~\ref{dc} from their work.
\begin{proof}
  Suppose for the sake of contradiction that $\bigcap \mathcal{F} =
  \varnothing$.  Say that a ball $B \subseteq \Oo$ is \emph{good} if
  $B$ intersects every $C \in \mathcal{F}$.  Otherwise, say that $B$
  is \emph{bad}.  The following properties are straightforward:
  \begin{enumerate}
  \item \label{good1} The ball $\Oo$ is good.
  \item \label{good2} Every descendant of a bad ball is bad; every
    ancestor of a good ball is good.
  \item \label{good3} A ball is good if and only if one of its
    children is good.  This follows by directedness of $\mathcal{F}$,
    and the fact that each ball has only finitely many children.  In
    particular, every good ball has a good child.
  \item \label{good4} If $a \in \Oo$, then some ball $B_\gamma(a)$ is
    bad.  Otherwise, every ball around $a$ intersects every $C \in
    \mathcal{F}$.  As the sets in $\mathcal{F}$ are closed, this means
    $a \in \bigcap \mathcal{F}$, a contradiction.
  \end{enumerate}
  Let $\Theta \subseteq \Gamma$ be the set $\{\rad(B) : B \subseteq
  \Oo, ~ B \text{ is good}\}$.  By (\ref{good1}), $\Theta$ is
  non-empty.  By (\ref{good2}), $\Theta$ is downward-closed (in
  $\Gamma_{\ge 0}$).  By (\ref{good3}), $\Theta$ has no greatest
  element.  The value group $\Gamma$ is a pure model of Presburger
  arithmetic \cite[Theorem~6]{cluckers}, so it is definably complete.
  By definable completeness of $\Gamma$, $\Theta$ must be cofinal in
  $\Gamma$, and so $\Theta = \Gamma_{\ge 0}$.  In particular, there
  are arbitrarily small good balls (good balls of arbitrarily high
  radius).

  Let $f : \Oo \to \Gamma$ be the function
  \begin{equation*}
    f(a) = \max \{\gamma \in \Gamma : B_\gamma(a) \text{ is good}\}.
  \end{equation*}
  The set inside the maximum is non-empty by (\ref{good1}) and bounded
  above by (\ref{good4}), so the maximum exists by definable
  completeness of $\Gamma$.  Morally, the function $f(x)$ is
  measuring the distance from $x$ to $\bigcap \mathcal{F}$.

  The function $f$ is locally constant.  In fact, if $a \in \Oo$ and
  $\gamma_0 = f(a) + 1$, then $f$ is constant on the ball
  $B_{\gamma_0}(a)$.  Indeed, if $b \in B_{\gamma_0}(a)$, then
  \begin{gather*}
    B_{\gamma_0}(b) = B_{\gamma_0}(a) \text{ is bad} \\
    B_{\gamma_0 - 1}(b) = B_{\gamma_0 - 1}(a) \text{ is good},
  \end{gather*}
  and so $f(b) = \gamma_0 - 1 = f(a)$.

  By Fact~\ref{ckd-B} below, $f$ has a maximum value $\gamma_0$ on the
  set $\Oo$.  Then every ball of radius $> \gamma_0$ is bad,
  contradicting the fact that $\Theta$ is cofinal in $\Gamma$.
\end{proof}
\begin{fact}
  \label{ckd-B}
  Let $f : \Oo \to \Gamma$ be a definable function which is
  continuous, i.e., locally constant.  Then $f$ has a maximum.
\end{fact}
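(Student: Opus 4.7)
My plan is to deduce Fact~\ref{ckd-B} directly from the nested $\Gamma$-indexed chain-compactness of $\Oo$ proved by Cubides-Kovacsics and Delon (\cite{cubides-delon}, Theorem (A))---the weaker form referenced just before Lemma~\ref{dc}. This route avoids any circularity with Lemma~\ref{dc}, which itself relies on Fact~\ref{ckd-B}.

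First I would argue that ``no maximum'' forces the image $f(\Oo)$ to be unbounded above in $\Gamma$. The image is a definable subset of $\Gamma$, and by Cluckers' theorem \cite{cluckers} that $\Gamma$ is a pure model of Presburger arithmetic, any non-empty definable subset of $\Gamma$ bounded above has a maximum (the definable completeness of the discrete order $\Gamma$, invoked already in the proof of Lemma~\ref{dc}). So $f(\Oo)$ must be cofinal in $\Gamma$.

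Next, for each $\gamma \in \Gamma$, set $S_\gamma := f^{-1}\bigl([\gamma,\infty)\bigr) = \{x \in \Oo : f(x) \ge \gamma\}$. Since $f$ is locally constant---equivalently, continuous into the discrete space $\Gamma$---and $[\gamma,\infty) \subseteq \Gamma$ is clopen, each $S_\gamma$ is clopen in $\Oo$; in particular closed. Cofinality of $f(\Oo)$ makes every $S_\gamma$ non-empty, and clearly $\gamma \le \gamma'$ gives $S_\gamma \supseteq S_{\gamma'}$, so $(S_\gamma)_{\gamma \in \Gamma}$ is a definable, $\Gamma$-indexed, nested family of non-empty closed subsets of $\Oo$.

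This is precisely the form of family handled by Cubides-Kovacsics and Delon's Theorem (A), so $\bigcap_{\gamma \in \Gamma} S_\gamma \ne \varnothing$. Any $a$ in this intersection satisfies $f(a) \ge \gamma$ for every $\gamma \in \Gamma$, contradicting $f(a) \in \Gamma$. The one delicate point to verify is that the Cubides-Kovacsics--Delon chain-compactness really is a prior input independent of Fact~\ref{ckd-B}; this is implicit in the discussion surrounding Lemma~\ref{dc}, which presents Lemma~\ref{dc} as a strengthening of Theorem (A) rather than as a prerequisite for it.
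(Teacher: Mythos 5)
Your proof is correct, and it takes a genuinely different and shorter route than the paper's. You observe that absence of a maximum forces $f(\Oo)$ to be cofinal in $\Gamma$ (by definable completeness of Presburger arithmetic), then pull back the rays $[\gamma,\infty)$ under $f$ to get the sublevel sets $S_\gamma = f^{-1}([\gamma,\infty))$, which are clopen (since $f$ is locally constant) and non-empty, forming a nested family; Theorem (A) of Cubides-Kovacsics--Delon then forces $\bigcap_\gamma S_\gamma \ne \varnothing$, a contradiction. This works, modulo the trivial reparametrization needed to match the paper's convention in Remark~\ref{gam-fam} that a $\Gamma$-family is \emph{increasing} in $\gamma$.

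The paper's own proof avoids any appeal to \cite{cubides-delon} and is completely self-contained. It replays the ``good ball has a good child'' argument to produce arbitrarily small good balls (balls on which $f$ is unbounded), then uses the Presburger cell structure of $\Gamma$: taking a uniform modulus $n$ for the images $f(B)$ over all balls $B$, it passes to the finite-valued map $g(x) = f(x) \bmod 2n$, which is uniformly continuous by Remark~\ref{ucon-0}. Choosing a good ball small enough that $g$ is constant on it, cofinality of $f$ on that ball forces two values of $f$ differing by exactly $n$, contradicting the constancy of $g$. The paper's motivation for this longer route is stated explicitly: the proof of Theorems (A)/(B) in \cite{cubides-delon} ``seems somewhat complicated,'' so the paper supplies an independent elementary proof. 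Your argument is slicker but sacrifices that independence. Moreover, your non-circularity check---that Theorem (A) is a prior input independent of Fact~\ref{ckd-B}---relies on the assumption that Cubides-Kovacsics and Delon establish their Theorem (A) without first proving their Theorem (B). Within this paper the logical flow is actually (B) before (A), so that assumption would need to be verified directly against \cite{cubides-delon} before one could safely substitute your proof for the paper's.
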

Fact~\ref{ckd-B} is an instance of Theorem (B) in
\cite{cubides-delon}.  The proof in \cite{cubides-delon} seems
somewhat complicated, so we give an alternate proof, which may be of
independent interest.  First we make a couple remarks.
\begin{remark}\label{modulus}
  If $\Delta \subseteq \Gamma$ is definable, say that an integer $n \in \Nn$ is a \emph{modulus} for $\Delta$ if $\Delta \cap (k +
  n\Gamma)$ is a finite union of intervals in $k + n\Gamma$ for $k = 0,
  1, 2, \ldots, n-1$.
  \begin{enumerate}
  \item If $\Delta \subseteq \Gamma$ is definable, then $\Delta$ is
    definable in $(\Gamma,+,\le)$, a model of Presburger
    arithmetic.\footnote{This follows by Cluckers' theorem that
    $\Gamma$ is a pure model of Presburger arithmetic
    \cite[Theorem~6]{cluckers}.  More trivially, it follows by pulling
    $\Delta$ back along the map $\Mm^\times \to \Gamma$, and using
    $P$-minimality.}  Quantifier elimination in Presburger arithmetic
    then implies that every definable $\Delta \subseteq \Gamma$ has a
    modulus.
  \item By compactness, the following is true: if $\{\Delta_a\}_{a \in
    X}$ is a definable family of subsets of $\Gamma$, then there is an
    integer $n$ which is uniformly a modulus for every set $\Delta_a$.
  \item If $\Delta$ is infinite and $n$ is a modulus for $\Delta$,
    then $\{\gamma,\gamma+n\} \subseteq \Delta$ for some
    $\gamma$. That is, $\Delta$ must contain two consecutive elements
    of $k + n\Gamma$ for some $k$.  Otherwise, $\Delta \cap (k +
    n\Gamma)$ is a finite union of points for every $k$, and $\Delta$
    is finite.
  \end{enumerate}
\end{remark}
\begin{remark} \label{ucon-0}
  Let $D$ be a \emph{finite} set and let $f : \Oo \to D$ be a definable
  function which is continuous (i.e., locally constant).  Then $f$ is
  uniformly continuous: there is $\gamma$ such that $f$ is constant on
  every ball of radius $\gamma$.  To see this, note that $f$ is
  definable in the reduct $M \restriction \mathcal{L}_{Rings}$ by
  $P$-minimality, so we can reduce to the base theory $\Th(K)$.  Then we
  can transfer the statement to the elementarily equivalent model $K$,
  where it holds by compactness of $\Oo_K$.
\end{remark}
\begin{proof}[Proof (of Fact~\ref{ckd-B})]
  Let $f : \Oo \to \Gamma$ be locally constant.  If $f$ is bounded
  above, then a maximum exists by definable completeness of $\Gamma$.
  Assume $f$ is unbounded, i.e., $f(\Oo)$ is cofinal in $\Gamma$.  Say
  a ball $B \subseteq \Oo$ is \emph{good$'$} if $f$ is unbounded on $B$,
  and bad$'$ otherwise.  Then $\Oo$ is good$'$, and every good$'$ ball has a
  good$'$ child.  As in the proof of Lemma~\ref{dc}, there are
  arbitrarily small good$'$ balls, i.e., good$'$ balls of radius $> \gamma$
  for any $\gamma$.

  Let $n$ be a uniform modulus for the sets
  \begin{equation*}
    f(B) = \{f(x) : x \in B\},
  \end{equation*}
  as $B$ ranges over descendants of $\Oo$.  Let $g(x)$ be $f(x)$ mod
  $2n$.  Then $g$ is a continuous function from $\Oo$ to the finite
  set $\Gamma/2n\Gamma$.  By Remark~\ref{ucon-0}, $g$ is uniformly
  continuous: there is $\gamma_0$ such that
  \begin{equation*}
    v(a - b) > \gamma_0 \implies g(a) = g(b) \iff f(a) \equiv f(b)
    \pmod{2n}.
  \end{equation*}
  Take a good$'$ ball $B$ of radius $> \gamma_0$.  Then $g$ is constant
  on $B$.  Because $B$ is good$'$, the set $f(B)$ is cofinal in $\Gamma$,
  hence infinite.  As $n$ is a modulus for $f(B)$, there are $\gamma,
  \gamma+n$ in $f(B)$ by Remark~\ref{modulus}(3).  That is, there are
  $a,b \in B$ such that
  \begin{equation*}
    f(a) + n = f(b).
  \end{equation*}
  Then $f(a) \not \equiv f(b) \pmod{2n}$, a contradiction.
\end{proof}

\begin{proposition} \label{dc-thm}
  Let $D$ be a definable subset of $\Mm^n$.  Then $D$ is definably
  compact if and only if $D$ is closed and bounded.
\end{proposition}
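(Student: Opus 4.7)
The plan is to prove the two directions separately, each being a short packaging of Lemma~\ref{dc} together with the closure properties collected in Fact~\ref{dc-facts}.

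For the forward direction, I would first note that since the product topology on $\Mm^n$ is Hausdorff, Fact~\ref{dc-facts}(1) immediately gives that $D$ is closed. For boundedness I would argue by contradiction. Suppose $D \not\subseteq B_\gamma(0)^n$ for any $\gamma \in \Gamma$, and consider the definable family
\[
D_\gamma \;:=\; D \cap \{\bx \in \Mm^n : v(\bx) \le \gamma\}, \qquad \gamma \in \Gamma.
\]
Each set $\{\bx : v(\bx) \le \gamma\}$ is closed in $\Mm^n$, because its complement is the open set $\{\bx : v(\bx) \ge \gamma+1\} = B_{\gamma+1}(0)^n$ (using that $\Gamma$, being a model of Presburger, has a successor function). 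Hence each $D_\gamma$ is closed in $D$; the family is linearly ordered by reverse inclusion (a fortiori downward-directed), and every $D_\gamma$ is non-empty by the unboundedness hypothesis. Definable compactness of $D$ then forces a point $\bx_0 \in \bigcap_\gamma D_\gamma$, and such a point would satisfy $v(\bx_0) \le \gamma$ for every $\gamma \in \Gamma$---impossible.

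For the backward direction, assume $D$ is closed and bounded, so $D \subseteq B_\gamma(0)^n$ for some $\gamma$. The strategy is to show that $B_\gamma(0)^n$ is itself definably compact; then $D$ is definably compact as a closed subset, by Fact~\ref{dc-facts}(2). Pick any $c \in \Mm$ with $v(c) = \gamma$; the map $x \mapsto cx$ is a definable homeomorphism $\Oo \to B_\gamma(0)$, so Fact~\ref{dc-facts}(7) combined with Lemma~\ref{dc} yields that $B_\gamma(0)$ is definably compact. A repeated application of Fact~\ref{dc-facts}(4) then gives definable compactness of the $n$-fold product $B_\gamma(0)^n$, completing the argument.

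There is no serious obstacle: the proposition is essentially a repackaging of Lemma~\ref{dc} through the abstract closure properties of definable compactness. The only mild care needed is in the forward direction, where one must exhibit a family of genuinely closed, genuinely downward-directed, non-empty definable sets witnessing the failure of boundedness; taking sublevel sets of $v(\bx)$ (and using that $\Gamma$ has a successor function to see they are closed) is exactly the right choice.
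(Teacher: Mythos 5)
Your proof is correct and follows essentially the same route as the paper: the nontrivial (``closed and bounded $\Rightarrow$ definably compact'') direction is proved exactly as in the paper, by rescaling a coordinate ball to $\Oo$, invoking Lemma~\ref{dc}, and then chaining closure properties from Fact~\ref{dc-facts}. The only difference is that for the easy direction the paper simply cites an external reference, whereas you give a direct (and correct) argument using the downward-directed family of sublevel sets $D_\gamma = D \cap \{\bx : v(\bx) \le \gamma\}$.
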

\begin{proof}
  If $D$ is not closed or not bounded, it is easy to see that $D$ is
  not definably compact.  Indeed, the proof from
  \cite[Lemma~2.4]{johnson-yao} works.

  Conversely, suppose $D$ is closed and bounded.  Then $D$ is a closed subset
  of an $n$-dimensional closed ball $B \subseteq \Mm^n$.  There is a definable homeomorphism between $B$ and $\Oo^n$.  By the properties of definable compactness discussed at the start of this section
  \begin{gather*}
    \Oo \text{ is definably compact} \implies \Oo^n \text{ is definably compact} \\
    \iff B \text{ is definably compact} \implies D \text{ is definably compact}. \qedhere
  \end{gather*}
\end{proof}

\subsection{Applications of definable compactness}

\begin{corollary} \label{ucon}
  Let $D \subseteq \Mm^n$ be closed and bounded.  Let $f : D \to
  \Mm^m$ be definable and continuous.  Then $f$ is uniformly
  continuous.
\end{corollary}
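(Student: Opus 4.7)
The plan is a direct contradiction argument using Proposition~\ref{dc-thm} together with Fact~\ref{dc-facts}(4). Recall that uniform continuity of $f$ means: for every $\delta \in \Gamma$ there exists $\gamma \in \Gamma$ such that for all $x,y \in D$, $v(x-y) \geq \gamma$ implies $v(f(x) - f(y)) \geq \delta$. I fix an arbitrary $\delta$ and try to produce such a $\gamma$.

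The key construction is the definable family
\[
  C_\gamma := \{(x,y) \in D \times D : v(x-y) \geq \gamma \text{ and } v(f(x) - f(y)) < \delta\}, \quad \gamma \in \Gamma,
\]
which is totally ordered by reverse inclusion, in particular downward-directed. I would then verify that each $C_\gamma$ is closed in $D \times D$: both defining conditions cut out closed sets, since in the non-Archimedean setting all balls and their complements are clopen, so $\{z : v(z) \geq \gamma\}$ and $\{z : v(z) < \delta\}$ are both closed, and $f$ is continuous. Proposition~\ref{dc-thm} gives definable compactness of $D$, and Fact~\ref{dc-facts}(4) promotes this to $D \times D$.

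The dichotomy is now immediate. If some $C_{\gamma_0}$ is empty, then $\gamma_0$ is the desired modulus for $\delta$, and since $\delta$ was arbitrary we conclude that $f$ is uniformly continuous. Otherwise, definable compactness produces a point $(x,y) \in \bigcap_{\gamma \in \Gamma} C_\gamma$. But $v(x-y) \geq \gamma$ for every $\gamma \in \Gamma$ forces $x = y$, making $v(f(x)-f(y)) = v(0) = +\infty$, which contradicts $v(f(x)-f(y)) < \delta$.

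I do not anticipate a genuine obstacle here. The one point deserving care is the closedness of $C_\gamma$, which relies specifically on the $p$-adic fact that strict valuation inequalities define clopen sets — a step that would fail in an ordered-field setting but is painless here. Once this is noted, the argument is a routine application of the tools just developed.
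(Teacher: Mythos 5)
Your proof is correct and follows essentially the same route as the paper: both define a nested definable family of closed subsets of $D^2$ indexed by the candidate modulus, invoke definable compactness of $D^2$ (via Proposition~\ref{dc-thm} and Fact~\ref{dc-facts}(4)), and conclude that a point in the intersection forces $x=y$ with $v(f(x)-f(y))$ bounded, a contradiction. The only cosmetic differences are that you fix $\delta$ first and produce a modulus rather than phrasing the whole thing as one contradiction, and that you use a strict inequality $<\delta$ where the paper uses $\le\epsilon$; both cut out clopen sets in the ultrametric topology, so both work.
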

\begin{proof}
  Otherwise, there is some $\epsilon \in \Gamma$ such that for every
  $\delta \in \Gamma$, there are $\bar{a}, \bar{b} \in D$ with
  \begin{equation*}
    v(\bar{a}-\bar{b}) \ge \delta \text{ but } v(f(\bar{a}) -
    f(\bar{b})) \le \epsilon.
  \end{equation*}
  Here, $v(\bar{a})$ is short for $\min_{1 \le i \le n} v(a_i)$.

  Fix such an $\epsilon$.  For each $\delta$, let
  \begin{equation*}
    C_\delta = \{(\bar{a},\bar{b}) \in D^2 : v(\bar{a}-\bar{b}) \ge \delta \text{ and } v(f(\bar{a}) - f(\bar{b})) \le \epsilon\}.
  \end{equation*}
  Then $C_\delta$ is closed in $D^2$, by continuity of $f$.  By
  assumption, $C_\delta$ is non-empty.  The family
  $\{C_\delta\}_{\delta \in \Gamma}$ is nested, so by definable
  compactness of $D^2$, there is some $(\bar{a},\bar{b}) \in
  \bigcap_\delta C_\delta$.  Then
  \begin{equation*}
    v(\bar{a} - \bar{b}) = +\infty \text{ and } v(f(\bar{a}) -
    f(\bar{b})) \le \epsilon,
  \end{equation*}
  which is absurd.
\end{proof}

Let $f : D \to C$ be a definable function, for some definable sets $D
\subseteq \Mm^n$ and $C \subseteq \Mm^m$.  Let $x_0$ be a point in the
frontier $\partial D = \overline{D} \setminus D$.  Say that $b \in
\Mm^m$ is a \emph{cluster point} of $f(x)$ as $x \to x_0$ if for every
$\epsilon \in \Gamma$, there is $x \in B_\epsilon(x_0) \cap D$ such
that $f(x) \in B_\epsilon(b)$.  Equivalently, $(x_0,b)$ is in the
closure of the graph of $f$.
\begin{lemma} \label{cluster}
  In the above setting, suppose $C$ is definably compact.
  \begin{enumerate}
  \item There is at least one cluster point as $x \to x_0$.
  \item $\lim_{x \to x_0} f(x)$ exists if and only if there is a
    unique cluster point $b$, in which case $\lim_{x \to x_0} f(x) =
    b$.
  \end{enumerate}
\end{lemma}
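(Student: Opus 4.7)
The plan is to deduce both parts from the definable compactness of $C$ (Proposition~\ref{dc-thm}), applied to nested families of closed non-empty subsets, exactly as one would in the real-analytic case with a compact codomain.

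For part (1), I would set
\[ F_\epsilon := \overline{\{f(x) : x \in B_\epsilon(x_0) \cap D\}} \qquad (\epsilon \in \Gamma), \]
where the closure is taken in $\Mm^m$. Since $x_0 \in \partial D \subseteq \overline{D}$, the ball $B_\epsilon(x_0)$ meets $D$ for every $\epsilon$, so each $F_\epsilon$ is non-empty. The family $\{F_\epsilon\}_\epsilon$ is definable and nested. Because $C$ is definably compact, it is closed in $\Mm^m$ by Fact~\ref{dc-facts}(1), so $F_\epsilon \subseteq \overline{f(D)} \subseteq C$. Applying definable compactness of $C$ yields some $b \in \bigcap_\epsilon F_\epsilon$, and unpacking $b \in F_\epsilon$ gives, for each $\epsilon$, some $x \in B_\epsilon(x_0) \cap D$ with $v(f(x) - b) \ge \epsilon$; that is exactly the cluster-point condition.

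For part (2), the forward direction is routine: if $\lim_{x \to x_0} f(x) = b$, then any other candidate cluster point $b' \ne b$ can be ruled out by taking $\epsilon > v(b - b')$, which makes $B_\epsilon(b)$ and $B_\epsilon(b')$ disjoint and contradicts the simultaneous pull of $f(x)$ toward both points. For the converse, assume the cluster point $b$ from (1) is unique but $\lim_{x \to x_0} f(x) \ne b$. Then there is some $\delta_0$ such that for every $\epsilon$ we can find $x \in B_\epsilon(x_0) \cap D$ with $f(x) \notin B_{\delta_0}(b)$. Since $p$-adic balls are clopen, $C' := C \setminus B_{\delta_0}(b)$ is closed in $C$ and hence definably compact by Fact~\ref{dc-facts}(2). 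Rerunning the argument of (1) inside $C'$ — using the sets $F'_\epsilon := \overline{\{f(x) : x \in B_\epsilon(x_0) \cap D,\ f(x) \in C'\}}$, which are non-empty by choice of $\delta_0$ — produces a cluster point $b' \in C'$, and $b' \ne b$ because $b' \notin B_{\delta_0}(b)$, contradicting uniqueness.

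The only nontrivial ingredient is definable compactness itself; beyond that, the argument is a direct translation of the standard sequential-compactness proof. The one small $p$-adic convenience worth flagging is the clopen-ness of $B_{\delta_0}(b)$, which is what lets us remove it from $C$ and stay inside the definably compact category so that the part~(1) argument can be reused verbatim.
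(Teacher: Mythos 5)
Your proof is correct, and the underlying mechanism is the same as the paper's: apply definable compactness to a nested definable family of closed nonempty sets indexed by $\Gamma$. The paper works with the closures $\overline{G_\epsilon}$ of the graph of $f$ restricted to $B_\epsilon(x_0) \cap D$, living inside a definably compact product $B_{\epsilon_0}(x_0) \times C$, and then projects the intersection point $(a,b)$ to recover $a = x_0$ and a cluster point $b$. You instead take closures $F_\epsilon$ of the \emph{images} $f(B_\epsilon(x_0) \cap D)$ inside $C$ itself, which sidesteps the product entirely and needs only Fact~\ref{dc-facts}(1)--(2) rather than the (nontrivial) Fact~\ref{dc-facts}(\ref{non-triv}) about products. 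That is a slightly leaner use of the toolbox. One thing worth making explicit is the equivalence between ``$b \in \bigcap_\epsilon F_\epsilon$'' and the definition of cluster point: the former says that for all $\epsilon$ and all $\delta$ there is $x \in B_\epsilon(x_0)\cap D$ with $f(x)\in B_\delta(b)$, and setting $\delta=\epsilon$ (respectively using $\gamma = \max(\epsilon,\delta)$ for the reverse direction) shows it matches the paper's ``for all $\epsilon$ there is $x \in B_\epsilon(x_0)\cap D$ with $f(x)\in B_\epsilon(b)$.'' Your part (2) mirrors the paper's: the converse passes to the still-definably-compact $C' = C \setminus B_{\delta_0}(b)$ (clopenness of balls is the key, as you note) and reruns part (1) to manufacture a second cluster point.
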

\begin{proof}
  \begin{enumerate}
  \item For $\epsilon \in \Gamma$, let $G_\epsilon$ be the graph of
    $f$ restricted to $B_\epsilon(x_0) \cap D$.  By definable
    compactness\footnote{of $B_\epsilon(x_0) \times C$}, there is a
    point $(a,b) \in \bigcap_{\epsilon} \overline{G_\epsilon}$.  For
    any $\epsilon$ we have
    \begin{gather*}
      G_\epsilon \subseteq B_\epsilon(x_0) \times \Mm^m \\
      \overline{G_\epsilon} \subseteq B_\epsilon(x_0) \times \Mm^m \\
      a \in B_\epsilon(x_0).
    \end{gather*}
    Thus $a = x_0$.  Fix any $\epsilon$.  If $G$ is the graph of $f$,
    then
    \begin{equation*}
      (x_0,b) = (a,b) \in \overline{G_\epsilon} \subseteq \overline{G},
    \end{equation*}
    and so $b$ is a cluster point.
  \item First suppose $\lim_{x \to x_0} f(x)$ exists and equals $b$.
    Then \[(x_0,b) = \lim_{x \to x_0} (x,f(x)),\] and so $(x_0,b)$ is in
    the closure of the graph of $f$ and $b$ is a cluster point.
    Suppose for the sake of contradiction that $b'$ is another cluster
    point.  Take $\epsilon$ so large that $B_\epsilon(b') \cap
    B_\epsilon(b) = \varnothing$.  By existence of the limit, there is
    $\delta > \epsilon$ such that \[x \in B_\delta(x_0) \cap D
    \implies f(x) \in B_\epsilon(b).\] As $b'$ is a cluster point,
    there is $x \in B_\delta(x_0) \cap D$ such that $f(x) \in
    B_\delta(b') \subseteq B_\epsilon(b')$, contradicting the fact
    that $f(x) \in B_\epsilon(b)$.  This shows that $b$ is the unique
    cluster point.

    Conversely, suppose that $b$ is the unique cluster point.  We
    claim that $\lim_{x \to x_0} f(x) = b$.  Otherwise, there is some
    $\epsilon$ such that for any $\delta$, there is $x \in
    B_\delta(x_0) \cap D$ with $f(x) \notin B_\epsilon(b)$.  Fix
    such an $\epsilon$.  For each $\delta$, let $G_\delta$ be the
    non-empty definable set
    \begin{equation*}
      G_\delta = \{(x,f(x)) : x \in B_\delta(x_0) \cap D, ~ f(x)
      \notin B_\epsilon(b)\}.
    \end{equation*}
    By definable compactness, there is some $(a,b') \in
    \bigcap_{\delta} \overline{G_\delta}$.  As in part (1), $a = x_0$
    and $b'$ is a cluster point of $f$.  The set $G_\delta$ is
    contained in the closed set $\Mm^n \times (\Mm^m \setminus
    B_\epsilon(b))$ for each $\delta$, so we must have $b' \notin
    B_\epsilon(b)$.  Then $b' \ne b$, contradicting the fact that $b$
    is the unique cluster point. \qedhere
  \end{enumerate}
\end{proof}

\begin{remark} \label{gam-fam}
  A \emph{$\Gamma$-family} is a definable family of non-empty sets of
  the form $\{D_\gamma\}_{\gamma \in \Gamma}$, such that $\gamma <
  \gamma' \implies D_\gamma \subseteq D_{\gamma'}$.  Say that a
  definable topological space $X$ is \emph{$\Gamma$-compact} if every
  $\Gamma$-family of closed sets has non-empty intersection.  This
  condition is slightly weaker than our definition of definable
  compactness.

  In principle, we could use $\Gamma$-compactness rather than
  definable compactness, making Lemma~\ref{dc} unnecessary.  Theorem
  (A) of \cite{cubides-delon} says that closed bounded definable
  subsets of $\Mm$ are $\Gamma$-compact.  The applications of
  definable compactness, such as Corollary~\ref{ucon}, only use
  $\Gamma$-compactness.  Lastly, there is an analogue of
  Fact~\ref{dc-facts} for $\Gamma$-compactness.  Unfortunately, this
  analogue is a bit clumsy---one needs to restrict to definable
  topological spaces $X$ that are ``definably first countable'' in the
  sense that every point $a \in X$ has a $\Gamma$-family neighborhood
  basis.\footnote{In the base theory $\Th(K)$ of $p$-adically closed
  fields, \emph{every} definable (or interpretable) topological space
  is definably first countable, and in fact $\Gamma$-compactness
  agrees with definable compactness
  \cite[Theorem~8.11]{andujar-johnson}.  I don't know whether this
  continues to hold in $P$-minimal expansions.}  This isn't a problem in
  our case, because the spaces $\Mm^n$ are definably first countable.
\end{remark}

\section{More tools}
\subsection{Chains of nowhere dense sets}
The following lemma can be thought of as a ``definable Baire Category
Theorem''.
\begin{lemma} \label{baire}
  Let $\{D_\gamma\}_{\gamma \in \Gamma}$ be a
  $\Gamma$-family of subsets of $\Mm^k$, in the sense of
  Remark~\ref{gam-fam}.  Suppose every $D_\gamma$ is nowhere dense.
  Then the union $\bigcup_{\gamma \in \Gamma} D_\gamma$ is also
  nowhere dense.
\end{lemma}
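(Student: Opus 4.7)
The plan is to argue by contradiction: from a non-nowhere-dense union I will produce a definable function $m \colon B_1 \to \Gamma$ that is locally constant on some sub-ball, forcing one of the $D_\gamma$'s to have non-empty interior. First, by replacing each $D_\gamma$ with $\overline{D_\gamma}$, I may assume every $D_\gamma$ is closed, since this preserves the hypotheses and does not enlarge the closure of the union. Suppose $E := \bigcup_\gamma D_\gamma$ is not nowhere dense. By $P$-minimal dimension theory (in particular $\dim E = \dim \overline{E}$), we then have $\dim E = k$, and every definable subset of $\Mm^k$ of full dimension contains a non-empty open ball, so some open ball is contained in $E$. Fix any $\gamma_0 \in \Gamma$; since $D_{\gamma_0}$ is closed and nowhere dense, its complement inside this ball is dense and open, and hence contains a non-empty open sub-ball $B_1$.

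For each $x \in B_1$ the upward-closed set $\{\gamma \in \Gamma : x \in D_\gamma\}$ is non-empty (as $x \in E$) and omits every $\gamma \leq \gamma_0$ (by monotonicity of the family and $x \notin D_{\gamma_0}$). Since $\Gamma$ is a pure model of Presburger arithmetic by Cluckers' theorem, and in particular is definably complete, this set has a minimum $m(x) \in \Gamma$, defining a definable function $m \colon B_1 \to \Gamma$. I would then invoke generic continuity from $P$-minimal cell decomposition to find an open sub-ball $B_2 \subseteq B_1$ on which $m$ is continuous. Because $\Gamma$ carries the discrete order topology, $m$ must be locally constant on $B_2$, so for any $x_2 \in B_2$ the level set $\{x \in B_2 : m(x) = m(x_2)\}$ is a non-empty open subset of $B_2$, and it is contained in $D_{m(x_2)}$ by construction of $m$. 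This contradicts the assumption that $D_{m(x_2)}$ is nowhere dense.

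The main technical point I anticipate is the appeal to generic continuity for a definable function $m \colon B_1 \to \Gamma$ whose codomain lies in the value group sort rather than in the field sort. This should follow from the $P$-minimal cell decomposition applied to the graph of $m$ inside $\Mm^k \times \Gamma$; if an issue arises I would fall back to analyzing the level sets $\{x \in B_1 : m(x) = \gamma\}$ directly, using the dichotomy (valid for definable subsets of $\Mm^k$ in $P$-minimal) that every such set is either nowhere dense or has non-empty interior.
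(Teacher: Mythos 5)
Your approach is genuinely different from the paper's. The paper rephrases ``nowhere dense'' as ``dimension $< k$'' and then uses the dp-minimality of $P$-minimal theories: if the union has dimension $k$, then by a characterization of dp-rank it contains a product $S_1\times\cdots\times S_k$ of countably infinite sets, and by saturation that product sits inside a single $D_\gamma$. This is an entirely combinatorial/compactness argument that never touches the topology or constructs an auxiliary definable function, and it works in any dp-minimal theory. Your argument is topological: build $m(x)=\min\{\gamma : x\in \overline{D_\gamma}\}$ and use generic local constancy of $m$. Both strategies can in principle succeed, but yours leans on a fact that needs more care.

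The load-bearing step is generic continuity (equivalently, generic local constancy) for the $\Gamma$-valued definable function $m\colon B_1\to\Gamma$. The generic continuity result in \cite{p-minimal-cells} that this paper cites (Theorem~4.6) is stated for functions $U\to\Mm^m$ into the field sort; it does not immediately cover maps with codomain in the value group sort, so you cannot just invoke it. I do believe the $\Gamma$-valued version is true---morally, on a full-dimensional cell $m(x)$ should depend only on the values $v(x_i-c_i)$ of the coordinates, and those are constant on a small ball avoiding the centers---but this needs to be proved or precisely cited, not asserted as ``should follow.'' That is a real gap as written, albeit a fillable one.

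The fallback you sketch does not repair it: it is circular. You propose to argue that some level set $L_\gamma=\{x\in B_1 : m(x)=\gamma\}$ has nonempty interior because the union of the $L_\gamma$ is the open ball $B_1$. But that inference is exactly the lemma you are trying to prove (applied to the monotone family $M_\gamma=\{x\in B_1 : m(x)\le\gamma\}=B_1\cap \overline{D_\gamma}$, whose union is $B_1$ and each of which you already know to be nowhere dense). The nowhere-dense-or-interior dichotomy for a single definable set gives you nothing about an infinite union. So if the generic continuity claim is in doubt, you must supply an actual proof of it; there is no alternative route in your sketch.

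One smaller point: in the write-up you should be careful that after replacing $D_\gamma$ by $\overline{D_\gamma}$ you still have a $\Gamma$-family (monotonicity is preserved by closures, so this is fine, but worth stating), and that the set $\{\gamma : x\in\overline{D_\gamma}\}$ is a definable subset of $\Gamma$ (this uses stable embeddedness of $\Gamma$, which the paper gets from Cluckers' theorem).
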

\begin{proof}
  By dimension theory, the following are equivalent for a definable
  set $D \subseteq \Mm^k$:
  \begin{enumerate}
  \item $\dim(D) = k$.
  \item $D$ has non-empty interior.
  \item $\dim(\overline{D}) = k$.
  \item $\overline{D}$ has non-empty interior.
  \end{enumerate}
  Thus $D$ is nowhere dense iff $\dim(D) < k$.  We can rephrase the
  lemma as follows:
  \begin{quote}
    If $\{D_\gamma\}$ is a $\Gamma$-family of subsets of $\Mm^k$ and
    $\bigcup_{\gamma \in \Gamma} D_\gamma$ has dimension $k$, then
    some $D_\gamma$ has dimension $k$.
  \end{quote}
  By Fact~\ref{what}
  below, the condition $\dim\left(\bigcup_{\gamma \in \Gamma}
  D_\gamma\right) = k$ means that there are infinite subsets
  $S_1,\ldots,S_k \subseteq \Mm$ such that
  \begin{equation*}
    \bigcup_{\gamma \in \Gamma} D_\gamma \supseteq \prod_{i = 1}^k S_i.
  \end{equation*}
  We may assume the $S_i$ are countable (or merely small).  Then by
  saturation, we can take $\gamma$ sufficiently large that
  \begin{equation*}
    D_\gamma \supseteq \prod_{i = 1}^k S_i.
  \end{equation*}
  Again, by Fact~\ref{what}, this makes $D_\gamma$ have dimension $k$.
\end{proof}
\begin{fact}[{\cite[Lemma~2.3]{simon-walsberg}}] \label{what}
  If
  $D \subseteq \Mm^n$ is definable, then the following are equivalent:
  \begin{enumerate}
  \item $\dim(D) = n$.
  \item There are countable infinite sets $S_1, \ldots, S_n \subseteq
    \Mm$ such that $S_1 \times S_2 \times \cdots \times S_n \subseteq
    D$.
  \end{enumerate}
\end{fact}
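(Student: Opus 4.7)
The direction $(2) \Rightarrow (1)$ is immediate. In a dp-minimal theory every infinite set has dp-rank exactly one, and dp-rank is subadditive on products, so
\[
n = \sum_{i=1}^n \dpr(S_i) \le \dpr(S_1 \times \cdots \times S_n) \le \dpr(D) \le \dpr(\Mm^n) = n.
\]

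For $(1) \Rightarrow (2)$, the plan is to extract the $S_i$ from a Morley sequence. Fix a small model $M_0$ over which $D$ is defined and choose $\bar{a} = (a_1, \ldots, a_n) \in D$ with $\dpr(\bar{a}/M_0) = n$. Iterating subadditivity of dp-rank through every ordering of the coordinates---and using that $\dpr(a_i/A) \le 1$ for every $A$ by dp-minimality---every step must be an equality, so $\dpr(a_i / M_0 \bar{a}_{\neq i}) = 1$ for each $i$. Morally, the coordinates of $\bar{a}$ are mutually independent in the dp-rank sense.

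The crucial step is to upgrade this coordinate-wise independence into a genuine product inside $D$. For this I would invoke the known structural fact that in a dp-minimal NIP theory, a type of maximal dp-rank on a product of sorts is \emph{rectangular}: its $M_0$-invariant global extension coincides with the tensor product $\bigotimes_i p_i$, where $p_i$ is the $M_0$-invariant extension of $\tp(a_i/M_0)$ (see, for example, Simon's \emph{A Guide to NIP Theories}). Granting this, a Morley sequence $(\bar{a}^{(k)})_{k < \omega}$ in $\bigotimes_i p_i$ over $M_0$ has the feature that every ``shuffle'' $(a_1^{(k_1)}, \ldots, a_n^{(k_n)})$ again realizes $\bigotimes_i p_i$, and so lies in $D$; setting $S_i := \{a_i^{(k)} : k < \omega\}$ then produces the desired countable infinite sets.

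The main obstacle is precisely this product decomposition. In a self-contained proof, one would build the Morley sequence coordinate by coordinate, at each step invoking dp-minimality to ensure that the next extension remains generic over all previously chosen data. The bookkeeping needed to guarantee that every shuffle realizes the target type, rather than merely a tree-like or lower-triangular collection of tuples, is where dp-minimality (as opposed to mere NIP) is essential.
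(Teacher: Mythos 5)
The direction $(2)\Rightarrow(1)$ is standard; the product gives an ict-pattern of depth~$n$ via the formulas $x_i = y$ applied to enumerations of the $S_i$, so $\dpr(D)\ge n$, and $\dpr(D)\le n$ because $D\subseteq\Mm^n$. (Your phrasing ``$\dpr(S_i)=1$'' is slightly loose since the $S_i$ need not be definable, but this is cosmetic.)

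The direction $(1)\Rightarrow(2)$ has a genuine gap. The ``rectangularity'' claim you invoke---that a type $\tp(\ba/M_0)$ of maximal dp-rank on $n$ copies of the home sort has an $M_0$-invariant global extension of the form $\bigotimes_i q_i$---is not a theorem in Simon's \emph{Guide}, and it is false in general dp-minimal theories. Concretely, work in $\Th(\Qq_p)$ with $M_0$ the standard model; pick $a_1$ with $v(a_1)>\Gamma_{M_0}$ and $a_2 = a_1(1+t)$ with $t$ a fresh nonzero infinitesimal over $M_0 a_1$. One arranges $\dpr(a_1 a_2/M_0)=2$, and $\tp(a_1 a_2/M_0)$ contains $v(x_1-x_2)>v(x_2)$. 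No tensor product $q_1\otimes q_2$ of $M_0$-invariant extensions of $p=\tp(a_1/M_0)=\tp(a_2/M_0)$ can imply this: by $M_0$-invariance of $q_1$, whether $v(b_1-b_2)>v(b_2)$ holds for $b_1\models q_1$ depends only on $\tp(b_2/M_0)$; taking two realizations $b_2,b_2'\models p$ with $v(b_2)\ne v(b_2')$ and $b_1\models q_1$ over both, the formula forces $v(b_1)=v(b_2)$ and $v(b_1)=v(b_2')$, a contradiction. So the fixed witness $\ba$ need not sit on any rectangular grid, and the correct argument cannot proceed through $\tp(\ba/M_0)$; the grid $S_1\times\cdots\times S_n\subseteq D$ one ultimately finds will in general not pass through $\ba$. (There is also a secondary issue: even granting a decomposition $q=\bigotimes_i q_i$, the shuffle property you use for the Morley sequence requires the factors $q_i$ to \emph{commute} pairwise, which is a further hypothesis not supplied by maximality of dp-rank.) For what it's worth, the paper itself does not prove this Fact but cites it to \cite[Theorem~3.25]{prdf1a}; a self-contained proof has to work directly with the arrays of mutually indiscernible sequences witnessing $\dpr(D)\ge n$ and massage them into a genuine grid, rather than Morleyizing a single maximal type.
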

%% Here, $\dpr(D)$ is the \emph{dp-rank} of $D$ \cite[Section~4.2]{NIPguide}, which agrees $\dim(D)$ in $P$-minimal theories \cite[Proposition~2.4]{simon-walsberg}.  Fact~\ref{what} is implicit in work of Simon \cite{surprise}, though I am having
%% trouble finding a precise reference.  At any rate, it is explicit in
%% \cite[Theorem~3.25]{prdf1a}, and for $P$-minimal theories also see \cite[Lemma~2.3]{simon-walsberg}.

\subsection{Strengthening limits}
Our goal is to prove Proposition~\ref{strengthen} below, which lets us
strengthen limits along one variable to full limits.  This will be
used later to convert differentiability into strict differentiability.
\begin{lemma} \label{bounce-angle}
  Let $U \subseteq \Mm^k$ be open, definable, and non-empty.  Let $X
  \subseteq \Mm^k \times \Mm^\times$ be a definable set such that $U
  \times \{0\} \subseteq \overline{X}$.  Then there is an open
  definable non-empty subset $U_0 \subseteq U$ such that $0 \in
  \overline{X_a}$ for every $a \in U_0$, where $X_a = \{b \in
  \Mm^\times : (a,b) \in X\}$.
\end{lemma}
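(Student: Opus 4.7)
The plan is to identify the set $B := \{a \in U : 0 \notin \overline{X_a}\}$ of ``bad'' parameters, show that it is nowhere dense in $\Mm^k$ via Lemma~\ref{baire}, and then take $U_0 := U \setminus \overline{B}$.

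For each $\gamma \in \Gamma$, let
\[
D_\gamma = \{a \in U : X_a \cap \{b \in \Mm^\times : v(b) \ge \gamma\} = \varnothing\}.
\]
This is a definable $\Gamma$-family, monotone increasing in $\gamma$, whose union is exactly $B$ (if $B$ is empty there is nothing to prove, so we may assume $D_\gamma$ is non-empty for large enough $\gamma$). The key step is to show each $D_\gamma$ is nowhere dense, i.e., $\dim(D_\gamma) < k$. Suppose instead $\dim(D_\gamma) = k$; then by the dimension characterization recalled at the start of the proof of Lemma~\ref{baire}, $D_\gamma$ contains a non-empty open set, hence a closed ball $B_\delta(a_0) \subseteq U$ for some $a_0$ and $\delta$. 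By the hypothesis $U \times \{0\} \subseteq \overline{X}$, we have $(a_0,0) \in \overline{X}$, so taking $\epsilon := \max(\delta,\gamma)$ we can find $(a',b') \in X$ with $v(a'-a_0) \ge \epsilon$ and $v(b') \ge \epsilon$. Then $a' \in B_\delta(a_0) \subseteq D_\gamma$, yet $b' \in X_{a'}$ with $b' \ne 0$ and $v(b') \ge \gamma$, directly contradicting the definition of $D_\gamma$.

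With each $D_\gamma$ nowhere dense, Lemma~\ref{baire} yields that $B$ is nowhere dense, so $U_0 := U \setminus \overline{B}$ is open, definable, and non-empty (since $U$ has dimension $k$ while $\overline{B}$ does not). Every $a \in U_0$ lies outside $B$, so $0 \in \overline{X_a}$, as required. The main obstacle is choosing the $\Gamma$-family correctly: once $D_\gamma$ is set up so that the hypothesis $U \times \{0\} \subseteq \overline{X}$ is directly incompatible with $D_\gamma$ containing an open ball, the Baire-type Lemma~\ref{baire} and the standard dimension criterion for nowhere density finish everything off.
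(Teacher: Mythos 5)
Your proof is correct and follows essentially the same route as the paper: define $D_\gamma = \{a \in U : B_\gamma(0) \cap X_a = \varnothing\}$, show each has empty interior using the hypothesis $U \times \{0\} \subseteq \overline{X}$, apply Lemma~\ref{baire} to conclude the union is nowhere dense, and take $U_0$ to be the interior of the complement. (The paper phrases the nowhere-density of each $D_\gamma$ slightly differently, observing that an open rectangle $U_0' \times B_\gamma(0)$ disjoint from $X$ is disjoint from $\overline{X}$; your variant with an explicit witness $(a',b')$ is the same idea unfolded.)
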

\begin{proof}
  For any $\epsilon \in \Gamma$, let $D_\epsilon$ be the set of $a \in
  U$ such that $B_\epsilon(0) \cap X_a = \varnothing$.  Then
  $D_\epsilon$ has empty interior.  Otherwise, if $U_0$ is the
  interior of $D_\epsilon$, we have
  \begin{align*}
    (U_0 \times B_\epsilon(0)) \cap X &= \varnothing \\
    (U_0 \times B_\epsilon(0)) \cap \overline{X} &= \varnothing \\
    (U_0 \times \{0\}) \cap \overline{X} &= \varnothing,
  \end{align*}
  contradicting the fact that $U_0 \times \{0\} \subseteq U \times
  \{0\} \subseteq \overline{X}$.  Thus $D_\epsilon$ has empty interior
  as claimed.  Equivalently, $D_\epsilon$ is nowhere dense.  By Lemma~\ref{baire}, the
  union $\bigcup_\epsilon D_\epsilon$ has empty interior.  The
  complement $U \setminus \bigcup_\epsilon D_\epsilon$ then has
  non-empty interior.  Take $U_0$ to be the interior of $U \setminus
  \bigcup_\epsilon D_\epsilon$.  If $a \in U_0$, then $a \notin
  D_\epsilon$ for any $\epsilon$, which means $B_\epsilon(0) \cap X_a
  \ne \varnothing$, and $0 \in \overline{X_a}$.
\end{proof}

\begin{proposition} \label{strengthen}
  Let $U$ be a non-empty, open, definable subset of $\Mm^k$.  Let $D
  \subseteq \Mm$ be a definable set with $0 \in \partial D$.  Let $f :
  U \times D \to \Mm$ be a definable function.  Suppose that
  \begin{equation*}
    \lim_{y \to 0} f(a,y) \text{ exists}
  \end{equation*}
  for every $a \in U$.  Then there is a non-empty, open, definable
  subset $U_0 \subseteq U$ such that
  \begin{equation*}
    \lim_{(x,y) \to (a,0)} f(x,y) \text{ exists and equals } \lim_{y
      \to 0} f(a,y)
  \end{equation*}
  for every $a \in U_0$.
\end{proposition}
\begin{proof}
  Let $g(a) = \lim_{y \to 0} f(a,y)$ for $a \in U$.
  
  Note that the projective line $\Pp^1(\Mm)$ is definably
  compact, as it is covered by the two definably compact sets $\Oo$ and $\{x^{-1} : x \in \Oo\}$.  Regard $f$ as a function to $\Pp^1$.  For each $a \in U$,
  let $C_a$ be the set of cluster points of $f$ around $(a,0)$, i.e.,
  \begin{equation*}
    C_a = \{b \in \Pp^1(\Mm) : (a,0,b) \in \overline{G(f)}\} = \{b :
    (a,0,b) \in \partial G(f)\},
  \end{equation*}
  where $G(f)$ is the graph of $f$, regarded as a subset of $\Mm^{k+1}
  \times \Pp^1(\Mm)$.  Then $C_a$ is non-empty for each $a \in U$, by
  Lemma~\ref{cluster}.  In fact, clearly $g(a) \in C_a$ for each $a$.

  The set $C_a$ must be finite for generic $a \in U$.  Otherwise,
  $\dim \partial G(f) \ge \dim(U) + 1 = k + 1$, contradicting the
  small boundaries property \cite[Theorem~3.5]{p-minimal-cells}:
  \begin{equation*}
    \dim \partial G(f) < \dim G(f) = \dim(U \times D) = \dim U + \dim D = k+1.
  \end{equation*}
  So there is a non-empty, open definable subset $U_0 \subseteq U$
  such that $C_a$ is finite for $a \in U_0$.  Shrinking $U_0$ further,
  we may assume that the cardinality of $C_a$ is constant on $U_0$.

  If $|C_a| = 1$ for every $a \in U_0$, then $C_a = \{g(a)\}$ for each
  $a$, and Lemma~\ref{cluster} gives $\lim_{(x,y) \to (a,0)} f(x,y) = g(a)$.

  Suppose $|C_a| = N > 1$ for every $a \in U_0$.  $P$-minimal theories
  have definable finite choice \cite[Corollary~2.5]{p-minimal-cells},
  so there is a definable function $h$ such that $h(a) \in C_a
  \setminus \{g(a)\}$ for every $a \in U_0$.  $P$-minimal theories have
  generic continuity \cite[Theorem~4.6]{p-minimal-cells}, so by
  shrinking $U_0$ further, we may assume that $g$ and $h$ are
  continuous on $U_0$.  Fix some $a_0 \in U_0$ and let $V, W$ be
  clopen neighborhoods in $\Pp^1(\Mm)$ separating $g(a_0)$ and
  $h(a_0)$.  By continuity, we may shrink $U_0$ further and assume
  that
  \begin{gather*}
    g(a) \in V \\
    h(a) \in W
  \end{gather*}
  for every $a \in U_0$.  Let $X = \{(a,b) \in U \times D : f(a,b) \in
  W\}$.  The fact that $h(a) \in C_a$ means that $(a,0,h(a))$ is in
  the closure of the graph of $f$.  Then for any $\epsilon$, there are
  $(x,y) \in U \times D$ such that $x \in B_\epsilon(a)$, $y \in
  B_\epsilon(0)$, and $f(x,y) \in W$, i.e., $(x,y) \in X$.  This shows
  that $(a,0) \in \overline{X}$.  By Lemma~\ref{bounce-angle}, we may
  shrink $U_0$ further and ensure that $0 \in \overline{X_a}$ for
  every $a \in U_0$.  But the fact that $\lim_{y \to 0} f(a,y) = g(a)
  \in V$ implies that there is some $\epsilon$ such that if $y \in
  B_\epsilon(0)$, then $f(a,y) \in V$ and so $f(a,y) \notin W$, $(a,y)
  \notin X$, and $y \notin X_a$.  Then $B_\epsilon(0)$ shows that $0
  \notin \overline{X_a}$, a contradiction.
\end{proof}

\subsection{Rational powers}
Recall that $P_n$ denotes the set of non-zero $n$th powers in $\Mm$,
and $\overline{P_n}$ denotes its closure $P_n \cup \{0\}$, the set of
$n$th powers.  The following fact is well-known:
\begin{fact} \label{torsion-free}
  There is some $\ell$ such that $P_\ell$ is torsion-free.
\end{fact}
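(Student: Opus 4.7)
The plan is to take $\ell$ to be the order of the (finite, cyclic) group of roots of unity in $K$, and verify that $P_\ell$ contains no nontrivial torsion elements. Since $K$ is a finite extension of $\Qq_p$, its group of roots of unity is a finite cyclic group of some specific order $w$, and the claim will be that $P_w$ works.

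The first step is to check that the torsion subgroup of $\Mm^\times$ coincides with the group of $w$-th roots of unity, and in particular has cardinality $w$, just as in $K$. For each fixed positive integer $n$, the first-order sentence
\[
\forall x \, (x^n = 1 \rightarrow x^w = 1)
\]
holds in $K$, because every root of unity in $K$ has order dividing $w$. By elementary equivalence (we are working in an expansion of $\Th(K)$) it holds in $\Mm$ as well, so any torsion element of $\Mm^\times$ is automatically a $w$-th root of unity.

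Now take $\ell = w$ and suppose $\zeta \in P_\ell$ is a root of unity, written as $\zeta = \alpha^\ell$ for some $\alpha \in \Mm^\times$. Then $\zeta^m = 1$ forces $\alpha^{\ell m} = 1$, so $\alpha$ itself is torsion. By the previous step $\alpha^w = 1$, and hence $\zeta = \alpha^\ell = \alpha^w = 1$. This shows that $P_\ell$ is torsion-free. The only mildly delicate point, which is what the phrasing of the displayed sentence above is designed to capture, is the uniformity: we need a single exponent $w$ bounding the orders of \emph{all} torsion elements of $\Mm^\times$ simultaneously, rather than one exponent per torsion element. This is where one uses that the torsion of $K^\times$ is not merely finite for each order bound but globally finite and cyclic.
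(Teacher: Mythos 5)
Your proof is correct, and it is essentially the standard argument one would give. The paper itself offers no proof for this Fact, simply calling it ``well-known, or an easy exercise,'' so there is nothing to compare against; your choice of $\ell = w$ (the order of $\mu(K)$) and the transfer via the first-order sentences $\forall x\,(x^n = 1 \to x^w = 1)$, $n \in \Nn$, is exactly the intended elementary argument, and you correctly flag the only point worth flagging, namely that the bound $w$ is uniform in $n$ because $\mu(K)$ is globally finite.
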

For example, this can be proven as follows.  Work in the standard model $K$.  If $I$ is a sufficiently small non-zero ideal in $\Oo_K$, then the multiplicative group $(1+I)^\times$ is isomorphic to a subgroup of $(K,+)$ via the $p$-adic logarithm map, and therefore $(1+I)^\times$ is torsion-free.  Meanwhile, $(1+I)^\times$ is an open subgroup of the compact group $\Oo_K^\times$, so the index $\ell = [\Oo_K^\times : (1+I)^\times]$ is finite.  Then the group of $\ell$th powers $P_\ell$ is isomorphic to $\Zz \times (1 + I)^\times$, which is torsion-free.
%% Work in the standard model $K$.  Take $I$ so small that $(1+I)^\times$
%% is isomorphic to an additive group, via the $p$-adic
%% exponential/logarithm.  Then $(1+I)^\times$ is torsion-free.  Let $q$
%% be the size of the residue field.  The map $x \mapsto x^{q-1}$ sends
%% $\Oo^\times$ to $1 + \mm$.  The $p$th power map is contracting on $1 +
%% \mm$, and the valuation is discrete, so the map $x \mapsto x^{p^n}$
%% will send $1 + \mm$ to $1 + I$ if $n$ is big enough.  Then the map $x
%% \mapsto x^{(q-1)p^n}$ sends $\Oo^\times$ into $1 + I$.  Take $\ell =
%% (q-1)p^n$.  The set of $\ell$th powers in $\Oo^\times$ is contained in
%% $1+I$, so it is torsion-free.  As $K^\times \cong \Oo^\times \times
%% \Zz$, the set of $\ell$th powers in $K^\times$ is also torsion-free.
\begin{definition} \label{qp-def}
  Let $q = a/b$ be a rational number in lowest terms.  A \emph{$q$th
  power map} on $P_n$ is a multiplicative homomorphism
  \begin{equation*}
    f : P_n \to \Mm^\times
  \end{equation*}
  such that $f$ is 0-definable in the pure language of rings, and
  $f(x)^b = x^a$.
\end{definition}
When such an $f$ exists, we write it as $x^q$ rather than
$f(x)$.\footnote{Multiple $q$th power maps may exist on $P_n$, but the
ambiguity won't cause us any problems.  We will construct some
canonical $q$th power maps in Lemma~\ref{qth-power} below.  If you
like, you may assume that $x^q$ only means the map constructed in
Lemma~\ref{qth-power}.}
\begin{example}
  If $K$ contains $\sqrt{-1}$, then there is no square root map
  $x^{1/2}$ on $P_2$.  Otherwise,
  \begin{equation*}
    1 = 1^{1/2} = (-1)^{1/2}(-1)^{1/2} = -1.
  \end{equation*}
\end{example}
Note that if there is a $q$th power map on $P_n$, then there is a
$q$th power map on $P_m$ for any $n \mid m$, simply by restricting
from $P_n$ to $P_m$.  The next lemma says that for fixed $q$, there is
a $q$th power map on $P_n$ for all sufficiently divisible $n$.
\begin{lemma} \label{qth-power}
  For any rational number $q = a/b$, there is some $n$ such that there
  is a $q$th power map on $P_n$.
\end{lemma}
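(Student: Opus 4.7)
The plan is to exploit Fact~\ref{torsion-free} in order to isolate a subgroup where $b$-th roots are unique, and then use it to define the desired map. Fix $\ell$ such that $P_\ell$ is torsion-free, and set $n = \ell b$. I claim that the raising-to-the-$b$ map $\phi : P_\ell \to \Mm^\times$, $\phi(y) = y^b$, is a group isomorphism from $P_\ell$ onto $P_n$. Injectivity is immediate: if $y_1^b = y_2^b$ with $y_1, y_2 \in P_\ell$, then $y_1/y_2 \in P_\ell$ (since $P_\ell$ is a subgroup of $\Mm^\times$) and $(y_1/y_2)^b = 1$, so by torsion-freeness $y_1 = y_2$. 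For the image, note that if $y = u^\ell \in P_\ell$ then $y^b = u^{\ell b} \in P_{\ell b} = P_n$, and conversely, given $x = v^{\ell b} \in P_n$, the element $y := v^\ell$ lies in $P_\ell$ and satisfies $y^b = x$.

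With this isomorphism in hand, for each $x \in P_n$ there is a unique $y \in P_\ell$ with $y^b = x$. I define
\begin{equation*}
  f : P_n \to \Mm^\times, \qquad f(x) = y^a, \text{ where } y \text{ is the unique element of } P_\ell \text{ with } y^b = x.
\end{equation*}
The condition ``$y \in P_\ell$ and $y^b = x$'' is 0-definable in $\mathcal{L}_{\mathrm{rings}}$, and uniqueness of $y$ guarantees that $f$ itself is 0-definable in the pure ring language, as required. Multiplicativity follows from the multiplicativity of $\phi^{-1}$ (which is automatic since $\phi$ is a group isomorphism) and the multiplicativity of $y \mapsto y^a$. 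Finally, $f(x)^b = y^{ab} = (y^b)^a = x^a$, so $f$ is a $q$th power map on $P_n$.

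There is no serious obstacle: the whole argument reduces to the observation that on a torsion-free abelian group, the endomorphism ``raise to the $b$-th power'' is injective, and computing its image as $P_{\ell b}$ is a short manipulation. The only point that warrants care is confirming that the inverse of $\phi$, and hence $f$, is defined by a formula in the pure language of rings rather than requiring extra structure from $T$; but since $P_\ell$ is 0-definable in $\mathcal{L}_{\mathrm{rings}}$ and $y$ is uniquely determined by $x$, the definition of $f$ goes through.
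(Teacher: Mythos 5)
Your proof is correct and follows essentially the same approach as the paper: both fix $\ell$ with $P_\ell$ torsion-free, observe that the $b$th power map $P_\ell \to P_{\ell b}$ is a 0-definable bijection, and define $x^q$ by inverting it and then raising to the $a$th power. You simply fill in the routine verification that this map is a bijection, which the paper leaves implicit.
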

\begin{proof}
  Let $P_\ell$ be torsion-free, as in Fact~\ref{torsion-free}.  Then
  the $b$th power map $P_\ell \to P_{\ell b}$ is a 0-definable
  bijection.  Let $g : P_{\ell b} \to P_\ell$ be its inverse.  Let
  $x^q$ be the map
  \begin{align*}
    P_{\ell b} &\to P_{\ell a} \\
    x &\mapsto g(x)^a.  \qedhere
  \end{align*}
\end{proof}

\section{Asymptotic behavior of functions in one variable} \label{sec-a}
In this section, we look at the asymptotic behavior of definable
functions
\begin{equation*}
  f : P_n \to \Mm
\end{equation*}
as $x \in P_n$ approaches 0.
We show that after restricting to a
smaller set $P_m \subseteq P_n$, the function $f$ looks asymptotically
like $Cx^q$ for some constant $C \in \Mm$ and rational number $q$:
\begin{equation*}
  \lim_{x \in P_m, ~ x \to 0} \frac{f(x)}{Cx^q} = 1.
\end{equation*}
The following fact is well-known, and can be deduced from
\cite[Lemma~4]{other-clucker}, \cite[Theorem~7.3]{denef-84}, or \cite[Theorem
  1.8]{denef-cell}.\footnote{By
$P$-minimality, one may assume that $D$ is definable in the pure field
reduct, so the results of \cite{other-clucker,denef-84,denef-cell} on
semialgebraic sets apply.  The results in
\cite{other-clucker,denef-84,denef-cell} are statements about multivariable
definable sets in the standard model $K$ rather than 1-variable sets
in the monster model $\Mm \succ K$, but they imply Fact~\ref{y-u-no} in
the same way that the cell decomposition theorem in an o-minimal
structure $M$ implies the o-minimality of elementary extensions $N
\succ M$.  Theorem~7.3 in \cite{denef-84} implies the
statement that given any finite set of polynomials $f_1,\ldots,f_m \in
\Mm[t]$ and a positive integer $n$, we can partition $\Mm$ into finitely many
annuli $A$ with centers $c$ such that on $A$, each $f_i$ has the form
$u_i(t)^n \cdot h_i \cdot (t - c)^{\nu_i}$ with $u_i(t)$ a definable
function taking values in $\Oo^\times$ and some $h_i \in \Mm$.  Here, an ``annulus'' means a set as in Fact~\ref{y-u-no}($\ast$) with $m=1$.  For $k \mid n$, the truth
value of $P_k(f_i(t))$ on $A$ is determined by the coset $(t-c)P_n$
in $\Mm/P_n$.  So if we further split each annulus $A$ into the
cosets $A \cap \lambda P_n$, then the truth value of $P_k(f_i(t))$ is
constant on each coset.  Combined with Macintyre's quantifier
elimination, this gives Fact~\ref{y-u-no}.  This proof is implicit in the proof of \cite[Theorem~7.4]{denef-84}.}
\begin{fact}[1-dimensional cell decomposition]\label{y-u-no}
  If $D \subseteq \Mm$ is definable, then $D$ can be partitioned into
  finitely many sets of the form
  \begin{equation*}
    \{x \in \Mm : \gamma_1 \Box_1 v(x - c) \Box_2 \gamma_2, ~ x - c \in \lambda P_m\} \tag{$\ast$}
  \end{equation*}
  where $c \in \Mm$, $\gamma_1, \gamma_2 \in \Gamma$, $\lambda \in
  \Mm$, and each relation $\Box_1, \Box_2$ is either $<$ or
  no condition.
\end{fact}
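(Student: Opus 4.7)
The plan is to reduce to the pure field language via $P$-minimality, apply Macintyre's quantifier elimination, and then use a Newton-polygon/annulus factorization in the style of Denef to finish.

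First, by $P$-minimality of $T$, the 1-variable set $D$ is already definable in $\Mm \restriction \mathcal{L}_{Rings}$, so I may work entirely in the pure field reduct. By Macintyre's quantifier elimination for $p$-adically closed fields, $D$ is defined by a Boolean combination of atomic formulas of the form $P_{n_i}(f_i(x))$ for finitely many polynomials $f_i \in \Mm[x]$ (together with polynomial equations $f_i(x) = 0$ defining finite sets, which can be absorbed into the partition). Let $N$ be a common multiple of the $n_i$; then the truth value of each $P_{n_i}(f_i(x))$ is determined by the coset of $f_i(x)$ in $\Mm^\times/P_N$.

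Next, I would partition $\Mm$ into finitely many annuli
\[A_j = \{x \in \Mm : \gamma_1^j \,\Box_1\, v(x - c_j) \,\Box_2\, \gamma_2^j\}\]
with centers $c_j \in \Mm$, on each of which every $f_i$ admits a factorization
\[f_i(x) = u_{i,j}(x)^N \cdot h_{i,j} \cdot (x - c_j)^{\nu_{i,j}}\]
with $u_{i,j} : A_j \to \Oo^\times$ definable and $h_{i,j} \in \Mm^\times$. I would choose the $c_j$ among the roots of the $f_i$ and pick annuli small enough that a single linear factor $(x - c_j)$ dominates the product expansion of each $f_i$, leaving only an $N$th-power unit behind. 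This is essentially the content of Denef's Theorem~1.8 in \cite{denef-cell}.

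Finally, on each $A_j$, the truth value of $P_{n_i}(f_i(x))$ depends only on $h_{i,j} \cdot (x-c_j)^{\nu_{i,j}}$ modulo $P_N$, hence only on the coset of $x - c_j$ in $\Mm^\times/P_N$, because $u_{i,j}(x)^N \in P_N$ automatically. Since $[\Mm^\times : P_N]$ is finite, I would further split each $A_j$ into the finitely many pieces $A_j \cap (c_j + \lambda P_N)$ as $\lambda$ ranges over coset representatives (including $\lambda = 0$ to handle the singleton $\{c_j\}$ if needed). On each such piece the defining Boolean combination is constant, so $D$ is the union of those pieces on which it evaluates to true, and each piece has the required form. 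The main obstacle is the factorization in the middle step---producing an $N$th-power unit $u_{i,j}(x)^N$ rather than an arbitrary unit requires a Hensel/Newton-polygon argument, which is cleanest to import wholesale from Denef rather than redo by hand.
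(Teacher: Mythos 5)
Your proposal matches the paper's own argument almost verbatim: the paper (in the footnote to Fact~\ref{y-u-no}) likewise reduces to the pure field reduct by $P$-minimality, invokes Macintyre's quantifier elimination, applies Denef's annulus factorization \cite[Theorem~1.8]{denef-cell} to write each $f_i$ as $u_i(t)^n \cdot h_i \cdot (t-c)^{\nu_i}$, and refines the annuli by $P_n$-cosets of $t - c$. The only nuance you pass over silently, which the paper flags, is that Denef's theorem is stated for the standard model $K$ rather than the monster $\Mm$, so one needs a transfer argument analogous to how o-minimal cell decomposition transfers to elementary extensions.
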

Fact~\ref{y-u-no} easily implies the following:
\begin{corollary} \label{asymptot0}
  Let $S$ be a finite set and $f : B_\epsilon(0) \cap P_n \to S$ be a
  definable function.  Then there is $\delta \ge \epsilon$ and $m$ a
  multiple of $n$ such that the restriction
  \begin{equation*}
    f : B_\delta(0) \cap P_m \to S
  \end{equation*}
  is constant.
\end{corollary}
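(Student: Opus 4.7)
The plan is to apply the one-variable cell decomposition of Fact~\ref{y-u-no} to each fiber of $f$ and then show that, after shrinking the domain to $B_\delta(0) \cap P_m$ for $\delta$ large and $m$ sufficiently divisible, only a single cell of the resulting partition survives. Since $S$ is finite, partitioning each $f^{-1}(s)$ by Fact~\ref{y-u-no} and unioning yields a finite partition
\[
  B_\epsilon(0) \cap P_n = C_1 \sqcup \cdots \sqcup C_r,
\]
where each $C_i$ has the form $\{x : \gamma_{1,i} \mathrel{\Box_{1,i}} v(x - c_i) \mathrel{\Box_{2,i}} \gamma_{2,i}, ~ x - c_i \in \lambda_i P_{m_i'}\}$ and $f$ is constant on $C_i$. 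It suffices to prove a single-cell dichotomy: for any such cell $C$ (with center $c$, exponent $m'$, and coset parameter $\lambda$), there exist $\delta \in \Gamma$ and a common multiple $m$ of $n$ and $m'$ such that $B_\delta(0) \cap P_m$ is either entirely contained in $C$ or disjoint from $C$. Applying the dichotomy to each of the finitely many $C_i$ and taking common $\delta, m$ then forces $B_\delta(0) \cap P_m$ into a single $C_i$, on which $f$ is constant.

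For the dichotomy, I split on whether $c = 0$. If $c = 0$, the valuation condition $\gamma_1 \mathrel{\Box_1} v(x) \mathrel{\Box_2} \gamma_2$ can hold for small $x$ only if $\Box_2$ imposes no upper bound, in which case any $\delta \ge \gamma_1$ makes it hold uniformly; otherwise $\delta > \gamma_2$ makes it fail uniformly. With $m$ a multiple of $m'$, every $x \in P_m$ lies in $P_{m'}$, so $x \in \lambda P_{m'}$ is equivalent to the fixed condition $\lambda \in P_{m'}$. If $c \ne 0$, then for $\delta > v(c)$ we have $v(x - c) = v(c)$ for every $x \in B_\delta(0)$, so the valuation condition becomes a fixed yes/no. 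For the coset condition, use Fact~\ref{torsion-free} to choose $m$ divisible by a torsion-free exponent $\ell$ with $m' \mid \ell$; then by Hensel's lemma there is $\delta_0$ with $B_{\delta_0}(1) \subseteq P_{m'}$. Taking $\delta \ge v(c) + \delta_0$, every $x \in B_\delta(0)$ satisfies $1 - x/c \in B_{\delta_0}(1) \subseteq P_{m'}$, so $x - c = -c(1 - x/c) \in (-c)\,P_{m'}$; thus $x - c \in \lambda P_{m'}$ reduces to the fixed condition $-c \in \lambda P_{m'}$.

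The only substantive step is the coset analysis in the $c \ne 0$ case, where the openness of $P_{m'}$ near $1$ (i.e.\ the existence of $\delta_0$) is required to guarantee that $1 - x/c$ is an $m'$-th power for all sufficiently small $x$. This is a standard Hensel-type input, and everything else is routine bookkeeping of valuation conditions and coset memberships; the finite union step is immediate.
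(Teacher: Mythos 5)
Your proof is correct, and it is essentially the argument the paper intends: the paper gives no explicit proof, stating only that Fact~\ref{y-u-no} ``easily implies'' the corollary, and the refine-the-partition-by-fibers, then-shrink-past-each-cell dichotomy you carry out is the standard way to realize that. One small remark: the appeal to Fact~\ref{torsion-free} in the $c \ne 0$ branch is unnecessary -- all you need there is openness of $P_{m'}$ around $1$ (Hensel), and the choice of $m$ as a common multiple of $n$ and the various $m_i'$ is only needed for the $c=0$ cells -- but this doesn't affect correctness.
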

\begin{lemma} \label{asymptot1}
  Let $f : B_\epsilon(0) \cap P_n \to \Oo^\times$ be a definable
  function.  Then there is $\delta \ge \epsilon$ and $m$ a multiple of
  $n$ such that the restriction
  \begin{equation*}
    f : B_\delta(0) \cap P_m \to \Oo^\times
  \end{equation*}
  extends to a continuous function
  \begin{equation*}
    g : B_\delta(0) \cap \overline{P_m} \to \Oo^\times.
  \end{equation*}
  Equivalently, $\lim_{x \to 0} f(x)$ exists, if we restrict $x$ to
  $P_m$.
\end{lemma}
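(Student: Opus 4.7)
The plan is to combine the definable compactness of $\Oo^\times$ with Corollary~\ref{asymptot0} via two short saturation arguments; no further analysis of $f$ is required.

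Since $\Oo^\times$ is closed and bounded, it is definably compact by Proposition~\ref{dc-thm}. For each multiple $m$ of $n$, I would set
\[
  E_m := \{ y \in \Oo^\times : \forall \delta \in \Gamma,~ \exists x \in B_\delta(0) \cap P_m,~ f(x) \in B_\delta(y) \},
\]
the set of cluster points of $f|_{B_\epsilon(0) \cap P_m}$ at $0$ (note that $0 \in \partial(B_\epsilon(0) \cap P_m)$, witnessed by $m$th powers of small elements). By Lemma~\ref{cluster}(1) each $E_m$ is non-empty, closed, and definable, and the countable family $\{E_m\}$ is downward directed under divisibility: $m \mid m'$ implies $P_{m'} \subseteq P_m$ and hence $E_{m'} \subseteq E_m$. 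By Lemma~\ref{cluster}(2), what we want to prove is exactly that $|E_m| = 1$ for some $m$.

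The crux is to show $|\bigcap_m E_m| \le 1$. Suppose otherwise that distinct $y, y'$ both lie in the intersection. Pick $\gamma \in \Gamma$ with $\gamma > v(y - y')$, so that $B_\gamma(y) \cap B_\gamma(y') = \varnothing$, and define the three-valued definable function
\[
  F : B_\epsilon(0) \cap P_n \to \{1, 2, 3\}
\]
by $F(x) = 1$ if $f(x) \in B_\gamma(y)$, $F(x) = 2$ if $f(x) \in B_\gamma(y')$, and $F(x) = 3$ otherwise. By Corollary~\ref{asymptot0}, $F$ is constant on $B_{\delta_0}(0) \cap P_{m_0}$ for some $m_0, \delta_0$. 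But $y \in E_{m_0}$ forces $F$ to take the value $1$ arbitrarily close to $0$ inside $P_{m_0}$, and similarly $y' \in E_{m_0}$ forces the value $2$ there---a contradiction.

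Two saturation arguments then bridge the gap from $|\bigcap_m E_m| \le 1$ to $|E_m| = 1$ for some $m$. First, the countable partial type $\{y \in E_m : n \mid m\}$ is finitely consistent, since $E_{\lcm(m_1,\ldots,m_k)}$ sits inside each $E_{m_i}$ and is non-empty; by saturation of $\Mm$ it is realised, so $\bigcap_m E_m = \{b\}$ for a unique $b$. Second, if no $E_m$ equals $\{b\}$, the enlarged partial type $\{y \in E_m : n \mid m\} \cup \{y \neq b\}$ would be finitely consistent (take any element of $E_{\lcm}$ other than $b$), hence realised, contradicting uniqueness of $b$. So $E_m = \{b\}$ for some $m$, and Lemma~\ref{cluster}(2) delivers $\lim_{x \to 0,~ x \in P_m} f(x) = b \in \Oo^\times$, which gives the continuous extension $g$ on $B_\delta(0) \cap \overline{P_m}$ for any suitable $\delta \ge \epsilon$. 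The main obstacle is the step $|\bigcap_m E_m| \le 1$: this is exactly where Corollary~\ref{asymptot0} is essential, as it forbids $f$ from oscillating between two disjoint open neighborhoods along the approach to $0$ inside a sufficiently refined $P_{m_0}$.
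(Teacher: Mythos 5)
The proposal is correct, but it takes a genuinely different route from the paper. The paper's proof first invokes the small boundaries property \cite[Theorem~3.5]{p-minimal-cells} to show that the cluster set $C$ of $f$ at $0$ is \emph{finite}, then separates the finitely many cluster points with a clopen partition of $\Oo^\times$ and applies Corollary~\ref{asymptot0} once to the resulting finite-valued indicator function. You never establish finiteness of the cluster set at all: instead you form the nested family $\{E_m\}_{n \mid m}$ of cluster sets, use a two-point separation argument (plus Corollary~\ref{asymptot0}) to show $\bigl|\bigcap_m E_m\bigr| \le 1$, and then bridge from the intersection to a single $E_{m}$ by two saturation arguments over a countable partial type. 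The net effect is that you replace the appeal to dimension theory by a pair of compactness steps; the paper's version is shorter on the page but leans on a stronger black box, while yours is more elementary in its toolkit at the cost of the saturation bookkeeping. One small remark: since your $E_m$ is defined via cluster points of $f$ restricted to $B_\epsilon(0)\cap P_m$, you end up with $\delta = \epsilon$ automatically, whereas the paper may need to increase $\delta$; both are within the statement's allowance.
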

\begin{proof}
  Let $G$ be the graph of $f$.  Let $C$ be the set of cluster points
  of $f$ at $x = 0$:
  \begin{align*}
    C &= \{b : (0,b) \in \overline{G}\} \\
    &= \{b : (0,b) \in \partial G\}.
  \end{align*}
  The set $G$ has dimension 1, so its frontier has dimension 0 by the
  small boundaries property \cite[Theorem~3.5]{p-minimal-cells}.  Therefore $C$ is finite.  On the
  other hand, $C$ is non-empty by Lemma~\ref{cluster}, and definable
  compactness of $\Oo^\times$.  Let $C = \{b_1,\ldots,b_\ell\}$.  Take
  pairwise disjoint clopen neighborhoods $V_i \ni b_i$, and let $V_0$
  be the complement $\Oo^\times \setminus (V_1 \cup \cdots \cup V_n)$.
  Let $h(x)$ be the unique $i \in \{0,\ldots,n\}$ such that $f(x) \in
  V_i$.  By Corollary~\ref{asymptot0}, there are $\delta \ge \epsilon$ and
  $m \in n\Zz$ such that $h(x)$ is a constant value $i$ on
  $B_\delta(0) \cap P_m$.  Then
  \begin{equation*}
    f(x) \in V_i \text{ for } x \in B_\delta(0) \cap P_m.
  \end{equation*}
  Let $f'$ be the restriction of $f$ to $B_\delta(0) \cap P_m$ and let
  $C'$ be the set of cluster points of $f'$ at $0$.  Again, $C'$ is
  non-empty by Lemma~\ref{cluster}.  Clearly $C' \subseteq C$.
  Because the graph of $f'$ lies inside the closed set $\Mm \times
  V_i$, we must have $C' \subseteq V_i$.  Then
  \begin{equation*}
    C' \subseteq C \cap V_i = 
    \begin{cases}
      \varnothing & \text{ if } i = 0 \\
      \{b_i\} & \text{ if } 1 \le i \le n.
    \end{cases}
  \end{equation*}
  As $C'$ is non-empty, we must have $i > 0$ and $C' = \{b_i\}$.  Then
  Lemma~\ref{cluster} gives $\lim_{x \to 0} f'(x) = b_i$.
\end{proof}
By a theorem of Raf Cluckers \cite[Theorem~6]{cluckers}, the value
group $\Gamma$ is a pure model of Presburger arithmetic.  Therefore,
$\Gamma$ is stably embedded, and eliminates imaginaries.

Cluckers also proves a cell decomposition theorem for definable sets
in Presburger arithmetic.  For $i_1,\ldots,i_n \in \{0,1\}$, he
defines a class of ``$(i_1,\ldots,i_n)$-cells'' in $\Gamma^n$, and
shows that any definable set in $\Gamma^n$ can be partitioned into
finitely many cells.  We only need the cases $n = 1, 2$.  For subsets
of $\Gamma^1$, a (0)-cell is a singleton $\{a\}$, and a (1)-cell is an
infinite set of the form $[a,b] \cap (k + n\Gamma)$, where $a,b \in (k
+ n\Gamma) \cup \{\pm\infty\}$.  For subsets of $\Gamma^2$,
\begin{itemize}
\item A $(0,0)$-cell is a singleton $\{(a,b)\}$.
\item A $(0,1)$-cell is a vertical segment $\{a\} \times C$, where $C
  \subseteq \Gamma$ is a (1)-cell.
\item A $(1,0)$-cell is the graph of a linear function on a
  $(1)$-cell.
\item There is also a notion of $(1,1)$-cell.
\end{itemize}
We don't need the precise definition of $(1,1)$-cell; the only
important thing to know is that $(1,1)$-cells have $\acl$-rank 2, or
equivalently, dp-rank 2.\footnote{Recall that \emph{dp-rank} \cite[Section~4.2]{NIPguide} is a relatively robust notion of ``dimension'' or ``rank'' for definable sets in NIP theories.  Algebraic closure satisfies exchange in
Presburger arithmetic, so dp-rank agrees with $\acl$-rank \cite[Theorem~0.3(0)]{surprise}.  We will use dp-rank to transfer dimension information from $\Mm$ to other sorts.  The important things to know are that $\dpr(X \times Y) = \dpr(X) + \dpr(Y)$, $\dpr(X \cup Y) = \max(\dpr(X),\dpr(Y))$, and $\dpr(\img(f)) \le \dpr(\dom(f))$ for any definable function $f$.  On the home sort $\Mm$, dimension agrees with dp-rank \cite[Proposition~2.4]{simon-walsberg}.}
Therefore, if $D \subseteq \Gamma^2$ is a definable set with dp-rank
$\le 1$, then $D$ is a finite union of $(0,0)$-cells, $(0,1)$-cells,
and $(1,0)$-cells.
\begin{lemma} \label{asymptot2}
  Let $f : B_\epsilon(0) \cap P_n \to \Mm^\times$ be a definable
  function.  Then there is $\delta \ge \epsilon$, $m$ a multiple of
  $n$, a rational number $q$, and an element $\gamma_0 \in \Gamma$
  such that
  \begin{equation*}
    v(f(x)) = q \cdot v(x) + \gamma_0
  \end{equation*}
  for $x \in B_\delta(0) \cap P_m$.
\end{lemma}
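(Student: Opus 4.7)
The plan is to study the definable set
\[
D = \{(v(x), v(f(x))) : x \in B_\epsilon(0) \cap P_n\} \subseteq \Gamma^2
\]
using Cluckers' Presburger cell decomposition, and then use Corollary~\ref{asymptot0} to isolate a single linear relation on a restricted domain.

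First I would show $\dpr(D) \le 1$. The set $D$ is the image of $B_\epsilon(0) \cap P_n \subseteq \Mm$ under the definable map $x \mapsto (v(x), v(f(x)))$; the domain has dp-rank at most $1$ by dp-minimality of $T$, and dp-rank is monotone under definable images (any ict-pattern in $D$ pulls back to an ict-pattern of the same depth in the preimage). Because $(1,1)$-cells have dp-rank $2$ (as noted in the excerpt immediately before the lemma), the Presburger cell decomposition of $D$ contains only $(0,0)$-, $(0,1)$-, and $(1,0)$-cells; call these $E_1, \ldots, E_N$. Each $(1,0)$-cell $E_i$ is the graph of a $\Qq$-linear function $L_i(\alpha) = q_i \alpha + \gamma_i$ on a $(1)$-cell $C_i \subseteq \Gamma$.

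Next I define a finite-valued definable function
\[
h : B_\epsilon(0) \cap P_n \to \{1, \ldots, N\}
\]
by letting $h(x)$ be the unique index $i$ with $(v(x), v(f(x))) \in E_i$. By Corollary~\ref{asymptot0}, there exist $\delta \ge \epsilon$ and a multiple $m$ of $n$ such that $h$ is constantly equal to some $i_0$ on $B_\delta(0) \cap P_m$. The cell $E_{i_0}$ must be a $(1,0)$-cell, since otherwise it would have bounded first coordinate, whereas $\{v(x) : x \in B_\delta(0) \cap P_m\}$ is cofinal in $\Gamma$. Writing $L_{i_0}(\alpha) = q\alpha + \gamma_0$, we conclude $v(f(x)) = q \cdot v(x) + \gamma_0$ for every $x \in B_\delta(0) \cap P_m$, as required.

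The main technical point is the dp-rank bound $\dpr(D) \le 1$; if monotonicity of dp-rank under definable maps into interpretable sorts feels slippery, one can argue contrapositively by extracting an infinite Cartesian product $S_1 \times S_2 \subseteq D$ from a hypothetical $(1,1)$-cell (via saturation of $\Gamma$) and noting that the pairwise-disjoint definable families $\{x : v(x) = \alpha\}_{\alpha \in S_1}$ and $\{x : v(f(x)) = \beta\}_{\beta \in S_2}$, together with witnesses $x_{\alpha,\beta}$ in each intersection, form a depth-$2$ ict-pattern in $\Mm$, contradicting dp-minimality of $T$. Once the dp-rank bound is in hand, everything else is a routine combination of Cluckers' cell decomposition with Corollary~\ref{asymptot0}.
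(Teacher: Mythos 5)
Your proposal is correct and takes essentially the same approach as the paper: the set you call $D$ is exactly the set the paper calls $\Theta$ (the image of the graph of $f$ under the coordinatewise valuation map), you invoke the same dp-rank bound together with Cluckers' cell decomposition to rule out $(1,1)$-cells, and then apply Corollary~\ref{asymptot0} to restrict to a single $(1,0)$-cell. The paper is terser about why $\dpr(\Theta) \le 1$ (it simply notes the graph of $f$ has dp-rank $1$ and invokes monotonicity under images), whereas you spell out the ict-pattern pullback and also offer a contrapositive argument via Fact~\ref{what}; both are sound and add no new ideas beyond what the paper uses.
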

\begin{proof}
  The graph of $f$ has dimension 1, or equivalently, dp-rank 1.  Let
  $\Theta$ be its image under the valuation map:
  \begin{equation*}
    \Theta = \{(v(x),v(f(x))) : x \in B_\epsilon(0) \cap P_n\}
    \subseteq \Gamma^2.
  \end{equation*}
  Then $\Theta$ has dp-rank at most 1, so it is a union
  \begin{equation*}
    \Theta = C_1 \cup \cdots \cup C_\ell,
  \end{equation*}
  where each $C_i$ is a $(0,0)$-cell, a $(0,1)$-cell, or a
  $(1,0)$-cell.  Let $h : B_\epsilon(0) \cap P_n \to
  \{1,\ldots,\ell\}$ be the function such that $h(x) = i$ iff
  $(v(x),v(f(x))) \in C_i$.  By Corollary~\ref{asymptot0}, there are
  $\delta \ge \epsilon$ and $m \in n\Zz$ such that $h$ is constant on
  $B_\delta(0) \cap P_m$.  Then there is a cell $C$ such that
  \begin{equation*}
    x \in B_\delta(0) \cap P_m \implies (v(x),v(f(x))) \in C.
  \end{equation*}
  The set
  \begin{equation*}
    \{v(x) : x \in B_\delta(0) \cap P_m\}
  \end{equation*}
  has no upper bound, so there can be no upper bound on the first
  coordinate of points in $C$.  This prevents $C$ from being a
  $(0,0)$-cell or a $(0,1)$-cell.  It must instead be a $(1,0)$-cell,
  i.e., the graph of a linear function $qx + \gamma_0$ on a $(1)$-cell
  $D \subseteq \Gamma$.  Then
  \begin{equation*}
    x \in B_\delta(0) \cap P_m \implies (v(x),v(f(x))) \in C \implies v(f(x)) = q \cdot v(x) +
    \gamma_0.
  \end{equation*}
  A priori, $\gamma_0$ could belong to the divisible hull $\Gamma
  \otimes_\Zz \Qq$ rather than $\Gamma$ itself.  Write $q$ as $a/b$
  with $a,b \in \Zz$.  Take $x \in B_\delta(0) \cap P_m \cap P_b$.
  Then $q \cdot v(x)$ and $v(f(x))$ are both in $\Gamma$, implying
  that $\gamma_0 \in \Gamma$.
\end{proof}

\begin{proposition} \label{asymptot3}
  Let $f : B_\epsilon(0) \cap P_n \to \Mm$ be a definable function.
  Then there is $\delta \ge \epsilon$ and $m$ a multiple of $n$ such
  that one of the following holds:
  \begin{enumerate}
  \item $f$ is identically zero on $B_\delta(0) \cap P_m$.
  \item After restricting $f$ to $B_\delta(0) \cap P_m$, there is a
    rational number $q$ and constant $C \in \Mm^\times$ such that
    $P_m$ has a $q$th power map and
    \begin{equation*}
      \lim_{x \to 0} \frac{f(x)}{Cx^q} = 1.
    \end{equation*}
  \end{enumerate}
\end{proposition}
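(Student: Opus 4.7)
The plan is to chain together the three previous asymptotic lemmas in a natural way. First I would dispose of the zero case: apply Corollary~\ref{asymptot0} to the two-valued definable function $\mathbf{1}_{\{f = 0\}} : B_\epsilon(0) \cap P_n \to \{0,1\}$, to obtain $\delta_1 \ge \epsilon$ and $m_1$ a multiple of $n$ such that $f$ is either identically zero or nowhere zero on $B_{\delta_1}(0) \cap P_{m_1}$. In the first case we are in conclusion~(1), so from now on assume $f$ takes values in $\Mm^\times$ on this set.

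Next, apply Lemma~\ref{asymptot2} to $f : B_{\delta_1}(0) \cap P_{m_1} \to \Mm^\times$ to find $\delta_2 \ge \delta_1$, a multiple $m_2$ of $m_1$, a rational $q$, and $\gamma_0 \in \Gamma$ such that
\begin{equation*}
  v(f(x)) = q \cdot v(x) + \gamma_0 \text{ for all } x \in B_{\delta_2}(0) \cap P_{m_2}.
\end{equation*}
Now use Lemma~\ref{qth-power} to enlarge $m_2$ to some multiple $m_3$ on which a $q$th power map $x \mapsto x^q$ is defined. Pick any $c \in \Mm$ with $v(c) = \gamma_0$ and define the definable function
\begin{equation*}
  u(x) = \frac{f(x)}{c \cdot x^q}.
\end{equation*}
Since $v(x^q) = q \cdot v(x)$ by the defining relation $(x^q)^b = x^a$, we have $v(u(x)) = 0$ for all $x$ in the domain, so $u$ maps $B_{\delta_2}(0) \cap P_{m_3}$ into $\Oo^\times$.

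Finally, apply Lemma~\ref{asymptot1} to $u$: there is $\delta \ge \delta_2$ and a multiple $m$ of $m_3$ such that $\lim_{x \to 0, ~ x \in P_m} u(x)$ exists in $\Oo^\times$; call this limit $u_0$. Setting $C = c \cdot u_0 \in \Mm^\times$, we get
\begin{equation*}
  \lim_{x \to 0, ~ x \in P_m} \frac{f(x)}{C x^q} = \lim_{x \to 0} \frac{u(x)}{u_0} = 1,
\end{equation*}
which is conclusion~(2). There is no real obstacle here; the only point requiring mild care is the bookkeeping to ensure that the final $m$ is simultaneously a multiple of $n$ and divisible enough to support a $q$th power map, which is automatic since we keep replacing $m$ by further multiples.
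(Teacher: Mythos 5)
Your proof is correct and follows essentially the same route as the paper: dispose of the zero case via Corollary~\ref{asymptot0}, apply Lemma~\ref{asymptot2} to pin down the valuation asymptotics, normalize by $c\cdot x^q$ using Lemma~\ref{qth-power}, and finish with Lemma~\ref{asymptot1}. The only cosmetic difference is notation ($c$, $u_0$ versus the paper's $C_0$, $C_1$), and your explicit verification that $v(x^q) = q\cdot v(x)$ is a harmless elaboration of a step the paper leaves implicit.
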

\begin{proof}
  Let
  \begin{equation*}
    h(x) = 
    \begin{cases}
      1 & \text{ if } f(x) = 0 \\
      0 & \text{ if } f(x) \ne 0.
    \end{cases}
  \end{equation*}
  By increasing $\epsilon$ and $m$, we may assume that $h$ is constant
  (Corollary~\ref{asymptot0}).  Then $f$ is everywhere zero or everywhere
  non-zero.  In the first case, we are done.  Assume the second case.
  Lemma~\ref{asymptot2} gives $\gamma_0 \in \Gamma$ and $q \in \Qq$
  such that
  \begin{equation*}
    v(f(x)) = q \cdot v(x) + \gamma_0
  \end{equation*}
  after possibly restricting the domain of $f$.  Take $C_0$ with
  $v(C_0) = \gamma_0$.  Increasing $m$, we may assume $P_m$ has a
  $q$th power map (Lemma~\ref{qth-power}).  Then
  \begin{equation*}
    v\left(\frac{f(x)}{C_0 x^q}\right) = 0
  \end{equation*}
  for every $x$.  By Lemma~\ref{asymptot1}, we may shrink the domain further, and ensure that
  \begin{equation*}
    \lim_{x \to 0} \frac{f(x)}{C_0 x^q} = C_1
  \end{equation*}
  for some constant $C_1 \in \Oo^\times$.  Then
  \begin{equation*}
    \lim_{x \to 0} \frac{f(x)}{C_0 C_1 x^q} = 1. \qedhere
  \end{equation*}
\end{proof}

\section{Generic differentiability}

\subsection{Review of strict differentiability} \label{review}
Let $K$ be a topological field (Hausdorff, non-discrete).  The
following definition should be standard, or equivalent to the standard
definition.
\begin{definition} \label{difdef}
  Let $U \subseteq K^n$ be an open set and $f : U \to K^m$ be a
  function.  Then $f$ is \emph{strictly differentiable} at $\ba \in U$
  if for every $i \le n$, the limit
  \begin{equation*}
    g_i(\ba) = \lim_{\substack{\bx \to \ba \\ \epsilon \to 0}}
    \frac{f(\bx + \epsilon \bar{e}_i) - f(\bx)}{\epsilon}
  \end{equation*}
  exists, where $\bar{e}_i$ is the $i$th standard basis vector.  The
  \emph{strict derivative} $Df(\ba)$ is the $m \times n$ matrix whose
  $i$th column is $g_i(\ba)$.
\end{definition}
\begin{example}
  A one-variable function $f : K \to K$ is strictly differentiable at
  $a \in K$ if the following limit exists:
  \begin{equation*}
    Df(a) = \lim_{\substack{x \to a \\ \epsilon \to 0}} \frac{f(x + \epsilon) - f(x)}{\epsilon} = \lim_{(x,y) \to (a,a)} \frac{f(y) - f(x)}{y-x}.
  \end{equation*}
\end{example}
\begin{fact} \phantomsection \label{strict-facts}
  \begin{enumerate}
  \item \label{sf1} A matrix $\mu$ is the strict derivative of $f$ at
    $\ba$ if and only if the following condition holds: for any
    neighborhood $W$ of $\mu$ there is a neighborhood $V$ of $\ba$
    such that if $\bx, \by \in V$, then
    \begin{equation*}
      f(\by) - f(\bx) = \mu' \cdot (\by - \bx)
    \end{equation*}
    for some matrix $\mu' \in W$.
  \item \label{sf2} If $f$ is strictly differentiable on $U$, then the
    strict derivative $Df$ is continuous on $U$.
  \item \label{sf3} A function $f : U \to K^m$ is strictly
    differentiable at $\ba \in U$ if and only if each of the component
    functions $f_1,\ldots,f_m : U \to K$ is strictly differentiable.
    In this case, the strict derivative of $f$ is the matrix whose
    $i$th row is the strict derivative of $f_i$.
  \item \label{sf3.5} If $f$ is strictly differentiable at $\ba$, then
    $f$ is continuous at $\ba$.
  \item \label{sf4} If $f$ is strictly differentiable at $\ba$ and
    $Df(\ba)$ has trivial kernel (i.e., $Df(\ba)$ has rank $n$), then
    $f$ is injective on a neighborhood of $\ba$.
  \item \label{sf5} If $f$ is strictly differentiable at $\ba$ and $g$
    is strictly differentiable at $\bb = f(\ba)$, then $g \circ f$ is
    strictly differentiable at $\ba$, and $D(g \circ f)(\ba) = Dg(\bb)
    \cdot Df(\ba)$.
  \item \label{sf6} If $f : U \to V$ is a homeomorphism between two
    open sets $U,V \subseteq K^n$, and $f$ is strictly differentiable
    at $\ba \in U$, and $Df(\ba)$ is invertible, then $f^{-1}$ is
    strictly differentiable at $\bb = f(\ba)$, and $Df^{-1}(\bb) =
    Df(\ba)^{-1}$.
  \end{enumerate}
\end{fact}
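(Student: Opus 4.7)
The plan is to establish the secant characterization (\ref{sf1}) first and then deduce the remaining items as essentially classical corollaries. For the forward direction of (\ref{sf1}), I would telescope along the standard basis: given $\bx, \by$, set $\bz_i = \bx + \sum_{j \le i} (y_j - x_j)\bar{e}_j$, so that $\bz_0 = \bx$, $\bz_n = \by$, and $\bz_i - \bz_{i-1} = (y_i - x_i)\bar{e}_i$. For $\bx, \by$ sufficiently close to $\ba$ every $\bz_i$ also remains close to $\ba$, and the definition of strict differentiability produces $c_i$ close to $g_i(\ba)$ with $f(\bz_i) - f(\bz_{i-1}) = c_i(y_i - x_i)$; summing, the matrix $\mu'$ with columns $c_1, \ldots, c_n$ is close to $Df(\ba)$ and satisfies $f(\by) - f(\bx) = \mu'(\by - \bx)$. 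The converse direction is immediate: taking $\by = \bx + \epsilon \bar{e}_i$ in the secant condition recovers the $i$th column limit in Definition~\ref{difdef}.

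The remaining items then follow the standard pattern. Component decomposition (\ref{sf3}) is transparent from Definition~\ref{difdef}. Continuity of $f$ at $\ba$ (\ref{sf3.5}) is immediate from the secant identity, since $\mu'(\by - \bx) \to 0$ as $\bx, \by \to \ba$ with $\mu'$ bounded. For continuity of $Df$ on $U$ (\ref{sf2}), each column of $Df(\ba')$ is a limit of one-direction secants of $f$ near $\ba'$; applying (\ref{sf1}) at $\ba$ constrains these secants to lie in any prescribed neighborhood $W$ of $Df(\ba)$ provided $\ba'$ is sufficiently close to $\ba$, so $Df(\ba')$ lies in $\overline{W}$. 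Local injectivity (\ref{sf4}) exploits openness of the invertible matrices in $\Mm^{n \times n}$ (continuity of $\det$ together with $\Mm^\times$ open in $\Mm$): choose $W$ inside this open set and apply (\ref{sf1}), so any collision $f(\bx) = f(\by)$ on the resulting $V$ forces $\mu'(\by - \bx) = 0$ with $\mu'$ invertible, hence $\bx = \by$. The chain rule (\ref{sf5}) follows by composing the secant identities at $\ba$ and at $\bb = f(\ba)$ and checking that the composed secant matrices converge to $Dg(\bb) \cdot Df(\ba)$. Finally, for (\ref{sf6}), given $y, y'$ near $\bb$ let $\bx = f^{-1}(y)$ and $\by = f^{-1}(y')$, which stay close to $\ba$ by continuity of $f^{-1}$; inverting the secant identity yields $f^{-1}(y') - f^{-1}(y) = (\mu')^{-1}(y' - y)$ with $(\mu')^{-1}$ close to $Df(\ba)^{-1}$ by continuity of matrix inversion on invertibles.

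The main obstacle is the telescoping step in (\ref{sf1}): one must arrange that all intermediate points $\bz_i$ lie uniformly within the neighborhoods prescribed by the strict axis-derivatives, and this is precisely where the \emph{strict} hypothesis is essential---ordinary differentiability at $\ba$ alone would not apply at the $\bz_i$, only at $\ba$ itself. Once (\ref{sf1}) is in hand, everything else is a direct adaptation of the classical real-analysis proofs, the only ambient input being that $\Mm$ is a Hausdorff non-discrete topological field in which the invertible $n \times n$ matrices form an open set and matrix inversion is continuous on it---both automatic since $\Mm^\times$ is open in $\Mm$.
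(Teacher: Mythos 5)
Your proposal is correct and is, in substance, the same proof the paper gives, just written in standard $\varepsilon$--$\delta$ language where the paper uses non-standard analysis. Your telescoping argument for (\ref{sf1}) --- interpolating $\bz_0=\bx,\ldots,\bz_n=\by$ along the standard basis and using strict differentiability at each step because all the $\bz_i$ stay near $\ba$ --- is precisely the paper's step ``\emph{(b)} implies \emph{(c)} by changing coordinates one-by-one,'' and you correctly identify it as the place where strictness (rather than mere differentiability at $\ba$) is essential. The deduction of (\ref{sf2})--(\ref{sf6}) from (\ref{sf1}) likewise matches the paper's chain of reasoning. One small caveat: in (\ref{sf4}) the matrix $Df(\ba)$ is $m\times n$ with $m$ possibly different from $n$, so ``$GL_n$ is open'' does not directly apply; you need instead that the locus of $m\times n$ matrices of full column rank $n$ is open in $K^{mn}$ (a union of nonvanishing loci of $n\times n$ minors), after which the argument goes through unchanged.
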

\begin{proof}[Proof sketch]
  Part (\ref{sf3}) is straightforward.  For part (\ref{sf2}), given any $i \le
  n$ and $\ba \in U$ and closed neighborhood $N_0 \ni g_i(a)$, there is a
  neighborhood $N$ of $\ba$ and a neighborhood $N_1 \ni 0$ such that
  \begin{equation*}
    \bx \in N \text{ and } \epsilon \in N_1 \implies
    \frac{f(\bx + \epsilon \bar{e}_i) - f(\bx)}{\epsilon} \in N_0.
  \end{equation*}
  Taking the limit $\epsilon \to 0$, it follows that
  \begin{equation*}
    \bx \in N \implies g_i(\bx) \in N_0,
  \end{equation*}
  and we have shown that $g_i$ is continuous at $\ba$.

  The other points can be seen most easily through non-standard
  analysis.  Say a non-standard element is ``infinitesimal'' if it is
  contained in every standard neighborhood of 0, and let $\bx \approx
  \by$ mean that $\bx - \by$ is a tuple of infinitesimals.  Say that $x$ is $o(y_1,\ldots,y_n)$ if $x$ has the form $\sum_i \epsilon_i y_i$ for some infinitesimals $\epsilon_i$, and say that $\bx$ is $o(y_1,\ldots,y_n)$ if each coordinate of $x$ is $o(y_1,\ldots,y_n)$.\footnote{In a normed field, our definition of ``$x$ is $o(y_1,\ldots,y_n)$'' means precisely that $|x|/\max(|y_1|,\ldots,|y_n|)$ is infinitesimal, as one would expect from the little-o notation.}

  In non-standard terms, a matrix $\mu$ is the strict
  derivative of $f$ at $\ba$ if the following equivalent conditions
  hold:
  \begin{enumerate}[label=(\emph{\alph*})]
  \item \label{alph1} If $i \le n$ and $\bx \approx \ba$ and $\epsilon
    \approx 0$, then
    \begin{equation*}
      \frac{f(\bx + \epsilon \bar{e}_i) - f(\bx)}{\epsilon} \approx \mu \bar{e}_i
    \end{equation*}
  \item \label{alph2} If $i \le n$ and $\bx \approx \ba$ and $\epsilon
    \approx 0$, then
    \begin{equation*}
      f(\bx + \epsilon \bar{e}_i) = f(\bx) + \mu \epsilon \bar{e}_i +
      o(\epsilon)
    \end{equation*}
  \item \label{alph3} If $\bx \approx \ba$ and $\bar{\epsilon} \approx \bar{0}$,
    then
    \begin{equation*}
      f(\bx + \bar{\epsilon}) = f(\bx) + \mu \bar{\epsilon} +
      o(\bar{\epsilon})
    \end{equation*}
    where $o(\bar{\epsilon}) :=
      o(\epsilon_1) + o(\epsilon_2) + \cdots + o(\epsilon_n)$.
  \item \label{alph4} If $\bx \approx \ba$ and $\bar{\epsilon} \approx \bar{0}$,
    then there is a matrix of infinitesimals $\mu_\epsilon$ such that
    \begin{equation*}
      f(\bx + \bar{\epsilon}) = f(\bx) + \mu \bar{\epsilon} +
      \mu_\epsilon \bar{\epsilon}.
    \end{equation*}
  \item \label{alph5} If $\bx \approx \by \approx \ba$, then there is a matrix $\mu'
    \approx \mu$ such that
    \begin{equation*}
      f(\by) = f(\bx) + \mu' (\by - \bx).
    \end{equation*}
  \end{enumerate}
  Condition \ref{alph1} is a non-standard reformulation of
  Definition~\ref{difdef}, and condition \ref{alph5} is a
  non-standard reformulation of the condition in part (\ref{sf1}).
  The implications
  \begin{equation*}
    \ref{alph1}\iff\ref{alph2}\impliedby\ref{alph3}\iff\ref{alph4}\iff\ref{alph5}
  \end{equation*}
  are straightforward, and $\ref{alph2}$ implies $\ref{alph3}$ by
  changing coordinates one-by-one.  This verifies part (\ref{sf1}).

  For part (\ref{sf3.5}), suppose $\mu$ is the strict derivative of
  $f$ at $\ba$.  If $\bx \approx \ba$ then condition~\ref{alph3} gives
  \begin{equation*}
    f(\bx) = f(\ba) + \mu(\bx - \ba) + o(\bx - \ba) \approx f(\ba)
  \end{equation*}
  since $\bx - \ba$ is infinitesimal.  Thus $f$ is continuous at
  $\ba$.
  
  Part (\ref{sf4}) holds because if a standard matrix $\mu$ has
  trivial kernel, then so does every non-standard $\mu' \approx \mu$.
  Then \[\by - \bx \ne \bar{0} \implies f(\by) - f(\bx) = \mu'(\by -
  \bx) \ne \bar{0}\] where $\mu'$ is the matrix from condition \ref{alph5}.

  For part (\ref{sf5}), suppose $\mu_1 = Df(\ba)$ and $\mu_2 =
  Dg(\bb)$.  If $\bx \approx \by \approx \ba$, then $f(\bx) \approx
  f(\by) \approx f(\ba) = \bb$ by continuity, and so
  \begin{equation*}
    g(f(\by)) - g(f(\bx)) = \mu_2' (f(\by) - f(\bx)) = \mu_2' \mu_1'
    (\by - \bx)
  \end{equation*}
  for some matrices $\mu_2' \approx \mu_2$ and $\mu_1' \approx \mu_1$ by condition \ref{alph5}.
  Then $\mu_2' \mu_1' \approx \mu_2 \mu_1$, proving (\ref{sf5}).

  The proof of (\ref{sf6}) is similar: if $\mu = Df(\ba)$ and $\bx
  \approx \by \approx f(\ba)$, then $f^{-1}(\by) \approx f^{-1}(\bx) \approx \ba$ because $f$ is a homeomorphism, and so
  \begin{equation*}
    \by - \bx = \mu' (f^{-1}(\by) - f^{-1}(\bx)) \text{ for some } \mu' \approx \mu
  \end{equation*}
  by condition~\ref{alph5}.
  Then
  \begin{equation*}
    f^{-1}(\by) - f^{-1}(\bx) = (\mu')^{-1} (\by - \bx) \text{ and } (\mu')^{-1} \approx \mu^{-1}. \qedhere
  \end{equation*}
\end{proof}

\begin{fact} \label{val-strict}
  If $(K,v)$ is a valued field, then a matrix $\mu$ is the strict
  derivative $Df(\ba)$ if and only if the following holds: for any
  $\gamma \in \Gamma_K$, there is a neighborhood $N \ni \ba$ such that
  \begin{equation*}
    \bx, \by \in N \implies v(f(\bx) - f(\by) - \mu(\bx - \by)) >
    \gamma + v(\bx - \by).
  \end{equation*}
\end{fact}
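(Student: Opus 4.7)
The plan is to deduce this directly from Fact~\ref{strict-facts}(\ref{sf1}), translating the neighborhood-theoretic characterization into the valuation-theoretic inequality. On the space of $m \times n$ matrices over $K$, take the valuation $v(\mu') := \min_{i,j} v(\mu'_{ij})$; the sets $W_\gamma := \{\mu' : v(\mu' - \mu) > \gamma\}$ then form a neighborhood basis of $\mu$. So Fact~\ref{strict-facts}(\ref{sf1}) says that $\mu = Df(\ba)$ iff for every $\gamma \in \Gamma_K$ there is a neighborhood $N$ of $\ba$ such that every pair $\bx, \by \in N$ admits some $\mu'$ with $v(\mu' - \mu) > \gamma$ and $f(\by) - f(\bx) = \mu'(\by - \bx)$. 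The task is to show this reformulation is equivalent to the stated valuation inequality.

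For the forward direction, given such a $\mu'$, one simply computes
\begin{equation*}
  f(\by) - f(\bx) - \mu(\by - \bx) = (\mu' - \mu)(\by - \bx),
\end{equation*}
so by the ultrametric estimate on matrix-vector products,
\begin{equation*}
  v\bigl(f(\by) - f(\bx) - \mu(\by - \bx)\bigr) \ge v(\mu' - \mu) + v(\by - \bx) > \gamma + v(\by - \bx).
\end{equation*}
This is immediate and requires no further work.

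The reverse direction is the only place where something must be constructed: given the valuation inequality, we must exhibit a concrete $\mu' \in W_\gamma$ realizing $f(\by) - f(\bx) = \mu'(\by - \bx)$. If $\bx = \by$ just take $\mu' = \mu$. Otherwise set $\bar{v} = \by - \bx$ and $\bar{w} = f(\by) - f(\bx) - \mu(\by - \bx)$, and pick an index $i$ with $v(v_i) = v(\bar{v})$. Define $\mu' = \mu + \delta$, where $\delta$ is the matrix whose $i$th column is $\bar{w}/v_i$ and whose other columns are zero. Then $\delta \bar{v} = \bar{w}$ by construction, so $\mu' \bar{v} = \mu \bar{v} + \bar{w} = f(\by) - f(\bx)$; and the entries of $\delta$ have valuation at least $v(\bar{w}) - v(\bar{v}) > \gamma$, so $v(\mu' - \mu) > \gamma$ as required.

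I expect no serious obstacle; the only mildly subtle point is choosing the correct column of $\mu$ to perturb (namely the one corresponding to the largest-absolute-value coordinate of $\by - \bx$), which is what makes the estimate $v(\delta) > \gamma$ work out. Once that construction is in place, the equivalence falls out of Fact~\ref{strict-facts}(\ref{sf1}) and the observation that the $W_\gamma$ form a neighborhood basis of $\mu$.
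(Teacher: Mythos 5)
Your proof is correct, and it takes a more elementary route than the paper's. The paper disposes of this fact in a few lines by working in the non-standard framework already set up in its proof sketch of Fact~\ref{strict-facts}: it reformulates the stated condition as ``$\bx \approx \by \approx \ba \implies v(f(\bx) - f(\by) - \mu(\bx - \by)) \gg v(\bx - \by)$,'' identifies this with the $o(\bar{\epsilon})$ condition, and observes that this is exactly condition~\ref{alph3} from that earlier proof. You instead reduce to Fact~\ref{strict-facts}(\ref{sf1}) and give a direct, standard-language argument: the forward implication is the ultrametric bound $v((\mu'-\mu)(\by - \bx)) \ge v(\mu'-\mu) + v(\by - \bx)$, and the reverse implication requires constructing a witness $\mu'$, which you do by a rank-one perturbation of $\mu$ supported on the column indexed by a coordinate of $\by - \bx$ of minimal valuation. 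Your explicit construction is the real content here and it checks out: $\delta\bar{v} = \bar{w}$ and each entry of $\delta$ has valuation at least $v(\bar{w}) - v(\bar{v}) > \gamma$. The trade-off is standard: the paper's non-standard version is shorter given the surrounding infrastructure, while yours is self-contained, makes the witnessing matrix explicit, and would survive transplantation into a paper that hadn't introduced the non-standard dictionary at all.
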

\begin{proof}[Proof sketch]
  In non-standard terms, this condition says
  \begin{equation*}
    \bx \approx \by \approx \ba \implies v(f(\bx) - f(\by) - \mu(\bx - \by)) \gg v(\bx - \by),
  \end{equation*}
  where $\gamma_1 \gg \gamma_2$ means that $\gamma_1-\gamma_2$ is
  greater than every standard element of the value group.  Equivalently,
  \begin{equation*}
    (\bx \approx \ba \text{ and } \bar{\epsilon} \approx \bar{0})
    \implies v(f(\bx + \bar{\epsilon}) - f(\bx) - \mu \bar{\epsilon})
    \gg v(\bar{\epsilon}).
  \end{equation*}
  But $v(\bb) \gg v(\bc)$ iff $\bb$ is $o(\bc)$, so we can rephrase
  this as
  \begin{equation*}
    (\bx \approx \ba \text{ and } \bar{\epsilon} \approx \bar{0})
    \implies \left(f(\bx + \bar{\epsilon}) - f(\bx) - \mu
    \bar{\epsilon} \text{ is } o(\bar{\epsilon})\right).
  \end{equation*}
  This is Condition~\ref{alph3} in the proof of Fact~\ref{strict-facts}.
\end{proof}
For reference, here is the definition of multi-variable non-strict
differentiability:
\begin{definition}
  A function $f : U \to \Mm^m$ is \emph{differentiable} at $\ba \in U$
  if there is an $m \times n$ matrix $\mu$ such that for every
  neighborhood $W$ of $\mu$, there is a neighborhood $V$ of $\ba$ such
  that if $\bx \in V$, then
  \begin{equation*}
    f(\bx) - f(\ba) = \mu' \cdot (\bx - \ba)
  \end{equation*}
  for some matrix $\mu' \in W$.  The matrix $\mu$ is called the
  \emph{derivative} of $f$ at $\ba$.
\end{definition}
In non-standard terms, this says that if $\bx \approx \ba$,
then \[f(\bx) = f(\ba) + \mu \cdot (\bx - \ba) + o(\bx - \ba).\]
The derivative is unique, when it exists.  Strict differentiability
implies differentiability, and the strict derivative equals the
derivative when it exists.

\subsection{Generic (strict) differentiability in one variable}
Return to the $P$-minimal monster model $\Mm$.
Let $U \subseteq \Mm^1$ be a definable non-empty open set and let $f : U
\to \Mm$ be a definable function.
\begin{definition}
  If $m \ge 1$, $\lambda \in \Mm^\times$, $q \in \Qq$, and $P_m$
  has a $q$th power map, then $f$ is
  \emph{$(\lambda,m,q)$-differentiable} at $a$ if
  \begin{equation*}
    \lim_{\substack{x \to 0 \\ x \in \lambda P_m}} \frac{f(a+x) - f(a)}{(x/\lambda)^q}
  \end{equation*}
  exists, in which case the limit is the
  \emph{$(\lambda,m,q)$-derivative} at $a$.  Similarly, $f$ is
  \emph{strictly $(\lambda,m,q)$-differentiable} at $a$ if
  \begin{equation*}
    \lim_{\substack{(w,x) \to (a,0) \\ x \in \lambda P_m}}
    \frac{f(w+x) - f(w)}{(x/\lambda)^q}
  \end{equation*}
  exists, in which case the limit is the \emph{strict
  $(\lambda,m,q)$-derivative}.  In both definitions, we shorten
  $(\lambda,m,1)$ to $(\lambda,m)$.
\end{definition}
For example, the $(\lambda,m)$-derivative is like a directional
derivative in the direction $\lambda P_m$, and a $(1,1)$-derivative is
an ordinary derivative.  Note that $(\lambda,m,q)$-differentiability
depends only on the multiplicative coset $\lambda P_m$.  For fixed
$m$, there are only finitely many possibilities for $\lambda P_m$
because $P_m$ has finite index.
\begin{remark}\label{stricter}
  If $f : U \to \Mm$ is $(\lambda,m,q)$-differentiable on $U$, then it
  is strictly $(\lambda,m,q)$-differentiable on a smaller open set $U_0
  \subseteq U$, by Proposition~\ref{strengthen} (with $D = \lambda P_m$).
\end{remark}
Say that $f : U \to \Mm$ is \emph{somewhere locally constant} if there
is a ball $B \subseteq U$ such that $f \restriction B$ is constant,
and \emph{nowhere locally constant} otherwise.
\begin{lemma} \label{a3rev}
  Suppose $f : U \to \Mm$ is definable and nowhere locally constant,
  $\lambda \in \Mm^\times$, and $a \in U$.  Then there is some $m$ and
  $q$ such that $f$ is $(1,m,q)$-differentiable at $a$ and the
  $(1,m,q)$-derivative is non-zero.
\end{lemma}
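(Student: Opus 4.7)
\emph{Plan.} The approach is to apply Proposition~\ref{asymptot3} directly to $g(x) := f(a + x) - f(a)$, with input $n = 1$ (so $P_n = \Mm^\times$) and $\epsilon$ chosen large enough that $a + B_\epsilon(0) \subseteq U$.  Proposition~\ref{asymptot3} produces $\delta \ge \epsilon$ and a positive integer $m$ for which either \textup{(i)} $g \equiv 0$ on $P_m \cap B_\delta(0)$, or \textup{(ii)} there are $C \in \Mm^\times$ and $q \in \Qq$ (with a $q$-th power map on $P_m$) satisfying
\[
\lim_{\substack{x \to 0 \\ x \in P_m}} \frac{g(x)}{C x^q} = 1.
\]

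In case \textup{(ii)} we are done: the displayed limit rearranges to $\lim_{x \to 0,\, x \in P_m} g(x)/x^q = C \ne 0$, which is precisely the assertion that $f$ is $(1, m, q)$-differentiable at $a$ with nonzero $(1, m, q)$-derivative $C$.

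The main obstacle, then, is to rule out case \textup{(i)} via the nowhere-locally-constant hypothesis.  The crucial input is that $P_m$ is \emph{open} in $\Mm^\times$; this is a first-order consequence of Hensel's lemma for the $m$-th power map in the base field $K$, which transfers to the $P$-minimal expansion $\Mm$.  Combined with the clopenness of $B_\delta(0)$ and the fact that $P_m \cap B_\delta(0)$ is nonempty (pick $y \in \Mm$ with $v(y) \ge \lceil \delta/m \rceil$ and set $x := y^m$), this makes $a + (P_m \cap B_\delta(0))$ a nonempty open subset of $U$.  In case \textup{(i)}, $f$ would be identically $f(a)$ on this nonempty open set, contradicting the assumption that $f$ is nowhere locally constant.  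Hence only case \textup{(ii)} can occur, producing the desired $(1, m, q)$-differentiability with nonzero derivative.  (The hypothesis $\lambda \in \Mm^\times$ appears never to enter the argument; I read it as a vestigial parameter in the statement.)
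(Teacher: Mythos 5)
Your proposal is correct and follows essentially the same route as the paper: translate so that $a = 0$, invoke Proposition~\ref{asymptot3}, and use the nowhere-locally-constant hypothesis to exclude the identically-zero case. You additionally supply the openness-of-$P_m$ detail that the paper leaves implicit when declaring the contradiction, and you correctly identify the derivative as $C$ rather than $C^{-1}$, the latter being a small slip in the paper's printed proof.
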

\begin{proof}
  This comes directly from Proposition~\ref{asymptot3}.  In more
  detail, replacing $f(x)$ with $f(x+a) - f(a)$, we may assume that $a
  = f(a) = 0$.  Applying Proposition~\ref{asymptot3}, we get an $m \ge
  1$ and a ball $B_\epsilon(0)$ such that one of two things happens:
  \begin{itemize}
  \item $f$ is identically zero on $P_m \cap B_\epsilon(0)$,
    contradicting the assumption on $f$.
  \item There is a constant $C \in \Mm^\times$ and a rational number
    $q$ such that \[\lim_{\substack{x \to 0 \\ x \in B_\epsilon(0)
      \cap P_m}} \frac{f(x)}{Cx^q} = 1\] Then $C$ is the
    $(1,m,q)$-derivative of $f$ at 0.  \qedhere
  \end{itemize}
\end{proof}
Recall that in $P$-minimal theories, any infinite one-variable definable
set $D \subseteq \Mm^1$ has non-empty interior \cite[Lemma~4.3(ii)]{p-min}.
\begin{lemma} \label{chaos}
  If $f : U \to \Mm$ is definable and nowhere locally constant, and
  $\lambda \in \Mm^\times$, then there is an integer $m$ and a smaller
  non-empty open set $U_0 \subseteq U$ such that $f$ is strictly
  $(\lambda,m)$-differentiable on $U_0$.
\end{lemma}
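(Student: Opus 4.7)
The plan is to reduce first to the case $\lambda = 1$, then use Lemma~\ref{a3rev} together with saturation to locate a single pair $(m_0, q_0)$ and a nonempty open set on which $f$ is $(1, m_0, q_0)$-differentiable with nonzero derivative, next upgrade to strict differentiability via Remark~\ref{stricter}, and finally force $q_0 = 1$ by an additivity calculation. The reduction to $\lambda = 1$ goes through $\tilde{f}(w) := f(\lambda w)$: this is definable and nowhere locally constant on $\lambda^{-1} U$, and strict $(1, m)$-differentiability of $\tilde{f}$ on a nonempty open $V$ translates directly to strict $(\lambda, m)$-differentiability of $f$ on $\lambda V$.

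For each pair $(m, q)$ the set
\begin{equation*}
V_{m, q} := \{a \in U : f \text{ is $(1, m, q)$-differentiable at $a$ with nonzero derivative}\}
\end{equation*}
is definable, and Lemma~\ref{a3rev} gives $U = \bigcup_{(m, q)} V_{m, q}$. I would then apply saturation to the countable partial type ``$a \in U$ and $a \notin V_{m, q}$ for every $(m, q)$'' to extract a finite subcover $U \subseteq V_{m_1, q_1} \cup \cdots \cup V_{m_k, q_k}$; since $U$ is nonempty open and a finite union of nowhere dense sets is nowhere dense, some $V_{m_0, q_0}$ has nonempty interior $U_0$. Remark~\ref{stricter} then produces a smaller nonempty open $U_1 \subseteq U_0$ on which $f$ is strictly $(1, m_0, q_0)$-differentiable, with strict derivative function $D$; an adaptation of the proof of Fact~\ref{strict-facts}(\ref{sf2}) shows $D$ is continuous on $U_1$, so shrink $U_1$ once more if needed.

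To force $q_0 = 1$, fix $a \in U_1$ and choose a nonzero $t \in P_{m_0}$ with $1 + t \in P_{m_0}$---for example $t = p^N u$ with $u \in (\Oo^\times)^{m_0}$ and $N$ a large multiple of $m_0$, so that $1 + t$ is close enough to $1$ for Hensel's lemma to place it in $P_{m_0}$. Then for small $x \in P_{m_0}$ both $tx$ and $(1+t)x$ lie in $P_{m_0}$, and expanding $f(a + (1+t)x) - f(a)$ in two ways---once as the single step $(1+t)x$, and once as the two consecutive steps $tx$ then $x$ starting from $a + tx$---using multiplicativity of the $q_0$-th power map, dividing by $x^{q_0}$, letting $x \to 0$, and invoking continuity of $D$ yields
\begin{equation*}
D(a)(1+t)^{q_0} = D(a) + D(a)\, t^{q_0}.
\end{equation*}
Since $D(a) \ne 0$, $(1+t)^{q_0} = 1 + t^{q_0}$ for every such $t$. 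The hard part is converting this identity into $q_0 = 1$: a binomial-style expansion gives $v((1+t)^{q_0} - 1) = v(q_0) + v(t)$, whereas $v(t^{q_0}) = q_0\, v(t)$, and these cannot agree for arbitrarily large $v(t)$ unless $q_0 = 1$. This matches the heuristic in the footnote that $f(a+x) - f(a) \propto x^{q_0}$ cannot hold uniformly in $a$ when $q_0 \ne 1$.
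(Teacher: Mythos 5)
Your proposal follows the paper's proof structure through the reduction to $\lambda = 1$, the covering argument that locates a single pair $(m_0,q_0)$ on a nonempty open set, and the upgrade to strict differentiability via Remark~\ref{stricter}. Where you genuinely diverge is the final step forcing $q_0 = 1$. You decompose $f(a+(1+t)x) - f(a)$ as a two-term telescope with \emph{unequal} increments $tx$ and $x$, arriving at the identity $(1+t)^{q_0} = 1 + t^{q_0}$, and then you appeal to a $p$-adic binomial estimate $v((1+t)^{q_0}-1) = v(q_0) + v(t)$ to derive a contradiction as $v(t)$ grows. The paper instead uses a $k$-term telescope with \emph{equal} increments of size $x$, for a fixed rational integer $k > 1$ chosen in $P_{m_0}$, yielding $k^{q_0} g(a) = k\, g(a)$ and hence $k^{q_0} = k$; since $(k^{q_0})^b = k^a$ for $q_0 = a/b$ by Definition~\ref{qp-def}, one gets $k^a = k^b$, which forces $a = b$ because an integer $> 1$ is not a root of unity. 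The paper's version buys a purely algebraic finish: no analysis of the power map is required beyond multiplicativity. Your version requires actually understanding the leading-order behavior of the abstract $0$-definable $q_0$-th power map near $1$---which is true and provable (via Hensel applied to $y^b = (1+t)^a$, or $p$-adic $\log/\exp$) but is heavier machinery that you compress into ``a binomial-style expansion.'' So there is no gap, just a less economical finish. One minor correction: in the two-step expansion you invoke ``continuity of $D$,'' but what is actually used for $\lim_{x\to 0}\frac{f(a+tx+x)-f(a+tx)}{x^{q_0}} = D(a)$ is strict $(1,m_0,q_0)$-differentiability at $a$ (the base point $a+tx$ moves with $x$); continuity of $D$ is a consequence rather than the ingredient needed here, and you already arranged strict differentiability via Remark~\ref{stricter}.
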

\begin{proof}
  The case where $f$ is somewhere locally constant is trivial, so
  assume $f$ is nowhere locally constant.  Changing coordinates ($x' =
  x/\lambda$) we reduce to the case where $\lambda = 1$.  Let
  $D_{q,m}$ be the set of $a \in U$ such that $f$ is
  $(1,m,q)$-differentiable at $a$, and the $(1,m,q)$-derivative is
  non-zero.  By Lemma~\ref{a3rev}, every $a \in U$ belongs to some
  $D_{q,m}$.  By saturation, $U$ is covered by finitely many of the
  $D_{q,m}$.  Then one of them must be infinite, and hence have
  non-empty interior.  Shrinking $U$, we may assume that $U \subseteq
  D_{q,m}$.  So, for every $a \in U$, the $(1,m,q)$-derivative exists
  and is non-zero.  By Remark~\ref{stricter}, we can shrink $U$
  further and arrange for $f$ to be strictly $(1,m,q)$-differentiable
  on $U$.  It remains to show that $q = 1$.

  Let $g(a)$ be the strict $(1,m,q)$-derivative of $f$ at $a \in U$:
  \begin{equation*}
    g(a) = \lim_{\substack{(w,x) \to (a,0) \\ x \in P_m}} \frac{f(w+x)
      - f(w)}{x^q}. \tag{$\ast$}
  \end{equation*}
  Remember that we arranged $g(a) \ne 0$ for all $a$.
  If $c \in P_m$, then
  \begin{equation*}
    \lim_{\substack{(w,x) \to (a,0) \\ x \in P_m}} \frac{f(w+cx) - f(w)}{x^q}
    = \lim_{\substack{(w,y) \to (a,0) \\ y \in P_m}} \frac{f(w+y) - f(w)}{y^q/c^q} = c^q g(a) \tag{$\dag$}
  \end{equation*}
  by the change of coordinates $y = cx$.

  Take some positive integer $k > 1$ such that $k \in P_m$.  For
  example, $k = p^n + 1$ works for $n \gg 0$.  Fix some $a \in U$.  By
  ($\ast$),
      \begin{align*}
      \lim_{\substack{x \to 0 \\ x \in P_m}} \frac{f(a + kx) - f(a)}{x^q} &= \lim_{\substack{x \to 0 \\ x \in P_m}} \sum_{i = 0}^{k-1} \frac{f(a+ix+x) - f(a+ix)}{x^q} \\
      &= \sum_{i = 0}^{k-1} \lim_{\substack{x \to 0 \\ x \in P_m}} \frac{f(a+ix+x) - f(a+ix)}{x^q} \\
      &= \sum_{i = 0}^{k-1} g(a) = k \cdot g(a),
    \end{align*}
    because $(a+ix,x) \to (a,0)$ as $x \to 0$.  On the other hand,
    \begin{equation*}
      \lim_{\substack{x \to 0 \\ x \in P_m}} \frac{f(a + kx) - f(a)}{x^q} = k^q \cdot g(a)
    \end{equation*}
    by ($\dag$).  Thus $k = k^q$.  Regardless of how one chooses
    $k^q$, this is impossible unless $q = 1$.

    Then $q = 1$ and $f$ is strictly $(1,m)$-differentiable on $U$.
\end{proof}
\begin{corollary} \label{cormorant}
  Let $f : U \to \Mm$ be definable.
  \begin{enumerate}
  \item For every ball $B \subseteq U$ and $\lambda \in \Mm^\times$,
    there is a smaller ball $B' \subseteq B$ and an $m \ge 1$ such
    that $f$ is strictly $(\lambda,m)$-differentiable on $B'$.
  \item \label{corm2} In the previous point, $m$ can be chosen
    independently of $\lambda$ and $B$.  In other words, there is an
    $m \ge 1$ such that for every ball $B \subseteq U$ and $\lambda
    \in \Mm^\times$, there is a smaller ball $B' \subseteq B$ on which
    $f$ is strictly $(\lambda,m)$-differentiable.
  \end{enumerate}
\end{corollary}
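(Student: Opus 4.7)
For part (1), the plan is to apply Lemma~\ref{chaos} directly to $f \restriction B$ with the given direction $\lambda$. If $f \restriction B$ is nowhere locally constant, Lemma~\ref{chaos} provides a non-empty open $U_0 \subseteq B$ and an integer $m$ such that $f$ is strictly $(\lambda,m)$-differentiable on $U_0$; any ball $B' \subseteq U_0$ works. If $f$ is constant on some sub-ball $B' \subseteq B$, the conclusion is trivial with derivative $0$ for every $m$.

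For part (2), my plan rests on two observations. First, strict $(\lambda, m)$-differentiability is monotone in $m$ under divisibility: if $f$ is strictly $(\lambda, m)$-differentiable at $a$ with derivative $c$ and $m \mid m'$, then $f$ is strictly $(\lambda, m')$-differentiable at $a$ with the same $c$, since $\lambda P_{m'} \subseteq \lambda P_m$ (any $m'$-th power is an $m$-th power) and restricting a well-defined joint limit to a cofinal sub-direction preserves both its existence and value. Second, for each fixed integer $m$, the statement $\phi_m(B, \lambda)$ asserting the existence of a sub-ball $B' \subseteq B$ on which $f$ is strictly $(\lambda, m)$-differentiable is first-order in the parameters $B, \lambda$: strict differentiability unpacks to an $\varepsilon$-$\delta$ condition over $\Gamma$ and $\Mm$, and $\lambda P_m = \{\lambda z^m : z \in \Mm^\times\}$ is definable.

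Given these observations, part (1) says the partial type $\{\lnot \phi_m(B, \lambda) : m \in \Nn\}$ in variables $(B, \lambda)$ over the base of definition of $f$ is not realized in $\Mm$. By the high saturation of $\Mm$, it is not finitely satisfiable, so there exist $m_1, \ldots, m_k$ with $\bigvee_{i=1}^k \phi_{m_i}(B, \lambda)$ holding for every $(B, \lambda)$. Setting $m = \lcm(m_1, \ldots, m_k)$ and invoking the monotonicity observation yields $\phi_m(B, \lambda)$ for every $(B, \lambda)$, which is exactly the conclusion of (2).

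The main thing I expect to need careful attention is the monotonicity step, since the strict limit is a \emph{joint} limit in $(w, x) \to (a, 0)$ rather than $x \to 0$ alone; one has to confirm that shrinking the set of allowed $x$-values (while keeping $0$ a limit point of the new set, which it is since $\lambda P_{m'}$ has elements of arbitrarily large valuation) preserves both the existence of the limit and its value. Once this is verified, the saturation/compactness step is routine.
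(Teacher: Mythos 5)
Your proof is correct and matches the paper's (terse) argument: part~(1) is an application of Lemma~\ref{chaos} after splitting off the locally constant case, and part~(2) is the compactness/saturation argument the paper compresses to ``by compactness.'' The monotonicity observation (passing to $\lcm(m_1,\dots,m_k)$ via $\lambda P_{m'}\subseteq\lambda P_m$ for $m\mid m'$) and the first-order formulation of $\phi_m(B,\lambda)$ are exactly the details the paper leaves implicit, and both are handled correctly.
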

\begin{proof}
  The first point follows by applying Lemma~\ref{chaos} to $f
  \restriction B$.  The second point follows by compactness.
\end{proof}
\begin{lemma} \label{almost-there}
  Let $f : U \to \Mm$ be definable.  Then there is a non-empty open
  $U_0 \subseteq U$ such that $f$ is differentiable on $U_0$.
\end{lemma}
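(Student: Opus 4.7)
The plan is to combine the one-directional strict derivatives provided by Corollary~\ref{cormorant}(\ref{corm2}) into a single full derivative. I first reduce to the case that $f$ is nowhere locally constant. Applying Corollary~\ref{cormorant}(\ref{corm2}), I fix $m \ge 1$ with the stated uniform property, and let $\lambda_1, \dots, \lambda_N \in \Mm^\times$ be representatives for the (finitely many) cosets of $P_m$. Applying the corollary in turn with $\lambda = \lambda_1, \dots, \lambda_N$ and shrinking the open set at each step, I arrive at a non-empty open $U_0 \subseteq U$ on which $f$ is strictly $(\lambda_i, m)$-differentiable for every $i$; write $g_i : U_0 \to \Mm$ for the corresponding strict directional derivatives.

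It then suffices to prove $g_i(a) = g_j(a)$ for every $a \in U_0$ and every $i, j$. Once this holds, the restricted limit $\lim_{x \to 0, \, x \in \lambda_i P_m} (f(a+x) - f(a))/x$ takes a common value $g(a)$ for every $i$, and since finitely many cosets $\lambda_i P_m$ partition $\Mm^\times$, the unrestricted limit $\lim_{x \to 0} (f(a+x) - f(a))/x = g(a)$ exists, so $f$ is differentiable on $U_0$ with derivative $g$.

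To prove $g_i(a) = g_j(a)$ for fixed $i \ne j$ and $a \in U_0$, I would compute $f(a+x+y) - f(a)$ in two ways for a carefully chosen pair $x \in \lambda_i P_m$, $y \in \lambda_j P_m$. Since $m\Gamma$ is cofinal in $\Gamma$, the coset $\lambda_j \lambda_i^{-1} P_m$ contains elements of arbitrarily large valuation, so I pick $u$ in this coset with $v(u)$ so large that $1 + u \in P_m$ (automatic for $v(u) \ge N$, where $N$ is a Hensel threshold making $1 + \mm^N \subseteq P_m$). For a prescribed $\gamma \in \Gamma$, I then take $x \in \lambda_i P_m$ with $v(x)$ large enough that all three strict-differentiability approximations used below have error valuation exceeding the corresponding increment valuation by at least $\gamma$, and set $y = u x$. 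Then $y \in \lambda_j P_m$, $v(y) = v(u) + v(x) > v(x)$, and $x + y = x(1+u) \in \lambda_i P_m$. Expanding
\[
f(a+x+y) - f(a) = [f(a+x+y) - f(a+x)] + [f(a+x) - f(a)]
\]
via strict $(\lambda_j, m)$-differentiability based at $w = a + x$ and strict $(\lambda_i, m)$-differentiability based at $w = a$, and expanding directly via strict $(\lambda_i, m)$-differentiability at $a$ with increment $x + y$, and then equating the two expressions, I obtain $(g_j(a) - g_i(a)) \cdot y = E$ with $v(E) > v(x) + \gamma$. This gives $v(g_j(a) - g_i(a)) > \gamma - v(u)$; since $\gamma \in \Gamma$ is arbitrary and $v(u)$ is a fixed element of $\Gamma$, $g_j(a) = g_i(a)$.

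The main obstacle is this final comparison step. The earlier reductions are routine applications of Corollary~\ref{cormorant}(\ref{corm2}), but reconciling the directional strict derivatives across different cosets requires the two-way expansion above; the crucial point is that $v(u)$ can be chosen large but \emph{independently} of the precision $\gamma$, so the bound $v(g_j(a) - g_i(a)) > \gamma - v(u)$ sharpens arbitrarily as $\gamma$ grows.
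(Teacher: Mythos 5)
Your proof is correct and follows essentially the same route as the paper's: both reduce via Corollary~\ref{cormorant}(\ref{corm2}) to showing that the strict directional derivatives agree across the finitely many cosets of $P_m$, and both establish this by the same two-way expansion of $f(a + x(1+u)) - f(a)$ for $u$ in the transition coset chosen small enough that $1+u \in P_m$ (once directly, and once split as $[f(a{+}x{+}ux) - f(a{+}x)] + [f(a{+}x) - f(a)]$). The paper records the comparison as the limit identity $(1+b)g_1(a) = b\,g_\lambda(a) + g_1(a)$ in its Claim, while you carry out the equivalent computation with explicit valuation error bounds; your closing observation that $v(u)$ stays fixed while the precision $\gamma$ is arbitrary is precisely what makes the paper's limit version converge.
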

\begin{proof}
  Let $m$ be as in Corollary~\ref{cormorant}(\ref{corm2}).  Let
  $\lambda_1P_m, \ldots, \lambda_kP_m$ enumerate the finitely many
  cosets of $P_m$.  By Corollary~\ref{cormorant}(\ref{corm2}) we can
  shrink $U$ and assume that $f$ is strictly
  $(\lambda_1,m)$-differentiable on $U$.  Repeating this for
  $\lambda_2, \ldots, \lambda_k$ we may assume that $f$ is strictly
  $(\lambda_i,m)$-differentiable for every $i$.  Then $f$ is strictly
  $(\lambda,m)$-differentiable for every $\lambda$.  Let
  $g_\lambda(a)$ denote $\lambda^{-1}$ times the strict $(\lambda,m)$-derivative at $a \in
  U$:
  \begin{equation*}
    g_\lambda(a) = \lim_{\substack{(w,x) \to (a,0) \\ x \in \lambda
        P_m}} \frac{f(w+x) - f(w)}{x}.
  \end{equation*}
  Note that if $a \in U$ and $c \in \lambda P_m$, then
  \begin{equation*}
    \lim_{\substack{(w,x) \to (a,0) \\ x \in P_m}} \frac{f(w+cx) -
      f(w)}{x} = \lim_{\substack{(w,y) \to (a,0) \\ y \in c P_m}}
    \frac{f(w+y) - f(w)}{y/c} = c g_\lambda(a) \tag{$\dag$}
  \end{equation*}
  via the change of coordinates $y = cx$.
  \begin{claim}
    For any $a \in U$ and $\lambda \in \Mm^\times$, $g_\lambda(a) =
    g_1(a)$.
  \end{claim}
  \begin{claimproof}
    Take $b \in \lambda P_m$ so small that $1 + b \in P_m$.  Then
    \begin{equation*}
      \lim_{\substack{x \to 0 \\ x \in P_m}} \frac{f(a+(1+b)x) -
        f(a)}{x} = (1+b)g_1(a)
    \end{equation*}
    by ($\dag$).  But the same value can alternatively be calculated
    as
    \begin{gather*}
      \lim_{\substack{x \to 0 \\ x \in P_m}} \frac{f(a+x+bx) - f(a+x)}{x} +
      \lim_{\substack{x \to 0 \\ x \in P_m}} \frac{f(a+x) - f(a)}{x} \\
      = b g_\lambda(a) + g_1(a)
    \end{gather*}
    by ($\dag$).  Thus
    \begin{equation*}
      (1 + b)g_1(a) = b g_\lambda(a) + g_1(a),
    \end{equation*}
    implying $g_\lambda(a) = g_1(a)$.
  \end{claimproof}
  It follows that
  \begin{equation*}
    \lim_{\substack{x \to 0 \\ x \in \lambda P_m}} \frac{f(a+x) -
      f(a)}{x} = g_\lambda(a) = g_1(a)
  \end{equation*}
  for any coset $\lambda P_m$.  Since there are only finitely many
  cosets,
  \begin{equation*}
    \lim_{x \to 0} \frac{f(a+x) - f(a)}{x} = g_1(a),
  \end{equation*}
  and we see that $g_1(x)$ is the derivative of $f$.  We have shown
  that $f$ is differentiable on $U$.
\end{proof}
\begin{proposition} \label{one-var-diff}
  If $f : U \to \Mm$ is definable, then $f$ is strictly differentiable
  at all but finitely many points.
\end{proposition}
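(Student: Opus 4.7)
My plan is to show that the exceptional set
\[
E = \{a \in U : f \text{ is not strictly differentiable at } a\}
\]
has empty interior in $U$. Once this is established, the one-dimensional cell decomposition (Fact~\ref{y-u-no}) finishes the job: each cell $\{x : \gamma_1 \Box_1 v(x-c) \Box_2 \gamma_2, ~ x-c \in \lambda P_m\}$ that is not a singleton is open in $\Mm$, so any definable subset of $\Mm$ with empty interior is a finite union of singletons, hence finite.

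To see that $E$ has empty interior, I argue by contradiction: suppose $E$ contains some ball $B \subseteq U$. Applying Lemma~\ref{almost-there} to the restriction $f \restriction B$ yields a non-empty open $B' \subseteq B$ on which $f$ is differentiable at every point. One-variable differentiability coincides with $(1,1,1)$-differentiability, because $P_1 = \Mm^\times$ and $x^1 = x$, so the limit defining the ordinary derivative is exactly the limit defining the $(1,1,1)$-derivative. Thus $f$ is $(1,1,1)$-differentiable on $B'$. By Remark~\ref{stricter} applied with $\lambda = m = q = 1$, there is a further non-empty open $B'' \subseteq B'$ on which $f$ is strictly $(1,1,1)$-differentiable, which is the same as strict differentiability in the sense of Definition~\ref{difdef}. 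Since $B'' \subseteq B \subseteq E$, this contradicts the definition of $E$, so $E$ has empty interior.

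I do not anticipate any serious obstacle here: the heavy lifting was done in Lemma~\ref{almost-there} (differentiability on some open set) and in Lemma~\ref{strengthen} (upgrading pointwise limits to uniform ones), packaged into Remark~\ref{stricter}. The only subtlety worth spelling out is the reduction from ``empty interior'' to ``finite'', which is special to the one-variable setting and relies on the observation that every non-singleton 1-cell in Fact~\ref{y-u-no} is open.
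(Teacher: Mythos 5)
Your proof is correct and follows essentially the same route as the paper. The paper's own proof says: if the bad set $X$ is infinite, it contains an open set by $P$-minimality; restrict to it, apply Lemma~\ref{almost-there} to get differentiability on a smaller open set, then Remark~\ref{stricter} to upgrade to strict differentiability, contradiction. You spell out two things the paper leaves implicit---that one-variable differentiability is $(1,1,1)$-differentiability with $P_1 = \Mm^\times$ (so Remark~\ref{stricter} with $D = P_1$ applies), and that ``empty interior implies finite'' follows from the cell structure in Fact~\ref{y-u-no}---but these are exactly the ingredients the paper is invoking under the phrase ``by $P$-minimality,'' so the content is the same.
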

\begin{proof}
  Let $X$ be the set of points on which $f$ is \emph{not} strictly
  differentiable.  If $X$ is infinite, it contains an open set by
  $P$-minimality.  Restricting $f$ to this open set, we may assume that
  $f$ is nowhere strictly differentiable.  Lemma~\ref{almost-there}
  gives a smaller open set on which $f$ is differentiable, and then
  Remark~\ref{stricter} gives a further smaller open set on which $f$
  is strictly differentiable, a contradiction.
\end{proof}

\begin{remark}
  Proposition~\ref{one-var-diff} shows that if $f : \Mm \to \Mm$ is
  definable and $a \in \Mm$ is generic, then $f$ has the following
  asymptotic expansion around $a$:
  \begin{equation*}
    f(a + \epsilon) = f(a) + f'(a)\epsilon + o(\epsilon).
  \end{equation*}
  Now consider the remainder $f(a + \epsilon) - (f(a) +
  f'(a)\epsilon)$.  Using Proposition~\ref{asymptot3}, the remainder must
  look like a power of $\epsilon$:
  \begin{equation*}
    f(a + \epsilon) - f(a) - f'(a)\epsilon \propto C_a \epsilon^q,
    \tag{$\ast$}
  \end{equation*}
  at least for $\epsilon$ in some $P_m$.  Using the proof strategy
  from Lemmas~\ref{chaos} and \ref{almost-there}, one can show that $q
  = 2$, that ($\ast$) holds on \emph{all} cosets of $P_m$, and that
  $C_a = f''(a)/2$.  Thus
  \begin{equation*}
    f(a + \epsilon) = f(a) + f'(a)\epsilon +
    \frac{f''(a)}{2}\epsilon^2 + o(\epsilon^2).
  \end{equation*}
  Continuing on in this way, one should get a Taylor series expansion
  \begin{equation*}
    f(a + \epsilon) = f(a) + f'(a)\epsilon + \cdots +
    \frac{f^{(n)}(a)}{n!}\epsilon^n + o(\epsilon^n)
  \end{equation*}
  for each $n$.  There should also be a multivariable version of these
  statements.  Unfortunately, the calculations are too complicated to
  carry out in this paper.
\end{remark}

\subsection{Multivariable generic differentiability}
\begin{theorem} \label{generic-diff}
  Let $U \subseteq \Mm^n$ be non-empty, open, and definable.  Let $f :
  U \to \Mm^m$ be a definable function.  Then $f$ is strictly
  differentiable on a definable open set $U_0 \subseteq U$ with $\dim
  U \setminus U_0 < \dim U$.
\end{theorem}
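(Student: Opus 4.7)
The plan is to bootstrap from the one-variable generic strict differentiability (Proposition~\ref{one-var-diff}) to the multivariable setting by verifying the existence of each strict partial derivative $g_i(\ba)$ of Definition~\ref{difdef} on a large subset of $U$. By Fact~\ref{strict-facts}(\ref{sf3}) one immediately reduces to $m = 1$, so assume $f : U \to \Mm$.

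First I would produce a definable open $U_1 \subseteq U$ with $\dim(U \setminus U_1) < n$ on which every ordinary one-variable partial derivative exists. For each $i \in \{1,\ldots,n\}$, let $B_i \subseteq U$ be the definable set of $\bx$ at which $\lim_{\epsilon \to 0}(f(\bx + \epsilon \bar{e}_i) - f(\bx))/\epsilon$ fails to exist. Fixing the coordinates other than $x_i$, the intersection of $B_i$ with the corresponding axis-parallel line is a subset of the one-variable exceptional set of Proposition~\ref{one-var-diff}, hence is finite. Uniform finiteness of the fibers of $B_i$ over $\Mm^{n-1}$ forces $\dim B_i \le n-1$ by the dimension theory of $P$-minimal structures. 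Setting $U_1 = U \setminus \bigcup_i \overline{B_i}$ yields the desired subset.

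Next I would upgrade each ordinary partial derivative to the joint limit in Definition~\ref{difdef} using Lemma~\ref{strengthen}. For any ball $B_\gamma(\ba) \subseteq U_1$, apply the lemma with the ball as the parameter space, with $D = B_\gamma(0) \setminus \{0\}$ (so that $0 \in \partial D$), and with the difference quotient $\Phi_i(\bx,\epsilon) = (f(\bx + \epsilon \bar{e}_i) - f(\bx))/\epsilon$, which is well-defined on $B_\gamma(\ba) \times D$ by the ultrametric inequality and satisfies the pointwise-limit hypothesis by construction of $U_1$. The conclusion supplies a non-empty open subset of $B_\gamma(\ba)$ on which the joint limit $g_i(\ba) = \lim_{(\bx,\epsilon) \to (\ba,0)} \Phi_i(\bx,\epsilon)$ exists. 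Hence the definable bad set $C_i = \{\ba \in U_1 : g_i(\ba)\text{ does not exist}\}$ has empty interior in $U_1$, so $\dim C_i < n$ by $P$-minimal dimension theory. Finally, set $U_0 = U_1 \setminus \bigcup_i \overline{C_i}$: this is definable and open with $\dim(U \setminus U_0) < n$, and at each $\ba \in U_0$ every $g_i(\ba)$ in Definition~\ref{difdef} exists, so $f$ is strictly differentiable on $U_0$.

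The main obstacle is the middle step---promoting per-point partial derivatives to the joint limits demanded by strict differentiability. In real analysis this is immediate from continuity of the partial derivatives via the mean value theorem, but in the ultrametric $P$-minimal setting one has no such tool. The role of Lemma~\ref{strengthen} (which itself rests on the small-boundaries property and the $P$-minimal definable-choice and generic-continuity machinery) is precisely to cover this gap.
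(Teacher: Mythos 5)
Your argument is correct and follows essentially the same route as the paper: reduce to $m=1$ via Fact~\ref{strict-facts}(\ref{sf3}), use Proposition~\ref{one-var-diff} to get ordinary partial derivatives away from a small set, then apply Lemma~\ref{strengthen} to promote per-point partial derivatives to the joint limits required by Definition~\ref{difdef}. Your version spells out more carefully the dimension-theoretic bookkeeping (fiber finiteness for the $B_i$, and showing each $C_i$ has empty interior) that the paper compresses into a ``shrink $U$'' step together with an implicit ``by the usual methods'' reduction to producing a single non-empty open set; the underlying ideas are identical.
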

\begin{proof}
  We will only consider the case $m=1$, which is sufficient, by
  Fact~\ref{strict-facts}(\ref{sf3}).  By the usual methods, it
  suffices to find a non-empty open subset $U_0 \subseteq U$ on which
  $f$ is strictly differentiable.

  Let $U_i$ be the set of points in $U$ such that the $i$th partial
  derivative exists:
  \begin{equation*}
    \lim_{y \to 0}
    \frac{f(\ba + y \bar{e}_i) -
      f(\bar{a})}{y} \text{ exists}.
  \end{equation*}
  The complement $U \setminus U_i$ cannot contain a ball, by
  Proposition~\ref{one-var-diff}.  Therefore, $U_i$ contains a ball.
  Shrinking $U$, we may assume that the $i$th partial derivative
  exists everywhere on $U$.  By Proposition~\ref{strengthen}, we may further
  shrink $U$ and assume that
  \begin{equation*}
    \lim_{(\bx,y) \to (\ba,0)} \frac{f(\bx + y \bar{e}_i) - f(\bx)}{y}
    \text{ exists}
  \end{equation*}
  for any $\ba \in U$.  Repeating this for $i = 1, 2, \ldots, n$, we
  can arrange this to hold for every $i$.  Then $f$ is strictly
  differentiable (Definition~\ref{difdef}).
\end{proof}

\subsection{The inverse function theorem}
We quickly check that one of the standard proofs of the inverse
function theorem works in our context.
\begin{lemma} \label{contract}
  Let $B \subseteq \Mm^n$ be a ball around 0.  Let $f : B \to B$ be a
  function which is contracting:
  \begin{equation*}
    v(f(x) - f(y)) > v(x-y) \text{ for distinct } x,y \in B.
  \end{equation*}
  \begin{enumerate}
  \item There is a unique $x \in B$ such that $f(x) = x$.
  \item \label{con2} For any $c \in B$, there is a unique $x \in B$
    such that $f(x) = x - c$.
  \end{enumerate}
\end{lemma}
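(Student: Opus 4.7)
The plan is to reduce (2) to (1) and then prove (1) by combining definable compactness of $B$ (Proposition~\ref{dc-thm}) with definable completeness of the Presburger value group $\Gamma$. For the reduction, consider $\tilde{f}(x) := f(x) + c$: this still maps $B$ into $B$ (since $B$ is a ball around $0$, hence closed under addition of its elements) and inherits the contraction property from $f$ because $\tilde{f}(x) - \tilde{f}(y) = f(x) - f(y)$; a fixed point of $\tilde f$ is precisely an $x$ with $f(x) = x - c$. Uniqueness in both parts is immediate from contraction: two distinct candidates $x, y$ would yield $v(f(x) - f(y)) = v(x - y)$, contradicting $v(f(x)-f(y)) > v(x-y)$.

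For existence in (1), my approach is by contradiction. First observe that $f$ is (uniformly) continuous: if $v(x-y) \ge \gamma$ then $v(f(x)-f(y)) > \gamma$. Assuming no fixed point exists, the function $g(x) := v(f(x) - x)$ is a $\Gamma$-valued (never $\infty$) continuous definable function on $B$. For each $\gamma \in \Gamma$, the preimage
\[
S_\gamma := (f - \id)^{-1}(B_\gamma(0)) = \{x \in B : g(x) \ge \gamma\}
\]
is closed and definable; the family $\{S_\gamma\}_{\gamma \in \Gamma}$ is nested, and its total intersection equals the (empty) fixed-point set of $f$. Since $B$ is closed and bounded, Proposition~\ref{dc-thm} makes it definably compact, so some $S_{\gamma^*}$ is already empty---i.e., $g$ is bounded above by $\gamma^*$.

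The finishing step invokes the Presburger structure on $\Gamma$: as $\Gamma$ is definably complete and discretely ordered, the bounded-above definable set $g(B) \subseteq \Gamma$ attains its maximum $\gamma_0 = g(x_0)$ at some $x_0 \in B$. Contraction applied to the pair $x_0 \ne f(x_0)$ (both in $B$) then gives
\[
g(f(x_0)) = v(f(f(x_0)) - f(x_0)) > v(f(x_0) - x_0) = \gamma_0,
\]
contradicting maximality of $\gamma_0$. So $f$ must have a fixed point, proving (1).

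I do not anticipate serious obstacles: the one delicate ingredient is extracting the maximum of $g$, which is handled cleanly by combining definable compactness of $B$ with definable completeness of $\Gamma$. The iterative flavor of the classical Banach fixed-point argument is replaced here by a one-shot compactness step, which avoids having to take a limit of a Cauchy sequence in the (non-complete) monster model.
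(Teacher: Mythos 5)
Your proposal is correct and uses exactly the same ingredients as the paper's proof: the auxiliary function $g(x)=v(f(x)-x)$, the observation $g(f(x))>g(x)$ from contraction, definable compactness of $B$ via Proposition~\ref{dc-thm}, and definable completeness of the Presburger value group $\Gamma$. The only difference is cosmetic---you apply compactness (to show $g$ is bounded) and then completeness (to extract a maximum), whereas the paper first shows $g$ unbounded via completeness and then reaches a fixed point via compactness---so this is the same argument run in the opposite order.
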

\begin{proof}
  \begin{enumerate}
  \item Uniqueness is clear.  Suppose existence fails ($f$ has no
    fixed point).  Note that $f$ is continuous.  Let $g : B \to
    \Gamma$ be the function $g(x) = v(f(x) - x)$.  Then
    \begin{equation*}
      g(f(x)) = v(f(f(x)) - f(x)) > v(f(x) - x) = g(x),
    \end{equation*}
    so the set $\{g(x) : x \in B\}$ has no maximum.  Since $\Gamma$ is
    definably well-ordered, $\{g(x) : x \in B\}$ has no upper bound.
    Then each of the closed sets
    \begin{equation*}
      D_\gamma = \{x \in B : g(x) \ge \gamma\} = \{x \in B : v(f(x) - x)
      \ge \gamma\}
    \end{equation*}
    is non-empty.  By definable compactness, there is $a \in
    \bigcap_{\gamma \in \Gamma} D_\gamma$.  But then $v(f(a) - a) =
    +\infty$, so $a$ is a fixed point.
  \item Apply the previous point to the function $g : B \to B$ given
    by $g(x) = f(x) + c$.  \qedhere
  \end{enumerate}
\end{proof}
\begin{lemma}
  Let $U$ be a definable neighborhood of $\ba \in \Mm^n$ and let $f :
  U \to \Mm^n$ be a definable function which is strictly
  differentiable on $U$, such that $Df(\ba)$ is invertible.  Then the
  image of $f$ contains a ball around $f(\ba)$.
\end{lemma}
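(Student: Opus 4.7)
The plan is to run the classical contraction-mapping proof of the inverse function theorem, with Lemma~\ref{contract} playing the role of the Banach fixed point theorem and Fact~\ref{val-strict} providing the valuation-theoretic formulation of strict differentiability. After translating so that $\ba = 0$ and $f(\ba) = 0$, I would write $\mu = Df(\ba)$ and, for each candidate target $c \in \Mm^n$, introduce the auxiliary function $h_c(\bx) = \bx - \mu^{-1}(f(\bx) - c)$, whose fixed points are exactly the solutions of $f(\bx) = c$.

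Since $\mu^{-1}$ is a fixed invertible matrix, there is some $\alpha \in \Gamma$ with $v(\mu^{-1} \bar{v}) \geq v(\bar{v}) + \alpha$ for all $\bar{v} \in \Mm^n$. I would then apply Fact~\ref{val-strict} with $\gamma = 1 - \alpha$ to produce a ball $B = B_\delta(0) \subseteq U$ on which $v(f(\bx) - f(\by) - \mu(\bx - \by)) > (1 - \alpha) + v(\bx - \by)$. Using the identity $h_c(\bx) - h_c(\by) = -\mu^{-1}(f(\bx) - f(\by) - \mu(\bx - \by))$, it follows that $v(h_c(\bx) - h_c(\by)) > v(\bx - \by)$ for all distinct $\bx, \by \in B$, so each $h_c$ is contracting on $B$ in the sense of Lemma~\ref{contract}.

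To apply Lemma~\ref{contract}(\ref{con2}) I would also need $h_c(B) \subseteq B$. Since $h_0(0) = 0$ and $h_0$ is contracting, $h_0(B) \subseteq B$; and if $v(c) \geq \delta - \alpha$, then $\mu^{-1} c \in B$, so $h_c(B) = h_0(B) + \mu^{-1} c \subseteq B + B = B$ as well, because $B$ is a ball around $0$. Lemma~\ref{contract}(\ref{con2}) then supplies a fixed point of $h_c$ in $B$, i.e.\ a preimage of $c$ under $f$. Consequently the image of $f$ contains $\{c \in \Mm^n : v(c) \geq \delta - \alpha\}$, which is a ball around $0$; translating back, the image contains a ball around $f(\ba)$.

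I do not anticipate a serious obstacle: the argument is structurally identical to the classical proof over $\Rr$, and Lemmas~\ref{contract} and Fact~\ref{val-strict} are already set up to make it run. The only point requiring some care is the choice of $\gamma$ in Fact~\ref{val-strict}, which must be strictly larger than $-\alpha$ so that the valuation gain from $\mu^{-1}$ does not wipe out the strict contraction; taking $\gamma = 1 - \alpha$ keeps us safely on the right side.
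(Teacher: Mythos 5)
Your argument is correct and is essentially the paper's proof: both run the contraction-mapping scheme with Lemma~\ref{contract} as the fixed-point device and Fact~\ref{val-strict} supplying the contracting estimate. The only cosmetic difference is that the paper first normalizes $Df(\ba)$ to the identity (so $\alpha = 0$ and the auxiliary map is simply $h(\bx) = \bx - f(\bx)$, with the preimage of $\bc$ extracted directly via Lemma~\ref{contract}(\ref{con2})), whereas you keep $\mu$ general and absorb it into the constant $\alpha$ and a one-parameter family $h_c$; also note that finding a fixed point of your self-map $h_c : B \to B$ is an application of part (1) of Lemma~\ref{contract} rather than part (\ref{con2}).
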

\begin{proof}
  Changing coordinates, we may assume $\ba = f(\ba) = \bar{0}$, and
  $Df(\ba)$ is the $n \times n$ identity matrix.  Let $h : U \to
  \Mm^n$ be the function $h(\bx) = \bx - f(\bx)$.  Then $h$ is
  strictly differentiable on $U$, and $Dh(\bar{0})$ vanishes.  By
  Fact~\ref{val-strict} (with $\gamma = 0$), there is a ball $B \ni
  \bar{0}$ with $B \subseteq U$ such that
  \begin{equation*}
    \bx, \by \in B \implies v(h(\bx) - h(\by)) > v(\bx - \by). \tag{$\ast$}
  \end{equation*}
  In particular, $h$ is contracting.  Note that $h(\bar{0}) =
  \bar{0}$, so if $\bx \in B$, then
  \begin{equation*}
    v(h(\bx)) = v(h(\bx) - h(\bar{0})) > v(\bx - \bar{0}) = v(\bx).
  \end{equation*}
  Thus $h$ maps $B$ into $B$.  By Lemma~\ref{contract}(\ref{con2}),
  for any $\bar{c} \in B$, there is $\bar{x} \in B$ such that
  \begin{equation*}
    \bx - f(\bx) = h(\bx) = \bx - \bc,
  \end{equation*}
  so $f(\bx) = \bc$.  We have shown that $\img(f)$ contains $B$.
\end{proof}
\begin{corollary}
  Let $U \subseteq \Mm^n$ be a definable open set and $f : U \to
  \Mm^n$ be definable and strictly differentiable.  If $Df(\ba)$ is
  invertible for every $\ba \in U$, then $f$ is an open map.
\end{corollary}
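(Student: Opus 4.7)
The plan is to reduce openness of $f$ to the previous lemma, which already shows that the image of a strictly differentiable function with invertible derivative at $\ba$ contains a ball around $f(\ba)$. Openness is a purely local property of the map: to show $f(V) \subseteq \Mm^n$ is open for every open $V \subseteq U$, it suffices to show that for each $\ba \in V$ the point $f(\ba)$ is interior to $f(V)$.

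So fix an open definable $V \subseteq U$ and a point $\ba \in V$. Since $V$ is open, choose a ball $B \subseteq V$ containing $\ba$; this $B$ is a definable neighborhood of $\ba$. The restriction $f \restriction B : B \to \Mm^n$ is again definable and strictly differentiable (strict differentiability is local, by Definition~\ref{difdef} or equivalently by Fact~\ref{strict-facts}(\ref{sf1})), and $D(f \restriction B)(\ba) = Df(\ba)$ is still invertible by hypothesis. The previous lemma, applied to $f \restriction B$, produces a ball around $f(\ba)$ contained in $(f \restriction B)(B) \subseteq f(V)$. Hence $f(\ba)$ lies in the interior of $f(V)$.

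Since this holds for every $\ba \in V$, the set $f(V)$ is open. As $V \subseteq U$ was an arbitrary open set, $f$ is an open map. There is no real obstacle here: the content is entirely in the previous lemma, and this corollary is just the standard reduction of ``$f$ is open'' to ``$f$ maps some neighborhood of each $\ba$ onto a neighborhood of $f(\ba)$.''
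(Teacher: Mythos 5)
Your proof is correct and is exactly the standard reduction the paper leaves implicit (the paper states this as an unproved corollary immediately after the ball-containment lemma). One tiny imprecision: openness of a map means the image of \emph{every} open set is open, not just definable ones, so the restriction ``open definable $V$'' is unnecessary; but since your argument only uses the existence of a ball $B \subseteq V$ around $\ba$, it already works verbatim for arbitrary open $V$, so there is no gap.
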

\begin{theorem}[Inverse function theorem] \label{ift}
  Let $U \subseteq \Mm^n$ be a definable open set and $f : U \to
  \Mm^n$ be a definable, strictly differentiable function.  Let $\ba
  \in U$ be a point such that the strict derivative $Df(\ba)$ is
  invertible.  Then there is a smaller open set $\ba \in U_0 \subseteq
  U$ such that $f$ restricts to a homeomorphism $U_0 \to V_0$ for some
  neighborhood $V_0 \ni f(\ba)$, and both $f : U_0 \to V_0$ and
  $f^{-1} : V_0 \to U_0$ are strictly differentiable.
\end{theorem}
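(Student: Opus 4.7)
The plan is to assemble the inverse function theorem from the immediately preceding results, using only continuity of the strict derivative and basic topology. Specifically, I will shrink $U$ in three successive steps until $f$ becomes a homeomorphism onto its (open) image, then invoke Fact~\ref{strict-facts}(\ref{sf6}) pointwise to deduce strict differentiability of $f^{-1}$.

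First, by Fact~\ref{strict-facts}(\ref{sf2}) the strict derivative $Df : U \to \mathrm{Mat}_{n \times n}(\Mm)$ is continuous. Since the determinant is a polynomial, hence continuous, and $\Mm^\times$ is open in $\Mm$, the set
\begin{equation*}
  U' = \{\bx \in U : Df(\bx) \text{ is invertible}\}
\end{equation*}
is open and contains $\ba$. Next, by Fact~\ref{strict-facts}(\ref{sf4}), since $Df(\ba)$ has trivial kernel, $f$ is injective on some open neighborhood $U_0 \subseteq U'$ of $\ba$. So on $U_0$, both $f$ is injective and $Df$ is invertible throughout.

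Now apply the preceding Corollary to $f \restriction U_0$: $f$ is an open map on $U_0$. Thus $V_0 := f(U_0)$ is an open neighborhood of $f(\ba)$, and $f : U_0 \to V_0$ is a continuous open bijection, hence a homeomorphism. In particular $f^{-1} : V_0 \to U_0$ is continuous. Finally, for each point $\bb \in V_0$, applying Fact~\ref{strict-facts}(\ref{sf6}) at $\bb = f(\bx)$ (with $\bx \in U_0$) shows that $f^{-1}$ is strictly differentiable at $\bb$, with derivative $Df(\bx)^{-1}$. This holds at every point of $V_0$, so $f^{-1}$ is strictly differentiable on $V_0$, completing the proof.

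No step is really an obstacle: all of the analytical content lives in the preceding lemma (contraction mapping giving openness) and in Fact~\ref{strict-facts}. The only thing to be careful about is the order of shrinkings --- one must secure invertibility of $Df$ on an open neighborhood before invoking Fact~\ref{strict-facts}(\ref{sf6}), since that fact requires $f$ to be a homeomorphism onto its image with invertible strict derivative at the relevant point.
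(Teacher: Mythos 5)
Your proposal is correct and follows essentially the same route as the paper's own proof: shrink first to make $Df$ invertible (using continuity from Fact~\ref{strict-facts}(\ref{sf2})), shrink again for injectivity via Fact~\ref{strict-facts}(\ref{sf4}), invoke the openness corollary to get a homeomorphism, and finish with Fact~\ref{strict-facts}(\ref{sf6}).
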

\begin{proof}
  The derivative $Df(\bx)$ is continuous by
  Fact~\ref{strict-facts}(\ref{sf2}).  So we can find a neighborhood
  $U_0 \ni \ba$ such that $Df(\bx)$ is invertible for every $\bx \in
  U_0$.  The fact that $Df(\ba)$ is invertible implies that $f$ is
  injective on a neighborhood of $\ba$, by
  Fact~\ref{strict-facts}(\ref{sf4}).  Shrinking $U_0$ further, we may
  assume that $f \restriction U_0$ is injective.  Then $f \restriction
  U_0$ is a continuous, injective, open map, so it is a homeomorphism
  onto its image $V_0$.  The inverse function $f^{-1} : V_0 \to U_0$
  is strictly differentiable by Fact~\ref{strict-facts}(\ref{sf6}).
\end{proof}

\section{Recognizing $p$-adic Lie groups} \label{lazard-nonsense}
Recall that a profinite group $G$ is a \emph{pro-$p$ group} if it is
an inverse limit of finite $p$-groups \cite[Proposition~1.12]{app}.
Using work of Lazard \cite{lazard}, as reported in \cite{app}, one can
prove the following:
\begin{fact} \label{reference-hunt}
  Let $G$ be a pro-$p$ group.  Let $S_i$ be the image of the $p^i$th
  power map $G \to G$.  Let $n$ be an integer.  Suppose the following
  conditions hold:
  \begin{enumerate}
  \item $G$ has no $p$-torsion.
  \item $S_1$ and $S_2$ are normal open subgroups of $G$, and the
    quotients $G/S_1$ and $G/S_2$ are abelian.
  \item $G/S_1$ has size $p^n$.
  \end{enumerate}
  Then $G$ is isomorphic, as a topological group, to an
  $n$-dimensional Lie group over $\Qq_p$.
\end{fact}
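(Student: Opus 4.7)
My strategy is to verify that $G$ itself is a \emph{uniformly powerful} pro-$p$ group, and then invoke Lazard's characterization (essentially \cite[Theorem~8.1]{app}): a topological group is a $p$-adic Lie group if and only if it contains an open uniformly powerful pro-$p$ subgroup, in which case the Lie group dimension equals $d(G)$, the minimal number of topological generators.

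The first step is to identify $S_i$ with $G^{p^i}$, the closure of the subgroup of $G$ generated by $p^i$-th powers. Because $S_i$ is open in a profinite group it is also closed, and it contains every $p^i$-th power, hence contains $G^{p^i}$; the reverse inclusion is trivial. Under this identification, abelianness of $G/S_1$ and $G/S_2$ translates into $[G,G] \subseteq G^p$ and $[G,G] \subseteq G^{p^2}$, respectively. The first inclusion is the standard definition of \emph{powerful} for odd $p$, while the second is the standard definition at $p = 2$ (where powerfulness requires $[G,G] \subseteq G^4 = G^{p^2}$). So $G$ is powerful in either case.

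Next I would show $G$ is finitely generated with $d(G) = n$. In a pro-$p$ group the Frattini subgroup is $\Phi(G) = \overline{G^p[G,G]}$, and $d(G) = \dim_{\Ff_p} G/\Phi(G)$. Since we have just shown $[G,G] \subseteq G^p = S_1$, we get $\Phi(G) = S_1$, so $G/\Phi(G) \cong G/S_1$ is elementary abelian of order $p^n$, giving $d(G) = n$. Combining powerfulness, finite generation, and torsion-freeness (Condition~(1)), a standard result from the theory of analytic pro-$p$ groups (see the relevant chapter of \cite{app}) upgrades $G$ from powerful to uniformly powerful. Lazard's theorem then produces a $p$-adic Lie group structure on $G$ of dimension $d(G) = n$.

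I do not anticipate a serious obstacle; the argument is essentially bookkeeping, matching the three hypotheses to the standard definitions in \cite{app}. The only mild subtlety is the different threshold for powerfulness at $p = 2$ versus $p$ odd, which is precisely why the hypotheses involve both $S_1$ and $S_2$ rather than just $S_1$.
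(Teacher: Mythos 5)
Your proof is correct and follows essentially the same route as the paper: identify $S_1 = \overline{G^p}$ and $S_2 = \overline{G^4}$, conclude $G$ is powerful (handling $p=2$ via $S_2$), show $\Phi(G) = S_1$ so that $G$ is finitely generated with $d(G) = n$, deduce uniform powerfulness from $p$-torsion-freeness, and invoke Lazard. The one small streamlining is that you obtain $\Phi(G) = S_1$ directly from $[G,G] \subseteq \overline{G^p}$, whereas the paper first notes $\Phi(G) \supseteq S_1$ is open (hence $G$ is finitely generated) and then uses the fact that $\Phi$ of a finitely generated powerful pro-$p$ group is the set of $p$th powers; both are valid.
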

Fact~\ref{reference-hunt} is probably obvious or well-known to experts
on $p$-adic Lie groups, but for the rest of us, we need to chain
together some references from \cite{app}.  First we recall some
definitions and facts:
\begin{enumerate}
\item \label{item1} A pro-$p$ group $G$ is \emph{powerful}
  \cite[Definition~3.1]{app} if $p$ is odd and $G/\overline{G^p}$ is
  abelian, or $p = 2$ and $G/\overline{G^4}$ is abelian, where $G^n$
  denotes the subgroup generated by $n$th powers.
\item \label{item2} If $G$ is a profinite group, the \emph{Frattini subgroup}
  $\Phi(G)$ is the intersection of all maximal open proper subgroups
  of $G$ \cite[Definition~1.8]{app}.
\item \label{item3} If $G$ is a pro-$p$ group, then the Frattini subgroup $\Phi(G)$
  is the closure of the subgroup generated by $p$th powers and
  commutators \cite[Proposition~1.13]{app}:
  \begin{equation*}
    \Phi(G) = \overline{G^p [G,G]}.
  \end{equation*}
\item \label{item4} A \emph{topological generating set} of $G$ is a subset $X$
  generating a dense subgroup of $G$, and $G$ is \emph{finitely
  generated} if there is a finite topological generating set
  \cite[p.~20]{app}.
\item \label{item5} A pro-$p$ group is finitely generated iff $\Phi(G)$ is open
  \cite[Proposition~1.14]{app}.
\item \label{item6} If $G$ is a finitely generated powerful pro-$p$ group, then
  $\Phi(G)$ is exactly the set of $p$th powers \cite[Lemma~3.4]{app}.
\item \label{item7} If $G$ is a topological group, then $\operatorname{d}(G)$ is the
  minimum cardinality of a topological generating set for $G$.  When
  $G$ is a pro-$p$ group, $\operatorname{d}(G)$ equals the dimension
  of $G/\Phi(G)$ as a vector space over $\Ff_p$.  (See the paragraph
  above \cite[Theorem~3.8]{app}.)
\item \label{item8} A pro-$p$ group is \emph{uniformly powerful} if it is powerful,
  finitely generated, and has no $p$-torsion
  \cite[Theorem~4.5]{app}.\footnote{Theorem~4.5 in \cite{app} says
  ``torsion-free'' rather than ``$p$-torsion free'', but as noted in
  the third sentence of the proof, these conditions are equivalent for
  pro-$p$ groups.}
\item \label{item9} If $G$ is a uniformly powerful pro-$p$ group, then $G$ is a Lie
  group over $\Qq_p$ \cite[Theorem~8.18]{app}.  Moreover, the
  dimension of $G$ as a Lie group over $\Qq_p$ equals
  $\operatorname{d}(G)$ by Theorem~8.36 and Definition~4.7 of
  \cite{app}.
\end{enumerate}
Combining these nine ingredients, we prove Fact~\ref{reference-hunt}:
\begin{proof}
  By our assumptions on $G$,
  \begin{gather*}
    \overline{G^p} = \overline{\langle S_1 \rangle} = \overline{S_1} =
    S_1 \text{ for any $p$} \\
    \overline{G^4} = \overline{\langle S_2 \rangle} = \overline{S_2} =
    S_2 \text{ when $p = 2$},
  \end{gather*}
  and both the quotients $G/S_1$ and $G/S_2$ are abelian.  Thus $G$ is
  a powerful pro-$p$ group (Ingredient \ref{item1}).  The Frattini subgroup  $\Phi(G)$ (Ingredient \ref{item2}) is clopen
  because it contains (Ingredient \ref{item3}) the clopen subgroup $S_1$:
  \begin{equation*}
    \Phi(G) = \overline{G^p [G,G]} \supseteq \overline{G^p} = S_1.
  \end{equation*}
  Therefore (Ingredient \ref{item5}) $G$ is finitely generated (Ingredient \ref{item4}), which implies (Ingredient \ref{item6}) that $\Phi(G) =
  \{x^p : x \in G\} = S_1$.  Moreover, $G$ is uniformly powerful
  because $G$ has no $p$-torsion (Ingredient \ref{item8}).  Then $G$ is a $p$-adic Lie group (Ingredient \ref{item9}) of
  dimension
  \begin{equation*}
    \dim(G) = \operatorname{d}(G) = \dim_{\Ff_p} G/\Phi(G) =
    \dim_{\Ff_p} G/S_1 = n,
  \end{equation*}
  (Ingredients \ref{item9}, \ref{item7})
  since $G/S_1$ has cardinality $p^n$.
\end{proof}
We will apply this to the following setting:
\begin{lemma} \label{recognizer2}
  Let $K$ be a degree $e$ extension of $\Qq_p$, and let $\Oo_K$ be the
  ring of integers in $K$.  Let $d$ be an integer.  Let $\star$ be a
  group operation on $\Oo_K^d$.  Let $f : \Oo_K^d \to \Oo_K^d$ be the
  $p$th power map (with respect to $\star$), i.e.,
  \begin{equation*}
    f(x) = \underbrace{x \star x \star \cdots \star x}_{\text{$p$ times}}.
  \end{equation*}
  Suppose the following conditions hold:
  \begin{enumerate}
  \item \label{sump1} $(\Oo_K^d,\star)$ is a topological group with respect to the
    usual topology on $\Oo_K$.
  \item \label{sump2} The identity element is $\bar{0}$.
  \item \label{sump3} For every $n \in \Nn$, the set $p^n \Oo_K^d$ is a normal
    subgroup of $(\Oo_K^d,\star)$ whose cosets are the additive cosets
    $\ba + p^n \Oo_K^d$.  Equivalently, the equivalence relation
    \begin{equation*}
      \bx \equiv_n \by \iff \bx - \by \in p^n \Oo_K^d
    \end{equation*}
    is a congruence on the group $(\Oo_K^d,\star)$.
  \item \label{sump4} For $n \le 2$, the group structure on
    $(\Oo_K^d,\star)/p^n\Oo_K^d$ agrees with the additive structure
    $(\Oo_K^d,+)/p^n\Oo_K^d$.  In other words,
    \begin{equation*}
      \bx + \by \equiv \bx \star \by \pmod{p^n \Oo_K^d}.
    \end{equation*}
  \item \label{sump5} The map $f$ scales distances by a factor of $p$:
    \begin{equation*}
      v(f(\bx) - f(\by)) = v(p) + v(\bx - \by).
    \end{equation*}
  \item \label{sump6} The image of $f$ contains $p \Oo_K^d$.
  \end{enumerate}
  Then $(\Oo_K,\star)$ is an $n$-dimensional $p$-adic Lie group for $n
  = de$.
\end{lemma}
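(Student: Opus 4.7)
The plan is to apply Fact~\ref{reference-hunt} to the group $G = (\Oo_K^d, \star)$, with $n = de$. The whole argument is a verification of the four hypotheses of that fact, and the main work is to identify the images $S_1$ and $S_2$ of the $p$th and $p^2$th power maps.

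First I would check that $G$ is a pro-$p$ group with no $p$-torsion. By (\ref{sump1}) it is a compact, Hausdorff, totally disconnected topological group (since the underlying space is $\Oo_K^d$), hence profinite. By (\ref{sump3}) the subgroups $p^n\Oo_K^d$ are open normal subgroups forming a neighborhood basis of the identity, and since $\Oo_K$ is a free $\Zz_p$-module of rank $e$ we have $|G/p^n\Oo_K^d| = p^{nde}$; every open normal subgroup contains some $p^n\Oo_K^d$, so every continuous finite quotient is a $p$-group. Condition (\ref{sump5}) applied with $\by = \bar 0$ yields $v(f(\bx)) = v(p) + v(\bx)$, so $f(\bx) = \bar 0$ forces $\bx = \bar 0$; hence $f$ is injective and $G$ has no $p$-torsion.

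The central step is the identification $S_1 = p\Oo_K^d$ and $S_2 = p^2\Oo_K^d$. The inclusion $f(\Oo_K^d) \subseteq p\Oo_K^d$ follows from $v(f(\bx)) = v(p) + v(\bx) \ge v(p)$, and (\ref{sump6}) gives the reverse inclusion, so $S_1 = p\Oo_K^d$. For $S_2$, note that $S_2 = f(f(\Oo_K^d)) = f(S_1) = f(p\Oo_K^d)$, and (\ref{sump5}) again gives $f(p\Oo_K^d) \subseteq p^2\Oo_K^d$. For the reverse, given any $\by \in p^2\Oo_K^d$, (\ref{sump6}) supplies some $\bx \in \Oo_K^d$ with $f(\bx) = \by$, and (\ref{sump5}) forces $v(\bx) = v(\by) - v(p) \ge v(p)$, so $\bx \in p\Oo_K^d$. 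Thus $S_2 = p^2\Oo_K^d$. This pullback step is the only point that requires a little care; everything else is bookkeeping.

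With $S_1$ and $S_2$ identified, the remaining hypotheses are immediate: $S_1, S_2$ are normal open subgroups by (\ref{sump3}), and by (\ref{sump4}) the quotients $G/S_1$ and $G/S_2$ coincide with the additive quotients $(\Oo_K^d, +)/p^n\Oo_K^d$ for $n \le 2$, which are abelian. Finally $|G/S_1| = |\Oo_K^d / p\Oo_K^d| = p^{de}$. All hypotheses of Fact~\ref{reference-hunt} hold with $n = de$, so $G$ is a $de$-dimensional Lie group over $\Qq_p$, as required.
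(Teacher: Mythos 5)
Your proof is correct and follows essentially the same route as the paper's: verify the hypotheses of Fact~\ref{reference-hunt} by using conditions (\ref{sump5}) and (\ref{sump6}) to identify $S_i$ with $p^i\Oo_K^d$, then read off the remaining hypotheses from (\ref{sump3}) and (\ref{sump4}). The only cosmetic difference is that the paper proves $f(p^i\Oo_K^d) = p^{i+1}\Oo_K^d$ for all $i$ at once, while you handle $i=1,2$ separately, which suffices.
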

\begin{proof}
  Let $G = (\Oo_K^d,\star)$ and let $G_n$ be the normal subgroup $(p^n
  \Oo_K^d, \star)$ from Assumption~\ref{sump3}.  Since we are taking
  the standard topology on $\Oo_K^d$ (Assumption~\ref{sump1}), the
  descending chain $G \supseteq G_1 \supseteq G_2 \supseteq \cdots$ is
  a neighborhood basis of the identity element $\bar{0}$
  (Assumption~\ref{sump2}).  By Assumption~\ref{sump3}, the index $|G
  : G_i|$ equals the additive index $|(G,+) : (G_i,+)|$.  Since
  $(\Oo_K^d,+)$ is isomorphic to $(\Zz_p^{de},+)$, the index $|G :
  G_i|$ equals $p^{dei}$.  In particular,
  \begin{equation*}
    G \cong \varprojlim_{i \to \infty} G/G_i \text{ is a pro-$p$ group}.
  \end{equation*}
  Taking $\by = \bar{0}$ in Assumption~\ref{sump5}, we see that
  \begin{equation*}
    v(f(\bx)) = v(\bx) + v(p).  \tag{$\ast$}
  \end{equation*}
  In particular, if $\bx \ne \bar{0}$, then $v(f(\bx)) = v(\bx) + v(p)
  < \infty$, so $f(\bx) \ne \bar{0}$.  This shows that the group
  $(\Oo_K^d,\star)$ has no $p$-torsion.

  Equation ($\ast$) also shows that $f$ maps $p^i \Oo_K^d$ into
  $p^{i+1} \Oo_K^d$.  In fact,
  \begin{equation*}
    f(p^i \Oo_K^d) = p^{i+1} \Oo_K^d
  \end{equation*}
  because if $\by$ is on the right hand side, then $\by \in p^{i+1}
  \Oo_K^d \subseteq p \Oo_K^d \subseteq \img(f)$ by
  Assumption~\ref{sump6}, so $\by = f(\bx)$ for \emph{some} $\bx \in
  \Oo_K^d$.  But then Equation~($\ast$) shows that $v(\bx) = v(\by) -
  v(p) \ge i \cdot v(p)$, so $\bx \in p^i \Oo_K^d$.

  Consequently, the image of the $p^i$th power map is exactly $p^i
  \Oo_K^d = G_i$.  In the notation of Fact~\ref{reference-hunt}, we
  have $S_i = G_i$.  In particular, $S_i$ is a normal open subgroup of
  $G$, and $G/S_i$ has size $p^{dei}$.  Then $G/S_1$ has size
  $p^{de}$.  For $i \le 2$, the quotient $G/S_i$ is abelian by
  Assumption~\ref{sump4}.

  Now all the conditions of Fact~\ref{reference-hunt} are satisfied,
  and so $G$ is a $p$-adic Lie group of dimension $de$.
\end{proof}

\section{Conditions $\mathfrak{A}$, $\mathfrak{B}$, $\mathfrak{C}$, $\mathfrak{D}$, $\mathfrak{E}$}
Work in a model $M$ of the $P$-minimal theory $T$.  Let $\Oo$ denote the valuation ring of $M$.
\begin{definition} \label{ABC}
  Let $G = (\Oo^d,\star)$ be a definable group with underlying set
  $\Oo^d$, such that $\bar{0} \in \Oo^d$ is the identity element.  We
  define the following conditions:
  \begin{itemize}
  \item[$(\mathfrak{A}_n)$] For $n \in \Nn$, the group $G$ satisfies
    condition $\mathfrak{A}_n$ if the ball $p^n\Oo^d$ is a normal
    subgroup of $G$, and the $\star$-cosets of $p^n\Oo^d$ agree with
    the additive cosets:
    \begin{equation*}
      \{a \star p^n\Oo^d : a \in \Oo^d\} = \{a + p^n\Oo^d : a \in \Oo^d\}.
    \end{equation*}
    Equivalently, the relation $v(\bx - \by) \ge n \cdot v(p)$ is a
    congruence on $G$.
  \item[$(\mathfrak{A}_\omega)$] The group $G$ satisfies
    $\mathfrak{A}_\omega$ if it satisfies $\mathfrak{A}_n$ for all $n
    \in \Nn$.
  \item[$(\mathfrak{A}_\infty)$] The group $G$ satisfies
    $\mathfrak{A}_\infty$ if for any $\bx, \by, \bz$ in $\Oo^n$ and
    $\gamma \in \Gamma$, the relation $v(\bx - \by) \ge \gamma$ if a
    congruence on $G$.
  \item[$(\mathfrak{B}_n)$] For $n \in \Nn$, the group $G$ satisfies
    condition $\mathfrak{B}_n$ if it satisfies $\mathfrak{A}_n$, and
    moreover the group operation on the $\star$-cosets of $p^n\Oo^d$
    is the usual addition.

    Equivalently, $G = (\Oo^d,\star)$ satisfies $\mathfrak{B}_n$ if
    \begin{equation*}
      \bx \star \by \equiv \bx + \by \pmod{p^n\Oo^d}
    \end{equation*}
    for every $\bx,\by \in \Oo^d$.
  \item[$(\mathfrak{B}_\omega)$] The group $G$ satisfies
    $\mathfrak{B}_\omega$ if it satisfies $\mathfrak{B}_n$ for all $n
    \in \Nn$.
  \item[$(\mathfrak{C}_n)$] For $n \in \Nn$, the group $G$ satisfies
    condition $\mathfrak{C}_n$ if
    \begin{itemize}
    \item It satisfies $\mathfrak{A}_n$
    \item Let $f(\bx,\by) = \bx \star \by$.  For any multi-indices
      $I,J$ with $|I|+|J| \le n$, the Taylor series coefficient
      \begin{equation*}
        \bar{c}_{I,J} = \frac{1}{I!J!} \frac{\partial f}{\partial
          \bx^I \partial \by^J} (\bar{0}, \bar{0})
      \end{equation*}
      exists and is in $\Oo^d \cap p^{|I|+|J|-1}\Oo^d$.
    \item The quotient group $G/p^n\Oo^d$ has the following group
      structure:
      \begin{equation*}
        \bx \star \by \equiv \sum_{\substack{I,J \\ |I|+|J| \le n}} \bar{c}_{I,J}
        \bx^I \by^J \pmod{p^n\Oo^d}.
      \end{equation*}
    \end{itemize}
  \item[$(\mathfrak{C}_\omega)$] $G$ satisfies $\mathfrak{C}_\omega$
    if it satisfies $\mathfrak{C}_n$ for all $n \in \Nn$.
  \item[$(\mathfrak{D})$] $G$ satisfies $\mathfrak{D}$ if the $p$th
    power map $f : G \to G$ satisfies the condition
    \begin{equation*}
      v(f(\bx) - f(\by)) = v(\bx - \by) + v(p)
    \end{equation*}
    for any distinct $\bx, \by \in G$.
  \item[$(\mathfrak{E})$] $G$ satisfies $\mathfrak{E}$ if the image of
    the $p$th power map $f : G \to G$ contains the ball $p \Oo^d$.
  \end{itemize}
\end{definition}
\begin{remark}
  Condition $\mathfrak{B}_\omega$ says something like $x \star y
  \approx x + y$, i.e., $\star$ is close to addition.  We could define
  $\mathfrak{B}_\infty$ by analogy to $\mathfrak{A}_\infty$, but it
  would imply $x \star y = x + y$ for all $x,y$.
\end{remark}
\begin{remark} \label{def-el}
  Conditions $\mathfrak{A}_n$, $\mathfrak{A}_\infty$,
  $\mathfrak{B}_n$, $\mathfrak{C}_n$, $\mathfrak{D}$, and
  $\mathfrak{E}$ are definable in families, and preserved in
  elementary extensions.  Conditions $\mathfrak{A}_\omega$,
  $\mathfrak{B}_\omega$, and $\mathfrak{C}_\omega$ are type-definable
  in families, and preserved in elementary extensions.
\end{remark}
\begin{example} \label{c-source}
  In the standard model $K$, suppose that $(\Oo^n,\star)$ is a
  definable group structure with $\bar{0}$ as the identity element,
  such that $\star$ is given by a formal power series
  \begin{equation*}
    \bx \star \by = \sum_{I,J} \bc_{I,J} \bx^I \by^J.
  \end{equation*}
  Since $\bx \star \bar{0} = \bar{0} \star \bx = \bx$, we in fact have
  \begin{equation*}
    \bx \star \by = \bx + \by + \sum_{\substack{I,J \\ |I| \ge 1, ~ |J| \ge 1}}
    \bc_{I,J} \bx^I \by^J.
  \end{equation*}
  Suppose $\bc_{I,J} \in p^{|I|+|J|-1} \Oo^n$ for each $I,J$ with
  $|I|,|J| \ge 1$.  Then $\bc_{I,J} \in \Oo^n \cap p^{|I| + |J| - 1}
  \Oo^n$ for each $I,J$, and condition $\mathfrak{C}_\omega$ is easy
  to verify.  The fact that $\mathfrak{A}_n$ holds for all $n$ also
  implies that $\mathfrak{A}_\infty$ holds, since we are in the
  standard model.
\end{example}

\subsection{From $\mathfrak{A}_\omega$ to compact domination}
First, we recall a few well-known facts about $P$-minimal theories.  Recall that \emph{dp-minimal} theories are theories of dp-rank 1 \cite[Definition~4.27]{NIPguide}, \cite{dpExamples}.  Informally, these are the ``1-dimensional'' NIP theories.
\begin{remark}\label{pdpm}
  $P$-minimal theories are dp-minimal, because $p$-adically closed
  fields are dp-minimal \cite[\S 6]{dpExamples}, and dp-minimality
  depends only on the collection of definable sets in one variable.
\end{remark}
Recall that \emph{distal theories} are a special class of NIP theories that are ``anti-stable'', in some informal sense.  See \cite[Chapter~9]{NIPguide} for a precise definition.
\begin{remark}
  $P$-minimal theories are distal.  This is well-known, but I don't have
  a reference on hand, so here is a proof:
\end{remark}
\begin{proof}
  $p$-adically closed fields have definable Skolem functions, so there
  is a definable function $f_0(x,y)$ such that
  \begin{equation*}
    f_0(x+y,xy) \in \{x,y\}.
  \end{equation*}
  Letting $f(x,y) = f_0(x+y,xy)$, we get a definable function $f$ such
  that
  \begin{gather*}
    f(x,y) = f(y,x) \\
    f(x,y) \in \{x,y\}.
  \end{gather*}
  Using $f$, we see that $\tp(a,b) \ne \tp(b,a)$ for any $a \ne b$.
  Consequently, the only totally indiscernible sequences are the
  constant sequences.  The function $f$ also exists in $P$-minimal
  expansions, of course.  Therefore, $P$-minimal theories also have the
  property that totally indiscernible sequences are constant.  If $p$
  is a global type which is generically stable, then the Morley
  sequence of $p$ is totally indiscernible
  \cite[Proposition~3.2(ii)]{udi-anand}, hence constant, which makes
  $p$ be a constant/realized type.  Thus $P$-minimal theories have no
  non-constant generically stable types.  By
  \cite[Corollary~9.19]{NIPguide}, $P$-minimal theories are distal.
\end{proof}
Recall that a definable group $G$ has \emph{finitely satisfiable
generics} (fsg) if there is a global type $p$ on $G$ and a small set
$A_0$ such that every translate of $p$ is finitely satisfiable in
$A_0$ \cite[Definition~4.1]{HPP}.  Equivalently, $G$ has fsg if there
is a gobal type $p$, finitely satisfiable in a small set $A_0$, such
that $p$ is ``almost translation-invariant'' in the sense that $\{g
\cdot p : g \in G\}$ is small.
\begin{remark} \label{fsg}
  If $G$ is a definable group in a distal theory, then the following
  are equivalent:
  \begin{enumerate}
  \item $G$ has finitely satisfiable generics (fsg).
  \item $G$ is compactly dominated.
  \item $G$ is compactly dominated by (normalized) Haar measure on $G/G^{00}$.
  \end{enumerate}
  The equivalence of (2) and (3) is \cite[Lemma~8.36]{NIPguide}.  The
  equivalence of (1) and (2) is stated in
  \cite[Example~8.42]{NIPguide}, but the proof is a bit hidden.  For
  reference:
  \begin{itemize}
  \item $G$ is compactly dominated iff $G$ has a smooth left-invariant
    measure \cite[Theorem~8.37]{NIPguide}.
  \item $G$ is fsg iff $G$ is definably amenable and $G$ has a
    generically stable left-invariant measure
    \cite[Proposition~8.33]{NIPguide}.
  \item $G$ is definably amenable iff $G$ has a left-invariant measure
    \cite[Definition 8.12]{NIPguide}.  Thus $G$ is fsg iff $G$ has a
    generically stable left-invariant measure.
  \item Smooth measures are generically stable \cite[Lemma~7.17 and
    \S7.5]{NIPguide}
  \item In distal theories, generically stable measures are smooth
    \cite[Proposition~9.26]{NIPguide}.
  \item Therefore, in distal theories, generically stable measures
    are the same thing as smooth measures, and fsg groups are the same
    thing as compactly dominated groups.
  \end{itemize}
\end{remark}
Now work in a monster model $\Mm$ of the $P$-minimal theory $T$.
\begin{proposition} \label{a-b}
  Suppose $G = (\Oo^d,\star)$ satisfies condition
  $\mathfrak{A}_\omega$.
  \begin{enumerate}
  \item $G$ is fsg and compactly dominated.
  \item $G^{00}$ is the subgroup $\bigcap_{n = 0}^\infty p^n \Oo^d$.
  \end{enumerate}
\end{proposition}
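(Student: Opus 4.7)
The plan is to set $H := \bigcap_{n < \omega} p^n \Oo^d$, verify that it is a type-definable normal subgroup of bounded index, establish compact domination of $G$ by $G/H$ via a topological-boundary argument, and finally identify $H$ with $G^{00}$.

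\emph{Easy inclusion and set-up.} By $\mathfrak{A}_n$, each $p^n \Oo^d$ is a normal $\star$-subgroup whose cosets coincide with the additive cosets, so its index is $|\Oo/p^n\Oo|^d = p^{nde}$ where $e = [K : \Qq_p]$. Hence $H$ is type-definable with $G/H$ of bounded cardinality, and therefore $G^{00} \subseteq H$. Equip $G/H \cong \varprojlim_n G/p^n\Oo^d$ with the logic topology, making it a profinite compact Hausdorff group, and let $\mu$ denote its Haar measure; let $\pi : G \to G/H$ be the projection.

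\emph{Compact domination.} For a definable $D \subseteq G$, set
\begin{equation*}
B_D \;=\; \{\, c \in G/H : \pi^{-1}(c) \cap D \neq \varnothing \text{ and } \pi^{-1}(c) \cap (G \setminus D) \neq \varnothing\,\}.
\end{equation*}
If $c = a + H$ with $a \in \Oo^d$, the fibre $\pi^{-1}(c) = a + H$ is the monad of $a$ in the $p$-adic topology; by saturation of $\Mm$, it meets $D$ iff $a \in \overline{D}$ and meets $G \setminus D$ iff $a \in \overline{G \setminus D}$. Thus $B_D = \pi(\partial D)$ for $\partial D$ the topological boundary. By the small boundaries property \cite[Theorem~3.5]{p-minimal-cells}, $\dim \partial D < d$. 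Using cell decomposition to count, a definable set of dimension $< d$ meets at most $O(p^{n(d-1)e})$ cosets of $p^n\Oo^d$ out of the $p^{nde}$ total, so the density tends to $0$ as $n \to \infty$ and $\mu(B_D) = 0$. Since $G^{00} \subseteq H$, the bad set in $G/G^{00}$ is the preimage of $B_D$ under the continuous surjection $G/G^{00} \twoheadrightarrow G/H$, still of Haar measure zero. This yields compact domination of $G$ by $G/G^{00}$ and, by Remark~\ref{fsg}, fsg, finishing Part~(1).

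\emph{Identifying $G^{00}$ (Part (2)).} It remains to show $H \subseteq G^{00}$. I would route through the definable-connected-component $G^0$ (the intersection of $\emptyset$-definable finite-index subgroups), proving first that $G^0 = H$ and then that $G^{00} = G^0$. For the former: any definable finite-index subgroup $H_0 \le G$ has full dimension $d$, so contains a ball $b + p^n\Oo^d$; by $\mathfrak{A}_n$, $\star$-translating by $b^{-1} \in H_0$ turns this ball into $p^n\Oo^d$, showing $p^n\Oo^d \subseteq H_0$ and hence $H \subseteq H_0$. Intersecting over all such $H_0$ (which include the $p^m\Oo^d$ themselves) gives $G^0 = H$. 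For $G^{00} = G^0$, one shows that the identity component of the compact Hausdorff group $G/G^{00}$ is trivial (so $G/G^{00}$ is profinite, forcing $G^0 \subseteq G^{00}$), leveraging the totally disconnected $p$-adic topology on $G$ together with standard NIP tools.

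\emph{Main obstacle.} The principal technical hurdle is the measure-zero estimate $\mu(\pi(\partial D)) = 0$, where one must carefully translate the dimension bound $\dim \partial D < d$ into an explicit coset-count via cell decomposition. The most conceptually delicate point is showing $G^{00} = G^0$ in the last step, i.e., ruling out a hypothetical non-trivial connected component of $G/G^{00}$; this should reflect the totally disconnected nature of the ambient $p$-adic topology, but deserves careful NIP-theoretic justification.
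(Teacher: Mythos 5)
Your plan is to verify compact domination directly from the small-boundaries property and a coset count, but the key identification on which this rests is false. Writing $H = \bigcap_n p^n\Oo^d$ and $\pi: G \to G/H$, you claim that for $c = a + H$ the fibre $\pi^{-1}(c)$ meets $D$ iff $a \in \overline D$, hence $B_D = \pi(\partial D)$. The direction ``$a \in \overline D \Rightarrow (a+H)\cap D \neq \varnothing$'' is correct (by saturation), but the converse fails: if $x \in (a+H)\cap D$ with $x \neq a$, then $x$ lies in every \emph{standard} ball around $a$, but not in balls of radius $> v(x-a)$, so $a$ need not be in $\overline D$. Concretely, take $D = B_\gamma(a)$ for a nonstandard radius $\gamma$: this $D$ is clopen, so $\partial D = \varnothing$ and $\pi(\partial D) = \varnothing$, yet $a + H$ meets both $D$ and its complement, so $B_D = \{a+H\} \neq \varnothing$. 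In other words, cosets of $H$ can straddle $D$ and $D^c$ without touching the boundary; the small-boundaries property $\dim\partial D < d$ does not bound the error set. To fix this you would have to show separately that the ``straddling but boundary-free'' cosets have measure zero (this does hold, by a uniform-continuity/compactness argument on the number of $p^n\Oo^d$-cosets that fail to be monochromatic), but it is exactly the kind of counting estimate that you wave at with ``using cell decomposition to count'' without supplying.

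The last step has a similar gap: to get $H \subseteq G^{00}$ you propose to show that $G/G^{00}$ is profinite by ``leveraging the totally disconnected $p$-adic topology together with standard NIP tools,'' but no argument is given, and triviality of $G^0/G^{00}$ is precisely the nontrivial content here. The paper handles this via the smooth left-$\star$-invariant measure $\mu$: any definable $D \supseteq G^{00}$ has positive $\mu$-measure because finitely many $\star$-translates cover $\Oo^d$, hence (by the link between $\mu$ and Haar measure) $D$ contains a full additive coset of $H$, whence $D \div D \supseteq H$; intersecting over all such $D$ gives $G^{00} \supseteq H$. Your route, by contrast, has no mechanism for controlling $G/G^{00}$ other than re-doing this measure argument in disguise.

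Finally, a comparison of approaches. The paper does not attempt to prove compact domination from scratch at all. It cites the known fsg-ness and $(\Oo,+)^{00} = \bigcap_n p^n\Oo$ for the additive group in the base theory $\Th(K)$ (Onshuus--Pillay), transfers these to the $P$-minimal expansion $T$ by the crucial observation that $P$-minimality makes $1$-types and $1$-variable type-definable sets identical in $T$ and $\Th(K)$, and deduces that $(\Oo^d,+)$ is fsg in $T$ with the expected $00$. It then takes the smooth left-invariant measure $\mu$ on $(\Oo^d,+)$ and argues (from compact domination and its link to Haar measure on $\Oo_K^d$) that $\mu$ is invariant under \emph{any} definable bijection preserving the congruences modulo $p^n\Oo^d$; by $\mathfrak A_\omega$, left $\star$-translations are such bijections, so $\mu$ is $\star$-invariant, and smoothness then gives compact domination of $(\Oo^d,\star)$ by \cite[Theorem~8.37]{NIPguide}. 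This completely avoids any topological-boundary or coset-counting estimate and is a much more economical route than what you sketch.
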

\begin{proof}
  The proof requires several steps.  We first need to
  analyze the groups $(\Oo,+)$ and $(\Oo^d,+)$.

  In the base theory $\Th(K)$ of $p$-adically closed fields (i.e.,
  the reduct to $\mathcal{L}_{Rings}$) the additive group $(\Oo,+)$ is
  fsg and the connected component $(\Oo,+)^{00}$ is $\bigcap_{n = 0}^\infty p^n \Oo$, by \cite[Corollaries~2.3--2.4]{O-P}.

  The group $(\Oo,+)$ is also fsg in the $P$-minimal expansion $T$.
  To see this, take an almost translation-invariant global type $q$ in
  the $\mathcal{L}_{Rings}$-theory, finitely satisfiable in a small
  set $A$.  Then $q$ extends uniquely to a global type $\hat{q}$ in
  the expansion, because the boolean algebra of definable sets is the
  same in the two languages (by $P$-minimality), so the spaces of
  global 1-types are the same.  It is clear that $\hat{q}$ is almost
  translation-invariant and finitely satisfiable in $A$, and so
  $(\Oo,+)$ is fsg in the expansion.

  Similarly, $(\Oo,+)^{00}$ in the expansion is still $\bigcap_{n=0}^\infty p^n \Oo$.  This holds because the collection of
  type-definable subsets of $(\Oo,+)$ is the same in both languages,
  by $P$-minimality again.  So the two collections
  \begin{align*}
    \{H : H \text{ is a type-definable, } H \lhd (\Oo,+), \text{ and
      $(\Oo,+)/H$ is small}\} &\text{ in } \Th(K)
    \\
        \{H : H \text{ is a type-definable, } H \lhd (\Oo,+), \text{ and
      $(\Oo,+)/H$ is small}\} &\text{ in } T
  \end{align*}
  are identical, and have the same minimum element.

  For the rest of the proof, we remain in the $P$-minimal expansion $T$,
  rather than the base theory $\Th(K)$.

  An extension of an fsg group by an fsg group is fsg
  \cite[Proposition~4.5]{HPP}, so in particular a product of two fsg
  groups is fsg.  Therefore $(\Oo^d,+)$ is fsg (in the $P$-minimal
  expansion $T$).  Moreover, $(G \times H)^{00} = G^{00} \times
  H^{00}$.  Therefore \[(\Oo^d,+)^{00} = \bigcap_{n =0}^\infty
  p^n\Oo^d\] holds (in the $P$-minimal expansion $T$).  Note that
  $(\Oo^d,+)/(\Oo^d,+)^{00} \cong \Oo_K^d$.  (For example, it's
  $\Zz_p^d$ when $\Mm$ is elementarily equivalent to $\Qq_p$.)  By
  Remark~\ref{fsg}, the group $(\Oo^d,+)^{00}$ is compactly dominated
  by Haar measure on $\Oo_K^d$.

  The fact that $(\Oo^d,+)$ is fsg and compactly dominated implies by
  \cite[Propositions~8.32, 8.38]{NIPguide} that there is a unique
  smooth, translation-invariant measure $\mu$ on $(\Oo^d,+)$.  By the
  proof of Proposition~8.38 in \cite{NIPguide}, we know that $\mu$ is
  related to Haar measure on $\Oo_K^d$ as follows.  Let $D \subseteq
  \Oo^d$ be definable.  Let $X_0, X_{1/2}, X_1 \subseteq
  (\Oo^d,+)/(\Oo^d,+)^{00} \cong \Oo_K^d$ be the following sets:
  \begin{itemize}
  \item $X_0$ is the set of cosets $a + (\Oo^d,+)^{00}$ disjoint from
    $D$.
  \item $X_1$ is the set of cosets $a + (\Oo^d,+)^{00}$ contained in
    $D$.
  \item $X_{1/2}$ is the remaining cosets.
  \end{itemize}
  Compact domination says that the Haar measure of $X_{1/2}$ is zero.
  By the proof of \cite[Proposition~8.38]{NIPguide}, $\mu(D)$ is the
  Haar measure of $X_1$.

  Let $f : \Oo^d \to \Oo^d$ be a definable bijection such that
  \begin{equation*}
    f(\bx) \equiv f(\by) \pmod{p^n\Oo^d} \iff \bx \equiv \by
    \pmod{p^n\Oo^d} \tag{$\ast$}
  \end{equation*}
  for each $n$.  Then $f$ preserves the relation $\bx \equiv \by
  \pmod{(\Oo^d,+)^{00}}$, so $f$ induces a map $\tilde{f}$ on the
  quotient $(\Oo^d,+)/(\Oo^d,+)^{00} \cong \Oo_K^d$.  It's easy to see
  that $\tilde{f}$ is a measure-preserving homeomorphism.  By compact
  domination, $f$ preserves $\mu$.

  For any $a \in \Oo^d$, the left translation $f(x) = a \star x$
  satisfies ($\ast$), because the group $(\Oo^d,\star)$ satisfies
  $\mathfrak{A}_n$.  Therefore $\mu$ is invariant under left
  translations in $(\Oo^d,\star)$.  The measure $\mu$ is also smooth.
  (Smoothness is unrelated to the group structure, and we chose $\mu$
  to be the smooth translation invariant measure on $(\Oo^d,+)$.)

  Therefore $G = (\Oo^d,\star)$ has a smooth measure invariant under
  left translations.  By \cite[Theorem~8.37]{NIPguide}, $G$ is
  compactly dominated.

  We know that $(\Oo^d,+)^{00} = \bigcap_{n=0}^\infty p^n \Oo^d$.  It
  remains to show that $(\Oo^d,\star)^{00} \stackrel{?}{=}
  (\Oo^d,+)^{00}$.  For each $n$, the set $p^n\Oo^d$ is a normal
  subgroup of both groups, and the cosets are the same in both groups
  (Condition $\mathfrak{A}_n$).  Therefore $p^n\Oo^d$ has finite index
  in both groups, and so
  \begin{equation*}
    (\Oo^d,\star)^{00} \subseteq \bigcap_{n
      = 0}^\infty p^n \Oo^d = (\Oo^d,+)^{00}.
  \end{equation*}
  \begin{claim}
    If $D \subseteq \Oo^d$ is a definable set containing $(\Oo^d,\star)^{00}$, then $D
    \div D := \{\bx \star \by^{-1} : \bx, \by \in D\}$ contains
    $(\Oo^d,+)^{00}$.
  \end{claim}
  \begin{claimproof}
    As $D$ contains $(\Oo^d,\star)^{00}$, boundedly many left
    $\star$-translates of $D$ cover $\Oo^d$.  By compactness, finitely
    many left $\star$-translates cover $\Oo^d$.  These translates have
    the same measure with respect to $\mu$, because $\mu$ is
    left-invariant for $\star$.  Then $D$ must have positive
    $\mu$-measure.  Because of the connection between $\mu$ and Haar
    measure, some additive coset $\ba + (\Oo^d,+)^{00}$ must lie in
    $D$.  By compactness, there is some $n$ such that $\ba + p^n\Oo^d
    \subseteq D$.  By Condition $\mathfrak{A}_n$, the set $\ba +
    p^n\Oo^d$ is a $\star$-coset of $p^n \Oo^d \lhd (\Oo^d,\star)$.
    Then $D \div D$ contains $p^n \Oo^d \supseteq (\Oo^d,+)^{00}$.
  \end{claimproof}
  Because $(\Oo^d,\star)^{00}$ is a type-definable subgroup of
  $(\Oo^d,\star)$, we have
  \begin{gather*}
    (\Oo^d,\star)^{00} = (\Oo^d,\star)^{00} \div (\Oo^d,\star)^{00} \\
    =
    \bigcap \{D \div D : D \text{ is definable and } D \supseteq
    (\Oo^d,\star)^{00}\} \stackrel{\text{(Claim)}}{\supseteq} (\Oo^d,+)^{00}.
  \end{gather*}
  The second equality holds by compactness: if $\Sigma(\bx)$ is the
  partial type defining $(\Oo^d,\star)^{00}$ and $\bc$ is in $\Oo^d$,
  then the following are equivalent:
  \begin{itemize}
  \item $\bc \in (\Oo^d,\star)^{00}$.
  \item There are $\ba, \bb \in (\Oo^d,\star)^{00}$ such that $\bc =
    \ba \star \bb^{-1}$.
  \item The partial type in the variables $\bx, \by$ saying
    \begin{equation*}
      \Sigma(\bx) \text{ and } \Sigma(\by) \text{ and } \bc = \bx \star \by^{-1}
    \end{equation*}
    is finitely satisfiable.
  \item $\bc \in \bigcap \{\phi(\Mm) \div \phi(\Mm) : \phi \in \Sigma\}$.
  \item $\bc \in \bigcap \{D \div D : D \text{ is definable and } D \supseteq
    (\Oo^d,\star)^{00}\}$.
  \end{itemize}
  So we conclude that
  \begin{equation*}
    (\Oo^d,\star)^{00} = (\Oo^d,+)^{00} = \bigcap_{n = 0}^\infty
    p^n\Oo^d. \qedhere
  \end{equation*}
\end{proof}

\subsection{Getting a $p$-adic Lie group}
Continue to work in a monster model $\Mm$ of a $P$-minimal expansion $T$
of $\Th(K)$ for some finite extension $K/\Qq_p$.
\begin{proposition} \label{b-good}
  Suppose $G = (\Oo^d,\star)$ satisfies condition
  $\mathfrak{B}_\omega$.  Then $G$ is fsg and compactly dominated, and
  the compact topological group $G/G^{00}$ is isomorphic to $\Oo_K^d$.
\end{proposition}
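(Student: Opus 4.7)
The plan is to reduce everything to Proposition~\ref{a-b}. Since condition $\mathfrak{B}_n$ explicitly includes $\mathfrak{A}_n$, our group $G$ automatically satisfies $\mathfrak{A}_\omega$. Applying Proposition~\ref{a-b} immediately yields that $G$ is fsg and compactly dominated, and $G^{00} = \bigcap_{n<\omega} p^n \Oo^d$. The only new content is the identification of $G/G^{00}$ with $\Oo_K^d$ as topological groups.

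To obtain this, I would use the fact that the proof of Proposition~\ref{a-b} also established $(\Oo^d,+)^{00} = \bigcap_n p^n \Oo^d$ and $(\Oo^d,+)/(\Oo^d,+)^{00} \cong \Oo_K^d$. Thus $G$ and $(\Oo^d,+)$ share the same underlying set and the same type-definable subgroup $G^{00} = (\Oo^d,+)^{00}$, so the identity on $\Oo^d$ descends to a set bijection $\phi \colon G/G^{00} \to (\Oo^d,+)/(\Oo^d,+)^{00}$. The key computation is that for any $\bx,\by \in \Oo^d$, condition $\mathfrak{B}_n$ gives $\bx \star \by \equiv \bx + \by \pmod{p^n\Oo^d}$ for every $n$, hence
\begin{equation*}
  (\bx \star \by) - (\bx + \by) \in \bigcap_n p^n \Oo^d = G^{00},
\end{equation*}
which is exactly the statement that $\phi$ respects the group operation.

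Finally, $\phi$ is a homeomorphism because the logic topology on either compact quotient is determined solely by the family of definable subsets of $\Oo^d$ together with $G^{00}$ as a set, and these data coincide on the two sides. There is no real obstacle: condition $\mathfrak{B}_\omega$ is essentially engineered to say that $\star$ and $+$ induce the same quotient group structure modulo $\bigcap_n p^n\Oo^d$, and Proposition~\ref{a-b} supplies both the fsg/compact-domination conclusions and the identification of this intersection with $G^{00}$.
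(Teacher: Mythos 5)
Your proposal is correct and follows essentially the same approach as the paper: invoke Proposition~\ref{a-b} for fsg, compact domination, and $G^{00} = \bigcap_n p^n \Oo^d$, and then use $\mathfrak{B}_\omega$ to identify the quotient group structure with that of $(\Oo_K^d,+)$. The paper phrases the last step as an equality of inverse limits $\varprojlim (\Oo^d,\star)/p^n\Oo^d = \varprojlim (\Oo^d,+)/p^n\Oo^d$, whereas you exhibit the identity map on $\Oo^d$ as the isomorphism directly, but these are the same argument.
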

\begin{proof}
  By Proposition~\ref{a-b}, $G^{00}$ is the subgroup $\bigcap_{n = 0}^\infty
  p^n \Oo^d$, and so $G/G^{00}$ is the inverse limit
  \begin{equation*}
    \varprojlim_{n \to \infty} (\Oo^d,\star)/p^n\Oo^d.
  \end{equation*}
  By condition $\mathfrak{B}_\omega$, this is the same as
  \begin{equation*}
    \varprojlim_{n \to \infty} (\Oo^d,+)/p^n\Oo^d,
  \end{equation*}
  which is just $\Oo_K^d$.
\end{proof}
\begin{proposition} \label{c-good}
  Suppose $G = (\Oo^d,\star)$ satisfies condition
  $\mathfrak{C}_\omega$.  Then $G$ is fsg and compactly dominated, and $G/G^{00}$ is isomorphic to a $d$-dimensional Lie group over $K$.
\end{proposition}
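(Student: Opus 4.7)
The plan is to first leverage Proposition~\ref{a-b}: since $\mathfrak{C}_\omega$ includes $\mathfrak{A}_n$ for each $n$, we have $\mathfrak{A}_\omega$, and hence $G$ is fsg and compactly dominated with $G^{00} = \bigcap_n p^n\Oo^d$. It remains to exhibit the compact quotient $G/G^{00}$ as a $d$-dimensional Lie group over $K$.

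I will first identify $G/G^{00}$ with $\Oo_K^d$ as a compact topological additive group. By Condition $\mathfrak{A}_n$, the $\star$-cosets of $p^n\Oo^d$ coincide with the additive cosets, so $G/p^n\Oo^d$ is, as a set, just $\Oo^d/p^n\Oo^d$. The elementary inclusion $\Oo_K \subseteq \Oo$ induces a map $\Oo_K^d/p^n\Oo_K^d \to \Oo^d/p^n\Oo^d$ which is injective by elementarity and between finite sets of the same cardinality, hence an isomorphism; these are compatible across $n$. Taking inverse limits gives a canonical homeomorphism $G/G^{00} \cong \Oo_K^d$ of additive topological groups.

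Next I will analyze the $\star$-operation on $G/G^{00}$. By Condition $\mathfrak{C}_n$, modulo $p^n\Oo^d$ the operation is the polynomial $\sum_{|I|+|J| \le n} \bc_{I,J} \bx^I \by^J$ with $\bc_{I,J} \in \Oo^d \cap p^{|I|+|J|-1}\Oo^d$. The Taylor coefficients $\bc_{I,J}$ are independent of $n$, so reducing them mod $p^n\Oo^d$ and transporting through the isomorphism above yields, for each $(I,J)$, a compatible system whose inverse limit defines an element $\tilde{\bc}_{I,J} \in p^{|I|+|J|-1}\Oo_K^d$. The group law on $G/G^{00} \cong \Oo_K^d$ is then given by the formal power series
\[
\bx \star \by = \sum_{I,J} \tilde{\bc}_{I,J} \bx^I \by^J,
\]
which converges on $\Oo_K^d \times \Oo_K^d$ since $v(\tilde{\bc}_{I,J} \bx^I \by^J) \ge (|I|+|J|-1) v(p) \to \infty$. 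Hence $(\Oo_K^d, \star)$ is a $K$-analytic group, i.e., a $d$-dimensional Lie group over $K$.

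The main obstacle is the bookkeeping around transferring Taylor coefficients between the monster model and the standard quotient $\Oo_K^d$: one must check that the finite-level polynomial descriptions are truly compatible across $n$ and that their inverse limit is literally the $\star$-operation inherited by $G/G^{00}$. But since the $\bc_{I,J}$ are defined once and for all as formal Taylor coefficients in $\mathfrak{C}_\omega$, and the isomorphisms $\Oo^d/p^n\Oo^d \cong \Oo_K^d/p^n\Oo_K^d$ come from a single elementary inclusion, this compatibility should be essentially automatic, and the argument is substantially cleaner than going through Lemma~\ref{recognizer2}, which would only give a Lie group over $\Qq_p$ of dimension $d[K:\Qq_p]$.
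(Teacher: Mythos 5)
Your proposal is correct and takes essentially the same route as the paper: invoke Proposition~\ref{a-b} to get fsg/compact domination and $G^{00}=\bigcap_n p^n\Oo^d$, identify $G/G^{00}$ with $\Oo_K^d$ (your inverse-limit construction is just the standard part map in disguise), and observe that $\mathfrak{C}_\omega$ forces the induced group law to be the convergent power series $\sum_{I,J}\st(\bc_{I,J})\bx^I\by^J$, hence a $K$-analytic group of dimension $d$. Your closing remark correctly identifies why one does not route this through Lemma~\ref{recognizer2}.
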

\begin{proof}
  Proposition~\ref{a-b} gives the first part, and shows that $G^{00} =
  \bigcap_{k=0}^\infty p^k\Oo^n$.  Then $G/G^{00}$ is homeomorphic to
  $\Oo_K^n$ via the standard part map $\st : \Oo \to \Oo_K$.  That is,
  the following two maps are equivalent:
  \begin{gather*}
    G \to G/G^{00} \\
    \Oo^n \stackrel{st}{\to} \Oo_K^n.
  \end{gather*}
  Condition $\mathfrak{C}_\omega$ shows that the induced group
  structure on $\Oo_K^n$ is given by
  \begin{gather*}
    \star : \Oo_K^n \times \Oo_K^n \to \Oo_K^n \\
    \bx \star \by = \sum_{I,J} \st(\bc_{I,J}) \bx^I \by^J.
  \end{gather*}
  Therefore the induced structure is a Lie group of dimension $n$.
\end{proof}
\begin{proposition} \label{ed-good}
  Suppose $G = (\Oo^d,\star)$ satisfies the conditions
  $\mathfrak{A}_\omega$, $\mathfrak{B}_1$, $\mathfrak{B}_2$,
  $\mathfrak{D}$, and $\mathfrak{E}$.  Then $G$ is fsg and compactly
  dominated, and $G/G^{00}$ is isomorphic to a $de$-dimensional Lie
  group over $\Qq_p$, where $e = [K : \Qq_p]$.
\end{proposition}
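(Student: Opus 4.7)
The plan is to combine Proposition~\ref{a-b} with Lemma~\ref{recognizer2}. Condition $\mathfrak{A}_\omega$ is exactly the hypothesis of Proposition~\ref{a-b}, so fsg and compact domination come for free, and $G^{00} = \bigcap_{n<\omega} p^n\Oo^d$. The standard part map identifies $\Oo^d/G^{00}$ with $\Oo_K^d$, and I will use this identification to view $G/G^{00}$ as $\Oo_K^d$ carrying an induced group operation $\star_0$. Everything then reduces to verifying the six hypotheses of Lemma~\ref{recognizer2} for $(\Oo_K^d,\star_0)$.

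The first four hypotheses come almost for free from the corresponding conditions on $G$. Each $\mathfrak{A}_n$ makes $p^n\Oo^d$ a clopen normal $\star$-subgroup of $G$ whose cosets are additive; passing to the quotient, its image is exactly $p^n\Oo_K^d$, and the collection $\{p^n\Oo_K^d\}_{n<\omega}$ forms a neighborhood basis of the identity in the logic topology on $G/G^{00}$. This matches the standard $p$-adic topology on $\Oo_K^d$ and gives hypotheses (\ref{sump1})--(\ref{sump3}). Hypothesis (\ref{sump4}) is immediate from $\mathfrak{B}_1$ and $\mathfrak{B}_2$, which say exactly that $\bx \star \by \equiv \bx + \by \pmod{p^n\Oo^d}$ for $n = 1, 2$.

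The real substance lies in hypotheses (\ref{sump5}) and (\ref{sump6}), where I need to check that $\mathfrak{D}$ and $\mathfrak{E}$ descend to the quotient. For (\ref{sump5}), I lift distinct $\bx, \by \in \Oo_K^d$ to $\tilde{\bx}, \tilde{\by} \in \Oo^d$; since $\bx \ne \by$, the infinitesimal correction in the lift cannot change the valuation, so $v(\tilde{\bx} - \tilde{\by}) = v(\bx - \by) \in \Gamma_K$. Condition $\mathfrak{D}$ then gives $v(f(\tilde{\bx}) - f(\tilde{\by})) = v(\bx - \by) + v(p)$, which is still finite, so $f_0(\bx) \ne f_0(\by)$ and the valuation equation descends cleanly to $\star_0$. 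For (\ref{sump6}), any $\by \in p\Oo_K^d$ lifts to some $\tilde{\by} \in p\Oo^d$, which by $\mathfrak{E}$ is of the form $f(\tilde{\bx})$ for some $\tilde{\bx} \in \Oo^d$, and then $f_0(\st(\tilde{\bx})) = \by$. Lemma~\ref{recognizer2} now yields that $G/G^{00}$ is a $p$-adic Lie group of dimension $de$. The only mild obstacle in all this is the bookkeeping around the standard-part identification and the transfer of valuation data; the key observation that unlocks everything is simply that for distinct elements of $\Oo_K^d$, the valuation of any two lifts is already determined by the quotient elements themselves.
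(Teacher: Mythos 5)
Your proposal is correct and follows exactly the same route as the paper: apply Proposition~\ref{a-b} for fsg, compact domination, and the identification $G^{00} = \bigcap_n p^n\Oo^d$, then check that the hypotheses $\mathfrak{A}_\omega$, $\mathfrak{B}_1$, $\mathfrak{B}_2$, $\mathfrak{D}$, $\mathfrak{E}$ descend through the standard-part map to give the six hypotheses of Lemma~\ref{recognizer2} for the induced operation on $\Oo_K^d$. The paper leaves that verification as a one-line remark; your write-up just spells out the lifting and valuation-transfer bookkeeping.
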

\begin{proof}
  Proposition~\ref{a-b} gives the first part, and shows that $G^{00} =
  \bigcap_{k=0}^\infty p^k\Oo^n$.  Then $G/G^{00}$ is canonically
  isomorphic to a topological group $(\Oo_K^d,\ast)$.  The assumptions
  on $\star$ precisely imply that $\ast$ satisfies all the
  requirements in Lemma~\ref{recognizer2}, and so $G/G^{00}$ is a
  $de$-dimensional Lie group over $\Qq_p$.
\end{proof}

\section{The first two proofs of the Onshuus-Pillay conjecture} \label{again}
In this section, we prove Theorems~\ref{second-main} and
\ref{third-main}, the first two versions of the Onshuus-Pillay
conjecture.

\begin{lemma}\label{stick}
  Suppose $M \models T$ and $D \subseteq M^n$ is definable, with
  $\dim(D) = k$.  Then there is a definable injection $D \to M^k$.
\end{lemma}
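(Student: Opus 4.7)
The plan is to combine $P$-minimal cell decomposition with a combinatorial ``packing'' step. First, apply the $P$-minimal cell decomposition theorem (Denef-style, available in the field reduct and extending to the multivariable setting by iteration of Fact~\ref{y-u-no}) to write $D = C_1 \sqcup \cdots \sqcup C_r$ as a finite disjoint union of cells, each of dimension $d_i \le k$. By the recursive definition of a cell, for each $i$ there is a subset $S_i \subseteq \{1,\ldots,n\}$ of size $d_i$ such that the coordinate projection $\pi_{S_i} : M^n \to M^{d_i}$ restricts to a definable bijection of $C_i$ onto its image. Composing with the inclusion $M^{d_i} \hookrightarrow M^k$ (padding the remaining $k - d_i$ coordinates with $0$) yields a definable injection $g_i : C_i \to M^k$.

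To combine the $g_i$ into a single injection $f : D \to M^k$, it suffices to produce a definable injection $\iota : M^k \times \{1,\ldots,r\} \to M^k$; then $f$ defined by $f(x) := \iota(g_i(x), i)$ for $x \in C_i$ is injective. Applying such an $\iota$ in a single coordinate reduces this to constructing a definable injection $\tau : M \times \{1,\ldots,r\} \to M$. For $\tau$, first observe that the map $\phi : M \to \Oo_M$ defined by $\phi(x) = 1 + px$ for $x \in \Oo_M$ and $\phi(x) = 1/x$ for $x \in M \setminus \Oo_M$ is a definable injection, because its two images $1 + p\Oo_M$ and $p\Oo_M \setminus \{0\}$ are disjoint subsets of $\Oo_M$. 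Next, choose $N$ large enough that $|\Oo_M / p^N\Oo_M| \ge r$ and fix representatives $a_1, \ldots, a_r \in \Oo_M$ of distinct cosets of $p^N\Oo_M$; then $(x,i) \mapsto a_i + p^N x$ is a definable injection $\Oo_M \times \{1,\ldots,r\} \hookrightarrow \Oo_M$. Composing with $\phi$ on each slice yields the desired $\tau$.

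The main point of interest is the combinatorial packing: one must manufacture a definable injection of finitely many disjoint copies of $M$ into $M$ itself, using the valuation structure. Since these constructions live entirely in $\mathcal{L}_{Rings}$ and are first-order statements about $K$, they transfer to any $M \models T$ with no additional input from the $P$-minimal expansion. Once this packing is in hand, the result is bookkeeping on top of cell decomposition.
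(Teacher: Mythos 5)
Your overall strategy is the same as the paper's: decompose $D$ into finitely many pieces each of which admits a definable injection into $M^{d_i}$ with $d_i \le k$, then ``pack'' finitely many disjoint copies of $M^k$ into $M^k$ using the valuation.  Your packing is sound; it is a minor variant of the paper's.  (The paper packs $M + M$ into $\Oo$ via the same $\phi$-like map and two disjoint balls; you pack $M \times \{1,\ldots,r\}$ into $\Oo_M$ via $\phi$ followed by cosets of $p^N\Oo_M$.  These are equivalent.  One tiny imprecision: for ramified $K$ the image of the $1/x$ branch of $\phi$ is $\mm_M \setminus \{0\}$ rather than $p\Oo_M \setminus \{0\}$, but disjointness from $1 + p\Oo_M \subseteq \Oo_M^\times$ still holds, so the map is injective as claimed.)

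The one place where your justification is off is the cell-decomposition step.  You invoke a ``Denef-style'' cell decomposition ``available in the field reduct and extending to the multivariable setting by iteration of Fact~\ref{y-u-no}.''  Neither clause is right: a multi-variable set definable in a $P$-minimal expansion need not be $\mathcal{L}_{Rings}$-definable ($P$-minimality only constrains one-variable sets), and multi-variable cell decomposition is not obtained by naively iterating the one-variable statement (Fact~\ref{y-u-no}), which is itself a theorem about semialgebraic subsets of $M^1$.  The correct tool is the topological cell decomposition of \cite{p-minimal-cells}, which is precisely what the paper cites at this step and which does furnish a finite decomposition of $D$ into pieces each definably injecting into $M^{d_i}$.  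With that substitution in place your argument goes through and matches the paper's.
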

\begin{proof}
  Write $X \le Y$ if there is a definable injection from $X$ to $Y$.
  Note that $M \le \Oo \times \{0,1\}$, because of the map
  \begin{gather*}
    f(x) = 
    \begin{cases}
      (x,0) & \text{ if } x \in \Oo \\
      (1/x,1) & \text{ if } x \notin \Oo.
    \end{cases}
  \end{gather*}
  Thus $M \le \Oo + \Oo$, where $+$ denotes disjoint union.  On the
  other hand, $\Oo \le B$ and $B \le \Oo$ for any ball $B$, and we can
  find two disjoint balls inside $\Oo$, so $\Oo + \Oo \le B + B \le
  \Oo$.  Then $M \le \Oo$, so
  \begin{equation*}
    M + M \le \Oo + \Oo \le \Oo \le M.
  \end{equation*}
  Multiplying by $M^{n-1}$, we see that $M^k + M^k \le M^k$ for any
  $k$.  And of course $M^j \le M^k$ for $j < k$.  In light of this, it
  suffices to cover $D$ with finitely many definable sets $D_i$, such
  that $D_i \le M^{k_i}$ for some $k_i \le \dim(D)$.  Such a
  decomposition is provided by the ``topological cell decomposition''
  of \cite{p-minimal-cells}.
\end{proof}
If $(G,\star)$ is a definable group and $a \in G$, define
\begin{gather*}
  \star_a : G \times G \to G \\
  x \star_a y = x \star a^{-1} \star y.
\end{gather*}
Then $(G,\star_a)$ is a definable group with identity element $a$,
definably isomorphic to $(G,\star)$ by the following
isomorphism:
\begin{gather*}
  (G,\star) \to (G,\star_a) \\
  x \mapsto a \star x.
\end{gather*}
In particular, if we prove Conjecture~\ref{op-conj} for $(G,\star_a)$,
then we prove it for $(G,\star)$.
\begin{remark}
  In the proof of the following lemma, we use the notion $\dim(\ba/B)$
  where $\ba$ is a finite tuple and $B$ is a small set of parameters.
  Recall that in any theory $T$ where $\acl$ satisfies the exchange property,
  one defines the \emph{dimension} $\dim(a_1,\ldots,a_n/B)$ as the maximum cardinality of a
  subset $\{c_1,\ldots,c_m\} \subseteq \{a_1,\ldots,a_n\}$ which is
  \emph{independent} over $B$, in the sense that $c_i$ isn't algebraic over $B \cup \{c_1,\ldots,c_{i-1}\}$ for $i=1,\ldots,n$ (the order of the $c_i$ doesn't matter).  Moreover, the \emph{dimension} $\dim(D)$ of a definable
  set $D \subseteq \Mm^n$ is defined to be $\max_{\ba \in D}
  \dim(\ba/B)$ for any small set $B \subseteq \Mm$ defining $D$ (the
  choice of $B$ doesn't matter).  Conversely, $\dim(\ba/B)$ is the
  minimum of $\dim(D)$ as $D$ ranges over $B$-definable sets
  containing $\ba$.  See \cite[Section~2]{HP94} for references for these facts.  In $P$-minimal theories, $\acl$ satisfies the exchange property \cite[Theorem~6.2]{p-min}, and the acl-dimension of a definable set $D$ agrees with the usual dimension $\dim(D)$ \cite[Theorem~6.3]{p-min}.

  To summarize:
  \begin{itemize}
  \item $\dim(\ba/B)$ is the minimum of $\dim(D)$ as $D$ ranges over
    $B$-definable sets $D \ni \ba$.
  \item If $B$ is small and $D \subseteq \Mm^n$ is $B$-definable, then
    \[\dim(D) = \max_{\ba \in D} \dim(\ba/D).\]
  \end{itemize}
  We will also use the well-known facts
  \begin{gather*}
    \dim(\ba,\bb/C) = \dim(\ba/C\bb) + \dim(\bb/C) \\
    \acl(C\ba) = \acl(C\bb) \implies \dim(\ba/C) = \dim(\bb/C).
  \end{gather*}
\end{remark}
\begin{lemma} \label{local-C1}
  Let $G \subseteq \Mm^n$ be an $n$-dimensional definable set and
  $\star$ be a definable group operation on $G$.  Then there is a
  definable set $U$ in the interior of $G$ with $\dim (G \setminus U)
  < n$, such that for any $a \in U$, the group operation $\star_a$ is
  strictly differentiable on a neighborhood of $(a,a)$.
\end{lemma}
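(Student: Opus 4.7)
The plan is to reduce to an application of Theorem~\ref{generic-diff} to a suitably chosen parametrized operation on $G^3$. First, by Lemma~\ref{stick} combined with the topological cell decomposition for $P$-minimal theories, I would reduce to the case where $G$ is an open subset of $\Mm^n$, so that the notions of differentiability, interior, and dimension are the standard ones and $\star$ becomes a definable map between open subsets of affine space.

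Next, I would apply Theorem~\ref{generic-diff} to the three-variable definable function
\[F:G\times G\times G\to G,\qquad F(a,x,y)=x\star a^{-1}\star y=\star_a(x,y),\]
obtaining an open definable $V\subseteq G^3$ on which $F$ is strictly differentiable and $\dim(G^3\setminus V)<3n$. Whenever $(a,x,y)\in V$, the two-variable restriction $\star_a=F(a,\cdot,\cdot)$ is strictly differentiable at $(x,y)$: this is immediate from the nonstandard characterization~(\emph{e}) in the proof of Fact~\ref{strict-facts}, applied to triples sharing the first coordinate $a$. Since the strict-differentiability locus of any definable function is open (again from characterization~(\emph{e})), if $(a,a,a)\in V$ then there is an open neighborhood $W\ni a$ with $\{a\}\times W\times W\subseteq V$, and so $\star_a$ is strictly differentiable on $W\times W$, which is a neighborhood of $a$ in the intended sense. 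I would therefore define $U:=\{a\in G:(a,a,a)\in V\}$.

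The main obstacle will be showing $\dim(G\setminus U)<n$, equivalently, that the diagonal $\Delta=\{(a,a,a):a\in G\}$ (which has dimension $n$) meets the bad set $V^c$ in dimension strictly less than $n$. Since $\dim V^c<3n$ only forbids $V^c$ from filling all of $G^3$, nothing a priori prevents $V^c$ from containing all of $\Delta$. To overcome this, I would exploit the strong degeneracy of $F$ on the two $2n$-dimensional slices $S_1=\{(a,a,y):a,y\in G\}$ and $S_2=\{(a,x,a):a,x\in G\}$: on $S_1$ one has $F(a,a,y)=y$ and on $S_2$ one has $F(a,x,a)=x$, so along the diagonal $\Delta=S_1\cap S_2$ the formal partial derivatives of $F$ in the $a$-, $x$-, and $y$-directions are pinned down to $-I$, $I$, $I$ respectively. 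Combined with generic differentiability of $F$, these rigid partial derivatives should force $F$ to be strictly differentiable at generic diagonal points.

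If that route proves recalcitrant, a fallback is to make a definable change of coordinates $\phi(a,x,y)=(a,xa^{-1},a^{-1}y)$, which sends $\Delta$ to the $n$-dimensional slice $G\times\{e\}\times\{e\}$ and transforms $F$ into $F'(a,u,v)=u\star a\star v$; one then argues that $V'^c$ cannot contain this entire slice by feeding in the separate generic strict differentiability of the one-variable maps $v\mapsto av$ and $u\mapsto ua$ (which are restrictions of $\star$ to lines, strictly differentiable off a lower-dimensional set by Proposition~\ref{one-var-diff}), and uses these together with Lemma~\ref{strengthen} to upgrade to strict differentiability of $F'$ at generic points of the slice.
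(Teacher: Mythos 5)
Your first step (apply Theorem~\ref{generic-diff} to the three-variable map $F(a,x,y)=x\star a^{-1}\star y$ on $G^3$) correctly identifies the central difficulty, and you state it precisely: the good set $V$ has complement of dimension $<3n$, but the diagonal $\Delta=\{(a,a,a)\}$ only has dimension $n$, so nothing prevents $\Delta\subseteq G^3\setminus V$ and hence $U=\varnothing$. Unfortunately, neither of your two fallbacks closes this gap.

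The first fallback does not work: the restrictions of $F$ to the slices $S_1=\{(a,a,y)\}$ and $S_2=\{(a,x,a)\}$ are just coordinate projections, so of course they are strictly differentiable \emph{as functions on the slices}, and they do pin down what the partial derivatives of $F$ in the three blocks \emph{would have to be} at diagonal points if $F$ were differentiable there. But prescribing the candidate differential in this way is not evidence that the strict derivative exists. Strict differentiability at a point $(a,a,a)$ is a statement about $F(\bx)-F(\by)$ for \emph{all} pairs of nearby points $\bx,\by\in G^3$, most of which lie off $S_1$ and $S_2$; knowing $F$ on two $2n$-dimensional slices through the point gives no control in the remaining directions. (Compare: $f(x,y)=g(x)$ for an arbitrary non-differentiable $g$ has a perfectly nice restriction to the slice $x=0$.) The same objection applies to the second fallback, and there is the additional problem that the proposed change of coordinates $\phi(a,x,y)=(a,xa^{-1},a^{-1}y)$ is built from group operations, which at this point you only know to be \emph{generically} strictly differentiable; so $\phi$ need not be a $C^1$-change of coordinates anywhere near the slice you care about, and in particular you cannot transport a strict-differentiability statement across $\phi$.

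The paper's proof avoids the diagonal issue entirely by introducing a second, free parameter $b$. One applies generic differentiability only to $\star:G\times G\to G$ (complement of dimension $<2n$), obtaining an open $\Delta\subseteq G\times G$ on which $\star$ is strictly differentiable, and then writes
\begin{equation*}
  x\star_a y \;=\; b\star\Bigl(\bigl((b^{-1}\star x)\star a^{-1}\bigr)\star y\Bigr),
\end{equation*}
a composition of three instances of $\star$ in which the pairs of arguments are $(b^{-1},x)$, $(b^{-1}\star x, a^{-1})$, $\bigl((b^{-1}\star x)\star a^{-1},\,y\bigr)$, and $(b,\ldots)$; at $(x,y)=(a,a)$ these specialize to $(b^{-1},a)$, $(b^{-1}\star a,a^{-1})$, $(b^{-1},a)$, and $(b,b^{-1}\star a)$. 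When $(a,b)$ is chosen jointly generic over a model of definition, each of these pairs is itself generic in $G\times G$ and therefore lies in $\Delta$, so the chain rule (Fact~\ref{strict-facts}(\ref{sf5})) gives strict differentiability of $\star_a$ at $(a,a)$, and the set $U$ of such $a$ contains every generic of $G$, hence $\dim(G\setminus U)<n$. The decisive point you are missing is that this extra parameter converts the problem of hitting an $n$-dimensional diagonal inside a $3n$-dimensional space (where genericity is useless) into the problem of hitting a $2n$-generic pair $(a,b)$ in $G^2$ (where genericity is exactly what generic differentiability supplies).
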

\begin{proof}
  Morally, this comes from the fact that definable functions in $\Mm$
  are generically strictly differentiable (Theorem~\ref{generic-diff}).
  However, the proof is not completely trivial, and we should give
  details.

  Before giving the real proof, we first sketch the conceptually
  ``correct'' proof.  First, construct a canonical strict
  $C^{1}$-manifold structure on $G$ compatible with the group
  structure.  Then apply generic differentiability to the identity map
  from $G$ (with its strict $C^{1}$-manifold structure) to $G$ (as a
  subset of $\Mm^n$).  Fix a point $a$ where this map is strictly
  differentiable.  Then an open neighborhood $a \in U \subseteq \Mm^n$
  is also a local chart for the manifold structure.  (That is, the two
  strict $C^1$-structures on $G$ agree with each other close to the
  point $a$.)  Because the group structure is strict $C^1$, the
  function $(x,y) \mapsto x \star a^{-1} \star y$ is strictly
  differentiable.
  
  Rather than formally developing strict $C^1$-manifolds, we instead
  give a more direct proof.  By generic strict differentiability,
  there is a definable open set $\Delta$ in the interior of $G \times
  G \subseteq \Mm^{2n}$, such that $\dim((G \times G) \setminus
  \Delta) < \dim(G \times G) = 2n$, and the group operations are
  strictly differentiable on $\Delta$.  Let $\nabla$ be the set of
  pairs $(a,b) \in G \times G$ such that the following conditions
  hold:
  \begin{gather*}
    (b^{-1},a) \in \Delta \\
    (b^{-1} \star a, a^{-1}) \in \Delta \\
    (b,b^{-1} \star a) \in \Delta.
  \end{gather*}
  Let $M_0$ be a small model over which everything is defined.  If the
  pair $(a,b) \in G^2$ is jointly generic, in the sense that
  $\dim(a,b/M_0) = 2\dim(G)$, then so are the pairs $(b^{-1},a),
  (b^{-1} \star a, a^{-1}), (b,b^{-1} \star a)$, and so all three
  pairs belong to $\Delta$, and $(a,b) \in \nabla$.  Let $U$ be the
  projection of $\nabla$ onto the first coordinate, intersected with
  the interior of $G$:
  \begin{equation*}
    U = \{a : (a,b) \in \nabla\} \cap \ter(G).
  \end{equation*}
  If $a \in G$ is generic over $M_0$, we can find $b \in G$ generic
  over $M_0a$, and then $(a,b)$ is generic over $M_0$, $(a,b) \in
  \nabla$, and $a \in U$.  It follows that $U$ contains all $a \in G$
  which are generic over $M_0$, and so $\dim(G \setminus U) <
  \dim(G)$.

  For $a \in U$, we will prove that $\star_a$ is strictly
  differentiable around $a$.  Take $b \in G$ such that $(a,b) \in
  \nabla$.  Then the composition
  \begin{equation*}
    b \star \left(\left((b^{-1} \star x) \star a^{-1}\right) \star
    y\right) = x \star_a y
  \end{equation*}
  is strictly differentiable for $(x,y)$ sufficiently close to $(a,a)$, because for
  such $x$ and $y$ we have
  \begin{gather*}
    (b^{-1},x) \approx (b^{-1},a) \in \Delta \\
    \left((b^{-1} \star x),a^{-1}\right) \approx (b^{-1} \star a, a^{-1}) \in \Delta \\
    \left(\left((b^{-1} \star x) \star a^{-1}\right),
    y\right) \approx (b^{-1} \star a \star a^{-1},a) = (b^{-1},a) \in \Delta \\
    \left(b, \left(\left((b^{-1} \star x) \star a^{-1}\right) \star
    y\right)\right) \approx (b,b^{-1} \star a) \in \Delta,
  \end{gather*}
  where $\approx$ means ``arbitrarily close to''.
\end{proof}
% TODO: I jumped through hoops to ensure that not only does one $a$ work, but actually almost all $a$ work.  But... I don't think I even needed that!  Unless it's used in the third proof.
\begin{lemma} \label{to-b-2}
  Let $G \subseteq \Mm^n$ be an $n$-dimensional definable set and
  $\star$ be a definable group operation on $G$.  Let $a$ be the
  identity element of $G$, and suppose $a$ is in the interior of $G$
  and $\star$ is strictly differentiable at $(a,a)$.  Then there is
  some non-zero $\epsilon$ such that $a + \epsilon \Oo^n$ is a
  subgroup of $G$ and $a + \epsilon \Oo^n$ satisfies conditions
  $\mathfrak{A}_\infty$, $\mathfrak{B}_\omega$, $\mathfrak{D}$ and
  $\mathfrak{E}$.  In fact, any sufficiently small $\epsilon$ works.
\end{lemma}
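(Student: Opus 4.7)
The plan is to reduce to the case $a = \bar{0}$ by translation (the conditions in question are stated intrinsically so this is harmless). Because $\bar{0} \star \by = \by$ and $\bx \star \bar{0} = \bx$, both partials of $\star$ at $(\bar{0}, \bar{0})$ are the identity matrix, so $D\star(\bar{0}, \bar{0})(\bx, \by) = \bx + \by$. Strict differentiability of $\star$ at $(\bar{0}, \bar{0})$ then gives, by Fact~\ref{val-strict}, for each $\gamma \in \Gamma$ a $\delta_\gamma \in \Gamma$ such that whenever $\bx_1, \by_1, \bx_2, \by_2$ all have valuation $\ge \delta_\gamma$,
\begin{equation*}
  v\bigl(\bx_1 \star \by_1 - \bx_2 \star \by_2 - (\bx_1 - \bx_2) - (\by_1 - \by_2)\bigr) > \gamma + v(\bx_1 - \bx_2, \by_1 - \by_2). \tag{$\ast$}
\end{equation*}
By saturation of $\Mm$, I can choose $\epsilon \in \Mm^\times$ with $v(\epsilon) > \delta_\gamma$ for every $\gamma$ in the value group of a small model defining $\star$; any such $\epsilon$ is ``sufficiently small''.

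To see that $\epsilon \Oo^n$ is closed under $\star$, specialize ($\ast$) with $(\bx_2, \by_2) = (\bar{0}, \bar{0})$: for $\bx, \by \in \epsilon \Oo^n$ this gives $v(\bx \star \by - \bx - \by) > \gamma + v(\epsilon)$ for every standard $\gamma$, forcing $\bx \star \by \in \epsilon \Oo^n$. For inverses, the inverse function theorem (Theorem~\ref{ift}) applied to $(\bx, \by) \mapsto (\bx, \bx \star \by)$, whose derivative at $(\bar{0}, \bar{0})$ is invertible, shows that inversion is strictly differentiable, hence continuous, at $\bar{0}$; shrinking $\epsilon$ so that $\bx^{-1}$ always lies in the neighborhood where ($\ast$) applies, the substitution $(\bx_1, \by_1) = (\bx, \bx^{-1})$, $(\bx_2, \by_2) = (\bar{0}, \bar{0})$ gives $v(\bx + \bx^{-1}) > v(\bx, \bx^{-1})$, which forces $v(\bx^{-1}) = v(\bx)$. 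After rescaling $(\epsilon \Oo^n, \star) \cong (\Oo^n, \star')$ by $\bx \mapsto \bx/\epsilon$, condition $\mathfrak{A}_\infty$ reads off ($\ast$) at $\gamma = 0$: $v(\bx_1 - \bx_2), v(\by_1 - \by_2) \ge \gamma'$ implies $v(\bx_1 \star \by_1 - \bx_2 \star \by_2) \ge \gamma'$. Condition $\mathfrak{B}_\omega$ is ($\ast$) at $\gamma = nv(p)$ for each $n$, and this is exactly where the ``infinitesimal'' smallness of $\epsilon$ matters: a single $\epsilon$ must serve arbitrarily large standard $\gamma$.

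Conditions $\mathfrak{D}$ and $\mathfrak{E}$ concern the $p$-th power map $f$. Iterating the chain rule (Fact~\ref{strict-facts}(\ref{sf5})), $f$ is strictly differentiable at $\bar{0}$ with derivative $p I$, so Fact~\ref{val-strict} gives $v(f(\bx) - f(\by)) = v(p) + v(\bx - \by)$ on $\epsilon \Oo^n$, which is $\mathfrak{D}$ after rescaling. The main obstacle is $\mathfrak{E}$, i.e., showing $f(\epsilon \Oo^n) \supseteq p\epsilon \Oo^n$. Here I would mimic the proof of the inverse function theorem: given $\by \in p\epsilon \Oo^n$, the map $F(\bx) = \bx + (\by - f(\bx))/p$ is a self-map of $\epsilon \Oo^n$ (because $v(\by/p) \ge v(\epsilon)$ and $v(f(\bx)/p) = v(\bx) \ge v(\epsilon)$), and its fixed points are precisely the preimages of $\by$. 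Strict differentiability of $f$ with derivative $pI$ makes $F$ contracting, since the error from ($\ast$) divided by $p$ still has valuation strictly greater than $v(\bx_1 - \bx_2)$ whenever $\gamma$ in ($\ast$) exceeds $v(p)$. Lemma~\ref{contract}(1) then supplies the required fixed point. The one coordinating concern is that ``sufficiently small $\epsilon$'' must absorb all of these $\gamma$-requirements simultaneously, which is exactly what saturation delivers.
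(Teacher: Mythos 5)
Your proposal is essentially the same argument as the paper's: translate to $a=\bar 0$, compute the strict derivatives of $\star$, inversion, and the $p$th power map at the identity (all equal to $\pm I_n$ or $pI_n$), take $\epsilon$ infinitesimal relative to a small model $M$ defining the group, and then read off each of $\mathfrak{A}_\infty$, $\mathfrak{B}_\omega$, $\mathfrak{D}$, $\mathfrak{E}$ from the nonstandard strict-differentiability estimates. The paper phrases these estimates as ``$\mu$ an $M$-infinitesimal matrix'' while you phrase them via Fact~\ref{val-strict}; that is a notational difference only. The one genuine divergence is $\mathfrak{E}$: the paper appeals directly to the inverse function theorem (Theorem~\ref{ift}) to conclude that $f$ maps the $M$-infinitesimal vectors onto themselves, then uses the derivative estimate to refine ``infinitesimal preimage'' to ``preimage in $\epsilon\Oo^n$'', whereas you inline the fixed-point argument (Lemma~\ref{contract}) underlying the IFT. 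Both are correct; yours is marginally more self-contained, the paper's is marginally shorter since the IFT is already on hand.
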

\begin{proof}
  Moving $G$ by a translation, we may assume $a = \bar{0}$.  Let $f :
  G \to G$ be the $p$th power map.  A straightforward calculation
  shows that at $\bar{0}$, the functions $\bx \star \by$, $\bx^{-1}$,
  and $f(\bx)$ have the following strict derivatives:
  \begin{gather*}
    \frac{\partial}{\partial \bx} (\bx \star \by) = I_n \\
    \frac{\partial}{\partial \by} (\bx \star \by) = I_n \\
    \frac{\partial}{\partial \bx} \bx^{-1} = -I_n \\
    \frac{\partial}{\partial \bx} f(\bx) = pI_n,
  \end{gather*}
  where $I_n$ is the $n \times n$ identity matrix.\footnote{For the
  group inverse map $\bx^{-1}$, strict differentiability follows by
  applying the inverse function theorem, Theorem~\ref{ift}, to the map
  $(\bx,\by) \mapsto (\bx,\bx \star \by)$.}

  Take a small model $M$ defining $(G,\star)$.  Let $\epsilon \in \Mm$
  be a non-zero $M$-infinitesimal, i.e., a non-zero element satisfying
  the equivalent conditions:
  \begin{itemize}
  \item $\epsilon$ is contained in any $M$-definable neighborhood of
    0.
  \item $\epsilon$ is contained in any $M$-definable ball
    $B_\gamma(0)$.
  \item $v(\epsilon) > \Gamma_M$.
  \end{itemize}
  Note that $\epsilon a$ is an $M$-infinitesimal for any $a \in \Oo =
  \Oo_\Mm$, including $a$ outside of $M$.

  We first verify that $\epsilon \Oo^n$ is a subgroup of $G$.  Let
  $\bx, \by$ be tuples in $\epsilon \Oo^n$.  Then $\bx$ and $\by$ are
  tuples of $M$-infinitesimals.  By differentiability,
  \begin{equation*}
    \bx \star \by = \bx + \by + \mu_1 \bx + \mu_2 \by
  \end{equation*}
  for some $n \times n$ matrices $\mu_1, \mu_2$ of $M$-infinitesimals.
  The right hand side is in $\epsilon \Oo^n$.  A similar argument
  shows that $\bx \in \epsilon \Oo^n \implies \bx^{-1} \in \epsilon
  \Oo^n$.  Now we check each of the required conditions:
  \begin{itemize}
  \item[$(\mathfrak{A}_\infty)$] Let $I$ be a principal ideal of $\Oo$
    contained in $\epsilon \Oo$.  We must show that the group
    operation $\star$ on $\epsilon \Oo^n$ respects congruence modulo
    $I^n$.  Suppose $\bx, \by, \bz \in \epsilon \Oo^n$ with $\bx
    \equiv \by \pmod{I^n}$.  Then $\bx, \by, \bz$ are tuples of
    $M$-infinitesimals.  By \emph{strict} differentiability of the group
    operation,
    \begin{equation*}
      (\bx \star \bz) - (\by \star \bz) = (\bx - \by) + \mu (\bx - \by)
    \end{equation*}
    for some matrix $\mu$ of $M$-infinitesimals.  The right hand side
    is in $I^n$, so $\bx \star \bz \equiv \by \star \bz \pmod{I^n}$.
    Similar arguments show
    \begin{gather*}
      \by \equiv \bz \pmod{I^n} \implies \bx \star \by \equiv \bx \star \bz \pmod{I^n} \\
      \bx \equiv \by \pmod{I^n} \implies \bx^{-1} \equiv \by^{-1} \pmod{I^n}.
    \end{gather*}
    Thus, congruence modulo $I^n$ is a congruence in the group
    $(\epsilon \Oo^n, \star)$, which is condition
    $(\mathfrak{A}_\infty)$.
  \item[$(\mathfrak{B}_\omega)$] If $\bx, \by$ are tuples in $\epsilon
    \Oo^n$, then $\bx, \by$ are $M$-infinitesimals, so
    differentiability of $\star$ gives
    \begin{equation*}
      \bx \star \by = \bx + \by + \mu_1 \bx + \mu_2 \by
    \end{equation*}
    for some $M$-infinitesimal matrices $\mu_1, \mu_2$.  The entries
    in $\mu_1, \mu_2$ have valuation greater than $v(p^k)$ for any
    $k$, so
    \begin{equation*}
      \bx \star \by - \bx - \by \in p^k \epsilon \Oo^n.
    \end{equation*}
    This is condition $(\mathfrak{B}_k)$, for arbitrary $k$.
  \item[$(\mathfrak{D})$] If $\bx, \by$ are in $\epsilon \Oo^n$, then
    they are $M$-infinitesimal, and so the strong differentiability of
    $f$ shows that
    \begin{equation*}
      f(\bx) - f(\by) = p(\bx - \by) + \mu(\bx - \by)
    \end{equation*}
    for some matrix $\mu$ of $M$-infinitesimals.  The right hand side
    has valuation $v(p) + v(\bx - \by)$ because $\mu$ is infinitesimal
    compared to $p$.  Thus condition $\mathfrak{D}$ holds.
  \item[$(\mathfrak{E})$] The derivative of $f$ at $\bar{0}$ is
    $pI_n$, which is invertible, so the inverse function theorem
    (Theorem~\ref{ift}) shows that $f$ maps the set of
    $M$-infinitesimal vectors onto the set of $M$-infinitesimal
    vectors.  If $\bx \in p \epsilon \Oo^d$, then $\bx$ is an
    $M$-infinitesimal vector, so $\bx = f(\by)$ for some
    $M$-infinitesimal vector $\by$.  By the differentiability of $f$ at
    $\bar{0}$,
    \begin{equation*}
      \bx = f(\by) = p \by + \mu \by
    \end{equation*}
    for some $M$-infinitesimal matrix $\mu$.  Then $v(\bx) = v(p) +
    v(\by)$ so $\by \in p^{-1}p \epsilon \Oo^n = \epsilon \Oo^n$.
    Thus every element of $p \epsilon \Oo^n$ is a $p$th power of an
    element in $\epsilon \Oo^n$, which is condition $\mathfrak{E}$. \qedhere
  \end{itemize}
\end{proof}
\begin{theorem} \label{second-main}
  Let $G$ be an $n$-dimensional definable group in a highly saturated
  model $\Mm$ of a $P$-minimal expansion $T$ of $\Th(K)$ for some finite
  extension $K/\Qq_p$.  Then there is a definable open subgroup $H
  \subseteq G$ such that $H$ is fsg, $H/H^{00} \cong \Oo_K^n$, and $H$
  is compactly dominated by $H/H^{00}$.  In particular, $H/H^{00}$ is
  an $n$-dimensional Lie group over $K$.
\end{theorem}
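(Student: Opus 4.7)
The plan is to reduce the theorem to Proposition~\ref{b-good} by chaining together Lemmas~\ref{stick}, \ref{local-C1}, and \ref{to-b-2}, producing a small ball-like subgroup of $G$ whose operation is an infinitesimal deformation of addition, and on which Condition $\mathfrak{B}_\omega$ holds.

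First I would apply Lemma~\ref{stick} to realize $G$ as a definable subset of $\Mm^n$, and then invoke Lemma~\ref{local-C1} to find a point $a \in \ter(G)$ such that the shifted operation $\star_a$ is strictly differentiable on some neighborhood of $a$. Replacing $(G,\star)$ by the isomorphic group $(G,\star_a)$, whose identity element is $a$, one may assume that the identity lies in the $\Mm^n$-interior of $G$ and that the group operation is strictly differentiable at $(a,a)$. This replacement is harmless because the entire conclusion---the existence of a definable open subgroup $H$ that is fsg, compactly dominated, and satisfies $H/H^{00}\cong\Oo_K^n$---is invariant under definable group isomorphism.

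Next, I would apply Lemma~\ref{to-b-2} to produce some nonzero $\epsilon$ so that $H := a + \epsilon \Oo^n$ is a subgroup of $(G,\star_a)$ satisfying conditions $\mathfrak{A}_\infty$, $\mathfrak{B}_\omega$, $\mathfrak{D}$, and $\mathfrak{E}$. Since ``any sufficiently small $\epsilon$ works'', I may additionally require that $H \subseteq \ter(G)$. The affine change of coordinates $\bar{u}\mapsto a + \epsilon \bar{u}$ identifies $H$, with its inherited group operation, with a group of the form $(\Oo^n,\star')$ having identity $\bar 0$; this reparametrization preserves the conditions $\mathfrak{A}_\infty$, $\mathfrak{B}_\omega$, $\mathfrak{D}$, $\mathfrak{E}$ because they are defined using only balls around $\bar 0$ and the additive structure. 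Applying Proposition~\ref{b-good} to $(\Oo^n,\star')$---which needs only $\mathfrak{B}_\omega$---then yields at once that $H$ is fsg, compactly dominated by $H/H^{00}$, and that $H/H^{00}\cong \Oo_K^n$.

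Finally, $H = a + \epsilon \Oo^n$ is a ball of radius $v(\epsilon)$ around $a$ in $\Mm^n$, hence open in $\Mm^n$; since $H \subseteq \ter(G)$, it is open in $G$, and it visibly has dimension $n$. Pulling $H$ back through the isomorphism $(G,\star)\cong(G,\star_a)$ (translation by $a$) gives the desired definable open subgroup of the original $(G,\star)$. I do not foresee a serious obstacle: all the real content has been concentrated in Lemma~\ref{to-b-2} (which itself relies on the generic strict differentiability theorem and the inverse function theorem) and in Proposition~\ref{b-good} (which couples $\mathfrak{B}_\omega$ with the compact-domination machinery developed via $\mathfrak{A}_\omega$). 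The only minor bit of care needed is to choose $\epsilon$ small enough that $a + \epsilon \Oo^n$ lies inside $\ter(G)$, which is automatic once $\epsilon$ is sufficiently small, since $a$ was chosen in the interior.
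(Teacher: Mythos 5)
Your proposal is correct and follows essentially the same route as the paper: Lemma~\ref{stick} to embed $G$ in $\Mm^n$, Lemma~\ref{local-C1} to get strict differentiability at the identity after passing to $\star_a$, Lemma~\ref{to-b-2} to produce the infinitesimal ball subgroup satisfying $\mathfrak{B}_\omega$, and Proposition~\ref{b-good} to conclude. You have merely spelled out the implicit rescaling $\bar u \mapsto a + \epsilon \bar u$ that the paper summarizes with the phrase ``up to isomorphism.''
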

\begin{proof}
  By Lemma~\ref{stick}, we may assume $G \subseteq \Mm^n$.  By
  Lemma~\ref{local-C1} we may assume that the group operation is
  differentiable at the identity element.  Lemma~\ref{to-b-2} gives an
  $n$-dimensional definable subgroup $H \subseteq G$ which (up to
  isomorphism) satisfies $\mathfrak{A}_\infty$, $\mathfrak{B}_\omega$,
  $\mathfrak{D}$, and $\mathfrak{E}$.  In particular, it satisfies
  $\mathfrak{B}_\omega$.  Proposition~\ref{b-good} shows that $H$ has the
  desired properties.
\end{proof}
\begin{theorem} \label{third-main}
  Let $M$ be a model of a $P$-minimal expansion $T$ of $\Th(K)$ for
  some finite extension $K/\Qq_p$ with $e = [K : \Qq_p]$.  In a
  monster model $\Mm \succeq M$, let $G$ be an $n$-dimensional
  $M$-definable group.  Then there is an $M$-definable open subgroup
  $H \subseteq G$ such that $H$ is fsg, $H/H^{00}$ is an
  $ne$-dimensional Lie group over $\Qq_p$ and $H$ is compactly
  dominated by $H/H^{00}$.
\end{theorem}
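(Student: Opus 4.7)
The strategy parallels the proof of Theorem~\ref{second-main}, with the key modification that we must produce an $M$-definable subgroup $H$. The crucial observation is that although Proposition~\ref{ed-good}'s hypothesis $\mathfrak{A}_\omega$ is only type-definable, Lemma~\ref{to-b-2} actually establishes the strictly stronger first-order condition $\mathfrak{A}_\infty$; together with the first-order conditions $\mathfrak{B}_1$, $\mathfrak{B}_2$, $\mathfrak{D}$, $\mathfrak{E}$, this can be arranged using an $M$-definable parameter $\epsilon_0$.

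First, reduce via Lemma~\ref{stick} to the case $G \subseteq \Mm^n$, and via Lemma~\ref{local-C1} (applied with small model $M$) to the case where there is an $M$-definable open $U \subseteq G$ of full dimension such that $\star_a$ is strictly differentiable in a neighborhood of $a$ for each $a \in U$. Since $U$ is $M$-definable and non-empty, $U(M) \neq \varnothing$, so pick $a \in U(M)$, replace $\star$ by $\star_a$, and translate coordinates so that the identity is $\bar{0} \in \Oo^n$ and $\star$ is strictly differentiable at $(\bar{0}, \bar{0})$; everything remains $M$-definable. The conjunction of $\mathfrak{A}_\infty$, $\mathfrak{B}_1$, $\mathfrak{B}_2$, $\mathfrak{D}$, $\mathfrak{E}$ on the subgroup $\epsilon\Oo_\Mm^n$ is first-order in $\epsilon$ (Remark~\ref{def-el}), and holds whenever $\epsilon \in \Mm$ is $M$-infinitesimal by Lemma~\ref{to-b-2}. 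Hence the first-order sentence
\[
\exists \delta \in \Gamma \; \forall \epsilon \bigl(v(\epsilon) > \delta \;\Longrightarrow\; \epsilon\Oo^n \text{ is a subgroup of } G \text{ satisfying all five conditions}\bigr)
\]
is true in $\Mm$ with any $\delta > \Gamma_M$ as witness. By $M \preceq \Mm$, there is $\delta_0 \in \Gamma_M$ with the same property; pick any $\epsilon_0 \in M$ with $v(\epsilon_0) > \delta_0$ and set $H := \epsilon_0\Oo_\Mm^n$. This $H$ is an $M$-definable open subgroup of $G$ that satisfies $\mathfrak{A}_\infty$, $\mathfrak{B}_1$, $\mathfrak{B}_2$, $\mathfrak{D}$, $\mathfrak{E}$ in $\Mm$.

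Since $\mathfrak{A}_\infty$ implies $\mathfrak{A}_\omega$ (by instantiating $\gamma = kv(p)$ for each $k$), Proposition~\ref{ed-good} applies directly to $H$, giving that $H$ is fsg, compactly dominated by $H/H^{00}$, and that $H/H^{00}$ is an $ne$-dimensional Lie group over $\Qq_p$. The main conceptual obstacle one might have feared---transferring the type-definable condition $\mathfrak{A}_\omega$ across to $M$-definable data---is thus circumvented by the first-order strengthening $\mathfrak{A}_\infty$ furnished by Lemma~\ref{to-b-2}. This explains why the parallel strategy fails for Conjecture~\ref{mod-op}: condition $\mathfrak{B}_\omega$ (needed for a Lie group over $K$ rather than $\Qq_p$) admits no analogous first-order strengthening, matching the discussion around Theorem~\ref{intro-2} in the introduction.
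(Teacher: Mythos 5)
Your proof is correct and follows essentially the same route as the paper: find an infinitesimal subgroup $\epsilon\Oo^n$ satisfying the first-order combination $\mathfrak{A}_\infty \wedge \mathfrak{B}_1 \wedge \mathfrak{B}_2 \wedge \mathfrak{D} \wedge \mathfrak{E}$ via Lemma~\ref{to-b-2}, then use the definability of this combination (Remark~\ref{def-el}) together with $M \preceq \Mm$ to replace $\epsilon$ by an $M$-definable one, and conclude with Proposition~\ref{ed-good}. You have merely made explicit, via the parametrization by $\epsilon$ and the Tarski--Vaught step, what the paper compresses into ``we can change $H$ to be $M$-definable.''
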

\begin{proof}
  As in Theorem~\ref{second-main}, we can find a $n$-dimensional
  definable subgroup $H \subseteq G$ which satisfies
  $\mathfrak{A}_\infty$, $\mathfrak{B}_\omega$, $\mathfrak{D}$, and
  $\mathfrak{E}$.  In particular, it satisfies the combination
  $\mathfrak{A}_\infty \wedge \mathfrak{B}_1 \wedge \mathfrak{B}_2 \wedge \mathfrak{D}
  \wedge \mathfrak{E}$.  This combination of properties is definable
  (Remark~\ref{def-el}), so we can change $H$ to be $M$-definable.
  Then Proposition~\ref{ed-good} shows that $H$ has the desired properties.
\end{proof}
From the proofs, one can extract the following observation, which is
similar to a theorem of Acosta L\'opez on definable groups in
1-h-minimal theories \cite[Proposition~6.3]{acosta-defcom}.
\begin{observation}
  Let $G$ be an $n$-dimensional definable group in a model $M$ of a
  $P$-minimal theory.  Then there is a definable family
  $\{U_\gamma\}_{\gamma \in \Gamma}$ such that each $U_\gamma$ is an
  $n$-dimensional definable subgroup,
    $\gamma \le \gamma' \implies U_\gamma \supseteq U_{\gamma'}$,
  and $\bigcap_{\gamma \in \Gamma} U_\gamma = \{1\}$.
\end{observation}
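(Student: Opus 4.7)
The plan is to take $U_\gamma$ to be a closed ball of radius $\gamma$ around the identity element in a suitable coordinate chart, with the definition frozen for small $\gamma$, reusing the infinitesimal-neighborhood construction from Lemma~\ref{to-b-2}. By Lemma~\ref{stick} I may assume $G \subseteq M^n$, and by Lemma~\ref{local-C1} together with the reduction $\star \leadsto \star_a$ used in the proof of Theorem~\ref{second-main} I may assume that the identity of $G$ is $\bar{0}$, lies in the interior of $G$, and that $\star$ is strictly differentiable on a neighborhood of $(\bar{0},\bar{0})$ with derivatives $\partial_{\bx}(\bx\star\by)|_{(\bar{0},\bar{0})} = \partial_{\by}(\bx\star\by)|_{(\bar{0},\bar{0})} = I_n$; the inverse function theorem (Theorem~\ref{ift}) likewise puts the inverse map into strictly differentiable form near $\bar{0}$ with derivative $-I_n$.

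The key claim is that for $\gamma \in \Gamma_M$ sufficiently large, the ball $B_\gamma(\bar{0})^n$ is an $n$-dimensional definable subgroup of $G$. This is exactly the closure calculation from the proof of Lemma~\ref{to-b-2}: ordinary differentiability of $\star$ at $(\bar{0},\bar{0})$ gives, for every target $\gamma_1 \in \Gamma_M$, some $\delta \in \Gamma_M$ such that $v(\bx),v(\by) \ge \delta$ forces $v(\bx \star \by - \bx - \by) \ge \gamma_1 + v(\bx,\by)$, and an analogous estimate holds for inversion. Applied with $\gamma_1 = 1$ this yields some $\gamma_0 \in \Gamma_M$ such that $B_\gamma(\bar{0})^n$ is closed under both $\star$ and inversion for every $\gamma \ge \gamma_0$. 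The set of such $\gamma$ is a non-empty, upward-closed, definable subset of $\Gamma_M$, so a least $\gamma_0$ exists by definable completeness of $\Gamma$.

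Finally, define
\[
  U_\gamma = \begin{cases} B_\gamma(\bar{0})^n & \text{if } \gamma \ge \gamma_0, \\ B_{\gamma_0}(\bar{0})^n & \text{if } \gamma < \gamma_0. \end{cases}
\]
This is a definable family of $n$-dimensional subgroups indexed by $\Gamma_M$, visibly monotone in $\gamma$. Any nonzero $\bx \in M^n$ has some coordinate of valuation $\gamma \in \Gamma_M$ and is therefore excluded by $B_{\gamma+1}(\bar{0})^n$, so $\bigcap_{\gamma \in \Gamma_M} U_\gamma = \{\bar{0}\}$, which corresponds to $\{1\}$ in $G$. The only substantive step is the closure of small balls under the group operations, and that is just the differentiability calculation of Lemma~\ref{to-b-2} carried out inside $M$ itself rather than against $M$-infinitesimals in the monster; so there is really no major obstacle beyond packaging the existing argument.
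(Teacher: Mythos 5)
Your proof is correct and follows the same route as the paper: reduce via Lemma~\ref{stick} and Lemma~\ref{local-C1}, translate so the identity is $\bar{0}$ and $\star$ is differentiable there with Jacobian $[I_n\ I_n]$, and take the family of $n$-dimensional balls around $\bar{0}$. The one real difference is in how to make the radii honest elements of $\Gamma_M$: the paper passes to a highly saturated elementary extension, invokes Lemma~\ref{to-b-2} at an $M$-infinitesimal $\epsilon$ to get condition $\mathfrak{A}_\infty$, and then transfers the existence of the family back to $M$ using the remark that dimension (and hence the whole package of conditions) is definable in families; you instead unwind the differentiability estimate in its $\varepsilon$-$\delta$ form (ordinary differentiability at $(\bar{0},\bar{0})$ plus the inverse function theorem for the group inverse) to produce a standard threshold $\gamma_0 \in \Gamma_M$ below which closed balls are already subgroups, and then freeze the family below $\gamma_0$. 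That obtains the same conclusion without the detour through a monster and the transfer-back step; it is the observation that the subgroup-closure part of Lemma~\ref{to-b-2} does not actually need $\epsilon$ to be infinitesimal. The only point worth tightening in your write-up is that Lemma~\ref{local-C1} and Theorem~\ref{ift} are stated in the monster $\Mm$, so applying them to a general model $M$ requires a one-line elementarity remark (the relevant conclusions are first-order over $M$-parameters, and the needed base point $a \in U(M)$ exists by definable Skolem functions); once that is said, the argument is complete.
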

\begin{proof}
  Replacing $M$ with an elementary extension, we may assume that $M$
  is highly saturated.  (This uses the fact that dimension is
  definable in families.)  By Lemma~\ref{stick}, we may assume $G
  \subseteq \Mm^n$.  By Lemma~\ref{local-C1} we may assume that the
  group operation is differentiable at the identity element.
  Lemma~\ref{to-b-2} gives an $n$-dimensional definable subgroup $H
  \subseteq G$ which (up to isomorphism) satisfies
  $\mathfrak{A}_\infty$, among other things.  Condition
  $\mathfrak{A}_\infty$ implies that the family of $n$-dimensional
  balls around 0 has all the desired properties.
\end{proof}

\section{Third proof, for pure $p$-adically closed fields} \label{thpr}
In this section, we restrict to the case where $T = \Th(K)$, i.e.,
pure $p$-adically closed fields.  Recall that $K$ is a fixed $p$-adic
field, i.e., a finite extension of $\Qq_p$.  We assume the language
$\Ll$ contains constant symbols naming a basis of $K$ over $\Qq_p$.
This ensures that the theory has definable Skolem functions and that
$\dcl(\varnothing)$ is dense in $K$.  The density implies that every
$K$-definable open ball is 0-definable, a fact we will need later.

Our goal is to prove Theorem~\ref{last-main}, the third version of the
Onshuus-Pillay conjecture.  By Proposition~\ref{c-good}, we
essentially reduce to showing that definable groups locally satisfy
condition $\mathfrak{C}_\omega$ from Definition~\ref{ABC}.  When
everything is defined over the standard model and the group law is an
analytic function, Example~\ref{c-source} makes things work
correctly.  The main difficulty will be transferring this argument
from the standard model to the monster model.  In principle, this
requires keeping careful track of the analyticity of definable
functions in $K$ and their radii of convergence.

\subsection{Splendid and locally splendid functions}
In this subsection, we introduce a class of definable functions $\Oo^n
\to \Oo^m$ called \emph{splendid functions}
(Definition~\ref{splendid-def}).  Morally, a function $f : \Oo^n \to
\Oo^m$ is splendid if $f$ is given by a power series with coefficients
in $\Oo$, in some non-standard sense.  We will show in the next
subsection that definable functions are locally splendid at generic
points (Proposition~\ref{gen-lsa}).

The machinery of splendid and locally splendid functions can be seen
as a model-theoretic hack to avoid keeping track of the radii of
convergence of definable functions.

\begin{definition}
  A function $f : \Oo_K^n \to \Oo_K^m$ is \emph{pre-splendid} if it's
  given by a convergent power series with coefficients in $\Oo_K$.
  That is,
  \begin{equation*}
    f(\bx) = \sum_I \bc_I \bx^I
  \end{equation*}
  where $\bc_I \in \Oo^m$ and $\lim_{|I| \to \infty} v(\bc_I) =
  +\infty$.
\end{definition}
\begin{observation} \phantomsection \label{ps-obs}
  \begin{enumerate}
  \item \label{po1} If $f : \Oo_K^n \to \Oo_K^m$ is constant, then $f$ is
    pre-splendid.
  \item \label{po5} The identity function $\Oo_K^n \to \Oo_K^n$ and coordinate
    projections $\pi_1,\ldots,\pi_n : \Oo_K^n \to \Oo_K$ are
    pre-splendid.  More generally, any polynomial map $f : \Oo_K^n \to
    \Oo_K^m$ is pre-splendid, \emph{provided} that the coefficients
    are in $\Oo_K$.    
  \item \label{po2} If $f : \Oo_K^n \to \Oo_K^m$ and $g : \Oo_K^m \to \Oo_K^\ell$
    are pre-splendid, then the composition $g \circ f : \Oo_K^n \to
    \Oo_K^\ell$ is pre-splendid.
%% I wrote it down and it does work.  It's easiest to think in terms of a
%% lemma saying that if I is some infinite set, and a_i are elements of O
%% which tend to 0 as i goes to infty in the one-point compactification
%% of I, and if f_i is a pre-splendid function in some variables xbar,
%% then there's a presplendid function g(xbar) = sum_i a_i*f_i(xbar).
%% You can use this to easily check that products and compositions of
%% presplendid functions are presplendid.
  \item \label{po3} If $f : \Oo_K^n \to \Oo_K^m$ is a function, then $f$ is
    pre-splendid if and only if the component functions $f_1, \ldots,
    f_m : \Oo_K^n \to \Oo_K$ are pre-splendid.
  \item \label{po4} The set of pre-splendid functions from $\Oo_K^n$
    to $\Oo_K$ is an $\Oo_K$-algebra, i.e., it is closed under the
    ring operations and multiplication by $\Oo_K$.
  \item \label{po6} If $f : \Oo_K^n \to \Oo_K^m$ is pre-splendid, then $f$ is
    strictly differentiable, and the strict derivative $Df : \Oo_K^n
    \to \Oo_K^{nm}$ is pre-splendid.
  \end{enumerate}
\end{observation}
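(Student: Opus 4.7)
The plan is to verify each item by direct manipulation of formal power series with coefficients in $\Oo_K$, using completeness of $\Oo_K$ and the ultrametric inequality to control tails. Items (\ref{po1}), (\ref{po3}), and (\ref{po5}) are immediate from the definition, once one notes that polynomial maps with $\Oo_K$-coefficients are vacuously pre-splendid (the tails are zero). For (\ref{po4}), the main content is closure under products of scalar pre-splendid functions: from the convolution formula $[\bx^I](fg) = \sum_{I_1 + I_2 = I}([\bx^{I_1}] f)([\bx^{I_2}] g)$ each coefficient lies in $\Oo_K$, and given $N$, once $|I|$ exceeds twice the threshold past which the coefficients of both $f$ and $g$ have valuation $\ge N$, every term in the convolution has one factor of valuation $\ge N$ and the other in $\Oo_K$, so the whole coefficient has valuation $\ge N$ by the ultrametric inequality.

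Item (\ref{po2}) is the main obstacle. First I would reduce to the case $f(\bar{0}) = \bar{0}$: writing $\bc = f(\bar{0}) \in \Oo_K^m$, I form the Taylor shift $\tilde g(\by) := g(\bc + \by)$, and a direct estimate shows that the Taylor coefficients of $g$ at the integral point $\bc$ again lie in $\Oo_K$ with valuations tending to infinity, so $\tilde g$ is pre-splendid. Then $\tilde f := f - \bc$ has zero constant term, each $\tilde f^J$ has order $\ge |J|$ and is pre-splendid by iterating (\ref{po4}), and for each multi-index $I$ only finitely many $J$ (those with $|J| \le |I|$) contribute to $[\bx^I]\tilde g(\tilde f(\bx))$, so the formal substitution yields a well-defined power series with $\Oo_K$-coefficients. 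Convergence follows from a two-step bound: given $N$, first choose $k$ with $v([\by^J]\tilde g) \ge N$ for $|J| \ge k$, then choose $M$ large enough that $v([\bx^I]\tilde f^J) \ge N$ for $|I| \ge M$ and each of the finitely many $J$ with $1 \le |J| < k$. For $|I| \ge M$ every contribution to $[\bx^I]\tilde g(\tilde f(\bx))$ has valuation $\ge N$, so the coefficient does as well.

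For (\ref{po6}), the formal partial derivative of $\sum_I \bc_I \bx^I$ in $x_j$ is $\sum_I i_j \bc_I \bx^{I - e_j}$, which is pre-splendid since multiplying by the non-negative integer $i_j$ cannot decrease valuation; by (\ref{po3}) the matrix-valued formal derivative $Df$ is pre-splendid. To show $f$ is actually strictly differentiable with this as its strict derivative, I apply Fact~\ref{val-strict}: given $\gamma \in \Gamma_K$, split $f = P + R$ with $P = \sum_{|I| \le N} \bc_I \bx^I$ and $N$ chosen so that $v(\bc_I) > \gamma$ for $|I| > N$. The polynomial part satisfies
\begin{equation*}
P(\by) - P(\bx) - DP(\bx)(\by - \bx) = \sum_I \bc_I \sum_{|J| \ge 2} \binom{I}{J} \bx^{I-J} (\by - \bx)^J,
\end{equation*}
an $\Oo_K$-combination of monomials of order $\ge 2$ in $\by - \bx$, hence has valuation $\ge 2 v(\by - \bx)$, which exceeds $\gamma + v(\by - \bx)$ once $v(\by - \bx) > \gamma$. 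For the tail $R$, the identity $\by^I - \bx^I = (\by - \bx) \cdot Q_I(\bx, \by)$ with $Q_I \in \Oo_K[\bx, \by]$ gives $v(R(\by) - R(\bx)) > \gamma + v(\by - \bx)$, and $DR(\bx)$ has entries of valuation $> \gamma$ so $v(DR(\bx)(\by - \bx)) > \gamma + v(\by - \bx)$. Combining via the ultrametric inequality yields the hypothesis of Fact~\ref{val-strict} and completes the verification.
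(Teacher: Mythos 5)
The paper states this Observation without proof, treating these as routine facts about power series over $\Oo_K$; there is therefore no authorial argument against which to compare. Your proof is correct and fills in the intended details: the Taylor shift $\tilde g(\by) = g(\bc + \by)$ with $\bc = f(\bar 0) \in \Oo_K^m$ has coefficients $\sum_{J \ge K} \binom{J}{K} \bd_J \bc^{J-K}$ that do lie in $\Oo_K$ with valuations tending to infinity, so the reduction to $\tilde f(\bar{0}) = \bar{0}$ is sound, and the two-step estimate for the coefficients of the substitution is exactly what is needed; for item~(\ref{po6}), the split $f = P + R$ with the Taylor-formula bound $v \ge 2\,v(\by-\bx)$ on the polynomial part, together with the valuation-$> \gamma$ bounds on $R(\by)-R(\bx)$ and $DR(\bx)(\by-\bx)$, verifies the criterion of Fact~\ref{val-strict}. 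The one step you leave implicit is that the formal power series you construct for $g \circ f$ actually evaluates pointwise to $g(f(\bx))$; this requires a Fubini-type rearrangement of a double sum, which does hold here because in the non-archimedean completeness setting any family whose terms have valuations tending to $+\infty$ is unconditionally summable, so the interchange is legitimate. This is standard and would be elided by any reference, so it is not a real gap.
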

Now consider a monster model $\Mm \succ K$.  Recall that $\Oo$ denotes
$\Oo_\Mm$.
\begin{definition} \label{splendid-def}
  Let $D$ be a 0-definable set and $\mathcal{F} = \{f_a\}_{a \in D}$ be a 0-definable family of
  functions from $\Oo^n \to \Oo^m$.  Then $\mathcal{F}$ is a
  \emph{splendid family} if for every $a \in D(K)$, the function
  $f_a(K) : \Oo_K^n \to \Oo_K^m$ is pre-splendid.  A definable
  function $f : \Oo^n \to \Oo^m$ is \emph{splendid} if it belongs to
  some splendid family.
\end{definition}
We can transfer the properties of pre-splendid functions in
Observation~\ref{ps-obs} to splendid functions:
\begin{proposition} \phantomsection \label{splendid-props}
  \begin{enumerate}
  \item If $f : \Oo^n \to \Oo^m$ is constant, then $f$ is splendid.
  \item If $f : \Oo^n \to \Oo^m$ is a polynomial map whose
    coefficients are in $\Oo$, then $f$ is splendid.  For example, the
    identity map $\Oo^n \to \Oo^n$ and coordinate projections
    $\pi_1,\ldots,\pi_n : \Oo^n \to \Oo$ are splendid.    
  \item If $f : \Oo^n \to \Oo^m$ and $g : \Oo^m \to \Oo^\ell$ are
    splendid, then the composition $g \circ f : \Oo^n \to \Oo^\ell$ is
    splendid.
  \item If $f : \Oo^n \to \Oo^m$ is a definable function, then $f$ is
    splendid if and only if the component functions $f_1,\ldots,f_m :
    \Oo^n \to \Oo$ are splendid.
  \item The set of splendid functions from $\Oo^n$ to $\Oo$ is an
    $\Oo$-algebra, i.e., it is closed under the ring operations and
    multiplication by $\Oo$.
  \item If $f : \Oo^n \to \Oo^m$ is splendid, then $f$ is strictly
    differentiable, and the strict derivative $Df : \Oo^n \to
    \Oo^{nm}$ is splendid.
  \end{enumerate}
\end{proposition}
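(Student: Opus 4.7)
The plan is to prove all six parts by a uniform transfer argument: in each case I construct a 0-definable family into which the relevant function fits, and verify that the $K$-members of this family are pre-splendid by the matching clause of Observation~\ref{ps-obs}.

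Parts (1)--(5) are direct. Given splendid $f$ and $g$ with witnessing splendid families $\{f_a\}_{a \in D_1}$ and $\{g_b\}_{b \in D_2}$, I form the natural composite families: $\{g_b \circ f_a\}_{(a,b) \in D_1 \times D_2}$ for composition, $\{f_a + g_b\}$, $\{f_a \cdot g_b\}$, and $\{\lambda f_a\}_{(\lambda,a) \in \Oo \times D_1}$ for the $\Oo$-algebra operations, and, given splendid families for each component $f_i$, their product family for assembling tuples. Each such family is 0-definable, and a $K$-parameter tuple specializes to a combination of pre-splendid $K$-functions, hence is pre-splendid by the corresponding clause of Observation~\ref{ps-obs}. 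For (1) and (2), the family is parameterized by the coefficients directly, so $K$-parameters land in $\Oo_K$ and the resulting $K$-function is a polynomial with $\Oo_K$ coefficients, pre-splendid by Observation~\ref{ps-obs}(\ref{po5}). The two directions of (4) follow from composing with the splendid coordinate projections $\pi_i$ and from the tuple construction, respectively.

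Part (6) requires more care: I must produce a 0-definable family whose $K$-members are the strict derivatives of the $K$-members of $\{f_a\}$, and first of all know that the strict derivative genuinely exists on the monster side. The key point is that by Fact~\ref{val-strict}, the property ``$f_a$ is strictly differentiable at every $\ba \in \Oo^n$ with strict derivative in $\Oo^{nm}$'' is first-order expressible in $a$: it asserts
\begin{equation*}
  \forall \ba \in \Oo^n \; \exists \mu \in \Oo^{nm} \; \forall \gamma \; \exists \delta \; \forall \bx, \by \in B_\delta(\ba): \; v(f_a(\bx) - f_a(\by) - \mu(\bx - \by)) > \gamma + v(\bx - \by).
\end{equation*}
Since every $f_a(K)$ with $a \in D(K)$ is pre-splendid, hence strictly differentiable with $\Oo_K$-valued derivative, elementarity promotes this to every $a \in D(\Mm)$. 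The strict derivative $Df_a$ is then definable from $a$ by uniqueness of $\mu$, yielding a 0-definable family $\{Df_a\}_{a \in D}$. For $a \in D(K)$, $Df_a(K)$ coincides with the term-by-term formal derivative of $f_a(K)$, which is pre-splendid by Observation~\ref{ps-obs}(\ref{po6}); hence $\{Df_a\}$ is splendid and contains $Df$. The main obstacle is precisely this elementarity step---recognizing $Df$ as uniformly definable rather than merely pointwise-defined---after which everything reduces to routine family bookkeeping.
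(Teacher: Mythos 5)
Your proof follows essentially the same strategy as the paper's: construct 0-definable composite families whose $K$-members are pre-splendid by the matching clause of Observation~\ref{ps-obs}, and for part (6) transfer strict differentiability from $K$ to $\Mm$ by elementarity before packaging the derivatives into a splendid family. The only difference is that you spell out the first-order formula behind the elementarity step via Fact~\ref{val-strict}, whereas the paper simply asserts that $K \preceq \Mm$ gives strict differentiability of each $g \in \mathcal{F}$; this extra detail is correct and harmless.
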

\begin{proof}
  \begin{enumerate}
  \item The family of constant functions from $\Oo^n \to \Oo^m$ is a
    splendid family, by Observation~\ref{ps-obs}(\ref{po1}).
  \item Suppose $f : \Oo^n \to \Oo^m$ is polynomial, with all
    coefficients in $\Oo$.  Let $d$ be the degree of $f$, and let
    $\mathcal{F}_{\le d}$ be the family of all polynomial maps $g :
    \Oo^n \to \Oo^m$ such that the coefficients of $g$ are in $\Oo$
    and $\deg(g) \le d$.  Then $\mathcal{F}_{\le d}$ is a 0-definable
    family (because of the bound on degree) and $\mathcal{F}_{\le d}$
    is a splendid family by Observation~\ref{ps-obs}(\ref{po5}).
  \item Take splendid families $\mathcal{F} \ni f$ and $\mathcal{G}
    \ni g$.  By Observation~\ref{ps-obs}(\ref{po2}), the family $\{g'
    \circ f' : f' \in \mathcal{F}, ~ g' \in \mathcal{G}\}$ is a
    splendid family containing $g \circ f$.
  \item Similar.
  \item Similar.
  \item
    Take a splendid family $\mathcal{F} \ni f$.  By
    Observation~\ref{ps-obs}(\ref{po6}), every function in
    $\mathcal{F}(K)$ is strictly differentiable.  Because $K \preceq
    \Mm$, this implies the strict differentiability of functions in
    $\mathcal{F}$ (including $f$).  Let $\mathcal{F}' = \{Dg : g \in
    \mathcal{F}\}$, the set of strict derivatives of functions in
    $\mathcal{F}$.  Then $\mathcal{F}'$ is a splendid family by
    Observation~\ref{ps-obs}(\ref{po6}), so $Df$ is splendid. \qedhere
  \end{enumerate}
\end{proof}
Additionally, 0-definable pre-splendid functions are splendid:
\begin{proposition} \label{lazy}
  If $f : \Oo^n \to \Oo^m$ is 0-definable and $f(K) : \Oo_K^n \to
  \Oo_K^m$ is pre-splendid, then $f$ is splendid.
\end{proposition}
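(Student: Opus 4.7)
The plan is to exhibit $f$ as belonging to a trivial one-element splendid family; the definition of splendid family places no size constraint on the indexing set $D$, so it is enough to promote $f$ itself to a ``family'' parameterized by a 0-definable point.

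Concretely, I would fix a 0-definable singleton, say $D = \{0\} \subseteq \Mm$, and form $\mathcal{F} = \{f_a\}_{a \in D}$ with $f_0 := f$. Since $f$ is 0-definable by hypothesis, the parameterized graph
\[
\{(a,\bx,\by) \in D \times \Oo^n \times \Oo^m : \by = f_a(\bx)\}
\]
is a 0-definable subset of $D \times \Oo^n \times \Oo^m$, so $\mathcal{F}$ is a 0-definable family of functions $\Oo^n \to \Oo^m$ in the sense required. To verify $\mathcal{F}$ is splendid, I must check that $f_a(K) : \Oo_K^n \to \Oo_K^m$ is pre-splendid for every $a \in D(K)$. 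But $D(K) = \{0\}$ and $f_0(K) = f(K)$, which is pre-splendid by assumption.

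Hence $\mathcal{F}$ is a splendid family containing $f$, and $f$ is splendid by the definition immediately preceding Proposition~\ref{splendid-props}. There is no real obstacle: the statement is a direct unpacking of the definitions, recording the observation that pre-splendidness of a single 0-definable function at the standard model $K$ lifts trivially to splendidness of $f$ in the monster $\Mm$ via the one-point family, so no uniformity across nontrivial parameter spaces needs to be produced.
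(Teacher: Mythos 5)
Your proposal is exactly the paper's proof, only spelled out in more detail: the paper simply observes that the singleton family $\{f\}$ is a splendid family, and you make this explicit by describing the one-point parameter set $D = \{0\}$ and checking the defining conditions. Same approach, correct.
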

\begin{proof}
  The singleton family $\{f\}$ is a splendid family.
\end{proof}
\begin{lemma} \label{scaler}
  Let $f : \Oo^n \to \Oo^m$ be splendid, with $f(\bar{0}) = \bar{0}$.
  Suppose $\epsilon \in \Oo$ is non-zero.  Let $g : \Oo^n \to \Mm^m$
  be the function $g(\bx) = f(\epsilon \bx)/\epsilon$.  Then $\img(g)
  \subseteq \Oo^m$, and the function $g : \Oo^n \to \Oo^m$ is
  splendid.
\end{lemma}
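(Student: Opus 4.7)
My plan is to produce a splendid family containing $g$ by uniformly parameterizing the construction $h \mapsto h(\eta\,\cdot\,)/\eta$ over the given splendid family for $f$. First, choose a splendid family $\mathcal{F} = \{f_a\}_{a \in D}$ with $f = f_{a_0}$ for some $a_0 \in D(\Mm)$, and set
\[
  D'' = \{(a,\eta) \in D \times (\Oo \setminus \{0\}) : f_a(\bar{0}) = \bar{0} \text{ and } \forall \bx \in \Oo^n ~ f_a(\eta \bx) \in \eta \Oo^m\}.
\]
For $(a,\eta) \in D''$, define $g_{a,\eta}(\bx) = f_a(\eta\bx)/\eta$. This gives a 0-definable family $\mathcal{G}$ of functions $\Oo^n \to \Oo^m$.

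The two remaining things to verify are (i) that $(a_0,\epsilon) \in D''$, so that $g = g_{a_0,\epsilon}$ lies in $\mathcal{G}$ and in particular $\img(g) \subseteq \Oo^m$, and (ii) that for every $(a,\eta) \in D''(K)$, the function $g_{a,\eta}(K) : \Oo_K^n \to \Oo_K^m$ is pre-splendid.

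Both will follow from the same one-line power series computation, carried out over $K$. If $a \in D(K)$ and $f_a(\bar{0}) = \bar{0}$, then splendor of $\mathcal{F}$ yields an expansion $f_a(\bx) = \sum_{|I| \ge 1} \bc_{a,I} \bx^I$ with $\bc_{a,I} \in \Oo_K^m$ and $v(\bc_{a,I}) \to \infty$. Substituting $\eta\bx$ and factoring out one $\eta$ gives
\[
  f_a(\eta\bx) = \eta \sum_{|I| \ge 1} \bc_{a,I} \eta^{|I|-1} \bx^I,
\]
which exhibits $g_{a,\eta}(K)$ as pre-splendid with coefficients $\bc_{a,I} \eta^{|I|-1} \in \Oo_K^m$ whose valuations still tend to infinity (since $v(\eta) \ge 0$). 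This directly handles (ii), and it also shows that the containment $f_a(\eta\bx) \in \eta \Oo_K^m$ holds in $K$ uniformly in $a \in D(K)$ with $f_a(\bar{0}) = \bar{0}$, in $\eta \in \Oo_K \setminus \{0\}$, and in $\bx \in \Oo_K^n$. Since this uniform containment is first-order and $K \preceq \Mm$, it transfers to $\Mm$, giving $(a_0,\epsilon) \in D''$ and handling (i).

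No step is particularly difficult; the only mild subtlety is that the parameter $a_0$ witnessing splendor of $f$ need not lie in $D(K)$, so the containment $\img(g) \subseteq \Oo^m$ has to be obtained via an elementary-equivalence transfer rather than by directly manipulating a power series in $\Mm$.
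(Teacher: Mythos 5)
Your proof is correct and follows essentially the same route as the paper: the paper simply says ``it suffices to check the analogous statement for pre-splendid functions'' and does the one-line power-series computation, while you make explicit the parameterized family $\mathcal{G}$ over $D''$ and the elementary transfer from $K$ to $\Mm$ that justify that reduction. Your care about $\img(g) \subseteq \Oo^m$ (since the witnessing parameter $a_0$ need not be $K$-rational) is exactly the point the paper leaves implicit, so substantively the two arguments coincide.
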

\begin{proof}
  It suffices to check the analogous statement for pre-splendid functions.
  Let $f : \Oo_K^n \to \Oo_K^m$ be pre-splendid with $f(\bar{0}) =
  \bar{0}$.  Then $f(\bx) = \sum_{I} \bc_I \bx^I$, where each $\bc_I$
  is in $\Oo_K^m$ and the constant term $\bc_{\bar{0}}$ vanishes.
  Then
  \begin{equation*}
    g(\bx) = f(\epsilon \bx)/\epsilon = \sum_I \epsilon^{|I|-1} \bc_I
    \bx^I.
  \end{equation*}
  All the coefficients $\epsilon^{|I|-1} \bc_I$ are still in $\Oo$,
  since the constant term vanishes.  Thus $g$ is pre-splendid.
%%   Now we verify the original statement for splendid functions.  Take a
%%   splendid family $\mathcal{F} \ni f$.  Let $\mathcal{G}$ be the
%%   family of functions $\Oo^n \to \Mm^m$ of the form $f'(\epsilon
%%   \bx)/\epsilon$ for $f' \in \mathcal{F}$ and $\epsilon \in \Oo
%%   \setminus \{0\}$.  We have just shown that every $K$-definable
%%   function in $\mathcal{G}$ is pre-splendid.  It follows that
%%   $\mathcal{G}$ is a splendid family (and in particular that $\img(g)
%%   \subseteq \Oo^m$ for every $g \in \mathcal{G}$).  Then $\mathcal{G}$
%%   witnesses that the given function $g(\bx) = f(\epsilon
%%   \bx)/\epsilon$ is splendid.
\end{proof}
\begin{definition} \label{ls-def}
  Let $U \subseteq \Mm^n$ be open and $f : U \to \Mm^m$ be a definable function.    Then $f$ is \emph{locally splendid}
  at $a \in U$ if there are some non-zero $\epsilon$ and $\delta$ such
  that
  \begin{gather*}
    a + \epsilon \Oo^n \subseteq U \\
    f(a + \epsilon \Oo^n) \subseteq f(a) + \delta \Oo^m
  \end{gather*}
  and the function
  \begin{equation*}
    a + \epsilon \Oo^n \stackrel{f}{\to} f(a) + \delta \Oo^m
  \end{equation*}
  is splendid, or more precisely, the following function is
  splendid:
  \begin{gather*}
    \Oo^n \to \Oo^m \\
    x \mapsto (f(a + \epsilon x) - f(a)) \delta^{-1}.
  \end{gather*}
  Finally, we say that $f$ is \emph{locally splendid} if it is
  splendid at every $a \in U$.
\end{definition}
Note that the definition of ``locally splendid'' smuggles in some
uniformity conditions, because of the saturation of $\Mm$.  For
example, if $f$ is locally splendid, there must be a single splendid
family which witnesses splendidness at every $a \in U$.  We will not
use this fact, however.

We first carry out some sanity checks on Definition~\ref{ls-def}:
splendid functions are locally splendid (Proposition~\ref{sanity}),
and local splendidness is a local property
(Proposition~\ref{ls-local}).
\begin{proposition} \label{sanity}
  If $f : \Oo^n \to \Oo^m$ is splendid, then $f$ is locally splendid.
\end{proposition}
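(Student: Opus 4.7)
The plan is to reduce local splendidness at an arbitrary point $a \in \Oo^n$ to the single claim that splendidness is preserved under translation by elements of $\Oo^n$: namely, if $f : \Oo^n \to \Oo^m$ is splendid and $a \in \Oo^n$, then $g(\bx) := f(a + \bx) - f(a)$ is again a splendid function $\Oo^n \to \Oo^m$. Granted the claim, Definition~\ref{ls-def} is verified at $a$ by taking $\epsilon = \delta = 1$: the inclusions $a + \Oo^n \subseteq \Oo^n$ and $g(\Oo^n) \subseteq \Oo^m$ hold automatically because $\Oo$ is a ring containing $a$ and $f$ maps $\Oo^n$ into $\Oo^m$, and the required function $\bx \mapsto (f(a + \bx) - f(a)) \cdot 1^{-1}$ is precisely $g$.

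The translation-invariance claim in turn reduces to the pre-splendid case by a direct power-series calculation over $K$. Writing a pre-splendid $f : \Oo_K^n \to \Oo_K^m$ as $\sum_I \bc_I \bx^I$ and expanding $(a + \bx)^I$ multinomially, one obtains
\begin{equation*}
  f(a + \bx) - f(a) = \sum_{J \ne \bar{0}} \bc'_J \bx^J, \qquad \bc'_J := \sum_{I \ge J} \binom{I}{J}\, \bc_I\, a^{I - J}.
\end{equation*}
Because $\binom{I}{J} \in \Zz \subseteq \Oo_K$ and $a^{I-J} \in \Oo_K$, the ultrametric inequality gives $v(\bc'_J) \ge \min_{I \ge J} v(\bc_I) \ge 0$, so $\bc'_J \in \Oo_K^m$ and the inner sums converge. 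For the tail estimate, $v(\bc'_J) \ge \min_{|I| \ge |J|} v(\bc_I)$, which tends to $+\infty$ as $|J| \to \infty$ by hypothesis on $f$. Thus the translated series is pre-splendid.

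To lift this from $K$ to $\Mm$, fix a splendid family $\mathcal{F} = \{f_t\}_{t \in D}$ with $f = f_{t_0}$ and form the $0$-definable family
\begin{equation*}
  \mathcal{G} = \{ g_{t, b} : (t, b) \in D \times \Oo^n \}, \qquad g_{t, b}(\bx) := f_t(b + \bx) - f_t(b),
\end{equation*}
parameterized by the $0$-definable set $D \times \Oo^n$. For any $K$-point $(t, b) \in D(K) \times \Oo_K^n$, the $K$-specialization $g_{t, b}(K)$ is pre-splendid by the calculation of the previous paragraph, so $\mathcal{G}$ is a splendid family. Since $g = g_{t_0, a}$ lies in $\mathcal{G}$, it is splendid, establishing the claim and hence the proposition.

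The only real work is the multi-index bookkeeping in the pre-splendid translation step; there is no conceptual obstruction, only routine ultrametric estimates, and the packaging into a $0$-definable family over the parameter set $D \times \Oo^n$ is forced by the definition of splendidness.
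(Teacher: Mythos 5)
Your proof is correct and follows the same overall skeleton as the paper's: take $\epsilon = \delta = 1$ and reduce to showing that $\bx \mapsto f(a + \bx) - f(a)$ is splendid. The difference is in how you justify this last step. The paper simply cites Proposition~\ref{splendid-props}: the map $\bx \mapsto a + \bx$ is a polynomial with coefficients in $\Oo$ and hence splendid, compositions of splendid maps are splendid, constants are splendid, and splendid maps $\Oo^n \to \Oo^m$ are closed under subtraction; so $f(a + \bx) - f(a)$ is splendid with no computation needed. You instead re-derive this fact from scratch by a multinomial rearrangement of the power series at the pre-splendid level (which is fine, though it silently invokes the standard ultrametric justification for rearranging double series) and then package the result into the $0$-definable family $\{g_{t,b}\}_{(t,b) \in D \times \Oo^n}$. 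Both are valid; the paper's route is shorter because the multinomial bookkeeping you do by hand is already encapsulated, in the form of the closure properties, in Proposition~\ref{splendid-props}.
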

\begin{proof}
  For any $a \in \Oo^n$, take $\epsilon = \delta = 1$.  Then
  \begin{gather*}
    a + \Oo^n = \Oo^n = \dom(f) \\
    f(a + \Oo^n) = f(\Oo^n) = \img(f) \subseteq \Oo^m = f(a) + \Oo^m,
  \end{gather*}
  and the function
  \begin{gather*}
    \Oo^n \to \Oo^m \\
    x \mapsto f(a + x) - f(a)
  \end{gather*}
  is splendid by Proposition~\ref{splendid-props}.
\end{proof}
\begin{lemma} \label{sanity2}
  Fix a definable function $f : U \to \Mm^n$ and a point $a \in U$.
  Say that a pair $(\epsilon,\delta)$ is ``suitable'' if it satisfies
  the conditions in Definition~\ref{ls-def}:
  \begin{gather*}
    a + \epsilon \Oo^n \subseteq U \\
    f(a + \epsilon \Oo^n) \subseteq f(a) + \delta \Oo^m
  \end{gather*}
  and the following function is splendid:
  \begin{gather*}
    \Oo^n \to \Oo^m \\
    x \mapsto (f(a + \epsilon x) - f(a)) \delta^{-1}.
  \end{gather*}
  Suppose that $f$ is locally splendid at $a$, i.e., some pair
  $(\epsilon_0,\delta_0)$ is suitable.
  \begin{enumerate}
  \item If $(\epsilon,\delta)$ is suitable, then $(\epsilon',\delta)$
    is suitable for any $\epsilon'$ with $v(\epsilon') \ge
    v(\epsilon)$.
  \item If $(\epsilon,\delta)$ is suitable, then $(\epsilon,\delta')$
    is suitable for any $\delta'$ with $v(\delta') \le v(\delta)$.
  \item For any $\delta$, there is some $\epsilon$ such that
    $(\epsilon,\delta)$ is suitable.
  \end{enumerate}
\end{lemma}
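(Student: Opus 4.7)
The plan is to reduce each assertion to the closure properties of splendid functions (Proposition~\ref{splendid-props}) combined with the scaling principle of Lemma~\ref{scaler}. Throughout, write $g(\bx) := (f(a + \epsilon_0 \bx) - f(a))\delta_0^{-1}$ for the given splendid function from $\Oo^n$ to $\Oo^m$; crucially, $g(\bar{0}) = \bar{0}$, which is what opens the door to Lemma~\ref{scaler}.

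For part (1), given $\epsilon'$ with $v(\epsilon') \ge v(\epsilon)$, factor $\epsilon' = \epsilon c$ with $c \in \Oo$. The two containments in the definition of suitability pass to the smaller ball (since $a + \epsilon'\Oo^n \subseteq a + \epsilon\Oo^n$), and the required rescaled map
\[
\bx \mapsto (f(a + \epsilon' \bx) - f(a))\delta^{-1} = \bigl(f(a + \epsilon(c\bx)) - f(a)\bigr)\delta^{-1}
\]
is the composition of the splendid function for $(\epsilon,\delta)$ with the polynomial map $\bx \mapsto c\bx$ (whose coefficients lie in $\Oo$), hence splendid. Part (2) is dual: write $\delta = \delta' c'$ with $c' \in \Oo$, so $\delta\Oo^m \subseteq \delta'\Oo^m$ handles the containment, and the new rescaled map equals $c'$ times the old one, which is splendid because splendid functions form an $\Oo$-algebra.

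Part (3) carries the real content. The case $v(\delta) \le v(\delta_0)$ reduces to (2) applied to $(\epsilon_0,\delta_0)$, so assume $v(\delta) > v(\delta_0)$. To compensate, shrink $\epsilon_0$ by a factor absorbing the valuation gap: choose a nonzero $c \in \Oo$ with $v(c) \ge v(\delta) - v(\delta_0)$ and set $\epsilon := \epsilon_0 c$. Then $a + \epsilon\Oo^n \subseteq a + \epsilon_0 \Oo^n \subseteq U$. Since $g(\bar{0}) = \bar{0}$, Lemma~\ref{scaler} supplies a splendid function $h : \Oo^n \to \Oo^m$ defined by $h(\bx) = g(c\bx)/c$, and a direct rewrite yields
\[
(f(a + \epsilon \bx) - f(a))\delta^{-1} = g(c\bx) \cdot \frac{\delta_0}{\delta} = h(\bx) \cdot \frac{c\,\delta_0}{\delta}.
\]
The scalar $c\delta_0/\delta$ has valuation $v(c) + v(\delta_0) - v(\delta) \ge 0$, so it lies in $\Oo$; multiplication by an element of $\Oo$ preserves splendidness, and simultaneously this forces $f(a + \epsilon\Oo^n) \subseteq f(a) + \delta\Oo^m$, so all three clauses of suitability hold for $(\epsilon,\delta)$.

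The only real obstacle is recognizing that part (3) genuinely requires Lemma~\ref{scaler}: the naive attempt to use only Proposition~\ref{splendid-props} breaks down exactly when $v(\delta) > v(\delta_0)$, because then $\delta_0/\delta \notin \Oo$. The vanishing $g(\bar{0}) = \bar{0}$ is what lets us extract a factor of $c$ from $g(c\bx)$ and absorb it into the denominator, converting the valuation deficit into a legitimate $\Oo$-scalar.
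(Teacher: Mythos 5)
Your proof is correct and follows essentially the same route as the paper: parts (1) and (2) come from composing with a polynomial scaling on the source or target (Proposition~\ref{splendid-props}), and part (3) hinges on Lemma~\ref{scaler} applied to the splendid function $g$ with $g(\bar 0)=\bar 0$. The only cosmetic difference is that you allow any $c\in\Oo$ with $v(c)\ge v(\delta)-v(\delta_0)$, whereas the paper takes $\rho=\delta/\delta_0$ exactly; this is the same idea.
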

\begin{proof}
  \begin{enumerate}
  \item Let $g : \Oo^n \to \Oo^m$ be the map $g(x) = (f(a + \epsilon
    x) - f(a)) \delta^{-1}$.  The multiplication by
    $\epsilon'/\epsilon$ map $\Oo^n \to \Oo^n$ is splendid, so the
    composition
    \begin{equation*}
      g(x\epsilon'/\epsilon) = (f(a + \epsilon'x) - f(a)) \delta^{-1}
    \end{equation*}
    is splendid, showing that $(\epsilon',\delta)$ is suitable.
  \item Similar, using the fact that the multiplication by
    $\delta/\delta'$ map $\Oo^m \to \Oo^m$ is splendid.
  \item If $v(\delta) \le v(\delta_0)$, then $(\epsilon_0,\delta)$ is
    splendid by the previous point.  Suppose $v(\delta) \ge
    v(\delta_0)$.  Let $\rho = \delta/\delta_0 \in \Oo$.  Because the
    pair $(\epsilon_0,\delta_0)$ is suitable, the following function
    is splendid:
    \begin{equation*}
      g(x) = (f(a + \epsilon_0 x) - f(a)) \delta_0^{-1}.
    \end{equation*}
    Note that $g(0) = 0$.  By Lemma~\ref{scaler}, the composition
    \begin{equation*}
      g(\rho x)\rho^{-1} = (f(a + \rho \epsilon_0 x) - f(a)) \delta^{-1}
    \end{equation*}
    is splendid.  Thus $(\rho \epsilon_0, \delta)$ is suitable.
    \qedhere
  \end{enumerate}
\end{proof}
Lemma~\ref{sanity2}(3) shows that in the definition of ``locally
splendid'' (Definition~\ref{ls-def}), we could have said ``for every
$\delta$ there is $\epsilon$'' rather than ``there exist $\delta$ and
$\epsilon$''.
\begin{proposition} \label{ls-local}
  Let $U, U', U_1, \ldots, U_k$ be open sets in $\Mm^n$.
  \begin{enumerate}
  \item If $a \in U' \subseteq U$ and $f : U \to \Mm^m$ is definable,
    then $f$ is locally splendid at $a$ iff $f \restriction U'$ is
    locally splendid at $a$.
  \item If $U = U_1 \cup \cdots \cup U_k$ and $f : U \to \Mm^m$ is
    definable, then $f$ is locally splendid if and only if $f
    \restriction U_i$ is locally splendid for each $i$.
  \end{enumerate}
\end{proposition}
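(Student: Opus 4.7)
The plan is to reduce both parts to a careful unpacking of Definition~\ref{ls-def}, using Lemma~\ref{sanity2} to adjust the scaling parameter $\epsilon$ as needed. The essential content is that local splendidness at $a$ depends only on the germ of $f$ at $a$, because we are always free to shrink $\epsilon$ without losing suitability.

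For the backward direction of part (1), I would observe that it is immediate: if $(\epsilon,\delta)$ is a pair witnessing local splendidness of $f \restriction U'$ at $a$, then in particular $a + \epsilon \Oo^n \subseteq U' \subseteq U$, and the rescaled function $x \mapsto (f(a+\epsilon x) - f(a))\delta^{-1}$ is literally the same whether computed from $f$ or from $f \restriction U'$, so $(\epsilon,\delta)$ is also suitable for $f$. For the forward direction, suppose $(\epsilon,\delta)$ is suitable for $f$ at $a$. Since $U'$ is open and contains $a$, I can pick $\epsilon'$ with $v(\epsilon') \ge v(\epsilon)$ and $a + \epsilon' \Oo^n \subseteq U'$. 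By Lemma~\ref{sanity2}(1), the pair $(\epsilon',\delta)$ is still suitable for $f$; since now $a + \epsilon' \Oo^n$ sits inside $U'$, this same pair is suitable for $f \restriction U'$.

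Part (2) then falls out formally from part (1). In one direction, if $f$ is locally splendid on $U$, then for any $i$ and any $a \in U_i$, applying part (1) with $U' = U_i$ gives local splendidness of $f \restriction U_i$ at $a$. Conversely, if each $f \restriction U_i$ is locally splendid, then for any $a \in U$ I pick some $i$ with $a \in U_i$; by hypothesis $f \restriction U_i$ is locally splendid at $a$, and part (1) (again with $U' = U_i$) promotes this to local splendidness of $f$ at $a$.

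There is no genuine obstacle here; the proposition is essentially bookkeeping, since Lemma~\ref{sanity2} has already done the substantive work of showing that suitable pairs can be freely rescaled. The only point that requires care is to verify that after shrinking $\epsilon$ to force $a + \epsilon' \Oo^n \subseteq U'$, the resulting rescaled function is still splendid; but this is exactly what Lemma~\ref{sanity2}(1) delivers.
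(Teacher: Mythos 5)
Your proof is correct and follows essentially the same route as the paper: both directions of part (1) are handled by observing that suitability is inherited on restriction and by using Lemma~\ref{sanity2}(1) to shrink $\epsilon$ so that $a + \epsilon' \Oo^n$ lands inside $U'$, and part (2) is reduced to part (1). The only difference is that you spell out the reduction of (2) to (1), which the paper leaves as ``Clearly (1) implies (2).''
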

\begin{proof}
  Clearly (1) implies (2).  We prove (1).  First suppose $f$ is
  splendid at $a$.  Take $(\epsilon,\delta)$ which is suitable for $f$
  in the sense of Lemma~\ref{sanity2}.  Take $\epsilon'$ so small that
  $a + \epsilon' \Oo^n \subseteq U'$ and $v(\epsilon') \ge
  v(\epsilon)$.  By Lemma~\ref{sanity2}(1), $(\epsilon',\delta)$ is
  suitable for $f$, which implies it is suitable for $f \restriction
  U'$.

  Conversely, suppose $f \restriction U$ is splendid at $a$.  If
  $(\epsilon,\delta)$ is suitable for $f \restriction U$, then
  $(\epsilon,\delta)$ is suitable for $f$.
\end{proof}
\begin{lemma} \label{future-use}
  Suppose $f : U \to \Mm^m$ is locally splendid at $a \in U$.  If $f,
  U, a$ are $M$-definable for some small $M \preceq \Mm$, then we can
  take the $(\epsilon,\delta)$ witnessing local splendidness to be
  $M$-definable.
\end{lemma}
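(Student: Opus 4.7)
The plan is to exploit elementarity of $M \preceq \Mm$ after freezing a single splendid family that captures the given local splendidness. By hypothesis and Definition~\ref{ls-def}, there exist (not necessarily $M$-definable) non-zero $\epsilon_0, \delta_0 \in \Oo$ such that the rescaled function $g_0(x) := (f(a + \epsilon_0 x) - f(a))\delta_0^{-1}$ maps $\Oo^n \to \Oo^m$ and is splendid. Unwinding the definition of splendid, fix a specific $0$-definable family $\mathcal{F} = \{h_c\}_{c \in C}$ which is splendid (each $h_c(K)$ is pre-splendid), together with a parameter $c_0 \in C$ such that $g_0 = h_{c_0}$.

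Now consider the formula $\psi(\epsilon, \delta)$ in the free variables $\epsilon, \delta$ asserting:
\begin{itemize}
\item $\epsilon \neq 0$ and $\delta \neq 0$;
\item for every $z \in \Oo^n$, $a + \epsilon z \in U$ and $f(a + \epsilon z) - f(a) \in \delta \Oo^m$;
\item there exists $c \in C$ such that $\delta \cdot h_c(z) = f(a + \epsilon z) - f(a)$ for all $z \in \Oo^n$.
\end{itemize}
Since $f$, $U$, $a$ are $M$-definable and $\mathcal{F}$ is $0$-definable, $\psi$ is a first-order formula with parameters from $M$. The pair $(\epsilon_0, \delta_0)$ satisfies $\psi$ in $\Mm$ (with $c_0$ as inner witness), so $\psi(\Mm)$ is non-empty. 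By elementarity, $\psi(M)$ is non-empty, yielding $\epsilon_1, \delta_1 \in M$ with $\psi(\epsilon_1, \delta_1)$. The third clause forces $g_{a,\epsilon_1,\delta_1} \in \mathcal{F}$, and since $\mathcal{F}$ is a splendid family, $g_{a,\epsilon_1,\delta_1}$ is itself splendid. Together with the first two clauses, this says $(\epsilon_1, \delta_1)$ witnesses local splendidness of $f$ at $a$ in the sense of Definition~\ref{ls-def}.

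The only subtle point---and it is not really an obstacle---is that ``splendid'' is defined via an existential quantifier over $0$-definable families, so ``$g$ is splendid'' is not literally a first-order statement about $g$. The trick is that one does not need to quantify over all such families: it suffices to freeze the single family $\mathcal{F}$ handed to us by the hypothesis on $(\epsilon_0, \delta_0)$, after which ``$g_{a,\epsilon,\delta} \in \mathcal{F}$'' becomes a genuine $M$-definable condition on $(\epsilon, \delta)$ and elementarity does the rest.
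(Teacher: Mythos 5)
Your proof is correct and is essentially the same argument as the paper's: freeze a single $0$-definable splendid family $\mathcal{F}$ containing the rescaled function, observe that membership in $\mathcal{F}$ (together with the containment conditions) defines an $M$-definable non-empty set of pairs $(\epsilon,\delta)$, and invoke Tarski--Vaught/elementarity to find such a pair in $M$. The remarks on why ``splendid'' is not first-order but freezing $\mathcal{F}$ sidesteps this are exactly the point the paper's proof relies on implicitly.
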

\begin{proof}
  Take some $(\epsilon_0,\delta_0)$ which is suitable for $f$ at
  $a$.  Let $\mathcal{F}$ be a splendid family containing the splendid
  function
  \begin{equation*}
    x \mapsto (f(a + \epsilon_0 x) - f(a)) \delta_0^{-1}.
  \end{equation*}
  In particular, $\mathcal{F}$ is 0-definable.  Let $D$ be the set of
  pairs $(\epsilon,\delta)$ such that
  \begin{gather*}
    a + \epsilon \Oo^n \subseteq U \\
    f(a + \epsilon \Oo^n) \subseteq f(a) + \delta \Oo^m,
  \end{gather*}
  and the function
  \begin{equation*}
    x \mapsto (f(a + \epsilon x) - f(a)) \delta^{-1}
  \end{equation*}
  is in $\mathcal{F}$.  Then $D$ is $M$-definable and contains
  $(\epsilon_0,\delta_0)$.  By Tarski-Vaught, there is some
  $M$-definable pair $(\epsilon,\delta)$ in $D$.  Then
  $(\epsilon,\delta)$ is suitable, i.e., $(\epsilon,\delta)$ witnesses
  local splendidness of $f$ at $a$.
\end{proof}
\begin{remark}
  Recall that a family $\mathcal{F}$ is \emph{ind-definable} if it is
  a small union of definable families.  For example, the family of
  splendid functions is ind-definable by definition.  The family of
  locally splendid functions is also ind-definable.  Since we will not
  need this fact, we leave the proof as an exercise to the reader.
\end{remark}
Locally splendid functions are closed under similar operations as
splendid functions:
\begin{proposition} \phantomsection \label{ls-props}
  Let $U \subseteq \Mm^n$ be open and $f : U \to \Mm^m$ be definable.
  \begin{enumerate}
  \item If $f$ is a polynomial map, then $f$ is locally splendid.
  \item If $f : U \to V$ is locally splendid and $g : V \to \Mm^\ell$
    is locally splendid, then $g \circ f : U \to \Mm^\ell$ is locally
    splendid.
  \item $f$ is locally splendid iff the component functions
    $f_1,\ldots,f_m : U \to \Mm$ are locally splendid.
  \item The set of splendid functions $U \to \Mm$ is an $\Mm$-algebra,
    i.e., closed under the ring operations and multiplication by
    $\Mm$.
  \item If $f : U \to \Mm^m$ is locally splendid, then $f$ is strictly
    differentiable and the strict derivative $Df : U \to \Mm^{nm}$ is
    locally splendid.
  \end{enumerate}
\end{proposition}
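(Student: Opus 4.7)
The plan is to bootstrap each clause from the corresponding item of Proposition~\ref{splendid-props}, using Lemma~\ref{sanity2} to freely adjust the pair $(\epsilon,\delta)$ that witnesses local splendidness. For (1), given a polynomial $f:U\to\Mm^m$ and $a\in U$, the translate $f(a+\epsilon x)-f(a)$ is a polynomial in $x$ of the same degree, each of whose coefficients has the form $(\text{fixed element of }\Mm^m)\cdot\epsilon^{|J|}$ for a multi-index $J$ with $|J|\ge 1$. Fixing any $\delta\in\Mm^\times$ and then taking $v(\epsilon)$ large enough, the rescaled map $x\mapsto(f(a+\epsilon x)-f(a))\delta^{-1}$ is a polynomial with all coefficients in $\Oo^m$, hence splendid by Proposition~\ref{splendid-props}(2).

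For (3) and (4), the observation is that Lemma~\ref{sanity2} lets us choose a single pair $(\epsilon,\delta)$ simultaneously suitable for any finite list of locally splendid functions at a common point $a$: item (3) of the lemma fixes $\delta$, then items (1) and (2) let us further shrink $\epsilon$ and $\delta$ to a common choice. Once this is done, the coordinatewise characterization and the $\Oo$-algebra structure on splendid functions (Proposition~\ref{splendid-props}(4)--(5)) transfer directly; scaling by $c\in\Mm^\times$ just requires replacing $\delta$ by $c\delta$.

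The substantive step is (2). Suppose $g$ is locally splendid at $b=f(a)$ with suitable pair $(\epsilon_g,\delta_g)$, so $G(y)=(g(b+\epsilon_g y)-g(b))\delta_g^{-1}$ is splendid on $\Oo^m$. By Lemma~\ref{sanity2}(3) applied to $f$ with the prescribed value $\delta=\epsilon_g$, I find $\epsilon_f$ such that $(\epsilon_f,\epsilon_g)$ is suitable for $f$, giving a splendid $F(x)=(f(a+\epsilon_f x)-f(a))\epsilon_g^{-1}$. Then
\begin{equation*}
  g(f(a+\epsilon_f x)) \;=\; g(b+\epsilon_g F(x)) \;=\; g(b)+\delta_g\,G(F(x)),
\end{equation*}
so $\bigl((g\circ f)(a+\epsilon_f x)-(g\circ f)(a)\bigr)\delta_g^{-1}=G\circ F$ is splendid by Proposition~\ref{splendid-props}(3), and $(\epsilon_f,\delta_g)$ witnesses local splendidness of $g\circ f$ at $a$. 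Matching the ``output scale'' of $f$ to the ``input scale'' of $g$ is the only genuine obstacle, and it is exactly what Lemma~\ref{sanity2}(3) provides.

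Finally, for (5), fix $a\in U$ with suitable pair $(\epsilon,\delta)$ for $f$ and set $F(x)=(f(a+\epsilon x)-f(a))\delta^{-1}$. By Proposition~\ref{splendid-props}(6), $F$ is strictly differentiable with splendid derivative $DF$. The chain rule Fact~\ref{strict-facts}(\ref{sf5}), applied to the affine substitution $x\mapsto a+\epsilon x$, shows that $f$ is strictly differentiable on $a+\epsilon\Oo^n$ with $Df(a+\epsilon x)=(\delta/\epsilon)DF(x)$. Hence
\begin{equation*}
  \bigl(Df(a+\epsilon x)-Df(a)\bigr)(\delta/\epsilon)^{-1} \;=\; DF(x)-DF(0),
\end{equation*}
which is splendid on $\Oo^n$; thus $(\epsilon,\delta/\epsilon)$ witnesses local splendidness of $Df$ at $a$, and doing this at every $a\in U$ completes the proof.
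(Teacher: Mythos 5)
Your proposal is correct and follows essentially the same route as the paper: items (1)--(4) bootstrap from Proposition~\ref{splendid-props} by using Lemma~\ref{sanity2} to adjust the suitable pairs $(\epsilon,\delta)$, and item (2) chains suitable pairs across $f$ and $g$ exactly as in the paper's proof. For item (5) you merely make explicit the computation ($Df(a+\epsilon x)=(\delta/\epsilon)DF(x)$, hence $(\epsilon,\delta/\epsilon)$ is suitable for $Df$) that the paper states informally as ``the strict derivative is a scaled version of the strict derivative of a splendid function.''
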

\begin{proof}
  \begin{enumerate}
  \item Given $a \in U$, take $\epsilon$ so small that $a + \epsilon
    \Oo^n \subseteq U$.  The map
    \begin{equation*}
      x \mapsto f(a+\epsilon x) - f(a)
    \end{equation*}
    is polynomial, so if we take $\delta$ very large, then $x \mapsto
    (f(a+ \epsilon x) - f(a))\delta^{-1}$ will be polynomial with
    coefficients in $\Oo$, and therefore splendid.
  \item Take any non-zero $\delta_3$.  By Lemma~\ref{sanity2}(3) there
    is $\delta_2$ such that
    \begin{gather*}
      f(a) + \delta_2 \Oo^m \subseteq V \\
      g(f(a) + \delta_2 \Oo^m) \subseteq g(f(a)) + \delta_3 \Oo^\ell
    \end{gather*}
    and the following map is splendid.
    \begin{equation*}
      g : (f(a) + \delta_2 \Oo^m) \to (g(f(a)) + \delta_3 \Oo^\ell)
    \end{equation*}
    By another application of Lemma~\ref{sanity2}(3) there is
    $\delta_1$ such that
    \begin{gather*}
      a + \delta_1 \Oo^n \subseteq U \\
      f(a + \delta_1 \Oo^n) \subseteq f(a) + \delta_2 \Oo^m
    \end{gather*}
    and the following map is splendid
    \begin{equation*}
      f : (a + \delta_1 \Oo^n) \to (f(a) + \delta_2 \Oo^m).
    \end{equation*}
    Then
    \begin{gather*}
      a + \delta_1 \Oo^n \subseteq U \\
      g(f(a + \delta_1 \Oo^n)) \subseteq g(f(a) + \delta_2 \Oo^m) \subseteq g(f(a)) + \delta_3 \Oo^\ell
    \end{gather*}
    and the composition
    \begin{equation*}
      g \circ f : (a + \delta_1 \Oo^n) \to (g(f(a)) + \delta_3 \Oo^\ell)
    \end{equation*}
    is splendid.  Then $(\delta_1,\delta_3)$ shows that $g \circ f$ is
    locally splendid at $a$.
  \item If $f$ is locally splendid, then the components of $f$ are
    locally splendid by the previous two points (compose $f$ with the
    coordinate projections $\Mm^m \to \Mm$).  Conversely, suppose the
    components $f_1,\ldots,f_m$ are locally splendid.  Fix $a \in U$.
    For each $i$, there is a pair $(\epsilon_i,\delta_i)$ which is
    ``suitable'' for $f_i$ at $a$, in the sense of
    Lemma~\ref{sanity2}.  Take $\epsilon$ and $\delta$ such that
    $v(\epsilon) \ge \max_i v(\epsilon_i)$ and $v(\delta) \le \min_i
    v(\delta_i)$.  Then $(\epsilon,\delta)$ is suitable for each $f_i$
    at $a$ by Lemma~\ref{sanity}(1,2).  In particular,
    \begin{gather*}
      a + \epsilon \Oo^n \subseteq U \\
      f_i(a + \epsilon \Oo^n) \subseteq f_i(a) + \delta \Oo \tag{$\ast$}
    \end{gather*}
    and the map $f_i : (a + \epsilon \Oo^n) \to (f_i(a) + \delta \Oo)$
    is splendid.  Equation ($\ast$) implies $f(a + \epsilon \Oo^n)
    \subseteq f(a) + \delta \Oo^m$.  The map $f : (a + \epsilon \Oo^n)
    \to (f(a) + \delta \Oo^m)$ is splendid because each component is
    splendid.
  \item This follows from the previous three points.
  \item Strict differentiability is clear because, up to a change of
    coordinate, $f$ looks locally like a splendid function, and
    splendid functions are strictly differentiable.  Locally, the
    strict derivative is a scaled version of the strict derivative of
    a splendid function, so the strict derivative is locally
    splendid. \qedhere
  \end{enumerate}
\end{proof}

\subsection{Generic local splendidness}
Next, we work towards proving that definable functions are generically
locally splendid.  To do this, we will essentially show that the
theory of $p$-adically closed fields has locally splendid definable
Skolem functions.

Say that a function on $K$ is \emph{strongly analytic} if it is given
by a single convergent power series, and \emph{analytic} if it is
locally strongly analytic.  (Some authors say ``locally analytic''
rather than ``analytic''.)
\begin{lemma} \label{analytic-ls}
  Let $f : U \to \Mm^m$ be a 0-definable function such that $U
  \subseteq \Mm^n$ is open, $U(K)$ is compact, and $f(K) : U(K) \to
  K^m$ is analytic.  Then $f$ is locally splendid.
\end{lemma}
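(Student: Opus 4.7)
The plan is to reduce local splendidness of $f$ to the splendidness of finitely many rescaled functions, one per ball in a 0-definable cover of $U(K)$. Since $U(K)$ is compact, $f(K)$ is analytic, and every $K$-definable ball is 0-definable by density of $\dcl(\varnothing)$ in $K$, I would first cover $U(K)$ by finitely many 0-definable closed balls $B_j = a_j + \epsilon_j \Oo_K^n$ (for $j = 1, \ldots, N$) with $a_j \in \dcl(\varnothing)^n$ and $\epsilon_j \in \dcl(\varnothing)$ non-zero, such that $B_j \subseteq U(K)$ and $f(K)\restriction B_j$ is strongly analytic, i.e., given by a single convergent power series $\sum_I c_{j,I}(x - a_j)^I$. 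By elementarity, the balls $B_j(\Mm)$ cover $U(\Mm)$ and each is contained in $U(\Mm)$.

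For each $j$, I would next choose a 0-definable $\delta_j$ (for instance, a power of $p$) with
\[
v(\delta_j) \;\le\; \min_{|I| \ge 1}\bigl(v(c_{j,I}) + |I|\, v(\epsilon_j)\bigr),
\]
the minimum being finite since $v(c_{j,I}) + |I|\,v(\epsilon_j) \to +\infty$ by convergence on $B_j$. The rescaled function $g_j : \Oo_K^n \to \Oo_K^m$ defined by $g_j(y) = (f(K)(a_j + \epsilon_j y) - f(K)(a_j))\delta_j^{-1}$ has coefficients $c_{j,I}\epsilon_j^{|I|}\delta_j^{-1} \in \Oo_K^m$ with valuations tending to $+\infty$, so $g_j$ is pre-splendid. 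Lifting to $\Mm$, the 0-definable function $\tilde g_j : \Oo^n \to \Oo^m$ given by the same formula with $f$ in place of $f(K)$ is well-defined by elementarity and is splendid by Proposition~\ref{lazy}.

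Finally, to check local splendidness at an arbitrary $a \in U(\Mm)$, pick $j$ with $a \in B_j(\Mm)$ and set $u := (a - a_j)/\epsilon_j \in \Oo^n$. I claim $(\epsilon_j, \delta_j)$ witnesses local splendidness at $a$. The inclusion $a + \epsilon_j \Oo^n = a_j + \epsilon_j \Oo^n = B_j(\Mm) \subseteq U(\Mm)$ is automatic, and a direct computation yields the identity
\[
(f(a + \epsilon_j y) - f(a))\delta_j^{-1} \;=\; \tilde g_j(u + y) - \tilde g_j(u)
\]
for all $y \in \Oo^n$. This expresses the rescaled function as $\tilde g_j$ precomposed with the affine translation $y \mapsto u + y$ (a polynomial map with coefficients in $\Oo$, hence splendid by Proposition~\ref{splendid-props}), followed by subtracting the constant $\tilde g_j(u) \in \Oo^m$. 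The result is splendid and takes values in $\Oo^m$ by the closure properties in Proposition~\ref{splendid-props}, yielding in particular $f(a + \epsilon_j \Oo^n) \subseteq f(a) + \delta_j \Oo^m$.

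The main obstacle is the first step: producing a finite 0-definable cover of $U(K)$ by closed balls $B_j \subseteq U(K)$ on which $f(K)$ is strongly analytic. This requires the compactness of $U(K)$ via a $p$-adic Heine--Borel argument (since analyticity is only local), together with density of $\dcl(\varnothing)$ to place the centers and radii inside $\dcl(\varnothing)$. After the cover is secured, the remainder is routine manipulation of splendid functions.
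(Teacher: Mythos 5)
Your proof is correct and matches the paper's strategy: cover $U(K)$ by finitely many $0$-definable closed balls on which $f(K)$ is strongly analytic (using compactness of $U(K)$ and density of $\dcl(\varnothing)$), rescale so that the power-series coefficients land in $\Oo_K$, conclude pre-splendidness, lift via Proposition~\ref{lazy}, and glue. The only difference is stylistic: the paper cites Proposition~\ref{ls-local}(2) to glue the locally splendid pieces, whereas you verify the local splendidness condition at each $a\in U(\Mm)$ by the direct identity $(f(a+\epsilon_j y)-f(a))\delta_j^{-1}=\tilde g_j(u+y)-\tilde g_j(u)$, which amounts to the same thing.
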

\begin{proof}
  Because $K$ is locally compact and $U(K)$ is compact, we can cover
  $U$ with $K$-definable balls $U = B_1 \cup \cdots \cup B_\ell$ such
  that $f(K)$ is strongly analytic on each $B_i(K)$.  By the
  assumption on $\Ll$ at the start of Section~\ref{thpr}, the balls
  $B_i$ are 0-definable.  Each of the restrictions $f(K) \restriction
  B_i(K)$ is strongly analytic, given by a convergent power series.
  Replacing $f$ with $\delta \cdot f$ for a small enough non-zero
  $\delta \in \dcl(\varnothing)$, we can arrange that all the
  coefficients in these power series are in $\Oo_K$.  Then $f(K)
  \restriction B_i(K)$ is (essentially) a 0-definable pre-splendid
  function for each $i$, and so $f \restriction B_i$ is splendid
  (Proposition~\ref{lazy}).  By Proposition~\ref{ls-local}(2), $f$ is
  locally splendid.
\end{proof}
\begin{lemma} \label{toto}
  The field operations are locally splendid on their domains.
\end{lemma}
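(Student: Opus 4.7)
The plan is to handle the four field operations separately. The operations $+, -, \times : \Mm \times \Mm \to \Mm$ are polynomial maps with coefficients in $\Zz \subseteq \Oo$, so they are locally splendid directly from Proposition~\ref{ls-props}(1). Division, being the composition of multiplication with inversion, will be locally splendid as soon as inversion is, by Proposition~\ref{ls-props}(2,3).

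The only serious case is the multiplicative inverse $\iota : \Mm^\times \to \Mm^\times$, $x \mapsto 1/x$. To verify that $\iota$ is locally splendid at an arbitrary $a \in \Mm^\times$, I would exhibit explicit rescaling parameters $\epsilon, \delta$ in the sense of Definition~\ref{ls-def}. With the choice $\epsilon = pa$ and $\delta = -p/a$, the set $a + \epsilon \Oo = a(1 + p\Oo)$ sits inside $\Mm^\times$ because $1 + p\Oo \subseteq \Oo^\times$, and a short computation gives
\begin{equation*}
  (\iota(a + \epsilon x) - \iota(a))\,\delta^{-1} = \frac{x}{1 + px} =: g(x),
\end{equation*}
independent of $a$.

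It remains to verify that $g : \Oo \to \Oo$ is splendid. It is $0$-definable, built from the ring operations and the rational integer $p$, and its restriction $g(K) : \Oo_K \to \Oo_K$ is pre-splendid because it is given by the convergent power series $\sum_{n \ge 0} (-p)^n x^{n+1}$, whose coefficients have valuations $n \cdot v(p) \to +\infty$. Proposition~\ref{lazy} then upgrades $g$ to a splendid function, so the pair $(\epsilon, \delta)$ witnesses that $\iota$ is locally splendid at $a$.

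This is really a bookkeeping step; there is no genuine obstacle. The one point worth flagging is that a more direct appeal to Lemma~\ref{analytic-ls} would be awkward, since it would require a $0$-definable open neighborhood of $a$ with compact $K$-points, which is not obviously available when $v(a)$ is non-standard in $\Gamma_\Mm$. The explicit rescaling above sidesteps this by collapsing the local behavior of $\iota$ near $a$ to the single fixed splendid function $g$, which belongs to the (singleton) $0$-definable splendid family $\{g\}$.
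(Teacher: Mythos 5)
Your proof is correct, and the computation checks out: with $\epsilon = pa$ and $\delta = -p/a$ one gets
\begin{equation*}
  \bigl(\iota(a+\epsilon x) - \iota(a)\bigr)\delta^{-1} = \frac{-px}{a(1+px)} \cdot \frac{-a}{p} = \frac{x}{1+px},
\end{equation*}
which is a $0$-definable function $\Oo \to \Oo$ whose restriction to $\Oo_K$ is given by the geometric-type series $\sum_{n\ge 0} (-p)^n x^{n+1}$ with coefficients of valuation $n\,v(p) \to \infty$; Proposition~\ref{lazy} then applies.

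The paper takes a genuinely different route for inversion: it first invokes Lemma~\ref{analytic-ls} to get local splendidness on the $0$-definable compact piece $\Oo^\times$, and then propagates to each annulus $c\,\Oo^\times$ by conjugating with the polynomial map $x \mapsto x/c$, using closure of locally splendid functions under composition. Your version sidesteps Lemma~\ref{analytic-ls} entirely, collapsing the local behavior of inversion near every $a$ to a single explicit splendid function $g(x) = x/(1+px)$ belonging to the singleton splendid family $\{g\}$; this is more self-contained and arguably more elementary, since it avoids the general analytic-function machinery. The paper's route exercises Lemma~\ref{analytic-ls}, which it needs anyway in Lemma~\ref{argh} for the Hensel root and $n$th-power maps, so there the abstraction earns its keep; for the field operations alone, your direct computation is cleaner. (Your caveat about non-standard $v(a)$ is a bit of a straw man---the paper also avoids applying Lemma~\ref{analytic-ls} at arbitrary $a$, restricting first to $\Oo^\times$ and only then rescaling---but the end result is the same.)
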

\begin{proof}
  Addition, multiplication, and subtraction are polynomials, so they
  are locally splendid by Proposition~\ref{ls-props}(1).  It remains
  to show that the map $f(x) = 1/x$ is locally splendid on
  $\Mm^\times$.  Lemma~\ref{analytic-ls} shows that $f$ is locally
  splendid on the subset $\Oo^\times$.  Then $f : c\Oo^\times \to
  c^{-1}\Oo^\times$ is also locally splendid for any $c \in
  \Mm^\times$, because it's the composition
  \begin{equation*}
    c\Oo^\times \stackrel{x \mapsto x/c}{\longrightarrow} \Oo^\times \stackrel{f}{\longrightarrow} \Oo^\times \stackrel{x \mapsto x/c}{\longrightarrow} c^{-1}\Oo^\times
  \end{equation*}
  and the two maps on the outside are polynomial maps.  Then we have
  covered $\dom(f)$ with sets $c\Oo^\times$ on which $f$ is locally
  splendid, so $f$ is locally splendid.
\end{proof}
\begin{lemma} \phantomsection \label{argh}
  \begin{enumerate}
  \item Let $\mm$ denote the maximal ideal.  Let
    \begin{equation*}
      h_n : \mm \times \Oo^\times \times \Oo^{n-2} \to \mm
    \end{equation*}
    be the definable function mapping $(a_0,a_1,\ldots,a_{n-1})$ to
    the unique root in $\mm$ of
    \begin{equation*}
      x^n + a_{n-1}x^{n-1} + \cdots + a_0 = 0,
    \end{equation*}
    the root guaranteed to exist by Hensel's lemma.  Then $h_n$ is
    locally splendid.
  \item Let $P_n$ denote the set of non-zero $n$th powers.  Then there
    is a locally splendid 0-definable map
    \begin{gather*}
      P_n \to \Mm \\
      x \mapsto \sqrt[n]{x}
    \end{gather*}
    assigning an $n$th root to each $x \in P_n$.
  \end{enumerate}
\end{lemma}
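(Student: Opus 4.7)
The plan in both parts is to apply Lemma~\ref{analytic-ls}, reducing to showing that the functions are analytic on the compact-in-$K$ pieces of their domains.

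For part (1), the domain $\mm \times \Oo^\times \times \Oo^{n-2}$ is open in $\Mm^n$ (both $\mm$ and $\Oo^\times$ are open balls), and its trace in $K$ is a product of compact sets, hence compact. The function $h_n$ is 0-definable by its Hensel characterization. The polynomial $F(x, a_0, \ldots, a_{n-1}) = x^n + a_{n-1}x^{n-1} + \cdots + a_0$ satisfies $\partial F/\partial x\,|_{x=0} = a_1 \in \Oo_K^\times$, which is a unit everywhere on the domain. Hence the $p$-adic implicit function theorem produces an analytic expression for the Hensel root as a function of the parameters, and Lemma~\ref{analytic-ls} gives local splendidness.

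For part (2), first fix an integer $N > 2v(n)$ so that for each $y \in 1 + p^N \Oo$, Hensel's lemma yields a unique $n$th root in $1 + p^{N-v(n)}\Oo$, given on $K$ by the convergent binomial series. This 0-definable restricted root map has compact trace in $K$ and is analytic there, so it is locally splendid by Lemma~\ref{analytic-ls}. To extend to all of $P_n$, exploit that $(P_n \cap \Oo^\times)/(1 + p^N \Oo)$ is a finite quotient of $\Oo^\times/(1 + p^N \Oo)$ (finiteness follows from Macintyre's quantifier elimination applied in $K$ and transferred to $\Mm$). Fix coset representatives $\mu_1, \ldots, \mu_k \in \dcl(\varnothing) \cap K$ together with chosen $n$th roots $\nu_i = \sqrt[n]{\mu_i} \in \dcl(\varnothing) \cap K$. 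Using a definable Skolem function of $T$, define a 0-definable section $s : n\Gamma \to \Mm^\times$ of $v$, and set $\rho(y) = s(v(y)/n)$. Then $y/\rho(y)^n \in P_n \cap \Oo^\times$ lies in a unique coset $\mu_i(1 + p^N\Oo)$, and I would define
\[
\sqrt[n]{y} \;=\; \rho(y) \cdot \nu_i \cdot \sqrt[n]{y/(\rho(y)^n\mu_i)},
\]
where the final factor is the locally splendid Hensel root on $1 + p^N\Oo$ constructed in the previous step.

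Local splendidness is then a pointwise check, which suffices by Proposition~\ref{ls-local}: near any $y_0 \in P_n$, the valuation $v(y)$ and the coset index $i$ are locally constant on $\Mm^\times$, so $\rho(y)$ and the multiplicative constant $\rho(y)^n\mu_i$ are locally constant. The formula for $\sqrt[n]{y}$ then reduces locally to a composition of constant multiplications, field operations (locally splendid by Lemma~\ref{toto}), and the locally splendid Hensel root on $1 + p^N\Oo$; Proposition~\ref{ls-props} finishes the verification. The main obstacle is bookkeeping: one must ensure that the section $s$ and the coset representatives can be chosen 0-definably and compatibly with the fixed roots $\nu_i$, which is routine given the density of $\dcl(\varnothing)$ in $K$ and the existence of definable Skolem functions in $T$.
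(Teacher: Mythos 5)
Your proof of part (1) is essentially the paper's: both apply the $p$-adic analytic implicit function theorem and then Lemma~\ref{analytic-ls}.  (One small inaccuracy: the relevant partial derivative is taken at the Hensel root, not at $x=0$, but since the root lies in $\mm$ the two differ by an element of $\mm$ and are therefore simultaneously units, so the conclusion stands.)

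Your proof of part (2) is correct but follows a genuinely different route.  The paper first invokes Lemma~\ref{qth-power} to obtain a $0$-definable multiplicative $\tfrac{1}{n}$th power map on some $P_m$ with $n\mid m$; this map is analytic by the inverse function theorem, hence locally splendid on the compact piece $P_m\cap\Oo^\times$ by Lemma~\ref{analytic-ls}, and then its homomorphism property (via the trick of Lemma~\ref{toto}, conjugating by $x\mapsto x/c$ and multiplication by $c^{1/n}$) propagates local splendidness from $P_m\cap\Oo^\times$ to all of $P_m$, after which finitely many cosets $a_iP_m$ cover $P_n$.  You instead build the root from scratch on a single compact coset $1+p^N\Oo$ via Hensel/binomial series, cover $P_n\cap\Oo^\times$ by finitely many translates of that coset, and handle the valuation by an auxiliary $0$-definable section $\rho$ of the value group.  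The paper's use of a multiplicative $q$th-power map is precisely what lets it avoid any value-group section, since the identity $(cx)^{1/n}=c^{1/n}x^{1/n}$ does the scaling for free; your approach pays for the absence of that homomorphism by introducing $\rho$, but then observes (correctly) that $\rho$ and the coset index are locally constant, so local splendidness reduces pointwise to a composition of constants, field operations (Lemma~\ref{toto}), and the Hensel root, closed under Proposition~\ref{ls-props}.  The ``bookkeeping'' you flag at the end is indeed routine: the coset representatives and their $n$th roots land in $\dcl(\varnothing)$ by density and definable Skolem functions, and the section $s$ exists $0$-definably for the same reason, with no further compatibility constraint needed.
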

In (2), we are not claiming that $\sqrt[n]{x}$th is a $\frac{1}{n}$th
power map in the sense of Definition~\ref{qp-def}, i.e.,
$\sqrt[n]{xy}$ need not equal $\sqrt[n]{x}\sqrt[n]{y}$.
\begin{proof}
  \begin{enumerate}
  \item The function $h_n(K) : \mm_K \times \Oo_K^\times \times
    \Oo_K^{n-2} \to \mm_K$ is analytic, by the implicit function
    theorem for analytic functions.  (See
    \cite[Proposition~5.9]{schneider}, at least for the \emph{inverse}
    function theorem for analytic functions, which easily implies the
    implicit function theorem.)  Then $h_n$ is locally splendid by Lemma~\ref{analytic-ls}.
  \item By Lemma~\ref{qth-power}, there is some $m$ such that $P_m$
    has an $n$th power map $x^{1/n}$ in the sense of
    Definition~\ref{qp-def}.  Increasing $m$, we may assume $n \mid
    m$.  We first show that this map $x^{1/n}$ on $P_m$ is locally
    splendid.

    The map $x^{1/n}$ is analytic on $P_m(K)$ by the inverse function
    theorem for analytic functions.\footnote{Alternatively, $x^{1/n}$
    is analytic at almost all points by the generic analyticity of
    definable functions in $K$ \cite[Theorem~1.1]{vdDS}.  Since
    $x^{1/n}$ is a homomorphism, as soon as it is locally analytic at
    one point, it is analytic everywhere.}  However, $P_m(K)$ is not
    compact, so we cannot immediately apply Lemma~\ref{analytic-ls}.
    Nevertheless, the subset $P_m(K) \cap \Oo_K^\times$ is compact, so
    we at least see that $x^{1/n}$ is locally splendid on $P_m \cap
    \Oo^\times$ by Lemma~\ref{analytic-ls}.  Using the same method as
    in Lemma~\ref{toto}, this implies that $x^{1/n}$ is locally
    splendid on $P_m \cap c\Oo^\times$ for every $c \in P_m \cap
    \Oo^\times$, and so $x^{1/n}$ is locally splendid on $P_m$.

    Finally, take $a_1, \ldots, a_k$ coset representatives of $P_m$ in
    $P_n$, and take $b_i$ to be any $n$th root of $a_i$.  By definable
    Skolem functions, we can take $a_i, b_i \in \dcl(\varnothing)$.
    Finally, define
    \begin{gather*}
      \sqrt[n]{-} : P_n \to \Mm^\times \\
      \sqrt[n]{x} = b_i (x/a_i)^{1/n} \text{ if } x \in a_i P_m.
    \end{gather*}
    The map $\sqrt[n]{x}$ is defined by gluing together locally
    splendid functions on the cosets $a_1P_m, a_2P_m, \ldots, a_kP_m$,
    so $\sqrt[n]{x}$ is locally splendid by
    Proposition~\ref{ls-local}(2).  \qedhere
  \end{enumerate}
\end{proof}
\begin{corollary} \label{dcl}
  Let $M \prec \Mm$ be a small model.  Let $\ba$ be an $n$-tuple in
  $\Mm$.  Let $b$ be an element in $\dcl(\ba M)$.  Then $b = f(\ba)$
  for some $M$-definable locally splendid function $f : U \to \Mm$
  with $\ba \in U \subseteq \Mm^n$.
\end{corollary}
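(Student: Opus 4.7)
The plan is to realize $b$ as the value at $\ba$ of an explicit composition of ``basic'' locally splendid operations applied to $\ba$ and parameters from $M$, and then take $f$ to be this composition (restricted to a neighborhood $U$ of $\ba$ small enough that each intermediate step stays inside the domain of local splendidness of the next operation). The basic operations I have in hand are: polynomial maps with coefficients from $M$ (Proposition~\ref{ls-props}(1)), division (Lemma~\ref{toto}), the $n$th root selectors $\sqrt[n]{-} : P_n \to \Mm$ from Lemma~\ref{argh}(2), and the Hensel root functions $h_n$ from Lemma~\ref{argh}(1). Closure of locally splendid maps under composition (Proposition~\ref{ls-props}(2)) will then deliver the required $f$.

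First I would use definable Skolem functions in $\Th(K)$ (available thanks to the constants naming a basis of $K/\Qq_p$) to pick an $M$-definable partial function $g : V \to \Mm$ with $\ba \in V$ and $g(\ba) = b$. The heart of the argument is to show that, after shrinking $V$ to a neighborhood of $\ba$, the function $g$ can be expressed as a composition of the basic operations listed above. I would deduce this from the multivariable $p$-adic cell decomposition theorem of Denef (the higher-dimensional analogue of Fact~\ref{y-u-no}): $V$ admits an $M$-definable partition into cells, and on each cell $g$ is given by an explicit formula built from polynomials in the coordinates (with $M$-coefficients), $n$th root selectors applied to expressions that the cell data guarantee to lie in some $P_n$, and Hensel root selectors applied to polynomials whose coefficients the cell data constrain to satisfy Hensel's hypotheses uniformly. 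Restricting to the cell $C$ containing $\ba$ and then to an open neighborhood $U$ of $\ba$ inside (the interior of) $C$, this explicit formula exhibits $g \restriction U$ as a composition of locally splendid maps, so we may set $f = g \restriction U$.

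The main obstacle is ensuring that the Skolem-type choices appearing in the cell description really are limited to Hensel and $n$th root selectors, rather than more general definable selections (which might fail to be locally splendid). This is essentially built into the form of Denef's cell decomposition: the ``centers'' of cells and the accompanying $P_n$-constraints are set up precisely so that any definable function on a cell is a rational function in the coordinate and in a chosen $n$th root of a polynomial, possibly with a Hensel-root refinement; no other definable selection is needed. Once this canonical form is invoked on $C$, the conclusion is immediate from Lemmas~\ref{toto} and \ref{argh} together with Proposition~\ref{ls-props}(1,2), and we finally shrink $U$ further (using Lemma~\ref{sanity2}(3) if necessary) to guarantee that every intermediate stage of the composition lands inside the domain of local splendidness of the next stage.
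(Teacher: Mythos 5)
Your proposal takes a genuinely different route from the paper. The paper's proof is a ``closure to elementarity'' argument: it sets $A = \{f(\ba) : f$ $M$-definable and locally splendid$\}$, observes via Proposition~\ref{ls-props}, Lemma~\ref{toto}, and Lemma~\ref{argh} that $A$ is a valued subfield containing $M$ and $\ba$, Henselian, and closed under $n$th roots; then it proves a purely value-group claim to get $\Gamma_A \preceq \Gamma_\Mm$; and finally it invokes model completeness / Ax--Kochen--Ershov to conclude $M \preceq A \preceq \Mm$, so $\dcl(M\ba) \subseteq A$. No explicit form of the Skolem function is ever produced. You instead propose to pick a Skolem function $g$ with $g(\ba) = b$ and to exhibit $g$, on a cell containing $\ba$, as an \emph{explicit} composition of polynomials, division, $n$th-root selectors and Hensel-root selectors, and then apply the closure properties of locally splendid maps. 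This is a ``canonical form / preparation'' strategy rather than an ``elementary submodel'' strategy.

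The gap is in the key claim that Denef-style cell decomposition hands you $g$, on a cell, as such a composition. What Denef's cell decomposition (and the one-variable version used in Fact~\ref{y-u-no}) really prepares are \emph{polynomials}: it partitions into cells with centers $c$ on which each given polynomial has the form $u(t)^n \cdot h \cdot (t-c)^{\nu}$ for a definable unit $u$; the unit $u$ and the center $c$ are themselves arbitrary definable functions of the remaining variables, not a priori built from your basic operations. So the statement ``any definable function on a cell is a rational function in a chosen $n$th root of a polynomial, possibly with a Hensel-root refinement'' is not a consequence of the cell decomposition you cite; it is essentially equivalent to what must be proved (namely, that $\Th(K)$ has \emph{algebraic} definable Skolem functions, à la van den Dries / Denef), and in the multivariable case it also hides an induction on the number of variables, since the centers and defining data of cells are definable functions in fewer variables that must themselves be shown locally splendid. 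You would need to either cite the precise ``algebraic Skolem functions'' result (not just cell decomposition) or carry out that induction in full; as written, the central step is assumed rather than derived. The paper's approach sidesteps this entirely because it never needs a syntactic normal form---elementarity of $A$ follows from a short list of closure axioms plus model completeness, which is why that route is considerably shorter.
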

\begin{proof}
  Let $A$ be the set of elements of the form $f(\ba)$, where $f$ is
  $M$-definable and locally splendid.  It suffices to show that
  $\dcl(\ba M) \subseteq A$.  If $g$ is $M$-definable and locally
  splendid and $\bb \in A^n$, then $g(\bb) \in A$ because a
  composition of locally splendid functions is locally splendid
  (Proposition~\ref{ls-props}).  By Lemma~\ref{toto}, $A$ is a
  subfield of $\Mm$.  As constant functions and coordinate projections
  are locally splendid, $A$ contains $M$ and the coordinates of the tuple $\ba$.  By
  Lemma~\ref{argh}(1), $A$ is a Henselian valued field.  Lastly,
  Lemma~\ref{argh}(2) shows that if $a \in A$ and $a \in P_n =
  P_n(\Mm)$, then $\sqrt[n]{a}$ exists in $A$.
  \begin{claim}
    If $\gamma \in \Gamma_A$ and $\gamma$ is a multiple of $n$ in
    $\Gamma_\Mm$, then $\gamma/n \in \Gamma_A$.
  \end{claim}
  \begin{claimproof}
    Take an element $b \in A$ with $v(b) = \gamma$.  Because $P_n$ has
    finite index in the multiplicative group, it has only finitely
    many cosets, and all of them are $M$-definable.  In particular, $b
    P_n$ is $M$-definable, so it contains some $M$-definable element
    $c$.  Then $b/c \in P_n$.  It follows that $v(b/c)$ is a multiple
    of $n$ (in $\Gamma_\Mm$).  As $v(b) = \gamma$ is a multiple of
    $n$, we also see that $v(c)$ is a multiple of $n$ (in $\Gamma_M$
    or $\Gamma_\Mm$).  Then
    \begin{equation*}
      \frac{\gamma}{n} = \frac{v(b)}{n} = \frac{v(b/c) + v(c)}{n} = \frac{v(b/c)}{n} + \frac{v(c)}{n} = v\left(\sqrt[n]{b/c}\right) + \frac{v(c)}{n}.
    \end{equation*}
    Because $A$ is a field containing $M$, we have $b/c \in A$.
    Because $A$ is closed under $n$th roots (when they exist), we have
    $\sqrt[n]{b/c} \in A$.  Then $v\left(\sqrt[n]{b/c}\right) \in
    \Gamma_A$.  Finally, $v(c)/n \in \Gamma_M \subseteq \Gamma_A$.  It
    follows that $\gamma/n \in \Gamma_A$, proving the Claim.
  \end{claimproof}
  Then for any $n$, the map $\Gamma_A/n\Gamma_A \to
  \Gamma_{\Mm}/n\Gamma_{\Mm}$ is injective.  Since the
  composition \[\Zz/n\Zz \cong \Gamma_M/n\Gamma_M \to
  \Gamma_A/n\Gamma_A \to \Gamma_{\Mm}/n\Gamma_{\Mm} \cong \Zz/n\Zz\]
  is a bijection, it follows that $\Gamma_A/n\Gamma_A \cong \Zz/n\Zz$.
  The group $\Gamma_A$ is also discretely ordered, as it contains the
  minimal positive element of $\Gamma_M$.  Then $\Gamma_A$ is a model
  of Presburger arithmetic (a $\Zz$-group).

  To summarize, $A$ is a henselian valued field sitting between $M$
  and $\Mm$, and its value group is a $\Zz$-group.  By the
  axiomatization of $\Th(K)$ plus its model completeness, we see that
  $M \preceq A \preceq \Mm$.  Then $\dcl(M\bar{a}) \subseteq A$.
\end{proof}
Using this, we see that definable functions are generically locally
splendid:
\begin{proposition} \label{gen-lsa}
  Let $U$ be a non-empty definable open set in $\Mm^n$ and $f : U \to
  \Mm^m$ be definable.  Then there is a smaller definable open set
  $U_0 \subseteq U$ such that $f \restriction U_0$ is locally
  splendid, and $\dim(U \setminus U_0) < \dim(U)$.  Moreover, if $M$
  is a small model defining $f$ and $U$, then we can take $U_0$ to be
  $M$-definable.
\end{proposition}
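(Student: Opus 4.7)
The plan is to reduce to $m = 1$, show that every $M$-generic point of $U$ lies in an $M$-definable open set on which $f$ is locally splendid, and then globalize these neighborhoods into a single definable $U_0$ via compactness of the generic-type locus. For the reduction, Proposition~\ref{ls-props}(3) says local splendidness is componentwise, so intersecting the $U_0$'s produced for each component $f_i$ handles general $m$. So assume $m = 1$, fix a small $M$ over which $f$ and $U$ are defined, and set $n = \dim(U)$.

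For the local step, take $\ba \in U$ with $\dim(\ba / M) = n$. Since $f(\ba) \in \dcl(M\ba)$, Corollary~\ref{dcl} produces an $M$-definable locally splendid function $g : V \to \Mm$ with $\ba \in V$ and $g(\ba) = f(\ba)$. The $M$-definable equalizer $E = \{x \in U \cap V : f(x) = g(x)\}$ contains the $M$-generic point $\ba$, hence has dimension $n$; the small-boundaries property forces $\dim(\partial E) < n$, so $M$-genericity of $\ba$ places $\ba$ in $W_{\ba} := \ter(E)$. Thus $W_{\ba}$ is an $M$-definable open subset of $U$ containing $\ba$ on which $f = g$ is locally splendid.

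To globalize, let $\Sigma$ denote the collection of all $M$-definable open $W \subseteq U$ on which $f \restriction W$ is locally splendid. Proposition~\ref{ls-local}(2) makes $\Sigma$ closed under finite unions, and the previous paragraph shows that every $M$-generic type in $S^U_n(M)$ contains a member of $\Sigma$. The set of $M$-generic types is closed in $S^U_n(M)$ (its complement is the open union of the clopens $[\phi]$ over $M$-definable $\phi \subseteq U$ with $\dim(\phi) < n$) and hence compact, so finitely many $W_1, \ldots, W_k \in \Sigma$ cover it. Directedness of $\Sigma$ then gives $U_0 := W_1 \cup \cdots \cup W_k \in \Sigma$, and $U \setminus U_0$ contains no $M$-generic point of $U$, forcing $\dim(U \setminus U_0) < n$ as desired.

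The main obstacle is this final globalization: local splendidness is not itself first-order definable, so the locally splendid locus of $f$ is only $M$-ind-definable in general. Combining the directedness of $\Sigma$ (Proposition~\ref{ls-local}(2)) with the compactness of the generic-type locus in $S^U_n(M)$ is precisely what lets us extract a single $M$-definable open set from this ind-definable data; without either ingredient, a finite subcover would not automatically lie in $\Sigma$, and we would be stuck with a good locus that cannot be written as a single formula.
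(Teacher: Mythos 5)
Your proof is correct and follows essentially the same approach as the paper's: apply Corollary~\ref{dcl} at an $M$-generic point to produce an $M$-definable locally splendid function agreeing with $f$, pass to the interior of the equalizer, and finish by a compactness/saturation argument extracting finitely many $M$-definable open pieces whose union covers the generic locus. The explicit reduction to $m=1$ via Proposition~\ref{ls-props}(3) and the Stone-space formulation of the compactness step are cosmetic restatements of what the paper does implicitly.
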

\begin{proof}
  Fix a small $M \preceq \Mm$ defining $U$ and $f$, if none was given.
  \begin{claim}
    If $\ba \in U$, then one of the following holds:
    \begin{itemize}
    \item $\ba \in D$ for some $M$-definable set $D \subseteq \Mm^n$
      with $\dim(D) < n$.
    \item $\ba \in D$ for some $M$-definable open set $D \subseteq
      \Mm^n$ such that $f \restriction D$ is locally splendid.
    \end{itemize}
  \end{claim}
  \begin{claimproof}
    This follows from Corollary~\ref{dcl}.  If $\dim(\ba/M) < n$, then
    the first case holds.  Suppose that instead, $\dim(\ba/M) = n$.
    Then Corollary~\ref{dcl} shows that $f(\ba) = g(\ba)$ for some
    locally splendid $M$-definable function $g$.  Let $D_0 =
    \{\bx \in \dom(f) \cap \dom(g) : f(\bx) = g(\bx)\}$.  Then $D_0$
    is $M$-definable and $\ba \in D_0$, so $\dim(D_0) = n$.  Let $D$
    be the interior of $D_0$.  Then $\dim (D_0 \setminus D) < n$, so
    $\ba \notin D_0 \setminus D$, and instead $\ba \in D$.  The
    restriction $f \restriction D$ equals the locally splendid
    function $g \restriction D$.
  \end{claimproof}
  By the Claim and saturation, we can cover $U$ with finitely many
  $M$-definable sets
  \begin{equation*}
    D_1 \cup D_2 \cup \cdots \cup D_k \cup U_1 \cup \cdots \cup U_\ell
  \end{equation*}
  such that $\dim(D_i) < n$, $U_i$ is open, and $f \restriction U_i$
  is locally splendid.  Let $U_0 = U_1 \cup \cdots \cup U_\ell$.
  Then
  \begin{equation*}
    \dim (U \setminus U_0) \le \dim \bigcup_i D_i < n = \dim(U),
  \end{equation*}
  and $f \restriction U$ is locally splendid by
  Proposition~\ref{ls-local}(2).
\end{proof}
%% TODO: simplify the arguments.  This section has so many steps that are
%% ``obvious'' to experts; I can omit most of them.

\subsection{Back to groups}
If $(G,\star)$ is a definable group and $a \in G$, recall the operation
\begin{equation*}
  x \star_a y = x \star a^{-1} \star y
\end{equation*}
from Section~\ref{again} making $G$ into a definable group with
identity element $a$, definably isomorphic to the original group.
\begin{lemma} \label{la-2}
  Let $M$ be a small model.  Let $G \subseteq \Mm^n$ be an
  $n$-dimensional $M$-definable set and $\star$ be an $M$-definable
  group operation on $G$.  Then there is an $M$-definable set $U$ in
  the interior of $G$ with $\dim (G \setminus U) < n$, such that for
  any $a \in U$, the group operation $\star_a$ is locally splendid at
  $(a,a)$.
\end{lemma}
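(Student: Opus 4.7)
The plan is to mimic the proof of Lemma~\ref{local-C1}, substituting generic local splendidness (Proposition~\ref{gen-lsa}) for generic strict differentiability (Theorem~\ref{generic-diff}), and using the fact that locally splendid functions are closed under composition (Proposition~\ref{ls-props}).

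First I would apply Proposition~\ref{gen-lsa} to the $M$-definable group operation $\star$, viewed as a definable function on the interior of $G \times G$. Because Proposition~\ref{gen-lsa} gives an $M$-definable witness, this produces an $M$-definable open set $\Delta \subseteq \ter(G) \times \ter(G)$ with $\dim\bigl((G\times G)\setminus \Delta\bigr) < 2n$ on which $\star$ is locally splendid.

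Next I would run the same ``detour through a generic auxiliary point'' trick as in Lemma~\ref{local-C1}. Define the $M$-definable set
\begin{equation*}
\nabla = \{(a,b) \in G\times G : (b^{-1},a),\ (b^{-1}\star a, a^{-1}),\ (b, b^{-1}\star a) \in \Delta\},
\end{equation*}
and let $U = \{a \in \ter(G) : \exists b\, (a,b)\in \nabla\}$. A joint genericity calculation over a small model defining everything shows that if $(a,b)$ is jointly generic, each of the three displayed pairs is jointly generic, so lies in $\Delta$; therefore $U$ contains every point of $G$ generic over $M$, giving $\dim(G\setminus U) < n$. For $a \in U$, the identity
\begin{equation*}
x \star_a y = b \star\!\left(\left((b^{-1}\star x)\star a^{-1}\right)\star y\right)
\end{equation*}
exhibits $\star_a$ near $(a,a)$ as a composition in which $\star$ is applied successively at the points $(b^{-1},a)$, $(b^{-1}\star a, a^{-1})$, $(b^{-1},a)$, and $(b, b^{-1}\star a)$, each of which lies in $\Delta$ by the definition of $\nabla$; so by Proposition~\ref{ls-props}(2,3) the composition is locally splendid at $(a,a)$.

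There is essentially no obstacle: the only point needing slight care is that Proposition~\ref{gen-lsa} gives an $M$-definable $U_0$ (so $\Delta$ is $M$-definable and hence so are $\nabla$ and $U$), and that Proposition~\ref{ls-props} is formulated so that local splendidness composes at a single point rather than on a whole open set, which is exactly what the ``detour through $b$'' argument produces.
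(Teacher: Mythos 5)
Your proposal is correct and matches the paper's intent exactly: the paper's own proof of this lemma is the one-line instruction ``like the direct proof of Lemma~\ref{local-C1}, using Proposition~\ref{gen-lsa}'', and your write-up simply fills in those details, with the same $\nabla$ construction, the same four-way factorization $x \star_a y = b \star \bigl(((b^{-1}\star x)\star a^{-1})\star y\bigr)$, and the same appeal to closure of local splendidness under composition (Proposition~\ref{ls-props}) in place of closure of strict differentiability.
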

\begin{proof}
  Like the ``direct'' proof of Lemma~\ref{local-C1}, using
  Proposition~\ref{gen-lsa} to get an $M$-definable set $\Delta \subseteq
  G \times G$ on which the group operation is locally splendid.
\end{proof}

Recall the condition $\mathfrak{C}_\omega$ from Definition~\ref{ABC}.
\begin{lemma} \label{mess}
  Let $G \subseteq \Mm^n$ be an $n$-dimensional definable set and let
  $\star$ be a definable group operation on $G$ with identity element
  $\bar{0}$.  Let $\epsilon, \delta$ be non-zero elements of $\Mm$, with
  $\epsilon \Oo^n$ and $\delta \Oo^n$ contained in $G$.  Suppose that
  the group operation $\star$ maps $\epsilon \Oo^n \times \epsilon
  \Oo^n$ into $\delta \Oo^n$, and the map
  \begin{gather*}
    f : \Oo^n \times \Oo^n \to \Oo^n \\
    f(\bx,\by) = \delta^{-1} \cdot (\epsilon \bx \star \epsilon \by)
  \end{gather*}
  is splendid.  Let
  \begin{equation*}
    \rho = p \epsilon^2/\delta.
  \end{equation*}
  Then $(\rho \Oo^n, \star)$ is a subgroup of $G$ satisfying
  $\mathfrak{C}_\omega$.
\end{lemma}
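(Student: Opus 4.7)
The plan is to exploit the splendid power-series form of $f$ and then rescale. Since $f$ is splendid, its standard-model instance is pre-splendid, and (transferring through the splendid family containing $f$) we may work with a power-series expansion $f(\bx,\by) = \sum_{I,J} \bar{d}_{I,J}\bx^I\by^J$ with $\bar{d}_{I,J} \in \Oo^n$. The identities $\bar{0}\star\by = \by$ and $\bx\star\bar{0} = \bx$ force $f(\bar{0},\by) = (\epsilon/\delta)\by$ and $f(\bx,\bar{0}) = (\epsilon/\delta)\bx$, so $\epsilon/\delta \in \Oo$, $\bar{d}_{e_i,0} = \bar{d}_{0,e_i} = (\epsilon/\delta)e_i$, $\bar{d}_{0,0} = 0$, and $\bar{d}_{I,0} = \bar{d}_{0,J} = 0$ for $|I|,|J| \ge 2$. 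In particular $v(\rho) = v(p) + v(\epsilon) + v(\epsilon/\delta) \ge v(\epsilon)$, so $\rho\Oo^n \subseteq \epsilon\Oo^n \subseteq G$.

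I then rescale: define $\bx\star'\by := \rho^{-1}(\rho\bx\star\rho\by) = \rho^{-1}\delta\,f(s\bx,s\by)$ with $s := \rho/\epsilon = p\epsilon/\delta \in \Oo$. This expands as $\bx\star'\by = \sum_{I,J}\bar{c}_{I,J}\bx^I\by^J$ with $\bar{c}_{I,J} = (\delta/\rho)\,s^{|I|+|J|}\bar{d}_{I,J}$. A one-line arithmetic calculation gives $(\delta/\rho)\,s^k = p^{k-1}(\epsilon/\delta)^{k-2}$, so (using $\epsilon/\delta \in \Oo$ and $\bar{d}_{I,J}\in\Oo^n$) I find $\bar{c}_{0,0} = 0$, $\bar{c}_{e_i,0} = \bar{c}_{0,e_i} = e_i$, and $\bar{c}_{I,J} \in p^{|I|+|J|-1}\Oo^n$ whenever $|I|+|J|\ge 2$. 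In every case $\bar{c}_{I,J} \in \Oo^n \cap p^{|I|+|J|-1}\Oo^n$.

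Closure of $\rho\Oo^n$ under $\star$ then follows by evaluating the power series: since $\bar{c}_{I,J} \in \Oo^n$ for $|I|+|J|\ge 1$ and the constant term vanishes, all partial sums lie in $\Oo^n$, hence so does the limit. For closure under $\star$-inverses I invoke the contraction Lemma~\ref{contract}: fixing $\bx\in\Oo^n$, the map $T(\by) = \by - \bx\star'\by$ sends $\Oo^n$ to $\Oo^n$, and the coefficient bounds give $T(\by_1)-T(\by_2) \in p\Oo^n\cdot(\by_1-\by_2)$, since every monomial of $\bx\star'\by$ of total degree $\ge 2$ acquires a factor of $p$. The unique fixed point $\by_0$ satisfies $\bx\star'\by_0 = \bar{0}$, so $\rho\by_0$ inverts $\rho\bx$ inside $\rho\Oo^n$. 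The conditions of $\mathfrak{C}_\omega$ then read off the series directly: $\mathfrak{A}_k$ holds because $\bar{c}_{I,J}\in\Oo^n$ for $|I|+|J|\ge 1$; the Taylor coefficients of $\star'$ at the origin are precisely the $\bar{c}_{I,J}$, which lie in $\Oo^n\cap p^{|I|+|J|-1}\Oo^n$; and the quotient-structure clause follows because $\sum_{|I|+|J|>k} \bar{c}_{I,J}\bx^I\by^J$ lies in $p^k\Oo^n$, each summand being in $p^{|I|+|J|-1}\Oo^n \subseteq p^k\Oo^n$.

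The main obstacle is purely bookkeeping: justifying the power-series manipulations in the monster via the splendid family containing $f$, checking convergence of the rescaled expansion, and running the contraction argument for inverses cleanly. None of these is deep given the splendid hypothesis on $f$; the whole lemma is really a concrete computation dressed up in group-theoretic language.
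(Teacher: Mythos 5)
Your computation is exactly the paper's: impose the identity law to get $\epsilon/\delta\in\Oo$, rescale by $\rho = p\epsilon^2/\delta$, and check that the rescaled coefficient $(\delta/\rho)\,s^{|I|+|J|}\bar d_{I,J} = p^{|I|+|J|-1}(\epsilon/\delta)^{|I|+|J|-2}\bar d_{I,J}$ lands in $\Oo^n\cap p^{|I|+|J|-1}\Oo^n$. The one place where the paper is more careful is exactly the step you wave at as ``transferring through the splendid family'': in the monster model an infinite power series does not literally converge, so one cannot simply ``work with'' $f(\bx,\by)=\sum\bar d_{I,J}\bx^I\by^J$ over $\Oo_\Mm$. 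The paper's fix is to observe that the set of counterexamples to the lemma is $\varnothing$-ind-definable (splendid families are $0$-definable, $\mathfrak C_\omega$ is type-definable), so if a counterexample existed there would be a $K$-definable one, and the whole computation can then be run honestly inside the standard model $K$, where pre-splendid really means a convergent series with $\Oo_K$-coefficients and Example~\ref{c-source} finishes. Your endgame differs cosmetically: the paper hands off to Example~\ref{c-source}, while you re-derive closure under $\star'$, closure under inverses via the contraction lemma, and the $\mathfrak C_\omega$ clauses directly from the coefficient bounds; all of that is correct (and the contraction argument for inverses is sound), just a re-proof of what Example~\ref{c-source} already packages. So the proposal is right in substance, but you should replace the informal ``transfer'' with the ind-definability reduction to the standard model, since that is what actually licenses treating $f$ as a genuine power series.
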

\begin{proof}
  The set of counterexamples is ind-definable, because the family of
  splendid functions is ind-definable (by definition) and the
  $\mathfrak{C}_\omega$ condition is type-definable
  (Remark~\ref{def-el}).  The set of counterexamples is even
  ind-definable over the empty set, by automorphism invariance.  If it
  is non-empty, it must contain a $K$-definable point.  In other
  words, if there is a counterexample, then there is a $K$-definable
  counterexample.

  Therefore, we may assume that $G, \star, \epsilon, \delta$ are
  $K$-definable.  Then $f$ is a $K$-definable splendid map, so $f(K) :
  \Oo_K^n \times \Oo_K^n \to \Oo_K^n$ is pre-splendid, given by a
  power series
  \begin{equation*}
    f(\bx,\by) = \sum_{I,J} \bc_{I,J} \bx^I \by^J
  \end{equation*}
  with $\bc_{I,J} \in \Oo_K^m$.  Then the group law itself is given by
  \begin{align*}
    \bx \star \by &= \delta \cdot f(\epsilon^{-1} \bx, \epsilon^{-1} \by) \\
    &= \sum_{I,J} \bc_{I,J} \delta \epsilon^{-|I|-|J|} \bx^I \by^J,
  \end{align*}
  at least for $\bx, \by \in \epsilon \Oo_K^n$.  By the identity law for
  $\star$, the first few terms of the power series for $\star$ must be
  $\bx + \by + \ldots$, and so the power series for $f$ must have the
  form
  \begin{equation*}
    f(\bx,\by) = \frac{\epsilon}{\delta} \bx + \frac{\epsilon}{\delta}
    \by + \cdots
  \end{equation*}
  In particular, $\epsilon/\delta \in \Oo$.  It follows that $\rho = p
  (\epsilon/\delta) \epsilon$ has higher valuation than $\epsilon$, so
  \begin{equation*}
    \rho \Oo^n \subseteq \epsilon \Oo^n \subseteq G.
  \end{equation*}
  If $\bx, \by \in \Oo_K^n$, then
  \begin{align*}
    \rho^{-1}(\rho \bx \star \rho \by) &= \sum_{I,J} \bc_{I,J} \rho^{|I|+|J|-1} \delta \epsilon^{-|I|-|J|} \bx^I \by^J \\
    &= \bx + \by + \frac{\delta}{\epsilon} \sum_{\substack{I,J \\ |I| \ge 1 \\ |J| \ge 1}}  \bc_{I,J} \left(\frac{\rho}{\epsilon}\right)^{|I|+|J|-1}  \bx^I \by^J.
  \end{align*}
  By Example~\ref{c-source}, it remains to show that
  \begin{equation*}
    \frac{\delta}{\epsilon} \left(\frac{\rho}{\epsilon}\right)^{|I|+|J|-1} \bc_{I,J} \stackrel{?}{\in} p^{|I|+|J|-1} \Oo^n.
  \end{equation*}
  But this is clear, because
  \begin{equation*}
    \frac{\delta}{\epsilon} \left(\frac{\rho}{\epsilon}\right)^{|I|+|J|-1} = \frac{\delta}{\epsilon} \left(p \frac{\epsilon}{\delta} \right)^{|I|+|J|-1} = p^{|I|+|J|-1} (\epsilon/\delta)^{|I|+|J|-2} \in p^{|I|+|J|-1} \Oo^n,
  \end{equation*}
  using the fact that $|I|+|J| \ge 2$ and $\epsilon/\delta \in \Oo$.
\end{proof}

\begin{theorem} \label{last-main}
  Let $M \preceq \Mm$ be a model (of $T = \Th(K)$) and let $G$ be an
  $M$-definable group.  There is an $M$-definable open subgroup $H
  \subseteq G$ such that
  \begin{itemize}
  \item $H$ is isomorphic to a definable group $(\Oo^n,\star)$
    satisfying $\mathfrak{C}_\omega$.
  \item $H$ is fsg and compactly dominated by $H/H^{00}$, and
    $H/H^{00}$ is isomorphic to an $n$-dimensional $p$-adic Lie group.
  \end{itemize}
\end{theorem}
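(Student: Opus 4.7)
The strategy is to stitch together the preceding tools: Lemmas~\ref{stick}, \ref{la-2}, \ref{future-use}, \ref{mess}, and Proposition~\ref{c-good}. The plan is to pass, $M$-definably, to an infinitesimally small neighborhood of the identity on which the group operation is locally splendid, then apply Lemma~\ref{mess} to shrink further to a subgroup satisfying condition $\mathfrak{C}_\omega$, and finally invoke Proposition~\ref{c-good}.

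First I would reduce to the case $G \subseteq \Mm^n$ with $n = \dim G$, via Lemma~\ref{stick} applied $M$-definably. Lemma~\ref{la-2} then produces an $M$-definable open subset $U \subseteq G$ with $\dim(G \setminus U) < n$ such that $\star_a$ is locally splendid at $(a,a)$ for every $a \in U$. Since $U$ is $M$-definable and non-empty, Tarski-Vaught supplies a point $a \in U(M)$. Replacing $\star$ by $\star_a$ (this is $M$-definable and yields an isomorphic group) and then translating by $-a$ (a polynomial change of coordinates, which preserves local splendidness by Proposition~\ref{ls-props}), I may assume that the identity of $G$ is $\bar 0$ and that $\star$ is locally splendid at $(\bar 0, \bar 0)$, with $G$ and $\star$ still $M$-definable.

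Next I would apply Lemma~\ref{future-use} to the two-variable function $\star$ at the point $(\bar 0, \bar 0)$, obtaining $M$-definable non-zero $\epsilon, \delta \in \Mm$ such that $\epsilon \Oo^{2n} \subseteq \dom(\star)$, $\star(\epsilon \Oo^{2n}) \subseteq \delta \Oo^n$, and the rescaled map $(\bx, \by) \mapsto \delta^{-1}(\epsilon \bx \star \epsilon \by)$ is splendid. Setting $\rho = p\epsilon^2/\delta$, which is also $M$-definable, Lemma~\ref{mess} identifies $H := \rho \Oo^n$ as a subgroup of $G$ whose group structure, after the canonical rescaling $\bx \mapsto \rho^{-1}\bx$ to $\Oo^n$, satisfies $\mathfrak{C}_\omega$; since $\rho \in \dcl(M)$, this subgroup $H$ is $M$-definable and open.

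Finally Proposition~\ref{c-good} gives that $H$ is fsg, compactly dominated by $H/H^{00}$, and that $H/H^{00}$ is isomorphic to an $n$-dimensional Lie group over $K$, hence in particular an $n$-dimensional $p$-adic Lie group. Transporting back along the $M$-definable isomorphism $(G, \star_a) \cong (G, \star)$ used at the start yields the desired $M$-definable open subgroup of the original group. The whole argument is really an exercise in bookkeeping: there is no serious obstacle beyond carefully propagating $M$-definability through each reduction, which is precisely what Lemmas~\ref{la-2} and \ref{future-use} were designed to allow.
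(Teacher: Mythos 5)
Your proposal is correct and follows essentially the same route as the paper's own proof: reduce via Lemma~\ref{stick} to $G \subseteq \Mm^n$, use Lemma~\ref{la-2} and a point $a \in U(M)$ to pass to $\star_a$ and translate the identity to $\bar 0$, invoke Lemma~\ref{future-use} for $M$-definable $\epsilon, \delta$, apply Lemma~\ref{mess} to land on the $M$-definable subgroup $\rho\Oo^n$ satisfying $\mathfrak{C}_\omega$, and finish with Proposition~\ref{c-good}. The additional justifications you supply (Tarski--Vaught for $a \in U(M)$, translation as a polynomial coordinate change preserving local splendidness) are fine and merely make explicit what the paper leaves implicit.
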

\begin{proof}
  By Lemma~\ref{stick} we may assume $G \subseteq \Mm^n$.  By
  Lemma~\ref{la-2}, there is an $M$-definable non-empty set $U$ such
  that $\star_a$ is locally splendid around $(a,a)$ for any $a \in U$.
  Fix an $a \in U(M)$.  Then $(G,\star)$ is definably isomorphic to
  $(G,\star_a)$.  Replacing $(G,\star)$ with $(G,\star_a)$, we may
  assume that $\star$ is locally splendid at the identity element
  $1_G$.  Translating $G$, we may assume the identity element is
  $\bar{0} \in \Mm^n$.

  By definition of ``locally splendid,'' there are $\epsilon$ and
  $\delta$ such that $\star$ maps $\epsilon \Oo^n \times \epsilon
  \Oo^n$ into $\delta \Oo^n$, and the map
  \begin{gather*}
    f : \Oo^n \times \Oo^n \to \Oo^n \\
    f(\bx,\by) = \delta^{-1} \cdot (\epsilon \bx \star \epsilon \by)
  \end{gather*}
  is splendid.  We can choose $\epsilon, \delta \in
  M$ by Lemma~\ref{future-use}.  Then
  Lemma~\ref{mess} gives an ($M$-definable) subgroup $\rho \Oo^n
  \subseteq G$ satisfying $\mathfrak{C}_\omega$.  By
  Proposition~\ref{c-good}, the subgroup $\rho \Oo^n$ has the desired
  properties.
\end{proof}

\begin{acknowledgment}
  The author was supported by the National Natural Science Foundation
  of China (Grant No.\@ 12101131) and the Ministry of Education of
  China (Grant No.\@ 22JJD110002).  The author would like to thank the anonymous referee, who read the paper extremely carefully and offered many helpful comments, especially concerning the history of $P$-minimality and $p$-adically closed fields.
\end{acknowledgment}

\bibliographystyle{plain} \bibliography{mybib}{}

\end{document}